\documentclass[12pt,a4paper]{article}

\usepackage[utf8]{inputenc}
\usepackage{amsmath}
\usepackage{amsfonts}
\usepackage{amssymb}
\usepackage{amsthm}
\usepackage[colorlinks]{hyperref}
\hypersetup{
  citecolor={cyan}
}
\usepackage[left=2cm,right=2cm,top=2cm,bottom=2cm]{geometry}
\usepackage{xcolor}
\usepackage[all,cmtip]{xy}
\usepackage{tikz}
\usetikzlibrary{decorations.markings}
\usepackage{enumerate}
\usepackage{array}

\newcolumntype{C}[1]{>{\centering\arraybackslash}m{#1}}


\newcommand{\id}{\mathrm{id}}

\newtheorem{thm}{Theorem}[section]
\newtheorem*{thm*}{Theorem}
\newtheorem{prop}[thm]{Proposition}
\newtheorem{cor}[thm]{Corollary}
\newtheorem*{cor*}{Corollary}
\newtheorem{lem}[thm]{Lemma}

\theoremstyle{definition}
\newtheorem{rem}[thm]{Remark}
\newtheorem{defn}[thm]{Definition}
\newtheorem{ex}[thm]{Example}

\DeclareMathOperator*{\colim}{colim}

\begin{document}
\title{Torus equivariant algebraic models and compact realization}
\author{Leopold Zoller}
\maketitle

\begin{abstract}
Let $T$ be a compact torus. We prove that, up to equivariant rational equivalence, the category of $T$-simply connected, $T$-finite type $T$-spaces with finitely many isotropy types is completely described by certain finite systems of commutative differential graded algebras with consistent choices of degree $2$ cohomology classes. We show that the algebraic systems corresponding to finite $T$-CW-complexes are exactly those which satisfy the necessary condition imposed by the Borel localization theorem along with certain finiteness conditions. We derive an algebraic characterization of when an algebra over a polyonmial ring is realized as the rational equivariant cohomology of a finite $T$-CW-complex. As further applications we prove that any GKM graph cohomology is realized by a finite $T$-CW-complex and classify equivariant cohomology algebras of finite $S^1$-CW-complexes with discrete fixed points.
\end{abstract}

\section{Introduction}

A core question in algebraic topology is whether a given ring is realizable as the cohomology ring of a space. While this is a widely open question for integral cohomology, it was proved in \cite{Quillen} that any nonnegatively graded, finite type, commutative algebra $A$ over $\mathbb{Q}$ with $A^0=\mathbb{Q}$, $A^1=0$ is realizable as $A\cong H^*(X;\mathbb{Q})$ for some simply connected space $X$. A main motivation for this article is to answer the question which algebras are realizable as the equivariant cohomology $H_T^*(X;\mathbb{Q})$ for some compact torus $T$. Naturally these algebras come equipped with a map $R\rightarrow H_T^*(X)$, where $R:=H^*(BT;\mathbb{Q})$ is the cohomology of the classifying space. Hence the question should be asked for $R$-algebras. It is not hard to deduce from nonequivariant realization that any finite type $R$-algebra $A$ is realizable as $H_T^*(X)$ for some free $T$-action on a simply connected space $X$ provided $A^1=0$ and $R^2\otimes A^0\rightarrow A^2$ is injective (see e.g.\ \cite[Proposition 4.2]{HalperinTori}). While at first glance this solves the problem, the answer is a little unsatisfactory: one can check that outside of the case where $\dim_\mathbb{Q}H_T^*(X)$ is finite --which it is usually not-- such a free realization $X$ has to be infinite dimensional. In fact, finding nice realizations has to be much harder, as for sufficiently nice spaces, there is powerful machinery linking algebraic properties of $H_T^*(X)$ to geometric properties of the action. One of the most prominent theorem in this direction is the Borel localization theorem which requires the $T$-space to be compact or satisfy certain other finiteness conditions (cf.\ \cite[Section 3.2]{AP}).
Consequently the interesting question to ask is: when is an $R$-algebra realized as the equivariant cohomology of a compact $T$-space or even a finite $T$-CW-complex? This turns out to be a more sophisticated problem as is illustrated e.g.\ by the fact that for $T=S^1$, $R\cong \mathbb{Q}[x]$, $c\in\mathbb{Q}$, the graded $\mathbb{Q}[x]$-algebra $\mathbb{Q}[x,a]/(a^2-cx^2)$ is realizable by a finite $S^1$-CW-complex if and only if $c$ is a square (see Example \ref{ex:nonrealizable}). Hence the above realizability question relies heavily on the multiplicative structure, to the point that the outcome might change when passing to $\mathbb{R}$-coefficients.

In the nonequivariant case, the realization question for cohomology algebras was solved by giving an algebraic description of the category of simply connected, finite $\mathbb{Q}$-type spaces up to rational equivalence. In the same vein, to attack this question we first give an algebraic description of the category of $T$-simply connected, $T$-finite $\mathbb{Q}$-type $T$-spaces up to equivariant rational equivalence. Here a $T$-space $X$ is called $T$-simply connected (resp.\ $T$-finite $\mathbb{Q}$-type) if every path component of $X^H$ is simply connected (resp.\ if $\dim H^k(X^H;\mathbb{Q})<\infty$ for $k\geq 0$) for all $H\subset T$. An equivariant rational equivalence is an equivariant map $X\rightarrow Y$ such that $H_*(X^H;\mathbb{Q})\rightarrow H_*(Y^H;\mathbb{Q})$ is an isomorphism for all $H\subset T$. Mostly for technical reasons, we restrict to spaces with finitely many orbit types (see Remark \ref{rem:disclaimer}).

Of course, this algebraization of the equivariant rational homotopy category is of interest independently of the cohomology realization problem. In fact, an algebraic description of this category has already been achieved in \cite{ScullMendes} using the following approach (in slightly greater generality): define $\mathcal{S}$ to be the set of pairs $(U,H)$ of subgroups of $T$, with $U\subset H$ and $U$ connected. Then consider the functor which maps a $T$-space $X$ to the diagram of cochain algebras, with entries $A_{PL}(X^H_{T/U})$ for any $(U,H)\in\mathcal{S}$, where $A_{PL}$ denotes the polynomial de Rham algebra and $X_{T/U}^H$ is the Borel construction of the $T/U$-action on the $H$-fixed points.
Formally this diagram is itself a functor $\mathcal{S}\rightarrow \mathrm{cdga}^{\geq 0}$ with values in the category of commutative cochain algebras, when regarding $\mathcal{S}$ as a category in a suitable fashion. We call this an $\mathcal{S}$-system. Denoting the $\mathcal{S}$-system of a $T$-space $X$ by $\underline{A_{PL}(X)}$, we obtain a map $\underline{A_{PL}(*)}\rightarrow \underline{A_{PL}(X)}$ induced by the constant map. At the $(U,H)$ position of $\underline{A_{PL}(*)}$  we have $A_{PL}(B(T/U))$. Thus pointwise, this captures the algebraic data of the respective Borel fibrations. Choosing a nice model $\underline{P}\rightarrow \underline {A_{PL}(*)}$ we obtain a functor from $T$-spaces to systems of cochain algebras under $\underline{P}$, i.e.\ morphisms of systems $\underline{P}\rightarrow \underline{A}$. Through this functor, the authors of \cite{ScullMendes} achieve an algebraic description of the $T$-equivariant rational homotopy category. However, as is stated in \cite{ScullMendes}, the object $\underline{P}$ is hard to describe explicitly.
The reason for this is that $\underline{P}\rightarrow \underline{A_{PL}(*)}$ has to respect the strict commutativity of the diagrams. Thus one can not freely choose Sullivan models for $B(T/U)$ pointwise, which would in general just produce homotopy commutative systems.

In this paper we seek to overcome this problem by showing that the datum of the $\underline{P}$-algebra structure is only relevant on the cohomological level and, in particular, the specific choice of $\underline{P}$ is irrelevant. This enables applications and lets us choose simpler algebraic models, which -- in the right setting -- can be written down explicitly. More precisely, we use the functor $X\rightarrow \underline{A_{PL}(X)}$ as above but at each entry $(U,H)$ of the diagram we only consider the additional datum $H^*(B(T/U))\rightarrow H^*(X^H_{T/U})$ on the cohomological level instead of the cochain level. We call this a cohomology $\underline{R}$-structure, where $\underline{R}=H^*(\underline{A_{PL}(*)})$ is the system of all cohomologies of the classifying spaces $B(T/U)$. We prove (see Theorem \ref{thm:R-realization})

\begin{thm*}
Let $C$ be a finite collection of subgroups of $T$ and $\mathcal{D}\subset \mathcal{S}$ be the stable subset generated by $C$. Let $\underline{A}$ be a $\mathcal{D}$-system with a cohomology $\underline{R}$-structure. Then there is a $T$-simply connected, $T$-finite $\mathbb{Q}$-type $T$-CW-complex $X$ with isotropies contained in $C$ such that $\underline{A_{PL}(X)}$ is connected to $\underline{A}$ via quasi isomorphisms of systems respecting the cohomology $\underline{R}$-structures if and only if $\underline{A}$ satisfies the triviality condition, is spacelike, of finite type, $H^1(\underline{A})=0$, and $\underline{R}^2\otimes H^0(\underline{A})\rightarrow H^2(\underline{A})$ is injective. The space $X$ can be chosen as a finite $T$-CW-complex if and only if $\underline{A}$ satisfies the localization condition and $H^*(\underline{A}(U,H))$ is finitely generated as an $R_{T/U}$-module for any $(U,H)\in\mathcal{D}$.
\end{thm*}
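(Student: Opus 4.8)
The plan is to reduce both equivalences to the algebraic description of the $T$-equivariant rational homotopy category developed earlier in the paper, so that the content of the theorem becomes a matter of pinning down exactly which algebraic conditions cut out the essential image of, successively, arbitrary $T$-spaces with isotropy in $C$, $T$-finite $\mathbb{Q}$-type $T$-simply connected $T$-CW-complexes, and finite $T$-CW-complexes. Throughout, the category $\mathcal{D}$ generated by $C$ is the natural algebraic home for such spaces, so that no isotropy bookkeeping is lost when passing to $\mathcal{D}$-systems.

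\emph{First equivalence.} Necessity is bookkeeping on $\underline{A_{PL}(X)}$: the triviality condition holds because fixed-point inclusions act strictly on polynomial forms and the classes $H^*(B(T/U))\to H^*(X^H_{T/U})$ come from the Borel projection; spacelikeness is, by definition, the condition isolating systems of the form $\underline{A_{PL}(X)}$ up to quasi-isomorphism; finite type is $T$-finite $\mathbb{Q}$-type; $H^1(\underline A(U,H))=H^1(X^H_{T/U})=0$ since $X^H$ is simply connected and $B(T/U)$ has vanishing $H^1$; and $\underline R^2\otimes H^0(\underline A)\to H^2(\underline A)$ is injective because in the Serre spectral sequence of $X^H\to X^H_{T/U}\to B(T/U)$ the differential $d_2\colon E_2^{0,1}\to E_2^{2,0}$ vanishes ($E_2^{0,1}=H^1(X^H)=0$), so $E_2^{2,0}=\underline R^2\otimes H^0(\underline A(U,H))$ survives and injects into $H^2(X^H_{T/U})$ (at empty fixed sets the map is the injection from $0$). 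For sufficiency I would invoke the paper's result that a cohomology $\underline R$-structure satisfying the triviality condition lifts, uniquely up to the appropriate homotopy, to an honest $\underline P$-structure, together with the category equivalence: a spacelike, finite-type $\mathcal{D}$-system so lifted is quasi-isomorphic, compatibly with $\underline R$-structures, to $\underline{A_{PL}(Y)}$ for a $T$-space $Y$ with isotropy in $C$. The remaining two conditions, $H^1(\underline A)=0$ and injectivity of $\underline R^2\otimes H^0(\underline A)\to H^2(\underline A)$, are exactly the equivariant analogue of Quillen's hypotheses ($A^1=0$ together with the Halperin-type injectivity forced once a classifying-space action is present); under them one realizes $\underline A$ directly by attaching equivariant cells $D^n\times T/H$ stratum by stratum over the poset of isotropy types, realizing a minimal model of each stratum in turn, which produces the desired $T$-simply connected, $T$-finite $\mathbb{Q}$-type $T$-CW-complex $X$.

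\emph{Second equivalence.} Necessity: if $X$ is a finite $T$-CW-complex then each $X^H$ is a finite $T/U$-CW-complex, so $H^*(\underline A(U,H))=H^*_{T/U}(X^H)$ is finitely generated over the Noetherian ring $R_{T/U}$ by induction over the cells (each cofiber sequence gives a long exact sequence of finitely generated modules), and Borel localization applied to the $T/U$-action on $X^H$ is precisely the localization condition. Sufficiency is the heart of the matter. Starting from the a priori infinite $T$-CW-complex $X$ produced above, I would rebuild it as a finite complex by induction over the isotropy poset: the bottom stratum $X^T$ has $H^*(X^T;\mathbb Q)\cong H^*_T(X^T)\otimes_R\mathbb Q$ finite-dimensional because $H^*_T(X^T)=H^*(X^T)\otimes R$ is finitely generated over $R$, hence $X^T$ is rationally equivalent to a finite complex by nonequivariant Quillen; inductively, attaching free $T/H$-orbits along the next stratum, finite generation of the relevant relative equivariant cohomology over $R_{T/U}$ forces only finitely many equivariant cells to be needed, while the localization condition guarantees that the attaching data algebraically prescribed near the deeper strata restrict compatibly to the already-finite data over the shallower strata. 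Carrying this through, and checking at each stage that the finite sub-$T$-CW-complex retains an $\underline R$-structure-preserving quasi-isomorphism to $\underline A$, yields the finite realization.

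The step I expect to be the main obstacle is sufficiency in the second equivalence: converting ``finitely generated over $R_{T/U}$'' plus the localization condition into a terminating cell-attachment procedure. Finite generation bounds the cohomology of each individual stratum, but the subtle point is controlling the attaching maps between strata — the localization condition is what guarantees that the apparently large module structure seen near the fixed points is pulled back from finitely much data on the strata, so that the induction over the isotropy poset closes up with finitely many cells; making this interact correctly with the $\underline R$-structure, which is a datum on the whole $\mathcal{D}$-system at once, is where the real work lies. By comparison, the first equivalence should follow fairly formally once the lifting of cohomology $\underline R$-structures to $\underline P$-structures and the underlying category equivalence are in hand.
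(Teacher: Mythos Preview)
Your overall strategy matches the paper's exactly: the proof of this theorem is a one-line combination of Corollary~\ref{cor:strictreplacement} (lift the cohomology $\underline{R}$-structure to a strict $\underline{P}$-structure, using \textbf{(TC)}, spacelikeness, and $H^1=0$) with Theorem~\ref{thm:P-realization} (realize the resulting $\underline{P}$-system by a $T$-CW-complex, using the remaining hypotheses, and finitely if \textbf{(LC)} plus finite generation hold). Your sketch of the inductive cell-attachment over isotropy strata and of how the localization condition forces finiteness via Lemma~\ref{lem:localizationfinite} is precisely the content of Theorem~\ref{thm:P-realization}.

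One logical wrinkle: you invoke ``the category equivalence'' as an input, but in the paper the categorical description (Corollary~\ref{cor:categorialdescription}) is proved \emph{using} the realization theorems, not the other way around. So you should cite Theorem~\ref{thm:P-realization} directly rather than routing through Section~\ref{sec:homcats}. Also, two of your necessity justifications are slightly off: spacelikeness is not ``by definition the condition isolating systems of the form $\underline{A_{PL}(X)}$''---it simply records that $H^0(X^H_{T/U})\cong\mathbb{Q}^{\pi_0(X^H)}$; and \textbf{(TC)} holds not because of how fixed-point inclusions act on forms but because $U\subset H$ acts trivially on $X^H$, so $X^H_{T/K}\simeq B(U/K)\times X^H_{T/U}$ as in Remark~\ref{rem:TC}. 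These are easy fixes and do not affect the architecture of the argument.
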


The localization condition in the second half of the theorem is just the necessary condition for realization by a sufficiently nice space, which is imposed by the Borel localization theorem. Thus this can be seen as a sort of ``converse to Borel localization''. This is the key to attacking the compact cohomology realization problem discussed in the beginning.

We point out that the first half of the above theorem can be obtained by drawing upon the results of \cite{ScullMendes} to obtain realizations for systems under $\underline{P}$ (as above) and, on the algebraic side, transition to cohomology $\underline{R}$-structures by the means given in this article. The key ingredient in this realization is Elmendorf's work from \cite{Elmendorf}. The statements on the occurring isotropy groups and realization through finite CW-complexes then become separate problems of approximating $T$-spaces up to rational homotopy within the equivariant world. However, in the present article we give a different, rather self contained proof, which we believe to be of independent value. While behaving less nicely with respect to functoriality, our realization procedure allows us close control over the realizations. The idea is that the equivariant approximation procedure hinted at above can be applied outside of the equivariant world to approximate a space as the Borel construction of a $T$-space. Thus starting with a system of cochain algebras, we take (nonequivariant) geometric realizations through the standard realization functors. Then we transfer this realization to the equivariant world by approximating the spaces as homotopy quotients of a $T$-CW-complex via an induction over the isotropy groups.

With regards to the compact realization problem of equivariant cohomology algebras, we arrive at the following criterion (Theorem \ref{thm:cohorealization}). We identify $H^*(BT)=\mathbb{Q}[V]$ for some vector space $V$ generated in degree $2$.

\begin{thm*}
Let $A$ be a graded $\mathbb{Q}[V]$-algebra. Then $A$ is isomorphic to the equivariant cohomology algebra of a $T$-simply connected finite $T$-CW-complex if and only if there is a finite collection $V_0,\ldots, V_k\subset V$ of vector spaces, $\mathbb{Q}[V_i]$-algebras $A_i$ for $i=0,\ldots,k$, and for each inclusion $V_i\supset V_j$ a map $f_{ij}\colon A_i\rightarrow \mathbb{Q}[V_i] \otimes_{\mathbb{Q}[V_j]} A_j$ of $\mathbb{Q}[V_i]$-algebras satisfying the following properties:
\begin{enumerate}[(i)]
\item  $V_0=V$, $V_k=0$, and for any $i,j$ we have $V_i+ V_j=V_l$ for some $0\leq l\leq k$.
\item $A_0=A$, each $A_i$ is spacelike, finitely generated as $\mathbb{Q}[V_i]$-module, $A_i^1=0$, and $V_i\otimes A^0\rightarrow A^2$ is injective for $0\leq i \leq k$.
\item We have $(\id_{\mathbb{Q}[V_i]}\otimes_{\mathbb{Q}[V_j]} f_{jl})\circ f_{ij}=f_{il}$. Furthermore for any subspace $W\subset V$ and the maximal $V_j\in\{V_0,\ldots,V_k\}$ with the property that $V_j\subset W$, the map $f_{0j}$ becomes an isomorphism when localized at the multiplicative subset $S(W)\subset \mathbb{Q}[V]$  generated by $V\backslash W$.
\end{enumerate}
\end{thm*}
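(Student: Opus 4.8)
The plan is to deduce this from Theorem~\ref{thm:R-realization} by setting up a dictionary between the combinatorial data $(V_i,A_i,f_{ij})$ and $\mathcal{D}$-systems with a cohomology $\underline{R}$-structure. To a subspace $V_i\subseteq V$ we attach the connected subgroup $U_i\subseteq T$ determined by $H^*(B(T/U_i))=\mathbb{Q}[V_i]$; then $V_i\supseteq V_j$ corresponds to $U_i\subseteq U_j$, the condition $V_i+V_j=V_l$ to the identity component of $U_i\cap U_j$ being $U_l$, and $V_0=V$, $V_k=0$ to $U_0=\{e\}$ and $U_k=T$. We set $C=\{U_0,\ldots,U_k\}$ and let $\mathcal{D}\subseteq\mathcal{S}$ be the stable subset it generates; since $\{U_i\}$ is intersection-closed, the relevant entries are governed by pairs $(U,U_l)$ with $U\subseteq U_l$. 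The entry at such a pair models the Borel construction $X^{U_l}_{T/U}$; because $U_l/U$ is a torus acting trivially on $X^{U_l}$, the Künneth splitting $B(T/U)\simeq B(U_l/U)\times B(T/U_l)$ forces its rational cohomology to be $\mathbb{Q}[V_U]\otimes_{\mathbb{Q}[V_l]}A_l$, where $\mathbb{Q}[V_U]=H^*(B(T/U))$ and $\mathbb{Q}[V_U]=\mathbb{Q}[V_l]\otimes\mathbb{Q}[V_U/V_l]$ after choosing a splitting. Under this identification the $f_{ij}$ are precisely the structure maps $(U_i,U_i)\to(U_i,U_j)$, the remaining structure maps being flat base-changes of the $f$'s or of inclusions $\mathbb{Q}[V_U]\hookrightarrow\mathbb{Q}[V_{U'}]$, and the cocycle identity in~(iii) is functoriality. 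In other words, conditions~(i)--(iii) package exactly a cohomology-$\underline{R}$-structured $\mathcal{D}$-system, presented via its restriction to the finite sub-poset $\mathcal{D}_0$ of pairs with both coordinates in $C$.

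For the ``if'' direction, realize each $A_i$ --- which is spacelike by~(ii) --- by a relative Sullivan model over $\mathbb{Q}[V_i]$ carrying the prescribed cohomology $\underline{R}$-structure, and, working inductively over the finite poset $\mathcal{D}_0$ and using the lifting properties of relative Sullivan algebras, assemble these into a strict cdga diagram on $\mathcal{D}_0$ realizing $(A_i,f_{ij})$ with the cocycle identity holding on the nose; then extend over all of $\mathcal{D}$ by the flat base-change formula $\underline{A}(U,U_l)=\mathbb{Q}[V_U]\otimes_{\mathbb{Q}[V_l]}\underline{A}(U_l,U_l)$. Now verify the hypotheses of Theorem~\ref{thm:R-realization}: finite type and module-finiteness over $\mathbb{Q}[V_U]=R_{T/U}$ follow from $A_l$ being finitely generated over $\mathbb{Q}[V_l]$; a degree count in the splitting gives $H^1(\underline{A})=0$ from $A_i^1=0$ and injectivity of $\underline{R}^2\otimes H^0(\underline{A})\to H^2(\underline{A})$ from the injectivity hypothesis in~(ii); spacelikeness and the triviality condition for the whole system reduce to~(ii) and the coherence built into the construction; and the localization condition is supplied by the clause in~(iii), as discussed below. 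Theorem~\ref{thm:R-realization} then yields a $T$-simply connected, $T$-finite $\mathbb{Q}$-type $T$-CW-complex $X$ with isotropies in $C$, which is finite because the localization condition and module-finiteness hold; at the entry $(U_0,U_0)$ one reads off $H^*_T(X)\cong H^*(\underline{A}(U_0,U_0))\cong A_0=A$ as $\mathbb{Q}[V]$-algebras.

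For the ``only if'' direction, let $X$ be a $T$-simply connected finite $T$-CW-complex with $H^*_T(X)\cong A$. Put $U_0=\{e\}$, $U_k=T$, and take $U_1,\ldots$ to be the identity components of the intersections of the finitely many isotropy subgroups of $X$; this is a finite, intersection-closed family of subtori. Set $A_i:=H^*_{T/U_i}(X^{U_i})$, which is a finitely generated $\mathbb{Q}[V_i]$-algebra because $X^{U_i}$ is a finite $T/U_i$-CW-complex, and which is $T/U_i$-simply connected because $X$ is $T$-simply connected; consequently $A_i$ is spacelike, $A_i^1=0$, and $V_i\otimes A_i^0\to A_i^2$ is injective, the latter two from the Serre spectral sequence of $X^{U_i}\to X^{U_i}_{T/U_i}\to B(T/U_i)$. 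For $U_i\subseteq U_j$ the inclusion $X^{U_j}\subseteq X^{U_i}$ together with the splitting $H^*_{T/U_i}(X^{U_j})\cong\mathbb{Q}[V_i]\otimes_{\mathbb{Q}[V_j]}A_j$ produces $f_{ij}$, and the cocycle identity is functoriality of restriction, so~(i)--(iii) hold except for the localization clause. For the latter, given $W\subseteq V$ let $U_W$ be the subtorus with $H^*(B(T/U_W))=\mathbb{Q}[W]$ and $V_j$ the largest $V_i\subseteq W$; since every isotropy identity component lies in $\{U_i\}$ and this family is intersection-closed, $U_j$ is the smallest member containing $U_W$, whence $X^{U_W}=X^{U_j}$. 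The Borel localization theorem then gives $H^*_T(X)[S(W)^{-1}]\cong H^*_T(X^{U_W})[S(W)^{-1}]=H^*_T(X^{U_j})[S(W)^{-1}]\cong\bigl(\mathbb{Q}[V]\otimes_{\mathbb{Q}[V_j]}A_j\bigr)[S(W)^{-1}]$, and unwinding the identifications this isomorphism is exactly $f_{0j}$ localized at $S(W)$.

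The main obstacle is matching the localization condition of Theorem~\ref{thm:R-realization} --- an abstract condition on all structure maps of a $\mathcal{D}$-system encoding what Borel localization forces for a sufficiently nice space --- with the single clause in~(iii) about the maps $f_{0j}$. Proving their equivalence requires, in one direction, reducing the abstract condition to the finitely many strata $U_i$ and showing that control over the $f_{0j}$ propagates to all structure maps via the cocycle identity and flat base change, and, in the other direction, invoking the converse content of Borel localization together with the identification $X^{U_W}=X^{U_j}$. A secondary technical point is checking that the stable subset $\mathcal{D}$ generated by $C$ is genuinely governed by the finite data on $\mathcal{D}_0$, i.e.\ that pairs with second coordinate outside $C$ --- in particular with disconnected second coordinate --- contribute nothing new, so that the strict $\mathcal{D}$-system can be assembled from $(A_i,f_{ij})$ alone.
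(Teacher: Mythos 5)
Your overall strategy coincides with the paper's: encode the data $(V_i,A_i,f_{ij})$ as a $\mathcal{D}(C)$-system with a cohomology $\underline{R}$-structure and invoke Theorem \ref{thm:R-realization}. (One simplification you miss: since the $f_{ij}$ are genuine algebra maps satisfying the cocycle identity strictly, setting $\underline{A}(H_k,H_i)=\mathbb{Q}[V_k]\otimes_{\mathbb{Q}[V_i]}A_i$ with trivial differential already gives a strict diagram of cdgas; no Sullivan-model assembly or lifting is needed.) Your ``only if'' direction, via Borel localization and the identification $X^{U_W}=X^{U_j}$, is correct. The problem is that the step you yourself label ``the main obstacle'' is exactly the nontrivial content of the paper's proof, and you do not supply it. Condition (iii) only says that $f_{0j}$ becomes an isomorphism after inverting $S(W)\subset\mathbb{Q}[V]=R_T$, i.e.\ it gives \textbf{(LC)} at positions whose first coordinate is $\{1\}$. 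What Theorem \ref{thm:R-realization} requires is that for every $(H_k,H_i)\in\mathcal{D}$ and every torus $K\supset H_i$ the map $\mathbb{Q}[V_k]\otimes_{\mathbb{Q}[V_i]}A_i\rightarrow\mathbb{Q}[V_k]\otimes_{\mathbb{Q}[V_j]}A_j$ become an isomorphism after inverting the much smaller multiplicative set $S_{T/H_k}(K)\subset\mathbb{Q}[V_k]$, generated only by $V_k\backslash W$. This does not follow by ``flat base change and the cocycle identity'': tensoring up to $\mathbb{Q}[V]$ and inverting $S(W)$ inverts elements of $\mathbb{Q}[V]$ that do not lie in $\mathbb{Q}[V_k]$ at all, so an isomorphism after the larger localization does not formally descend to one after the smaller.

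The paper closes this gap with a concrete argument you would need to reproduce: split $\mathbb{Q}[V]=\mathbb{Q}[V_k]\otimes\mathbb{Q}[V_k^\perp]$; deduce from the hypothesis on $f_{0i}$ and $f_{0j}$ that the kernel $M$ of $\underline{A}(H_k,H_i)\rightarrow\underline{A}(H_k,H_j)$ has $\mathbb{Q}[V_k^\perp]\otimes M$ annihilated by a monomial $f=\prod_i(x_i+y_i)\in S_T(K)$ with $x_i\in V_k$, $y_i\in V_k^\perp$; use the bigrading of $\mathbb{Q}[V_k^\perp]\otimes M$ to conclude that $M$ is already annihilated by $f'=\prod_{y_i=0}x_i$; and observe that $x_i+y_i\notin W$ forces $x_i\in V_k\backslash W$ whenever $y_i=0$, so $f'\in S_{T/H_k}(K)$ and $S_{T/H_k}(K)^{-1}M=0$ (and similarly for the cokernel). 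Without this, or an equivalent descent argument, your ``if'' direction is incomplete, since \textbf{(LC)} is precisely the hypothesis that lets Theorem \ref{thm:R-realization} produce a \emph{finite} $T$-CW-complex.
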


We give two more specific applications. The first is a simplified classification of the equivariant cohomology algebras of finite $S^1$-CW-complexes with discrete fixed points (Corollary \ref{cor:discretealgebras}). Note that this can also be used to show nonrealizability of certain algebras (see Example \ref{ex:nonrealizable}). We identify $H^*(BS^1)\cong \mathbb{Q}[x]$.

\begin{cor*}
Let $A$ be a $\mathbb{Q}[x]$-algebra which is finitely generated as a $\mathbb{Q}[x]$-module, spacelike, $A^1=0$, and $\langle x\rangle_\mathbb{Q}\otimes A^0\rightarrow A^2$ is injective. Assume further that for $S=\{x^k~|~k\geq 0\}$, the localized graded algebra $S^{-1}A$ has no nilpotent elements. Then $A$ is isomorphic as a $\mathbb{Q}[x]$-algebra to the equivariant cohomology of a finite $S^1$-CW-complex if and only if $(S^{-1}A)^0\cong \mathbb{Q}\times\ldots\times \mathbb{Q}$ as $\mathbb{Q}$-algebras. In this case the realization can be chosen with discrete fixed point set and $S^1$-simply connected. Conversely any equivariant cohomology algebra of such a space is of the above type.
\end{cor*}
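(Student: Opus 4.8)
The plan is to derive the corollary from Theorem~\ref{thm:cohorealization} together with its underlying system-level result Theorem~\ref{thm:R-realization}, using that for $T=S^1$ the space $V=\langle x\rangle_{\mathbb{Q}}$ is one-dimensional, so that the combinatorial data in Theorem~\ref{thm:cohorealization} degenerates. First I would note that, since the only subspaces of $V$ are $0$ and $V$, any family $V_0,\ldots,V_k$ as in the theorem equals $\{V,0\}$, so (discarding redundancies) one reduces to $k=1$, $V_0=V$, $V_1=0$, $A_0=A$, $A_1=:B$ a graded $\mathbb{Q}$-algebra and $f_{01}=:f\colon A\to\mathbb{Q}[x]\otimes_{\mathbb{Q}}B$ the only nontrivial structure map (the others being identities). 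In this situation conditions~(i)--(iii) collapse: (i) holds because subspaces of $V$ are closed under sums; the cocycle relation in (iii) is trivial, the degree-$2$ injectivity in (ii) for $A_1$ is vacuous as $V_1=0$, and the only nontrivial instance of the localization clause in (iii) occurs at $W=0$, where $S(W)$ is the set of powers of $x$. Hence, granting the hypotheses on $A=A_0$, realizability of $A$ by a finite $S^1$-CW-complex is equivalent to the existence of a finite-dimensional, spacelike graded $\mathbb{Q}$-algebra $B$ with $B^1=0$ together with a $\mathbb{Q}[x]$-algebra map $f\colon A\to\mathbb{Q}[x]\otimes_{\mathbb{Q}}B$ that becomes an isomorphism after inverting $x$.

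Next I would analyse $S^{-1}A$ under the standing assumption that it is reduced. As $x$ is invertible in $S^{-1}A$, multiplication by $x$ identifies $(S^{-1}A)^d$ with $(S^{-1}A)^{d+2}$; and any homogeneous element of odd degree squares to zero by graded commutativity, hence is nilpotent, hence vanishes. Thus $S^{-1}A$ sits in even degrees with $(S^{-1}A)^{2m}=x^m\cdot(S^{-1}A)^0$, so $S^{-1}A\cong\mathbb{Q}[x,x^{-1}]\otimes_{\mathbb{Q}}(S^{-1}A)^0$ as graded algebras. Therefore a pair $(B,f)$ as above can exist only if $(S^{-1}A)^0$ is a product of copies of $\mathbb{Q}$; conversely, if $(S^{-1}A)^0\cong\mathbb{Q}^n$ I would take $B:=\mathbb{Q}^n$ concentrated in degree $0$ (spacelike as the cohomology of $n$ points, with $B^1=0$), so that $S^{-1}A\cong\mathbb{Q}[x,x^{-1}]^n$, and then build $f$ as follows: composing the localization $A\to S^{-1}A$ with this isomorphism and the $j$-th projection gives $\mathbb{Q}[x]$-algebra maps $g_j\colon A\to\mathbb{Q}[x,x^{-1}]$, and since $g_j$ preserves degree while $(\mathbb{Q}[x,x^{-1}])^d$ equals $\mathbb{Q}\,x^{d/2}$ for $d$ even and $0$ for $d$ odd, the image of $g_j$ lies in $\mathbb{Q}[x]$; corestricting yields $f_j\colon A\to\mathbb{Q}[x]$ and $f:=(f_1,\ldots,f_n)\colon A\to\mathbb{Q}[x]^n=\mathbb{Q}[x]\otimes_{\mathbb{Q}}B$, whose localization at $S$ is the chosen isomorphism. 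Plugging $A_0=A$ (spacelike, finitely generated over $\mathbb{Q}[x]$, $A^1=0$, $\langle x\rangle\otimes A^0\hookrightarrow A^2$, all by hypothesis), $A_1=B$ and $f_{01}=f$ into Theorem~\ref{thm:cohorealization} then gives the ``if'' direction, together with an $S^1$-simply connected finite $S^1$-CW realization.

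It remains to treat the ``only if'' direction and the two supplementary claims. If $A\cong H^*_{S^1}(X)$ for a finite $S^1$-CW-complex $X$, the Borel localization theorem yields $S^{-1}A\cong H^*(X^{S^1})\otimes\mathbb{Q}[x,x^{-1}]$; reducedness of $S^{-1}A$ and finiteness of $X^{S^1}$ force each component of $X^{S^1}$ to be $\mathbb{Q}$-acyclic (a class of maximal positive degree would otherwise square to zero and be a nonzero nilpotent), so $(S^{-1}A)^0=H^0(X^{S^1})\cong\mathbb{Q}^{\pi_0(X^{S^1})}$. For the last sentence, given $X$ finite $S^1$-CW, $S^1$-simply connected and with discrete fixed points, one checks directly that $A=H^*_{S^1}(X)$ is finitely generated over $\mathbb{Q}[x]$ (finiteness of the CW structure), that $A^1=0$ and $\langle x\rangle\otimes A^0\to A^2$ is injective (Serre spectral sequence of $X\to X_{S^1}\to BS^1$ with $H^1(X)=0$), that $A$ is spacelike as equivariant cohomology of a $T$-space, and that $S^{-1}A\cong\mathbb{Q}[x,x^{-1}]^{|X^{S^1}|}$ is reduced with $(S^{-1}A)^0\cong\mathbb{Q}^{|X^{S^1}|}$. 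Finally, to get the realization in the ``if'' direction with discrete fixed point set (and $S^1$-simply connected), rather than invoking Theorem~\ref{thm:cohorealization} as a black box I would apply Theorem~\ref{thm:R-realization} with $C=\{S^1\}$ to the three-entry $\mathcal{D}$-system with entries $A$, $\mathbb{Q}[x]\otimes_{\mathbb{Q}}B$ and $B$ over the positions $(e,e)$, $(e,S^1)$, $(S^1,S^1)$, with the evident structure maps and cohomology $\underline{R}$-structure: here the localization condition is exactly that $S^{-1}f$ be an isomorphism, the finite-generation conditions are immediate, and the remaining hypotheses (triviality, finite type, spacelike, $H^1=0$, injectivity in degree $2$) are verified entrywise; since the resulting space has isotropies in $\{S^1\}$ and fixed-point position modeled on $B=\mathbb{Q}^n$, it can be arranged to have $X^{S^1}$ equal to $n$ points. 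I expect the main content to be the algebraic observation that reducedness forces $S^{-1}A\cong\mathbb{Q}[x,x^{-1}]^n$ (allowing $B$ to be taken in degree $0$), together with the verification that for one-dimensional $V$ the conditions of Theorem~\ref{thm:cohorealization}---equivalently, the three-entry system---collapse exactly to the existence of the pair $(B,f)$; neither step is deep, but the second requires care in matching the localization condition to the reducedness hypothesis.
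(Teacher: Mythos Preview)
Your argument is correct and follows the same route as the paper: for the ``if'' direction you take $A_1=(S^{-1}A)^0\cong\mathbb{Q}^n$ and the natural map $f_{01}\colon A\to (S^{-1}A)^{\ge 0}=\mathbb{Q}[x]\otimes A_1$ and feed this into Theorem~\ref{thm:cohorealization}, and for the ``only if'' direction you use Borel localization directly; the paper does exactly this, only more tersely. One small correction: when you invoke Theorem~\ref{thm:R-realization} for the discrete fixed point claim you should take $C=\{\{1\},S^1\}$ rather than $C=\{S^1\}$, since the theorem produces a space with isotropies contained in $C$ (with $C=\{S^1\}$ the action would be forced to be trivial); this is also the $C$ that arises in the proof of Theorem~\ref{thm:cohorealization}, and with it your three-entry system and the conclusion $X^{S^1}=\{n\text{ points}\}$ go through as you describe.
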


As a second application we show (Theorem \ref{thm:GKM-realization}, Remark \ref{ex:Egraph}) that the cohomology of an abstract GKM graph in the sense of \cite{GuilleminZara} is always realizable by a finite $T$-CW-complex whose one-skeleton is given by the graph. In fact, as we do not use those parts of the definition of an abstract GKM graph which model the combinatorial behaviour of $T$-manifolds, we work in a simplified setting, which we call $T$-graphs. 

\begin{thm*}
Let $(\Gamma,\alpha)$ be a $T$-graph. Then there is a finite $T$-CW-complex $X$ with $H^*_T(X)=H^*(\Gamma)$ and whose one-skeleton $X_1=\{x\in X~|~\mathrm{codim} T_x\leq 1\}$ is the graph of two-spheres defined by $(\Gamma,\alpha)$ if and only if for any codimension $1$ subtorus $H\subset T$, the graph $\Gamma^H$ is a disjoint union of trees. In this case $X$ can be chosen to be $T$-simply connected. In particular, any abstract GKM graph is realizable by a $T$-simply connected finite $T$-CW-complex.
\end{thm*}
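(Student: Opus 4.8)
\noindent\emph{Proof plan.} The strategy is to extract from $(\Gamma,\alpha)$ the algebraic input required by Theorem~\ref{thm:cohorealization} (equivalently, a $\mathcal D$-system for Theorem~\ref{thm:R-realization}), and, separately, to read off the obstruction coming from Borel localization. Write $V=H^2(BT)$; for a subtorus $U\subseteq T$ let $\Gamma^U$ be the sub-$T$-graph on all vertices of $\Gamma$ with those edges $e$ for which $\alpha(e)|_{\mathfrak u}=0$, so that its cohomology $H^*(\Gamma^U)=\{f\colon V(\Gamma)\to H^*(BT)\mid f(v)-f(w)\in(\alpha(e))\ \text{for every edge }(v,w)\ \text{of }\Gamma^U\}$ is a finitely generated, torsion free $\mathbb{Q}[\Ann(\mathfrak u)]$-algebra. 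I would take $\mathcal U=\{U_0=\{e\},\dots,U_k=T\}$ to be the finite, intersection-closed family of subtori generated by the $\ker\alpha(e)$; then the subspaces $V_i:=\Ann(\mathfrak u_i)$ contain $V$ and $0$ and are closed under sums, and for every subspace $W\subseteq V$ the subgraph on edges with label in $W$ equals $\Gamma^{U_W}$ for the largest $U_W\in\mathcal U$ with $V_{U_W}\subseteq W$. Setting $A_i=H^*(\Gamma^{U_i})$ and letting $f_{ij}\colon A_i\to\mathbb{Q}[V_i]\otimes_{\mathbb{Q}[V_j]}A_j$ (for $U_i\subseteq U_j$) be induced by the inclusion of congruence conditions provides the candidate data; the cocycle identities are immediate.

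For necessity, suppose such an $X$ exists and let $H\subset T$ have codimension $1$. Every cell of $X$ outside $X_1$ has isotropy of codimension $\geq 2$ and hence no $H$-fixed points, so $X^H=(X_1)^H$ is exactly the graph of two-spheres of $\Gamma^H$. By the Borel localization theorem the restriction $H_T^*(X)\to H_T^*(X^H)$ becomes an isomorphism after inverting the multiplicative set $S$ generated by the characters outside the line $V_H=\Ann(\mathfrak h)$; since $H_T^*(X)=H^*(\Gamma)$ is a submodule of $\bigoplus_v H^*(BT)$, hence torsion free, $S^{-1}H_T^*(X^H)$ must be torsion free too. But $H$ acts trivially on $X^H$, so $H_T^*(X^H)\cong H_{T/H}^*(X^H)\otimes_{\mathbb{Q}}\mathbb{Q}[W']$ for a complement $W'$ of $V_H$ in $V$, and a graph of two-spheres over $\Gamma^H$ has total Betti number $\dim H^*(X^T)+2b_1(\Gamma^H)$; thus if $\Gamma^H$ is not a disjoint union of trees the $(T/H)$-action is not equivariantly formal, so $H_{T/H}^*(X^H)$ has nonzero $\beta$-power torsion (where $V_H=\langle\beta\rangle$), and a socle element survives in $S^{-1}$ because every element of $S$ acts on it through its nonzero $W'$-component — contradicting torsion freeness. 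Hence every $\Gamma^H$ is a disjoint union of trees.

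For sufficiency, assume the tree condition. Then every $\Gamma^{U_i}$ with $U_i\neq\{e\}$ is a forest, being a subgraph of $\Gamma^H$ for any codimension $1$ subtorus $H\supseteq U_i$; consequently, for $i\neq 0$, $A_i=H^*(\Gamma^{U_i})$ coincides with the equivariant cohomology of the graph of two-spheres of $\Gamma^{U_i}$, which is $(T/U_i)$-equivariantly formal and therefore free over $\mathbb{Q}[V_i]$. Together with $A_0=H^*(\Gamma)$, each $A_i$ is then readily seen to be spacelike, even-graded with $A_i^1=0$, finitely generated over $\mathbb{Q}[V_i]$, with $V_i\otimes A_i^0\to A_i^2$ injective (the component idempotents of $\Gamma^{U_i}$ have trivial $V_i$-annihilator). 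The localization condition of Theorem~\ref{thm:cohorealization}(iii) is a short computation: inverting $S(W)$ makes vacuous precisely the congruences along edges whose label lies outside $W$, and flatness of $\mathbb{Q}[V]$ over $\mathbb{Q}[V_{U_W}]$ turns $S(W)^{-1}f_{0,U_W}$ into the identity of the localized congruence module $S(W)^{-1}H^*(\Gamma^{U_W})$ sitting in $\bigoplus_v S(W)^{-1}\mathbb{Q}[V]$. Theorem~\ref{thm:cohorealization} then yields a $T$-simply connected finite $T$-CW-complex with equivariant cohomology $H^*(\Gamma)$. To pin down $X_1$ I would not invoke the black box but re-run the construction underlying Theorem~\ref{thm:R-realization} for the $\mathcal D$-system attached to $C=\mathcal U$: realize the codimension $\leq 1$ part of the system by the graph of two-spheres of $(\Gamma,\alpha)$ — its equivariant cohomology matches the relevant entries since the subgraphs $\Gamma^{U_i}$, $U_i\neq\{e\}$, are forests and hence equivariantly formal — and note that the remaining induction over isotropy types only attaches cells of strictly larger isotropy codimension, so $X_1=\{x\mid\mathrm{codim}\,T_x\leq 1\}$ remains the graph of two-spheres. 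Finally, an abstract GKM graph is a $T$-graph all of whose $\Gamma^H$ are forests (Remark~\ref{ex:Egraph}), so the last assertion follows.

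I expect the delicate point to be exactly this last coordination step. The realization theorems only guarantee \emph{some} finite $T$-CW-complex with the prescribed equivariant cohomology, and forcing the codimension $\leq 1$ stratum to be literally the graph of two-spheres requires opening up the inductive construction of Theorem~\ref{thm:R-realization}, checking that inserting the graph of two-spheres as the realization of the bottom of the system is compatible with all higher entries, and verifying that the subsequent attachment of higher-isotropy cells neither creates new codimension $\leq 1$ cells nor disturbs the identification of the cohomology with $H^*(\Gamma)$ on the nose.
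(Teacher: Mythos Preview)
Your approach is correct and parallels the paper's structure, but the individual arguments differ. For necessity, the paper uses a one-line parity argument: if $\Gamma^H$ is not a forest then $H^1_T(X^H)\neq 0$, so by \textbf{(TC)} and \textbf{(LC)} one has $H^{odd}_T(X)\neq 0$, contradicting evenness of $H^*(\Gamma)$; your equivariant-formality/torsion argument works too but is longer. For sufficiency, rather than assembling the input to Theorem~\ref{thm:cohorealization} by hand, the paper simply sets $\underline{A}:=H^{even}(\underline{A_{PL}(R(\Gamma))})$: the full system of $R(\Gamma)$ already satisfies \textbf{(TC)}, \textbf{(LC)}, and finite generation, hence so does its even part, and a Mayer--Vietoris computation gives $H^{even}_T(R(\Gamma))=H^*(\Gamma)$. (Your explicit $A_i$ should, incidentally, take values in $H^*(B(T/U_i))$ rather than $H^*(BT)$ in order to be finitely generated over $\mathbb{Q}[V_i]$.) For the identification $X_1=R(\Gamma)$, the paper makes your ``re-run the induction'' precise as follows: having obtained \emph{some} realization $Y$, it constructs an equivariant map $R(\Gamma)\to Y$ by sending vertices to chosen fixed points in the corresponding components of $Y^T$ and extending over each edge-sphere, then uses Lemma~\ref{lem:tree} together with the tree hypothesis to check that this map induces isomorphisms on $H^*_{T/U}$ at every position $(U,H)$ with $\mathrm{codim}\,H\le 1$. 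This shows that the codimension-$\le 1$ part of the system is already realized by $R(\Gamma)$, so the inductive gluing of Theorem~\ref{thm:P-realization} can be continued from there, attaching only cells of higher isotropy codimension.
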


Besides realization problems, there remains the question how much our algebraic models know about the $T$-spaces. It turns out that they contain all information needed to reconstruct the $T$-space up to equivariant rational equivalence. We obtain (cf. Corollary \ref{cor:categorialdescription})

\begin{thm*}
Let $C$ be a finite collection of subgroups and $\mathcal{D}\subset \mathcal{S}$ be the subset generated by $\mathcal{D}$.
The functor $X\mapsto \underline{A_{PL}(X)}$ embeds the homotopy category of $T$-simply connected, $T$-finite $\mathbb{Q}$-type $T$-spaces with isotropies in $C$ as a full subcategory of the homotopy category of $\mathcal{D}$-diagrams of cdgas with cohomology $\underline{R}$-structures.
\end{thm*}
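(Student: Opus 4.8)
The plan is to deduce the statement from the realization result of Theorem~\ref{thm:R-realization} together with the full-embedding theorem of \cite{ScullMendes} at the level of systems under $\underline P$, bridged by the reduction from $\underline P$-algebra structures to cohomology $\underline R$-structures that is the technical core of this article. First I would record that $X\mapsto\underline{A_{PL}(X)}$, equipped with its cohomology $\underline R$-structure, descends to homotopy categories: an equivariant rational equivalence $X\to Y$ induces $\mathbb Q$-homology isomorphisms $X^H_{T/U}\to Y^H_{T/U}$ by comparison of the Serre spectral sequences over the simply connected base $B(T/U)$, hence a quasi-isomorphism of $\mathcal D$-systems respecting the cohomology $\underline R$-structures. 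Next, up to natural weak equivalence the functor factors as
\[
\{T\text{-spaces}\}\ \xrightarrow{\ \Phi_0\ }\ \{\text{systems under }\underline P\}\ \xrightarrow{\ \Psi\ }\ \{\mathcal D\text{-systems with cohomology }\underline R\text{-structure}\},
\]
where $\Phi_0$ is the Scull--Mendes functor restricted to $\mathcal D\subset\mathcal S$ and $\Psi$ sends $\underline P\to\underline B$ to the induced map $\underline R=H^*(\underline P)\to H^*(\underline B)$. It then suffices to prove that each arrow is fully faithful on the relevant homotopy categories, as a fully faithful functor is an equivalence onto its essential image, which is automatically a full subcategory (and which is pinned down explicitly by Theorem~\ref{thm:R-realization}).

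For $\Phi_0$ I would invoke \cite{ScullMendes}, which establishes that the $\underline P$-algebra model is a full embedding of the $T$-rational homotopy category of $T$-simply connected, $T$-finite $\mathbb Q$-type $T$-spaces. The point requiring care is the passage from $\mathcal S$ to the stable subset $\mathcal D$ generated by $C$: one must check that restriction along $\mathcal D\hookrightarrow\mathcal S$ is an equivalence from the homotopy category of $\mathcal S$-systems arising from spaces with isotropies in $C$ onto the corresponding category of $\mathcal D$-systems, and that it intertwines the two versions of $\Phi_0$. This holds because for such $X$ the fixed-point data $X^H$ with $H$ not subconjugate to a member of $C$ is assembled from the strata indexed by $\mathcal D$, so the values of $\underline{A_{PL}(X)}$ outside $\mathcal D$ are determined, compatibly and up to weak equivalence, by the values on $\mathcal D$.

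For $\Psi$ the content is exactly the rectification principle developed in the earlier sections: if $\underline B$ is weakly equivalent to $\underline{A_{PL}(X)}$ for some $T$-simply connected, $T$-finite $\mathbb Q$-type $X$, then the space of $\underline P$-algebra structures on $\underline B$ refining a prescribed cohomology $\underline R$-structure is non-empty and contractible in the appropriate homotopical sense, so that $\Psi$ becomes a bijection on homotopy classes of morphisms and of self-equivalences between such systems; hence $\Psi$ is fully faithful on the essential image of $\Phi_0$. Conceptually this rests on the intrinsic formality of the diagram $\underline R$ of polynomial algebras on degree-$2$ generators over the poset-like category $\mathcal D$, so that no coherence data beyond the cohomology map is needed and the choice of $\underline P$ is irrelevant. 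The associated obstruction and indeterminacy groups — computed over the non-free diagram $\mathcal D$, where all Borel-fibration squares must commute on the nose — are the crux, and this is also where the simply-connectedness and finite-type hypotheses are used; I would cite the corresponding vanishing results of this paper.

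Composing, $X\mapsto\underline{A_{PL}(X)}$ is fully faithful, and Theorem~\ref{thm:R-realization} identifies its essential image with the full subcategory of those $\mathcal D$-systems with cohomology $\underline R$-structure which satisfy the triviality condition, are spacelike, of finite type, have $H^1=0$, and have $\underline R^2\otimes H^0\to H^2$ injective; this is precisely the assertion that the functor embeds the homotopy category in question as a full subcategory. The main obstacle is the rectification step for $\Psi$ on the diagram level; the isotropy-restriction bookkeeping and the compatibility of the Scull--Mendes comparison with cohomology $\underline R$-structures at the morphism level are the secondary technical points.
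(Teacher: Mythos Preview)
Your proposal is correct and follows essentially the same route as the paper: the proof of Corollary~\ref{cor:categorialdescription} composes the Scull--Mendes embedding (Theorem~\ref{thm:scullmendes}), the restriction from $\mathcal{S}$-systems to $\mathcal{D}$-systems (your ``isotropy-restriction bookkeeping'' is Lemma~\ref{lem:restrictiongalore}), the realization Theorem~\ref{thm:P-realization} for essential surjectivity, and the passage from $\underline{P}$-structures to cohomology $\underline{R}$-structures via Theorem~\ref{thm:categorystricitification}. Your ``rectification principle'' for $\Psi$ is precisely Theorem~\ref{thm:strictification}, whose three parts correspond to essential surjectivity, fullness, and faithfulness of the forgetful functor; the phrase ``contractible space of $\underline{P}$-structures'' is morally right but the actual mechanism is the degree-$2$ argument of Lemma~\ref{lem:cohomohomotopy} combined with the \textbf{(TC)}-based vanishing in Proposition~\ref{prop:technicaldiagramstuff}, not a general formality statement for $\underline{R}$.
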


For the proof we draw upon the corresponding statement for the algebraic models in \cite{ScullMendes}. The above theorem then comes down to showing that no information is lost when restricting to finite diagrams and keeping track of the Borel fibrations only on the cohomological level as opposed to the cochain level.

The article is structured as follows: Section 2 is an introduction to the main machinery and key definitions. As an example we discuss an explicit algebraic model describing a $T^2$-action on $S^6$. Section 3 deals with nice choices of models for the systems of algebras introduced in Section 2. It is the most technical of the sections and mainly serves as a compendium for the rest of the paper. In Section 4 we study the relation between cohomology $\underline{R}$-structures and morphisms $\underline{P}\rightarrow \underline{A}$ of (strictly commutative) systems whenever $\underline{P}$ is a nice model for the system $\underline{A_{PL}(*)}$ of classifying spaces. 
We postpone the discussion on implications on the homotopy categories until the final Section 7 as this is not needed for the rest of the paper. The goal of Section 5 is to prove the realization results of algebraic systems by $T$-spaces. Section 6 is devoted to applications of the realization result. Finally, as already mentioned, Section 7 serves to reformulate the results of Sections 4 and 5 in the language of homotopy categories.

We expect the reader to be familiar with the basics of nonequivariant rational homotopy theory and Sullivan models. We recommend \cite{BibelI} as an exposition. Furthermore, general familiarity with the concepts of equivariant cohomology and equivariant cell complexes is assumed, for which we point the reader towards \cite{AP}, \cite{tomDieck}. Knowledge of model categories (see e.g.\ \cite{Hirschhorn}) is assumed only in the last section. For any missing explanations of terminology, we refer to the above sources.

\noindent {\bf Acknowledgements.} The author wants to thank Oliver Goertsches an Panagiotis Konstantis for their comments on Section \ref{sec:GKM} and Michael Mandell for sharing his insight. The author is grateful to the Max Planck Institute for Mathematics in Bonn for its hospitality and financial support.

\section{The stratified cochains of a $T$-space}

Throughout the article, coefficients will be taken in the field $\mathbb{Q}$ (unless stated otherwise) and will be suppressed from the notation. This concerns cohomology as well as all occurring cdgas (commutative differential graded algebras). The latter are all assumed to be unital and non-negatively graded. They will also be referred to as (commutative) cochain algebras. Diagrams of cochain algebras, i.e.\ cochain algebra valued functors, will be written with underlines to distinguish them from cochain algebras.

\subsection{Basic definitions}

For any topological group $G$ let $EG\rightarrow BG$ denote the universal $G$-bundle. We fix the construction of universal bundles from \cite{Milnor}. It is functorial with respect to homomorphisms of groups. For a $G$-space $X$ define $X_G$ to be the quotient $(EG\times X)/G$ with respect to the diagonal action. The projection $X_G\mapsto BG$ onto the first component will be referred to as the Borel fibration. The construction $X\mapsto X_G$ is functorial in the following sense: consider a group homomorphism $\varphi\colon G\rightarrow H$ with induced map $\tilde{\varphi}\colon EG\rightarrow EH$, an $H$-space $Y$, and a map $f\colon X\rightarrow Y$ satisfying $f(g\cdot x)=\varphi(g)\cdot f(x)$ for any $g\in G$, $x\in X$. Then $\tilde{\varphi}\times f$ descends to a map $X_G\rightarrow Y_H$.
Note that the Borel fibration is induced by the constant map $X\mapsto *$ of $G$-spaces (with $\varphi=\id_G$).

Now let $T$ be a compact torus. We denote by $\mathcal{S}$ the category consisting of all pairs of subgroups $(U,H)$ of $T$ such that $U$ is connected and $U\subset H$. We turn this into a partially ordered set by writing $(U,H)\leq (U',H')$ if and only if $U\supset U'$ and $H\subset H'$. It will be convenient to regard $\mathcal{S}$ as a category, in which there is a unique morphism $(U,H)\rightarrow (U',H')$ whenever $(U,H)\leq (U',H')$.
For a fixed $T$-space $X$ and $(U,H)\leq (U',H')\in \mathcal{S}$, the projection $T/U\mapsto T/U'$ and the inclusion $X^{H'}\rightarrow X^H$ induce maps $X_{T/U'}^{H'}\rightarrow X_{T/U}^{H}$. Thus a $T$-space induces a contravariant functor from $\mathcal{S}$ into the category $\textbf{Top}$ of topological spaces. Clearly equivariant maps between these $T$-spaces induce natural transformations between the induced functors on $\mathcal{S}$.

\begin{rem}
The idea of regarding a $G$-space as a functor can be traced back to Elmendorf \cite{Elmendorf}. The specific functor \[\{T\text{-spaces}\}\longrightarrow \{ \text{Functors: }\mathcal{S}\rightarrow\textbf{Top}\}\] was introduced in \cite[Section 4]{ScullMendes}. We point out that we only need to consider $(U,H)\in\mathcal{S}$ with $U$ connected, due to the fact that we work over $\mathbb{Q}$.
\end{rem}

\begin{defn}
Any subset $\mathcal{D}\subset \mathcal{S}$ inherits the partial ordering and we regard it as a full subcategory of $\mathcal{S}$. A $\mathcal{D}$-system (of cochain algebras) is a covariant functor from $\mathcal{D}$ into the category of unital cochain algebras.
A morphism of $\mathcal{D}$-systems is a natural transformation between them. A quasi isomorphism is a morphism that is pointwise (i.e.\ for any fixed $(U,H)\in\mathcal{D}$) a quasi isomorphism.
\end{defn}

\begin{ex}
\begin{itemize}
\item
The fundamental bridge between geometry and algebra is the functor
\[\{T\text{-spaces}\}\longrightarrow \{\mathcal{S}\text{-systems}\},\qquad
X\mapsto\underline{A_{PL}(X)}\] where $\underline{A_{PL}(X)}(U,H)= A_{PL}(X^H_{T/U})$. If it is clear from the context that we work in a setting of $\mathcal{D}$-systems, we occasionally write $\underline{A_{PL}(X)}$ for the restriction $\underline{A_{PL}(X)}|_\mathcal{D}$.

\item We set $\underline{R}=H^*(\underline{A_{PL}(*)})$. Thus $\underline{R}(U,H)=H^*(B(T/U))=:R_{T/U}$. Any $T$-space $X$ comes with a morphism $\underline{R}\rightarrow H^*(\underline{A_{PL}(X)})$ defined by the Borel fibration. A $T$-equivariant map $X\rightarrow Y$ induces a commutative diagram \[\xymatrix{
H^*(\underline{A_{PL}(X)})& \ar[l] H^*(\underline{A_{PL}(Y)})\\
\underline{R}\ar[u]\ar[ur]&
}\]
\end{itemize}
\end{ex}

\begin{rem}
For $U\subset H$, the datum of the $T$-action on $X^H$ and the induced $T/U$-action on $X^H$ are essentially the same. On the algebraic side this is reflected by the fact that, using the ${R}_T$- and $R_{T/U}$-module structures, one can reconstruct algebras $A_{PL}(X^H_{T/U})$ and $A_{PL}(X^H_T)$ from each other up to quasi isomorphism. Thus the system $\underline{A_{PL}(X)}$ is more or less determined by its entries in the $(\{1\},H)$-positions. However, the value of considering diagrams with varying first component (instead of just fixing $T$ as the acting group) becomes apparent when formulating necessary conditions for systems to be realizable up to quasi isomorphism as $\underline{A_{PL}(X)}$. As an example, $A_{PL}(X^T_T)$ is up to quasi isomorphism of the form $R_T\otimes A_{PL}(X^T_{\{1\}})$ which is a nontrivial restriction (see the discussion on the triviality condition below).
\end{rem}

\begin{defn}
We call a subset $\mathcal{D}\subset \mathcal{S}$ stable if it is of the form $\{(U,H)\in \mathcal{D}_L\times \mathcal{D}_R~|~U\subset H\}$ where
\begin{itemize}
\item $\mathcal{D}_R$ is a collection of subgroups of $T$ which is stable under intersection and contains $\{1\},T$.
\item $\mathcal{D}_L$ is a collection of subtori of $T$ which is stable under intersection and for any $H\in \mathcal{D}_R$ the identity component $H_0$ is contained in $\mathcal{D}_L$.
\end{itemize}
\end{defn}

\begin{ex}
A collection $C$ of subgroups of $T$ generates a stable subset $\mathcal{D}(C)$ as follows: let $\mathcal{D}_R$ be the set containing all finite intersections of groups from $C\cup\left\{\{1\},T\right\}$ and let $\mathcal{D}_L$ be the set containing all identity components of groups from $\mathcal{D}_R$.
\end{ex}

\begin{defn}
We call a subset $\mathcal{D}\subset \mathcal{S}$ bounded if the length of strictly ascending chains within the partially ordered set $\mathcal{D}$ is bounded. We say a collection $C$ of subgroups of $T$ is bounded if $\mathcal{D}(C)$ is bounded.
\end{defn}

\begin{rem}\label{rem:disclaimer}
The subset given by all pairs of subtori is an example of a stable bounded subset. The definition essentially excludes certain scenarios where there occur groups with arbitrarily many connected components. For us it is important because it enables the inductive approach which is prevalent throughout the article. When approaching the main results we will occasionally restrict further and only consider finite systems. We do this since it conveniently simplifies some arguments as well as the notion of homotopy and aligns with the general spirit of the article. Generalizations beyond the finite case are oftentimes possible.
\end{rem}

For a stable subset $\mathcal{D}\subset \mathcal{S}$ and a subgroup $H\subset T$ we set $m_\mathcal{D}(H)\in \mathcal{D}_R$ to be the intersection over all subgroups in $\mathcal{D}_R$ which contain $H$. Note that for $(U,H)\in\mathcal{D}$ we have $(U,m_\mathcal{D}(H))\in\mathcal{D}$.
Now assume additionally that $\mathcal{D}_L$ consists of all subtori. Then for any $(U,H)\in\mathcal{S}$ the element $(U,m_\mathcal{D}(H))$ lies in $\mathcal{D}$. This defines a functor $\mathcal{S}\rightarrow\mathcal{D}$.
We denote by
\[r_\mathcal{D}\colon \{\mathcal{S}\text{-\textit{systems}}\}\rightarrow \{\mathcal{D}\text{-\textit{systems}}\}\quad \text{and}\quad i_\mathcal{D}\colon\{\mathcal{D}\text{-\textit{systems}}\}\rightarrow \{\mathcal{S}\text{-\textit{systems}}\}\]
the restriction and induction functors defined by $\mathcal{D}\subset \mathcal{S}$ and $m_\mathcal{D}\colon \mathcal{S}\rightarrow \mathcal{D}$. We have $r_\mathcal{D}\circ i_\mathcal{D}=\id$ and a natural transformation $\varphi_\mathcal{D}\colon \id\rightarrow i_\mathcal{D}\circ r_\mathcal{D}$ provided by $(U,H)\leq (U,m_\mathcal{D}(H))$.

\begin{lem}\label{lem:isotropytypes}
Let $X$ be a $T$-space, $C$ be the collection of all occurring isotropy groups of $X$ and $\mathcal{D}\subset \mathcal{S}$ a stable subset containing $\mathcal{D}(C)$. Then for any $K\subset T$ we have $X^K=X^{m_\mathcal{D}(K)}$. If additionally $\mathcal{D}_L$ consists of all subtori of $T$, then $(i_\mathcal{D}\circ r_\mathcal{D})(\underline{A_{PL}(X)})=\underline{A_{PL}(X)}$.
\end{lem}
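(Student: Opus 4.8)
The plan is to derive both assertions from a single observation: the isotropy group of a $K$-fixed point lies in $\mathcal{D}_R$ and contains $K$, hence contains $m_\mathcal{D}(K)$.

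First I would establish $X^K=X^{m_\mathcal{D}(K)}$ for an arbitrary subgroup $K\subseteq T$. One inclusion is immediate: by construction $m_\mathcal{D}(K)$ is an intersection of subgroups of $T$ each of which contains $K$, so $m_\mathcal{D}(K)\supseteq K$ and therefore $X^{m_\mathcal{D}(K)}\subseteq X^K$. For the reverse inclusion I would take $x\in X^K$ with isotropy group $T_x$. Then $T_x\supseteq K$, and since $T_x$ occurs as an isotropy group of $X$ it lies in $C$, hence in $\mathcal{D}(C)_R$, hence in $\mathcal{D}_R$ (as $\mathcal{D}$ contains $\mathcal{D}(C)$). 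Thus $T_x$ is one of the subgroups of $\mathcal{D}_R$ containing $K$ over which the intersection defining $m_\mathcal{D}(K)$ is formed, whence $m_\mathcal{D}(K)\subseteq T_x$; in particular $m_\mathcal{D}(K)$ fixes $x$, so $x\in X^{m_\mathcal{D}(K)}$. This yields $X^K\subseteq X^{m_\mathcal{D}(K)}$, and indeed an equality of subspaces of $X$.

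For the second assertion I would invoke the hypothesis that $\mathcal{D}_L$ consists of all subtori, which is precisely what makes the functor $\mathcal{S}\to\mathcal{D}$, $(U,H)\mapsto(U,m_\mathcal{D}(H))$, and hence $i_\mathcal{D}$, available. On objects one then computes, for $(U,H)\in\mathcal{S}$,
\[(i_\mathcal{D}\circ r_\mathcal{D})\bigl(\underline{A_{PL}(X)}\bigr)(U,H)=A_{PL}\bigl(X^{m_\mathcal{D}(H)}_{T/U}\bigr)=A_{PL}\bigl(X^{H}_{T/U}\bigr),\]
which is the value of $\underline{A_{PL}(X)}$ at $(U,H)$ by the first part. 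On a morphism $(U,H)\le(U',H')$ of $\mathcal{S}$, the structure map of $(i_\mathcal{D}\circ r_\mathcal{D})(\underline{A_{PL}(X)})$ is $A_{PL}$ applied to the map $X^{m_\mathcal{D}(H')}_{T/U'}\to X^{m_\mathcal{D}(H)}_{T/U}$ induced by the projection $T/U'\to T/U$ and the inclusion $X^{m_\mathcal{D}(H')}\hookrightarrow X^{m_\mathcal{D}(H)}$; since the first part identifies the latter with the inclusion $X^{H'}\hookrightarrow X^H$, this agrees with the corresponding structure map of $\underline{A_{PL}(X)}$. Equivalently, $\varphi_\mathcal{D}$ evaluated at $\underline{A_{PL}(X)}$ is the identity, giving $(i_\mathcal{D}\circ r_\mathcal{D})(\underline{A_{PL}(X)})=\underline{A_{PL}(X)}$.

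I do not expect a genuine obstacle; the argument is essentially bookkeeping with the functor $m_\mathcal{D}$. The one point worth isolating — and the reason to record $X^K=X^{m_\mathcal{D}(K)}$ as a literal equality of subspaces rather than merely a homeomorphism — is that this strictness is exactly what promotes the comparison of $\mathcal{S}$-systems from a natural isomorphism to an actual equality of functors.
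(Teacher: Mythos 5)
Your argument is correct and is essentially the paper's: the paper routes through the auxiliary group $L$ (the intersection of all isotropy groups containing $K$) and notes $K\subset m_\mathcal{D}(K)\subset L$ with $X^K=X^L$, whereas you argue pointwise via $T_x\supseteq m_\mathcal{D}(K)$ — the same core observation that every relevant isotropy group lies in $\mathcal{D}_R$. The functorial bookkeeping for the second assertion, which the paper leaves implicit, is also handled correctly.
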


\begin{proof}
Define $L$ as the intersection of all isotropy groups of $X$ which contain $K$. We have $K\subset m_\mathcal{D}(K)\subset L$. Thus $X^K=X^L$ implies $X^K=X^{m_\mathcal{D}(K)}$.
\end{proof}

\begin{ex}\label{ex:S6}
Consider $T=T^2$ and the $T$-space $X=S^6\subset \mathbb{C}^3\oplus \mathbb{R}$ with the action $(s,t)\cdot (v,w,z,h)=(sv,tw,st^{-1}z,h)$. The occurring isotropy groups are \[C=\{T,S^1\times 1,\Delta S^1,1\times S^1,\{1\}\}\],
where $\Delta S^1$ is the diagonal circle.
In view of the above discussion (and the main results of the paper), the equivariant rational homotopy information on $X$ is contained in the $\mathcal{D}(C)$-system $\underline{A_{PL}(X)}|_{\mathcal{D}(C)}$. We give an explicit description of a model $\underline{\mathcal{M}}\rightarrow \underline{A_{PL}(X)}|_{\mathcal{D}(C)}$. As a diagram of algebras $\underline{\mathcal{M}}$ is given by
\begin{center}
\begin{tikzpicture}

\node at (0,0) (a) {$\Lambda (x_1,x_2,x_3,v)^2$};

\node at (-2.5,2.3) (b) {$\Lambda (x_1,x_2,x_3,v,a_1,t_1)$};
\node at (0,3) (c) {$\Lambda (x_1,x_2,x_3,v,a_2,t_2)$};
\node at (2.5,3.7) (d) {$\Lambda (x_1,x_2,x_3,v,a_3,t_3)$};

\node at (0,6) (e) {$\Lambda (x_1,x_2,x_3,v,b,s)$};

\node at (4.5,2.3) (f) {$\Lambda (x_1,a_1,t_1)$};
\node at (6,3) (g) {$\Lambda (x_2,a_2,t_2)$};
\node at (7.5,3.7) (h) {$\Lambda (x_3,a_3,t_3)$};

\node at (4.5,-0.7) (i) {$\Lambda (x_1)^2$};
\node at (6,0) (j) {$\Lambda (x_2)^2$};
\node at (7.5,0.7) (k) {$\Lambda (x_3)^2$};

\node at (10.5,0) (l) {$\mathbb{Q}^2$};

\draw[->, thick] (b) to[out=-70, in=150] (a);
\draw[->, thick] (c) to (a);
\draw[->, thick] (d) to[out=-90, in=30] (a);

\draw[->, thick] (e)+(-1,-0.3) to[out=-150, in=90] (b);
\draw[->, thick] (e) to (c);
\draw[->, thick] (e)+(1,-0.3) to[out=-30, in=100] (d);

\draw[->, thick] (f) to (b);
\draw[->, thick] (g) to (c);
\draw[->, thick] (h) to (d);

\draw[->, thick] (f) to (i);
\draw[->, thick] (g) to (j);
\draw[->, thick] (h) to (k);

\draw[->, thick] (i) to[out=180, in=-15] (a);
\draw[->, thick] (j) to[out=180, in=0] (a);
\draw[->, thick] (k) to[out=180, in=+15] (a);

\draw[->, thick] (l) to[out=160, in=0] (k);
\draw[->, thick] (l) to[out=180, in=0] (j);
\draw[->, thick] (l) to[out=200, in=0] (i);

\draw[very thick, dashed] (-4,6.5)--(12.5,6.5);
\draw[very thick, dashed] (11,7.5)--(11,-1);

\node at (0,7) {$\{1\}$};
\node at (4.5,7) {$S^1\times 1$};
\node at (6,7) {$\Delta S^1$};
\node at (7.5,7) {$1\times S^1$};
\node at (11.8,7) {$\mathcal{D}(C)$};
\node at (11.8,6) {$\{1\}$};
\node at (11.8,3.7) {$1\times S^1$};
\node at (11.8,3) {$\Delta S^1$};
\node at (11.8,2.3) {$S^1\times 1$};
\node at (11.8,0) {$T$};

\end{tikzpicture}
\end{center}
where the groups at the top and right indicate which entry of the diagram the respective algebra belongs to (e.g.\ $\Lambda(x_1,x_2,x_3,v)^2=\underline{\mathcal{M}}(\{1\},T)$, where the exponent in the first algebra denotes the Cartesian product). The degrees are $|v|=1$, $|x_i|=2=|a_i|$, $|b|=6$, $|t_i|=3$, and $|s|=11$. The differentials are $dv=x_1-x_2-x_3$, $dx_i=0=d a_i=db$, $dt_i=a_i^2-x_ia_i$, and $ds=b^2-x_1x_2x_3b$. The horizontal maps are the obvious inclusions. For the vertical maps in the middle row we specify $x_i\mapsto (x_i,x_i)$, $v\mapsto(v,v)$, $a_i\mapsto(0,x_i)$, and $t_i\mapsto 0$. Finally the three arrows leaving $\underline{\mathcal{M}}(\{1\},\{1\})$ are defined by $b\mapsto x_2x_3a_1$, $x_1x_3a_2$, $x_1x_2a_3$ and $s\mapsto x_2^2x_3^2t_1$, $x_1^2x_3^2t_2$, $x_1^2x_2^2t_3$ respectively.

Let us briefly verify that this is in fact a model for $\underline{A_{PL}(X)}$. We focus on the diagram aspects, leaving some pointwise verifications to the reader.

We begin in the bottom row. The space $X^T$ consists of two discrete points so $\mathbb{Q}^2$ is a model for $A_{PL}(X^T)$. Denote the groups $S^1\times 1$, $\Delta S^1$, and $1\times S^1$ by $H_1$, $H_2$, $H_3$ respectively. Each quotient $T/H_i$ is a circle and we choose minimal models $\Lambda(x_i)\rightarrow A_{PL}(B(T/H_i))$. Taking the two-fold product then gives the desired maps $\mathcal{M}(H_i,T)\rightarrow A_{PL}(X^T_{T/H_i})$. If we scale the $x_i$ correctly, then their images $[x_1],[x_2],[x_3]$ in $H^*(BT)$ after composing with the respective maps $H^*(B(T/H_i))\rightarrow H^*(BT)$ can be checked to satisfy $[x_1]=[x_2]+[x_3]$. Thus we can choose an image of $v$ in $A_{PL}^1(BT)$ that is compatible with the differentials and yields the Sullivan model $\Lambda(x_1,x_2,x_3,v)\rightarrow A_{PL}(BT)$ extending the previous maps. This finishes considerations for the bottom row.

In the middle row we have $X^{H_i}=S^2$ for each of the $H_i$ with the $T/H_i$-action corresponding to standard rotation. We leave it to the reader to verify the existence of the dashed quasi isomorphism in a commutative diagram

\[\xymatrix{
\Lambda(x_i,a_i,t_i)\ar@{-->}[r]\ar[d] & A_{PL}(X_{T/H_i}^{H_i})\ar[d]\\
\Lambda(x_i)\times \Lambda(x_i)\ar[r] & A_{PL}(X^T_{T/H_i})
}
\]
where the image of $x_i$ is given by $\Lambda(x_i)\rightarrow A_{PL}(B(T/H_i))\rightarrow A_{PL}(X^{H_i}_{T/H_i})$ and all solid arrows are the previously described maps. We just note that strict commutativity of the square can be achieved using the surjectivity of the right hand vertical map. To obtain compatible extensions to maps $\underline{\mathcal{M}}(\{1\},H_i)\rightarrow A_{PL}(X^{H_i}_T)$, we use the compositions $\Lambda(x_i,a_i,t_i)\rightarrow A_{PL}(X^{H_i}_{T/H_i})\rightarrow A_{PL}(X^{H_i}_T)$ and $\Lambda(x_1,x_2,x_3,v)\rightarrow A_{PL}(BT)\rightarrow A_{PL}(X^{H_i}_T)$, which in fact agree on $x_i$ and thus piece together to a quasi isomorphism.

Finally, we sketch how to construct the remaining map $\underline{\mathcal{M}}(\{1\},\{1\})\rightarrow A_{PL}(X_T)$ such that it is compatible with all other maps. We write $\mathcal{D}(\{1\},\{1\})$ the subset of $\mathcal{D}(C)$ consisting of the $(\{1\},H_i)$ and $(\{1\},T)$. We have given maps $\underline{\mathcal{M}}(\{1\},\{1\})\rightarrow \lim_{\mathcal{D}(\{1\},\{1\})} \underline{\mathcal{M}}\rightarrow \lim_{\mathcal{D}(\{1\},\{1\})} \underline{A_{PL}(X)}$. By standard theory (cf.\ Lemma \ref{lem:subcomplexes}) there is a surjective quasi isomorphism $A_{PL}(Y_T)\rightarrow \lim_{\mathcal{D}(\{1\},\{1\})} \underline{A_{PL}(X)}$, where $Y_T$ is the union over all subcomplexes in the limit. Here $Y\subset X$ is just the one-skeleton of the action, consisting of three copies of $S^2$, joined at the poles. We can now lift to a map $\underline{\mathcal{M}}(\{1\},\{1\})\rightarrow A_{PL}(Y_T)$ and we are done if we can lift this through the map $A_{PL}(X_T)\rightarrow A_{PL}(Y_T)$. On the $x_i$ and $v$ this lift is just given by $\Lambda(x_1,x_2,x_3,v)\rightarrow A_{PL}(BT)\rightarrow A_{PL}(X_T)$. On $b,s$ this lift can be constructed by hand, using that $A_{PL}(X_T)\rightarrow A_{PL}(Y_T)$ is surjective and an isomorphism on cohomology in higher degrees.
\end{ex}

\subsection{Cohomology $\underline{R}$-structures}

In Example \ref{ex:S6}, at each point $(U,H)\in \mathcal{D}(C)$ of the diagram, there is a map $R_{T/U}\rightarrow \underline{\mathcal{M}}(U,H)$, which is a model for the Borel fibration. To obtain an explicit description, e.g.\ identify $R_{T/H_i}\cong \Lambda(x_i)$, $R_T\cong \Lambda(x_1,x_2,x_3)/(x_1-x_2-x_3)\cong\Lambda(x_1,x_2)$ and take the obvious inclusions (diagonal in the bottom row). However for the whole system this does not piece together to a morphism $\underline{R}\rightarrow \underline{\mathcal{M}}$. The reason is that w.r.t.\ these identifications the map $R_{T/H_3}\rightarrow R_T$ is defined by $x_3\mapsto x_1-x_2$, while in $\underline{\mathcal{M}}$ the equality $x_3=x_1-x_2$ does not hold on the cochain level. This causes problems with respect to strict commutativity. Note however that there are no problems to define $\underline{R}\rightarrow H^*(\underline{\mathcal{M}})$ on the level of cohomology, leading us to the following

\begin{defn}
A cohomology $\underline{R}$-structure on a $\mathcal{D}$-system $\underline{A}$ is a morphism $\underline{R}|_{\mathcal{D}}\rightarrow H^*(\underline{A})$.
\end{defn}

\begin{defn}
Let $\underline{A}$ be a $\mathcal{D}$-system with a cohomology $\underline{R}$-structure. We say that $\underline{A}$ satisfies the triviality condition \textbf{(TC)} if the map \[R_{T/K}\otimes_{R_{T/{U}}} H^*(\underline{A}(U,H))\rightarrow H^*(\underline{A}(K,H))\] is an isomorphism for every $(U,H),(K,H)\in \mathcal{D}$ with $K\subset U$.
\end{defn}

\begin{rem}\label{rem:TC}
This condition reflects the fact that the Borel fibration of a trivial action is just a product: let $X$ be a $T$-space and $U\subset T$ a subtorus which acts trivially. Then let $L\subset T$ be a complementary torus to $U$, i.e.\ $T=U\times L$ and $L\cong T/U$. Then $ET\simeq EU\times EL$, $X_T\simeq BU\times X_L$, and $R_T\cong R_U\otimes R_L\cong R_U\otimes R_{T/U}$. We obtain \[H_T^*(X)=H^*(BU)\otimes H^*_L(X)= R_U\otimes H_{T/U}^*(X)=R_T\otimes_{R_{T/U}} H_{T/U}^*(X).\]
In particular the canonical cohomology $\underline{R}$-structure of $\underline{A_{PL}(X)}$ satisfies \textbf{(TC)}.
\end{rem}

\begin{defn}
For any $(U,H)\in\mathcal{S}$ let $S_{T/U}(H)\subset R_{T/U}$ be the multiplicative subset generated by $R_{T/U}^2\backslash V$, with $V=\ker (H^2(B(T/U))\rightarrow H^2(B(H/U)))$.
Let $\underline{A}$ be a $\mathcal{D}$-system with a cohomology $\underline{R}$-structure. We say that $\underline{A}$ satisfies the localization condition \textbf{(LC)} if for any $(U,H)\in\mathcal{D}$ and any torus $K\supset H$, the localized map
\[S_{T/U}(K)^{-1}H^*(\underline{A}(U,H))\rightarrow S_{T/U}(K)^{-1} H^*(\underline{A}((U,m_\mathcal{D}(K)))\]
is an isomorphism.
\end{defn}

\begin{rem}\label{rem:localization}
\begin{enumerate}[(i)]
\item Note that $m_\mathcal{S}(H)=H$ for any $H\subset T$. Thus over $\mathbb{Q}$, the Borel localization theorem can be rephrased as follows: for a {suitably nice} $T$-space $X$ the $\mathcal{S}$-system $\underline{A_{PL}(X)}$ together with $\underline{R}\rightarrow H^*(\underline{A_{PL}(X)})$ satisfies \textbf{(LC)}. For the precise meaning of suitably nice we refer to \cite[Theorem 3.2.6]{AP}. We note however that it includes the case of a finite $T$-CW-complex.

\item Let $X$ be a finite $T$-CW-complex, $C$ be the collection of its isotropy groups, and $\mathcal{D}\subset \mathcal{S}$ be a stable subset which contains $\mathcal{D}(C)$. It follows Lemma \ref{lem:isotropytypes} that the restricted system $\underline{A_{PL}(X)}|_{\mathcal{D}}$ satisfies \textbf{(LC)}.
\end{enumerate}
\end{rem}

The following lemma is a crucial ingredient for the connection between \textbf{(LC)} and compact realization of systems.

\begin{lem}\label{lem:localizationfinite}
Let $f\colon M\rightarrow N$ be a map between finitely generated graded $R_{T/U}$-modules and assume that it induces an isomorphism when localized at $S_{T/U}(K)$ for any torus $K\supsetneq U$. Then the kernel and cokernel of $f$ are finite dimensional over $\mathbb{Q}$.
\end{lem}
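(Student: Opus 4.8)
The plan is to reduce the statement to commutative algebra over the polynomial ring $R_{T/U}$ by understanding the supports of the kernel and cokernel. Recall that $R_{T/U}$ is a polynomial ring $\mathbb{Q}[V']$ on $V' = (R_{T/U})^2$, a finitely generated graded $\mathbb{Q}$-algebra; the modules $\ker f$ and $\operatorname{coker} f$ are finitely generated graded $R_{T/U}$-modules since $R_{T/U}$ is Noetherian. A finitely generated graded module over $R_{T/U}$ is finite dimensional over $\mathbb{Q}$ if and only if it is supported only at the irrelevant ideal $\mathfrak{m} = (V')$, equivalently if its support in $\operatorname{Spec} R_{T/U}$ is $\{\mathfrak m\}$ (or empty). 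So the goal becomes: show that for every homogeneous prime $\mathfrak p \subsetneq \mathfrak m$, localizing at $\mathfrak p$ kills $\ker f$ and $\operatorname{coker} f$.

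First I would reformulate the multiplicative set $S_{T/U}(K)$. For a torus $K \supsetneq U$, the kernel $V(K) = \ker(H^2(B(T/U)) \to H^2(B(K/U)))$ is a proper subspace of $V' = (R_{T/U})^2$ of codimension $\operatorname{rk}(K/U) > 0$, and $S_{T/U}(K)$ is generated by $V' \setminus V(K)$. As $K$ ranges over all tori strictly containing $U$, the subspaces $V(K)$ range over \emph{all} proper subspaces of $V'$ that arise as such kernels; in particular, since every codimension-one subspace of $V'$ is $V(K)$ for a suitable codimension-one subtorus... more carefully: I would argue that the homogeneous primes of $R_{T/U}$ generated by linear forms are exactly the ideals $(V(K))$ for $K$ ranging over subtori with $U \subseteq K$, and a graded prime not equal to $\mathfrak m$ is contained in some such linear prime $(V(K))$ with $K \supsetneq U$ (take $K$ so that $V(K)$ is a hyperplane containing the degree-two part of $\mathfrak p$; such a hyperplane is defined over $\mathbb{Q}$ and is the kernel coming from a codimension-one subtorus). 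The key point is then: localizing $f$ at $\mathfrak p$ factors through localizing at the multiplicative set $S_{T/U}(K)$ (since $S_{T/U}(K) \cap \mathfrak p = \varnothing$: the generators of $S_{T/U}(K)$ lie outside $(V(K)) \supseteq \mathfrak p$), so $f_{\mathfrak p}$ is a localization of the isomorphism $S_{T/U}(K)^{-1} f$ and hence an isomorphism. Therefore $(\ker f)_{\mathfrak p} = (\operatorname{coker} f)_{\mathfrak p} = 0$ for all graded $\mathfrak p \neq \mathfrak m$.

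Finally I would conclude: a finitely generated graded $R_{T/U}$-module whose localization at every graded prime $\mathfrak p \neq \mathfrak m$ vanishes is $\mathfrak m$-torsion, hence (being finitely generated) annihilated by a power $\mathfrak m^n$, hence a finitely generated module over the finite-dimensional ring $R_{T/U}/\mathfrak m^n$, hence finite-dimensional over $\mathbb{Q}$. Apply this to $\ker f$ and $\operatorname{coker} f$.

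The main obstacle I anticipate is the bookkeeping in the previous paragraph: making precise that \emph{every} graded prime of $R_{T/U}$ strictly contained in $\mathfrak m$ is contained in $(V(K))$ for some subtorus $K \supsetneq U$, and that this linear prime does indeed arise from the torus geometry. This is where one uses that $V' = \operatorname{Hom}(T/U, S^1)\otimes \mathbb{Q}$ and that rational subspaces of $V'$ correspond to subtori: given a proper graded prime $\mathfrak p$, its degree-two generators span a proper rational subspace, which is contained in a rational hyperplane, which is $V(K)$ for the corresponding codimension-one subtorus $K$ (note $K \supsetneq U$ since $V(K) \neq V'$). Once this dictionary is in place, the rest is standard localization theory, and one could alternatively phrase the whole argument via associated primes: every associated prime of the finitely generated module $\ker f$ (resp. $\operatorname{coker} f$) must be $\mathfrak m$, since at any other graded prime the module localizes to zero.
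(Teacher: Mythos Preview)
Your approach is essentially the same as the paper's: both show that $\ker f$ and $\operatorname{coker} f$ vanish when localized at any prime $\mathfrak p\neq\mathfrak m=R_{T/U}^+$ by producing a torus $K\supsetneq U$ with $S_{T/U}(K)\cap\mathfrak p=\varnothing$, and then conclude finite-dimensionality from $\sqrt{\mathrm{ann}}=\mathfrak m$.

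There is one genuine slip, however. The claim that a graded prime $\mathfrak p\neq\mathfrak m$ is \emph{contained in} the linear prime $(V(K))$ is false in general. In $R=\mathbb Q[x,y]$ (with $|x|=|y|=2$) the prime $\mathfrak p=(x^2-2y^2)$ satisfies $\mathfrak p\cap R^2=0$, yet $x^2-2y^2$ is irreducible and hence $\mathfrak p$ lies in no ideal generated by a single linear form. What you actually need, and what suffices, is only that $\mathfrak p\cap V'\subseteq V(K)$: then the degree-$2$ generators of $S_{T/U}(K)$ lie in $V'\setminus V(K)\subseteq V'\setminus\mathfrak p$, and primality of $\mathfrak p$ keeps all their products outside $\mathfrak p$. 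This is precisely the paper's argument (it in fact takes $K$ with $V(K)=\mathfrak p\cap R^2_{T/U}$ exactly; such $K$ exists because rational subspaces of $V'\cong\Hom(\pi_1(T/U),\mathbb Q)$ are in bijection with intermediate tori $U\subset K\subset T$). With this correction your proof goes through.

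One minor terminological point: for $V(K)$ to be a hyperplane you need $\dim(K/U)=1$, i.e.\ $\dim K=\dim U+1$, not $K$ of codimension one in $T$.
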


\begin{proof}
Let $\mathfrak{p}\subset R_{T/U}$ be a prime ideal with $\mathfrak{p}\cap R^2_{T/U}\subsetneq R^2_{T/U}$.  Then there is a subtorus $K\supsetneq U$ such that the kernel of $H^*(B(T/U))\rightarrow H^*(B(K/U))$ is equal to $\mathfrak{p}\cap R^2_{T/U}$. Then $S_{T/U}(K)\subset R_{T/U}\backslash \mathfrak{p}$. It follows from the assumptions that the kernel $A$ of $f$ vanishes when localized at $\mathfrak{p}$.

Now in case $A\neq 0$, let $\mathfrak{p}\subset R_{T/U}$ be a prime ideal which contains the annihilator $\mathrm{ann}(A)$.
Note that $A$ is finitely generated since $M$ is and $R_{T/U}$ is Noetherian. It follows that $A_\mathfrak{p}\neq 0$.
By what we have seen, above this implies $\mathfrak{p}\cap R^2_{T/U}=R^2_{T/U}$ and thus $\mathfrak{p}=R_{T/U}^+$ is the maximal homogeneous ideal. Hence the radical is given by $\sqrt{\mathrm{ann(A)}}=R^+_{T/U}$. As a vector space, $A$ is generated over $\mathbb{Q}$ by elements $m y_i$ where the $m\in R_{T/H}^+$ is a monomial and the $y_j$ are a finite $R_{T/H}$-generating set of $A$. By what we have just seen, these expressions vanish when the degree of $m$ passes a certain point. It follows that $A$ is finite dimensional and the same argument works for the cokernel of $f$.
\end{proof}

\section{Models for systems of cochain algebras}

The goal of this section is to lay the technical founadtion for the rest of the paper by studying nice representatives of the quasi isomorphism types of the systems introduced in the previous section. The reader may notice relations to the notions of fibration and cofibration in the sense of model categories. In fact, conditions related to the surjectivity condition from Section \ref{sec:surjcond} appear in similar context in \cite{ScullS1}, \cite{Triantifillou} and have been interpreted in model categorial terms in \cite{ScullCats} in the setting of finite group actions. We will however not delve into the details of this viewpoint in this paper. We just point out that the properties studied in this section do not correspond to the projective model structure which occurs briefly as a tool in Section \ref{sec:homcats}: in the projective model structure every system is fibrant and being a system of Sullivan cdgas is not sufficient to be cofibrant.

\subsection{The disconnected Sullivan condition}\label{sec:disconnected}

\begin{defn}
We say a cochain algebra $A$ is
\begin{itemize}
\item \emph{connected} if $A^0=\mathbb{Q}$. It is cohomologically connected if $H^*(A)$ is connected.
\item \emph{simply connected} if $A^1=0$. It is cohomologically simply connected if $H^*(A)$ is simply connected.
\item \emph{spacelike} if $H^0(A)=\mathbb{Q}\times\ldots\times \mathbb{Q}$ as algebras.
\end{itemize}
A system of cochain algebras is said to be (cohomologically) connected, simply connected, or spacelike, if it satisfies the respective conditions pointwise, i.e.\ at any fixed position $(U,H)\in\mathcal{D}$.
\end{defn}

We recall some terminology which we try to keep consistent with \cite{BibelI}, \cite{BibelII}.

\begin{itemize}
\item A morphism of the form $(B,d)\rightarrow (B\otimes \Lambda Z,d)$, $b\mapsto b\otimes 1$ with $Z=Z^{\geq 0}$ will be called a free extension.

\item We say that a free extension satisfies the nilpotence condition (also called Sullivan condition), if $B$ is cohomologically connected and $Z=\bigcup_{r\geq 0} Z(r)$ is a union of graded subspaces with $d\colon Z(0)\rightarrow B$ and $d\colon Z(r)\rightarrow B\otimes \Lambda V(r-1)$ for $r\geq 1$. We also just call this a free nilpotent extension.

\item A relative Sullivan algebra is a free nilpotent extension as above such that $Z=Z^{\geq 1}$. If additionally $d\colon 1\otimes Z\rightarrow (\mathbb{Q}\oplus B^{\geq 1})\otimes \Lambda Z$ then we call it a Sullivan extension (the latter notion was introduced in \cite{BibelII} and is in practice not more restrictive than the first one cf.\ \cite[Lemma 14.8]{BibelI}).

\item A free nilpotent algebra (resp.\ a Sullivan algebra) is a free nilpotent (resp.\ Sullivan) extension of $\mathbb{Q}$. An almost Sullivan algebra is a product $(\Lambda Z,d)\otimes (\Lambda (U\oplus dU),d)$, where $(\Lambda Z,d)$ is a Sullivan algebra and $(\Lambda(U\oplus dU),d)$ is contractible, generated in degree $0$.

\item We say a morphism of $\mathcal{D}$-systems (resp.\ a $\mathcal{D}$-system) is a free (nilpotent)/Sullivan extension (resp.\ free (nilpotent)/(almost) Sullivan) if the respective condition is fulfilled pointwise.
\end{itemize}

Rather frequently we will deal with free nilpotent cdgas $(\Lambda Z,d)$ which have $Z^0\neq 0$ and are thus not quite Sullivan algebras (but usually almost Sullivan algebras). A reason for this is that oftentimes bigger models are needed in order to achieve strict commutativity of diagrams (an example being Proposition \ref{prop:Sullivandiagram} below). In \cite{BibelII} there is the notion of a $\Lambda$-cdga which allows generators in degree $0$. Unfortunately it has a built in minimality condition which we usually neither have nor need and is thus also not our notion of choice. We want to point out that the theory for relative Sullivan cdgas in \cite{BibelI} and $\Lambda$-extensions in \cite{BibelII} is developed rather analogously and many proofs carry over verbatim to the less restrictive case of free nilpotent extensions (usually the essential ingredient is the ``lifting lemma'', which holds equally in the general free nilpotent case). A summary of the technical statements we will need in this paper is given in Remark \ref{rem:technicalstuff}.

While many standard references focus mainly on cohomologically connected cochain algebras, the theory extends rather seamlessly to spacelike ones. The reason for this is the following Lemma (see e.g.\ \cite[Theorem B, Principle 3.2]{LazarevMarkl} for proofs and a deeper treatment of disconnected rational homotopy theory) which shows that the category of spacelike cochain algebras is completely described in terms of products in the cohomologically connected category.

\begin{lem}\label{lem:disconnected}
Let $A$ be a spacelike cochain algebra. There is a finite product decomposition $A=\prod_i A_i$ with $A_i$ cohomologically connected and which is unique up to permutation of the factors. If $B=\prod B_j$ is another spacelike cochain algebra with cohomologically connected $B_j$, then there is a bijection between homomorphisms $\varphi\colon A\rightarrow B$ and matrices of homomorphisms $\varphi_{ij}\colon A_i\rightarrow B_j$ such that for each $j$ only a single $\varphi_{ij}$ is nonzero.
\end{lem}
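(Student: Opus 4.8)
\textbf{Proof strategy for Lemma \ref{lem:disconnected}.}

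The plan is to reduce everything to the decomposition of the unit. Since $A$ is spacelike, $H^0(A)=\mathbb{Q}\times\dots\times\mathbb{Q}$, and this $\mathbb{Q}$-algebra has a unique complete set of primitive orthogonal idempotents $e_1,\dots,e_n$ with $\sum e_i=1$. First I would lift these to $A^0$: because $d$ vanishes on $H^0$ but a priori not on all of $A^0$, one must check that idempotents of $H^0(A)$ can be represented by genuine idempotents of $A^0$ and, more importantly, by \emph{cocycles}. The cleanest way is to observe that $Z^0(A)=\ker(d\colon A^0\to A^1)$ is itself a commutative $\mathbb{Q}$-algebra mapping onto $H^0(A)$ (the map is surjective since there are no coboundaries in degree $0$), so it suffices to lift idempotents along a surjection of commutative rings; this is standard provided the kernel is nil, which here it is because $Z^0(A)/H^0(A)$ embeds into $A^0$ whose only unit-destroying elements... — more robustly, one argues directly: an idempotent $\bar e\in H^0(A)$ is represented by some $z\in Z^0(A)$ with $z^2-z=:w\in Z^0(A)$ a coboundary, hence $w=0$ since there are no $(-1)$-cochains, so $z$ is already idempotent. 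Thus the $e_i$ lift to orthogonal idempotent cocycles in $A^0$, still summing to $1$.

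Given such $e_i$, set $A_i:=e_iA$, a cochain subalgebra (the differential preserves $e_iA$ since $e_i$ is a cocycle and $d$ is a derivation: $d(e_ia)=e_i\,da$). Then $A=\prod_i A_i$ as cdgas because the $e_i$ are orthogonal and sum to $1$. Each $A_i$ has $H^0(A_i)=e_iH^0(A)=\mathbb{Q}$, so $A_i$ is cohomologically connected; this gives the existence of the decomposition. For uniqueness, note that any decomposition $A=\prod_j C_j$ into cohomologically connected factors corresponds to a decomposition of $1$ into cocycle idempotents whose classes in $H^0(A)$ are primitive and orthogonal, and by uniqueness of the primitive idempotent decomposition of the semisimple ring $H^0(A)$ these must agree with the $e_i$ up to permutation; since the idempotents themselves are determined by their cohomology classes (again because a difference of two idempotents lifting the same class is a coboundary in degree $0$, hence zero), the factors $A_i$ are determined up to permutation.

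For the statement on morphisms: a homomorphism $\varphi\colon A\to B$ is determined by, and can be assembled from, its components $\varphi_{ij}:=\pi_j\circ\varphi\circ\iota_i\colon A_i\to B_j$ where $\iota_i,\pi_j$ are the inclusions/projections. Unitality forces $\varphi(1_A)=1_B$, i.e.\ $\sum_j\varphi(e_i)$ over all $i$ recovers the $f_j$; the key point is that $\varphi(e_i)$ is an idempotent cocycle in $B^0$, and $B^0=\prod_j B_j^0$ with each $B_j^0$ having only the idempotents $0,f_j$ (since $H^0(B_j)=\mathbb{Q}$ and, as above, idempotents in $B_j^0$ are detected on cohomology). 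Hence $\varphi(e_i)=\sum_{j\in S_i}f_j$ for some subset $S_i$, and the $S_i$ partition $\{1,\dots,m\}$ because the $e_i$ partition $1_A$ and $\varphi$ respects orthogonality and sums. Restricting $\varphi$ to $A_i=e_iA$ lands in $\big(\sum_{j\in S_i}f_j\big)B=\prod_{j\in S_i}B_j$, so $\varphi_{ij}=0$ unless $j\in S_i$; but a homomorphism out of the cohomologically connected (in particular \emph{connected up to idempotents}) algebra $A_i$ into a product $\prod_{j\in S_i}B_j$ must be ``concentrated'': its image contains $1_{A_i}\mapsto\sum_{j\in S_i}f_j$, yet — here I would invoke that $A_i$ being cohomologically connected means $Z^0(A_i)=\mathbb{Q}$, so $A_i$ has no nontrivial idempotents, forcing $|S_i|=1$. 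Conversely any matrix $(\varphi_{ij})$ with a unique nonzero entry per row glues to a well-defined unital cdga map $\prod A_i\to\prod B_j$. The one subtlety worth flagging — and the main obstacle — is precisely the repeated lifting-of-idempotents argument: it must be handled with care since $A^0$ is not assumed reduced, and the degree reasoning ($A^{-1}=0$, so $Z^0(A)\twoheadrightarrow H^0(A)$ with the comparison of idempotents controlled by vanishing of low-degree coboundaries) is what makes all three parts go through cleanly.
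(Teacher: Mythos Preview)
Your approach is the same as the paper's: since $A^{-1}=0$ there are no coboundaries in degree $0$, so $H^0(A)=Z^0(A)\subset A^0$ is literally a subalgebra, and the primitive idempotents of $\mathbb{Q}^r$ are already orthogonal idempotent cocycles summing to $1$. The factors $A_i=e_iA$ and the uniqueness argument are fine. (Your framing of the ``lifting-of-idempotents obstacle'' is overblown: there is no lifting, the idempotents are already there.)

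There is, however, a genuine slip in the morphism part. You correctly show that the $S_i$ partition $\{1,\dots,m\}$, which means \emph{for each $j$} there is a unique $i$ with $\varphi_{ij}\neq 0$ --- this is exactly the statement of the lemma. But you then go on to claim $|S_i|=1$, arguing that $A_i$ has no nontrivial idempotents. That implication is false: the diagonal map $\mathbb{Q}\to\mathbb{Q}\times\mathbb{Q}$ is a perfectly good unital cdga morphism from a connected algebra to a product, with $|S_1|=2$. The absence of idempotents in the \emph{source} does not constrain the number of target factors it hits. Correspondingly, your converse (``unique nonzero entry per row'') is wrong; it should be per column $j$. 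With two nonzero entries in a column you cannot define a multiplicative map $A\to B_j$. Drop the $|S_i|=1$ claim and fix the row/column confusion, and the argument is complete and matches the paper's.
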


The product decomposition in the Lemma is also referred to as the decomposition into the path components of a spacelike cochain algebra.

\begin{proof}
Since $\mathbb{Q}^r\cong H^0(A,d)=\ker d|_{A_{0}}$ we obtain a unique collection of idempotent cocycles $1_1,\ldots,1_r$ such that $1=1_1+\ldots+1_r$. The factors in the product decomposition $A=\prod A_i$ are given by $A_i=1_i\cdot A$. The remaining properties are easily verified.
\end{proof}

\begin{defn}
A disconnected free extension of $B$ is a morphism $(B,d)\rightarrow \prod^k_{i=1}(B\otimes \Lambda Z_i,d)$ where each $B\rightarrow B\otimes \Lambda Z_i$ is a free extension. We say that it satisfies the nilpotence condition (resp.\ is a disconnected Sullivan extension) if each of the $B\rightarrow B\otimes \Lambda Z_i$ satisfies the nilpotence condition (resp.\ is a Sullivan extension). 
\end{defn}

\begin{prop}\label{prop:Sullivandiagram}
Let $\underline{P},\underline{A}$ be $\mathcal{D}$-systems of cochain algebras. Assume that $\underline{P}$ is cohomologically connected and $\underline{A}$ is spacelike. Then for any morphism $\underline{P}\rightarrow\underline{A}$ such that $H^1(\underline{P})\rightarrow H^1(\underline{A})$ is injective, there is a natural factorization $\underline{P}\rightarrow \underline{M}\rightarrow\underline{A}$ where the first map is a disconnected free nilpotent extension and the second map is a quasi isomorphism.
\end{prop}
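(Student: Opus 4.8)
The plan is to construct $\underline{M}$ by the standard step-by-step Sullivan construction, but carried out simultaneously over the whole diagram $\mathcal{D}$ so that strict commutativity is preserved, and to deal with the disconnectedness of $\underline{A}$ by working over each path component. First I would reduce to the pointwise-connected case: by Lemma \ref{lem:disconnected}, at each $(U,H)$ the algebra $\underline{A}(U,H)$ splits as a product of cohomologically connected factors, and a morphism between spacelike algebras is a ``matrix'' of morphisms of connected algebras, at most one nonzero per target factor. This is exactly the combinatorial data that a disconnected free extension is built to accommodate: over a connected base $\underline{P}$, a disconnected free nilpotent extension $\underline{P}\to\prod_i(\underline{P}\otimes\Lambda Z_i)$ maps each factor to (possibly) a different component of the target, so the components of $\underline{M}$ will be indexed compatibly with the components of $\underline{A}$.

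Next I would set up the inductive construction of $\underline{M}$ by degree (and within a degree, by the Sullivan filtration word-count $Z(r)$). Start with $\underline{M}(0)$: in degree $0$ we must arrange that $H^0(\underline{M})\to H^0(\underline{A})$ is an isomorphism, i.e.\ produce the right number of idempotents pointwise; since $\underline{P}$ is connected this means freely adjoining degree-$0$ generators (and their differentials, so the factors stay contractible in the relevant sense) — this is where the ``disconnected'' and ``free nilpotent with $Z^0\neq 0$'' features are genuinely used. Having fixed the component structure, the induction proceeds as usual: at stage $n$, assuming $\underline{P}\to\underline{M}(n-1)\to\underline{A}$ with the first map a (disconnected) free nilpotent extension and $H^{<n}$ iso, $H^n$ injective, I adjoin in degree $n-1$ generators to hit the cokernel of $H^n(\underline{M}(n-1))\to H^n(\underline{A})$ (with zero differential) and in degree $n$ generators killing the kernel of $H^{n+1}(\underline{M}(n-1))\to H^{n+1}(\underline{A})$. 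The starting condition $H^1(\underline{P})\hookrightarrow H^1(\underline{A})$ is what lets the induction begin correctly in low degrees. The colimit over $n$ gives $\underline{M}$ with $\underline{P}\to\underline{M}$ a disconnected free nilpotent extension and $\underline{M}\to\underline{A}$ a quasi-isomorphism, and ``natural'' is built in because every choice is made by a functorial recipe (see below).

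The crux — and the main obstacle — is performing each adjunction \emph{diagram-theoretically} rather than pointwise: when I adjoin a generator at $(U,H)$ I must simultaneously adjoin compatible generators at all $(U',H')\ge(U,H)$ so the result is still a strict functor on $\mathcal{D}$, and the differentials and the maps to $\underline{A}$ must commute on the nose. The standard device is to build, at each stage, the relevant cocycle/coboundary data not just at a single vertex but as a cocycle in the diagram of cochain algebras, using that $\underline{P}$ is cohomologically connected so that $H^*(\underline{P})$ (hence the obstruction groups over the diagram) behave well, together with the surjectivity-type arguments alluded to before Proposition \ref{prop:Sullivandiagram} and formalized in the referenced technical material (Remark \ref{rem:technicalstuff}, Lemma \ref{lem:subcomplexes}) to lift classes through the maps $\underline{M}(n-1)\to\underline{A}$. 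Concretely, one adjoins a whole "free module of generators over $\mathcal{D}$'' at once: for a bottom vertex one needs a new generator, and this forces new generators at every vertex above it, with the transition maps sending generator to generator; the differential and the augmentation to $\underline{A}$ are then defined on the bottom generator and propagated. The verification that this can always be done reduces to the existence of the required lifts in each connected factor, which is the usual Sullivan lifting lemma applied componentwise, and to checking that the propagation is consistent over composable arrows in $\mathcal{D}$, which follows since $\mathcal{D}$ is a poset (all diagrams of arrows commute automatically, so there are no higher coherence conditions to check). Naturality in $(\underline{P}\to\underline{A})$ holds because the whole construction is a functorial recipe: the only choices are of vector-space complements and of lifts, which can be fixed once and for all, or absorbed by noting the construction is natural up to the relevant notion — here I would make the construction literally natural by choosing, at each stage, $\underline{M}$ to be built from \emph{all} of $H^*(\underline{A})$ rather than a basis, i.e.\ adjoin a generator for every cocycle, as in the proof of the nonequivariant minimal-model existence theorem, which avoids basis choices entirely at the cost of a larger (but still free nilpotent) model.
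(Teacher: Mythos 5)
Your outline is essentially right, and your closing remark -- make the construction choice-free by adjoining a generator for \emph{every} cocycle/pair rather than a basis -- is in fact the entire strategy of the paper's proof, not an optional refinement. The paper runs the Bousfield--Gugenheim functorial factorization pointwise: for each morphism $P\to A$ of cochain algebras it builds $\mathcal{M}_0=P\otimes\Lambda V_0$ with $V_0\cong\ker(d_A)^{\geq 1}$, then adjoins a generator $v_{(x,y)}$ for every pair $(x,y)$ with $dx=\psi(y)$, and shows this recipe is strictly functorial in the square $(P\to A)\to(P'\to A')$. Since a functor applied to a strictly commutative $\mathcal{D}$-diagram returns a strictly commutative $\mathcal{D}$-diagram, no diagram-level work is needed at all. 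Your ``crux'' paragraph -- adjoining free modules of generators over the poset and propagating them upward -- is therefore solving a problem that the functorial pointwise construction dissolves; it could be made to work over a poset, but it is the harder route, and your appeals to the surjectivity condition and to Lemma \ref{lem:subcomplexes} are misplaced here (that lemma concerns $A_{PL}$ of unions of fixed-point sets and plays no role in this proposition). Also note a small indexing slip: classes in the cokernel of $H^n(\underline{M}(n-1))\to H^n(\underline{A})$ are hit by degree-$n$ generators with zero differential, not degree-$(n-1)$ ones.

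The one genuine gap is in degree $0$. First, the idempotents giving $H^0(\underline{M})\cong H^0(\underline{A})$ come from the \emph{product} decomposition of a disconnected free extension $\prod_i(P\otimes\Lambda Z_i)$ (Lemma \ref{lem:disconnected}), not from freely adjoined degree-$0$ generators: a free commutative algebra on degree-$0$ generators is polynomial and has no nontrivial idempotents. Second, degree-$0$ generators with \emph{nonzero} differential are unavoidable -- they are needed to kill the classes in $\ker\bigl(H^1(\mathcal{M}_0)\to H^1(A)\bigr)$ -- and one must verify that adjoining them does not enlarge $H^0$ of the connected factor, which would wreck the component bookkeeping. This is precisely where the hypothesis that $H^1(\underline{P})\to H^1(\underline{A})$ is injective enters: it guarantees that representatives $y_i=a_i+v_i$ of the killed classes have linearly independent components $v_i$ in the freshly adjoined cocycle space $V_0^1$, so after a change of variables the degree-$0$ generators together with their differentials form a contractible relative Sullivan piece and $H^0$ is unchanged. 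Your sketch locates the hypothesis only as ``lets the induction begin correctly in low degrees''; without the argument above, the claim that $\underline{M}\to\underline{A}$ is a quasi isomorphism (and even that $\underline{M}$ has the right component structure) is not established.
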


\begin{proof}
This is just a consequence of the functorial construction of free nilpotent extensions given e.g.\ in \cite[Section 4.7]{BG}. We briefly recall the construction (in a slightly adapted version) in order to demonstrate that it has the claimed properties. Consider first a morphism $(P,d)\rightarrow (A,d_A)$ between cohomologically connected cochain algebras such that $H^1(P)\rightarrow H^1(A)$ is injective. Then we set $\mathcal{M}_0=P\otimes \Lambda V_0$ where $V_0\cong \ker (d_A)^{\geq 1}$. We set the differential of $\mathcal{M}_0$ to be trivial on $V_0$ and define $\psi_0\colon \mathcal{M}_0\rightarrow A$ in the obvious way. Thus $\psi_0$ is surjective on cohomology and an isomorphism on $H^0$.

Now for $k\geq 1$ and each pair $(x,y)\in A^{k-1}\times \mathcal{M}_0^k$ with $dy=0$ and $dx=\psi_0(y)$, introduce a generator $v_{(x,y)}$ in degree $k-1$ with $dv_{(x,y)}=y$ and $\psi_1(v_{(x,y)})=x$. We obtain a commutative diagram

\[\xymatrix{
\mathcal{M}_0\ar[r]\ar[d]_{\psi_0}& \mathcal{M}_0\otimes \Lambda V_1\ar[dl]^{\psi_1}\\ A &
}\]
where $V_1$ is defined as follows:
in degrees $\geq 1$, a basis of $V$ is given by the $v_{(x,y)}$ for all possible choices of pairs $(x,y)$ as above, while in degree $0$ we choose a collection $(x_i,y_i)$ such that the $y_i$ descend to a basis of $\ker(\psi_0^*\colon H^1(\mathcal{M}_0)\rightarrow H^1(A))$. We set $\mathcal{M}_1:=\mathcal{M}_0\otimes \Lambda V_1$.

A cocycle in $\mathcal{M}_0^1$ is of the form $a+v$ with $v\in V_0^1$, $a\in P^1$.
It follows from the injectivity of $H^1(P)\rightarrow H^1(A)$ that we may write $y_i=a_i+v_i$ such that the $v_i$ are linearly independent in $V_0^1$. Denote by $L_0\subset V_0^1$ the span of the $v_i$ and by $W_0$ a complement of $L_0$ in $V_0^{\geq 1}$. Let $\phi$ be an automorphism of $\mathcal{M}_1$ which is the identity on $P$, $W_0$, and $V_1$ and is defined as $v_i\mapsto v_i-a_i$ on $L_0$. Then $d'=\phi^{-1}d\phi$ defines a differential on $\mathcal{M}_1$ and $\phi$ is an isomorphism of cdgas with respect to the two differentials $d$ and $d'$. Now $d'$ maps $V_1^0$ isomorphically onto $L_0$ and we can write the inclusion $P\otimes \Lambda(V_1^0\oplus L_0)\rightarrow\mathcal{M}_1$ as a relative Sullivan algebra.
This implies that still $H^0(\mathcal{M}_1,d)=\mathbb{Q}$.

Furthermore we observe that regarding representatives of $\ker(\psi_1^*\colon H^1(\mathcal{M}_1)\rightarrow H^1(A))$ we are in the same situation as before: setting $\mathcal{M}_0'= \mathcal{M}_0\otimes \Lambda V_1^0$, a $d$-cocycle in $\mathcal{M}_1^1$ is of the form $a+v$ with $v\in V_1^1$, $a\in \mathcal{M}_0'$. Furthermore, if $a_i+v_i$ are representatives of a basis of $\ker (H^1(\mathcal{M}_1)\rightarrow H^1(A))$, then the $v_i$ are linearly independent in $V_1^1$. This is due to the fact that an element of the kernel which is represented in $\mathcal{M}_0'$ is also represented in $\mathcal{M}_0$ and is hence exact in $\mathcal{M}_1$ by construction.

We iterate this procedure of costructing the extension $\mathcal{M}_0\subset \mathcal{M}_1$ and obtain a factorization $P\rightarrow \mathcal{M}=\bigcup_{i\geq 0} \mathcal{M}_i\rightarrow A$ in which the first map is a free nilpotent extension and the second map is easily checked to be a quasi isomorphism.

Now this construction is functorial in the following sense: for a commutative diagram of solid arrows
\[\xymatrix{
P\ar[d]\ar[r]& \mathcal{M}\ar@{-->}[d]\ar[r]^{\sigma}& A\ar[d]^f\\
P'\ar[r]&\mathcal{M}'\ar[r]^{\sigma'}& A'
}\]
with $\mathcal{M}$, $\mathcal{M'}$ constructed as above, there is a canonical choice for the dashed arrow $\mathcal{M}\rightarrow \mathcal{M}'$ such that the diagram commutes and this choice is compatible with compositions. This can be seen inductively: given a map $\varphi\colon \mathcal{M}_i\rightarrow \mathcal{M}_i'$ which is compatible with the factorizations, we extend it to $\mathcal{M}_{i+1}$ by sending $v_{(x,y)}\in V^k_{i+1}$ to $v_{(f(x),\varphi(y))}\in V_{i+1}'$ whenever $k\geq 1$. For some $v_{(x,y)}\in V_{i+1}^0$, the element $v_{(f(x),\varphi(y))}$ is not necessarily in $V_{i+1}'$. However, there is some $v\in {\mathcal{M}_{i+1}^0}'$ with $dv=\varphi(y)$. Furthermore any element with this property is of the form $v+\alpha\cdot 1_{\mathcal{M}_i'}$ due to the fact that $H^0(\mathcal{M}_i')=\mathbb{Q}$. Setting $\varphi(v_{(x,y)})= v+\alpha\cdot 1_{\mathcal{M}_i'}$ we find that the element $f(\sigma(v_{(x,y)}))-\sigma'(\varphi(v_{(x,y)}))$ is a cocycle in ${A'}^0$. Thus there is a unique choice of $\alpha$ such that it vanishes, which is the unique admissible choice of $\varphi$ such that the diagram above commutes. This proves functoriality of the factorization.

Now given $P\rightarrow A$ where $P$ is cohomologically connected and $A$ is spacelike, we have a canonical product decomposition into morphisms $P\rightarrow A_i$ between cohomologically connected cochain algebras. We apply the above construction to each of these factors to obtain a factorization $P\rightarrow\mathcal{M}\rightarrow A$ in which the first map is a disconnected free nilpotent extension and the second map is a quasi isomorphism. Functoriality carries over to this case and yields the proposition.
\end{proof}

\begin{rem}
Applied to $\underline{\mathbb{Q}}\rightarrow \underline{A_{PL}(*)}$ we obtain a free nilpotent approximation of the system $\underline{A_{PL}(*)}$ of classifying spaces. We observe for later use that we in fact have an almost Sullivan approximation (parts of the following discussion are rather analogous to the proof of \cite[Theorem 14.7]{BibelI}): we apply the construction from the proof of Proposition \ref{prop:Sullivandiagram} to $\mathbb{Q}\rightarrow A_{PL}(B(T/U))$ for some subgroup $U\subset T$. This yields a quasi isomorphism $(\Lambda V,d)\rightarrow A_{PL}(B(T/U))$. With notation as in the proof of Proposition \ref{prop:Sullivandiagram}, we may rewrite $(\Lambda V,d)\cong ((\Lambda V^0\oplus L)\otimes \Lambda W,d')$ where $d'$ maps $V^0=\bigoplus_i V_i^0$ isomorphically onto $L=\bigoplus_i L_i\subset V^1$ and $W=W^{\geq 1}$ is a complement of $V^0\oplus L$ in $V$. Furthermore the inclusion
$(\Lambda(V^0\oplus L),d')\rightarrow (\Lambda(V^0\oplus L)\otimes \Lambda W,d')$ is a relative Sullivan algebra. It thus follows (cf.\ \cite[Proposition 6.7]{BibelI}) that dividing by the ideal generated by $(V^0\oplus L)$ yields a surjective quasi isomorphism
\[(\Lambda V,d)\rightarrow (\Lambda W,\overline{d'}).\]
By the lifting Lemma the morphism admits a section. Tensoring this section with the inclusion of $\Lambda(V^0\oplus L)\cong \Lambda (V^0\oplus dV^0)$ gives the desired isomorphism
\[\Lambda(W,\overline{d'})\otimes (\Lambda(V^0\oplus dV^0),d)\rightarrow(\Lambda V,d)\]
with $(\Lambda W,\overline{d'})$ a Sullivan algebra.
\end{rem}

The following technical Lemma will prove helpful at several occasions.

\begin{lem}\label{lem:tensorTC}
For $U\subset H\subset T$, consider a homotopy commutative diagram \[\xymatrix{P(H)\ar[r]\ar[d] & P(U)\ar[d]\\
A_{PL}(BT/H)\ar[r]& A_{PL}(BT/U)
}\]
where vertical maps are quasi isomorphisms and $P(H),P(U)$ are free nilpotent. Let $\mathcal{M}_H=(P(H)\otimes \Lambda V,d)$ be a free nilpotent extension and let $\mathcal{M}_U=P(U)\otimes_{P(H)} \mathcal{M}_H=(P(U)\otimes \Lambda V,d)$ be the pushout. Then the map
\[H^*(P(U))\otimes_{H^*(P(H))}H^*(\mathcal{M}_H)\rightarrow H^*(M_U)\]
is an isomorphism.
\end{lem}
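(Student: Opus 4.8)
The plan is to reduce the statement to the standard Künneth/Eilenberg–Moore principle for free nilpotent (relative Sullivan) extensions. First I would observe that, by the very construction of the pushout, $\mathcal{M}_U=(P(U)\otimes\Lambda V,d)$ where the differential is extended from $P(H)\otimes\Lambda V$ along the map $P(H)\to P(U)$; in particular $P(U)\to\mathcal{M}_U$ is again a free nilpotent extension with the same filtration $V=\bigcup_r V(r)$ as $\mathcal{M}_H$. So both rows are free nilpotent extensions, and the left-hand square of algebras is a genuine pushout.

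The key step is a base-change comparison. Since $P(H)\to\mathcal{M}_H=(P(H)\otimes\Lambda V,d)$ is a free nilpotent extension of a cohomologically connected cdga, one has the Eilenberg–Moore / Künneth spectral sequence (or, equivalently, the fact that $H^*(\mathcal{M}_H)$ is computed from the Koszul-type filtration by $\Lambda V$) which in this relative setting yields that for any cdga map $P(H)\to B$ the natural map
\[
H^*(B)\otimes_{H^*(P(H))}H^*(\mathcal{M}_H)\longrightarrow H^*\bigl(B\otimes_{P(H)}\mathcal{M}_H\bigr)
\]
need not be an isomorphism in general — but it is once we know that $H^*(B)$ is \emph{flat}, or more precisely free, as an $H^*(P(H))$-module via the induced map. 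The crucial input here is that the vertical quasi isomorphisms identify $P(H)\simeq A_{PL}(B(T/H))$ and $P(U)\simeq A_{PL}(B(T/U))$, so on cohomology the map in question is $R_{T/H}\to R_{T/U}$, i.e. a surjection of polynomial rings $\mathbb{Q}[V_{T/H}]\twoheadrightarrow\mathbb{Q}[V_{T/U}]$ (dual to the subtorus inclusion) — and a quotient of a polynomial ring by a regular sequence of linear forms. Hence $R_{T/U}$ admits a finite free resolution over $R_{T/H}$ given by a Koszul complex on those linear forms.

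Concretely I would argue as follows. Choose a complementary torus so that $T/U\cong (T/H)\times L$ and $R_{T/H}\cong R_{T/U}\otimes R_L$ with $R_L=\Lambda(z_1,\dots,z_m)$ a polynomial ring on the "extra" degree-$2$ generators, and $R_{T/H}\to R_{T/U}$ is the quotient killing the $z_j$. Lift the $z_j$ to cocycles $\tilde z_j\in P(H)$; then $P(U)$ is quasi isomorphic to the Koszul complex $P(H)\otimes\Lambda(\xi_1,\dots,\xi_m)$ with $d\xi_j=\tilde z_j$ (this is just the statement that killing a polynomial subalgebra up to homotopy is effected by a Koszul factor, using that $\tilde z_1,\dots,\tilde z_m$ form a regular sequence on $H^*(P(H))$). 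Tensoring this quasi isomorphism over $P(H)$ with $\mathcal{M}_H$ — which is legitimate because $P(H)\to\mathcal{M}_H$ is a free, hence flat-enough, extension and the Koszul factor is a bounded complex of free modules — gives a quasi isomorphism
\[
\mathcal{M}_H\otimes\Lambda(\xi_1,\dots,\xi_m)\;\xrightarrow{\ \simeq\ }\;P(U)\otimes_{P(H)}\mathcal{M}_H=\mathcal{M}_U .
\]
Now $H^*(\mathcal{M}_H)$ is a module over $H^*(P(H))=R_{T/U}\otimes R_L$, and since $\mathcal{M}_H$ is a free nilpotent extension of the cohomologically connected $P(H)$, its cohomology is free — in fact free — over $R_L=\Lambda(z_1,\dots,z_m)$ in the relevant range... more carefully: the statement $H^*(P(U))\otimes_{H^*(P(H))}H^*(\mathcal{M}_H)\to H^*(\mathcal{M}_U)$ being an isomorphism is exactly the collapse of the Koszul homology, i.e. the vanishing $\mathrm{Tor}^{R_{T/H}}_{>0}(R_{T/U},H^*(\mathcal{M}_H))=0$, which follows once $z_1,\dots,z_m$ is a regular sequence on $H^*(\mathcal{M}_H)$. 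But this last regularity is automatic: $H^*(\mathcal{M}_H)$ is a free nilpotent extension of $R_{T/H}$ on cohomology in the sense that it carries a complete filtration whose associated graded is free over $R_{T/H}$, hence over the polynomial subring generated by the $z_j$, so these act as a regular sequence. Computing $H^*(\mathcal{M}_H\otimes\Lambda\xi)$ by the Koszul differential then gives $H^*(\mathcal{M}_U)\cong H^*(\mathcal{M}_H)/(z_1,\dots,z_m)=R_{T/U}\otimes_{R_{T/H}}H^*(\mathcal{M}_H)$, which is the claim.

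The main obstacle I expect is not any single computation but pinning down the regularity statement cleanly: one must be sure that the "extra" generators $z_j$ of $R_{T/H}$ act as a regular sequence on $H^*(\mathcal{M}_H)$, i.e. that $\mathcal{M}_H$ being a free nilpotent extension of $P(H)$ forces its cohomology to be free (or at least flat) over the polynomial subring $R_L\subset H^*(P(H))$. This is where the homotopy commutativity of the given square and the identifications $P(H)\simeq A_{PL}(B(T/H))$, $P(U)\simeq A_{PL}(B(T/U))$ are used — they guarantee that the map $H^*(P(H))\to H^*(P(U))$ really is the polynomial quotient by a regular sequence of linear forms, and that the complement torus splitting is available. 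Once that is in hand, everything else is the standard Koszul/Eilenberg–Moore bookkeeping for relative Sullivan algebras recalled in Remark \ref{rem:technicalstuff}.
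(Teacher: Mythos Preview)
There is a genuine error in your setup: you have the direction of the map $R_{T/H}\to R_{T/U}$ backwards. Since $U\subset H$, the quotient $T/U$ surjects onto $T/H$, so on classifying spaces $B(T/U)\to B(T/H)$ and on cohomology $R_{T/H}\hookrightarrow R_{T/U}$ is an \emph{injection} of polynomial rings, not a surjection. Choosing a splitting $T/U\cong (T/H)\times L$ one has $R_{T/U}\cong R_{T/H}\otimes R_L$, so $R_{T/U}$ is \emph{free} as an $R_{T/H}$-module. This is precisely why no regular-sequence or Tor-vanishing subtlety ever arises.

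Your attempted repair---arguing that the ``extra'' generators act as a regular sequence on $H^*(\mathcal{M}_H)$---is false even in the simplest example: take $P(H)=\Lambda(z)$ and $\mathcal{M}_H=(\Lambda(z,\xi),d\xi=z)$, so $H^*(\mathcal{M}_H)=\mathbb{Q}$ and $z$ acts as zero. A free nilpotent extension of $P(H)$ need not have cohomology free, flat, or torsion-free over $H^*(P(H))$; the filtration you allude to gives an associated graded free over $H^*(P(H))$ only on the $E_1$-page of a spectral sequence, not on the abutment.

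The paper's proof sidesteps all of this by first replacing $P(H)$ and $P(U)$ with their minimal models $R_{T/H}$ and $R_{T/U}$. The point is that the pushout $P(U)\otimes_{P(H)}\mathcal{M}_H$ is, up to quasi isomorphism, unchanged when one replaces $P(H)\to P(U)$ by a homotopic map or passes along a quasi isomorphism of the base (both instances of \cite[Theorem~6.10]{BibelI}). After this reduction the pushout is $R_{T/U}\otimes_{R_{T/H}}\mathcal{M}_H'\cong R_L\otimes_{\mathbb{Q}}\mathcal{M}_H'$, and the desired isomorphism is ordinary K\"unneth. Once you correct the direction of the map, your Eilenberg--Moore framing also goes through immediately: $\mathrm{Tor}^{R_{T/H}}_{>0}(R_{T/U},-)=0$ because $R_{T/U}$ is $R_{T/H}$-free.
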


\begin{proof}
We choose a minimal model $R_{T/H}\rightarrow A_{PL}(BT/H)$ which induces the identity on cohomology. Let $\varphi_H\colon P(H)\rightarrow R_{T/H}$ be a quasi isomorphism which is, up to homotopy, compatible with the chosen models. It admits a section $\psi_H$. Define $\varphi_U$ analogously. We note that the given map $\phi\colon P(U)\rightarrow P(H)$ is homotopic to $\phi'=\phi\circ\psi_H\circ\varphi_H$.
Now any homotopy between maps $f,g\colon P(H)\rightarrow P(U)\otimes (t,dt)$ induces morphisms
\[P(U)\otimes_{P(H)}\mathcal{M}_H\leftarrow (P(U)\otimes (t,dt))\otimes_{P(H)}\mathcal{M}_H\rightarrow P(U)\otimes_{P(H)}\mathcal{M}_H\]
where the outer tensor products are with respect to $f$ and $g$. By \cite[Theorem 6.10]{BibelI} these are quasi isomorphisms. Two more applications of the theorem yield that $p\colon \mathcal{M}_H\rightarrow R_{T/H}\otimes_{P(H)} \mathcal{M}_H=:\mathcal{M}_H'$ as well as
\[\phi_U\otimes p\colon P(U)\otimes_{P(H)}\mathcal{M}_H\rightarrow R_{T/U}\otimes_{R_{T/H}} \mathcal{M}_H'\]
 are quasi isomorphisms, where the first tensor product is taken with respect to $\phi'$ on the left and $R_{T/H}\rightarrow R_{T/U}$ is understood via $\varphi_U\circ\phi\circ\psi_H$. Clearly $H^*(R_{T/U}\otimes_{R_{T/H}} \mathcal{M}_H')= R_{T/U}\otimes_{R_{T/H}} H^*(\mathcal{M}_H')$.

\end{proof}

\begin{rem}\label{rem:technicalstuff}
Using Lemma \ref{lem:disconnected}, the theory surrounding free nilpotent cdgas carries over to the disconnected realm. We collect here a couple of facts which we will use throughout the paper. They follow immediately from the standard theory by considering individual components in the canonical product decompositions.

\begin{itemize}
\item Any morphism $B\rightarrow C$, with $B$ cohomologically connected and $C$ spacelike, factors as $B\rightarrow B\otimes \Lambda Z\rightarrow C$, where the first map is a disconnected Sullivan extension and the second map is a quasi isomorphism (cf.\ \cite[Theorem 3.1]{BibelII}).

\item Let $P$ be cohomologically connected. A morphism from a disconnected free nilpotent extension $P\rightarrow \mathcal{M}$ lifts through a surjective quasi isomorphism of spacelike cochain algebras relative to an initial lift of $P$. In case the quasi isomorphism is not surjective there is still a lift up to homotopy relative to $P$. Furthermore the homotopy class relative to $P$ of the lift is unique (cf.\ \cite[Lemma 14.4, Proposition 14.6]{BibelI}; note that results are formulated for the more restrictive relative Sullivan case, but the extra assumptions are not used; see also \cite[Lemma 1.2]{BibelII}).
\end{itemize}

For a cochain algebra $A$ the associated simplicial set, its so-called Sullivan realization, is denoted by $\langle A \rangle$ (cf.\ \cite[p.\ 247]{BibelI}). The spatial realization of $\langle A\rangle$ will be denoted by $\vert A\vert$. Note that directly from the definitions and Lemma \ref{lem:disconnected} we obtain $\langle \prod A_i\rangle=\coprod \langle A_i\rangle$ and $\vert \prod A_i\vert=\coprod \vert A_i\vert$. Thus the basic facts on realization carry over by considering path components separately. We will use:
\begin{itemize}
\item Let $\mathcal{M}$ be a disconnected free nilpotent algebra. Then the canonical map $A_{PL}(\vert\mathcal{M}\vert)\rightarrow A_{PL}(\langle\mathcal{M}\rangle)$ is a surjective quasi isomorphism (this is stated e.g.\ in \cite[p.\ 249]{BibelI} and \cite[p.\ 28]{BibelII} for Sullivan algebras and $\Lambda$-algebras respectively but the proofs apply equally in the more general setting).

\item Let $\mathcal{M}$ be an almost Sullivan algebra. If $H^1(\mathcal{M})=0$ and $\mathcal{M}$ is of finite type, then $\vert \mathcal{M}\vert$ is componentwise simply connected. Furthermore the canonical map $\mathcal{M}\rightarrow A_{PL}(\vert \mathcal{M}\vert)$ is a quasi isomorphism (the realization of an almost Sullivan cdga, i.e.\ a tensor product of a contractible cdga and a Sullivan cdga, splits into the product of the realization of the latter and a contractible space, as shown in \cite[Section 17 (c) Examples 1,2]{BibelI}; then use \cite[Theorem 17.10]{BibelI}).

\item If $P\rightarrow \mathcal{M}$ is a disconnected Sullivan extension, then the map $\vert\mathcal{M}\vert\rightarrow \vert P\vert$ is a fibration (cf.\cite[Proposition 17.9]{BibelI}).
\end{itemize}
\end{rem}
\subsection{The surjectivity condition}\label{sec:surjcond}

\begin{defn} Let $\mathcal{D}\subset\mathcal{S}$ be a subset.

\begin{itemize} 
\item For some $(U,H)\in\mathcal{D}$ let $\mathcal{D}(U,H)$ denote the subset of all $(U',H')>(U,H)$. Also denote by $\mathcal{D}^+(U,H)\subset \mathcal{D}(U,H)$ the subset of those $(U',H')>(U,H)$ with $U'\subsetneq U$ and by $\mathcal{D}^-(U,H)$ the subset of all $(U',H')>(U,H)$ with $H'\supsetneq H$.
\item
We say a $\mathcal{D}$-system $A$ satisfies the surjectivity condition \textbf{(SC)} if for any $(U,H)\in \mathcal{D}$ the map $\underline{A}(U,H)\rightarrow\lim_{\mathcal{D}(U,H)} \underline{A}$ is surjective.
\item For a $\mathcal{D}$-system $\underline{A}$ and $(U,H)\in\mathcal{D}$, we denote by $K_{\underline{A}}(U,H)$ the kernel of $\underline{A}(U,H)\rightarrow \lim_{\mathcal{D}(U,H)}\underline{A}$. We say a morphism $\underline{A}\rightarrow \underline{B}$ satisfies the surjectivity condition for morphisms \textbf{(SCM)} if $\underline{A}$ and $\underline{B}$ satisfy \textbf{(SC)} and the maps $K_{\underline{A}}(U,H)\rightarrow K_{\underline{B}}(U,H)$ are all surjective.
\end{itemize}
\end{defn}

\begin{prop}\label{prop:SCexistence} Let $\mathcal{D}\subset\mathcal{S}$ be bounded.
\begin{enumerate}[(i)]
\item  Let $\underline{A}$ be a $\mathcal{D}$-system. Then there is a free extension $\underline{A}\rightarrow \underline{B}$ which is pointwise given by taking the tensor product with a contractible free cdga, and  which satisfies the following properties: $\underline{B}$ satisfies \textbf{(SC)}. Furthermore, if $f\colon\underline{A}\rightarrow \underline{C}$ is a morphism of $\mathcal{D}$-systems and $\underline{C}$ satisfies \textbf{(SC)}, then $f$ extends to a morphism $\underline{B}\rightarrow\underline{C}$.
\item A given morphism $\underline{A}\rightarrow \underline{C}$ between two systems satisfying \textbf{(SC)} factors as $\underline{A}\rightarrow\underline{B}\rightarrow \underline{C}$ such that the first map is as in $(i)$ and the second map satisfies \textbf{(SCM)}.
\end{enumerate}
\end{prop}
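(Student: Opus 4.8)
The plan is to argue by induction on the length of chains in the bounded poset $\mathcal{D}$, building the extension $\underline{B}$ one ``layer'' at a time, starting from the maximal elements of $\mathcal{D}$ and working downward. For part $(i)$, fix $(U,H)\in\mathcal{D}$ and suppose inductively that $\underline{B}$ has been constructed (agreeing with $\underline{A}$ up to contractible free tensor factors) on all of $\mathcal{D}(U,H)=\{(U',H')>(U,H)\}$, so that in particular $\lim_{\mathcal{D}(U,H)}\underline{B}$ is defined and the composite $\underline{A}(U,H)\to\lim_{\mathcal{D}(U,H)}\underline{A}\to\lim_{\mathcal{D}(U,H)}\underline{B}$ is available. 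To make $\underline{B}(U,H)\to\lim_{\mathcal{D}(U,H)}\underline{B}$ surjective while only changing $\underline{A}(U,H)$ by a contractible free tensor factor, I would adjoin a contractible free cdga $\Lambda(W\oplus dW)$ whose degree-$n$ generators $W^n$ are indexed by a generating set of the cokernel of $\underline{A}(U,H)^n\to(\lim_{\mathcal{D}(U,H)}\underline{B})^n$: send the new generator $w$ to a chosen element of $\underline{B}(U,H)=\underline{A}(U,H)\otimes\Lambda(W\oplus dW)$ realizing the given coset and send $dw$ to whatever is forced; since $\Lambda(W\oplus dW)$ is contractible, $\underline{A}(U,H)\to\underline{B}(U,H)$ is still a quasi-isomorphism, and one checks that the new map to the limit is now surjective (a cocycle argument handles the $dw$'s). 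The key point that makes this globally well defined as a functor is that the limit is taken over a full subposet and the transition maps out of $(U,H)$ all factor through $\lim_{\mathcal{D}(U,H)}\underline{B}$, so naturality in $\mathcal{D}$ is automatic.

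For the extension property in $(i)$: given $f\colon\underline{A}\to\underline{C}$ with $\underline{C}$ satisfying \textbf{(SC)}, I would extend $f$ over the new generators by the same downward induction. At stage $(U,H)$ we already have a compatible family of maps $\underline{B}(U',H')\to\underline{C}(U',H')$ for $(U',H')>(U,H)$, hence a map $\lim_{\mathcal{D}(U,H)}\underline{B}\to\lim_{\mathcal{D}(U,H)}\underline{C}$; composing with $f(U,H)$ on the $\underline{A}(U,H)$ factor gives us where $\underline{A}(U,H)$ goes, and for a new generator $w\in W^n$ its image in $\lim_{\mathcal{D}(U,H)}\underline{B}$ maps to some element of $(\lim_{\mathcal{D}(U,H)}\underline{C})^n$, which by \textbf{(SC)} for $\underline{C}$ lifts to $\underline{C}(U,H)^n$; pick such a lift for each $w$ and set $dw\mapsto d(\text{image of }w)$, which is consistent since $d$ of the chosen preimage is a preimage of the right element. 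This defines $\underline{B}\to\underline{C}$ extending $f$; it need not be unique, but existence is all that is claimed.

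Part $(ii)$ follows by applying $(i)$ to the morphism $\underline{A}\to\underline{C}$ (viewing $\underline{C}$ as a recipient satisfying \textbf{(SC)}) to get $\underline{A}\to\underline{B}\to\underline{C}$ with the first map as in $(i)$; it remains to arrange \textbf{(SCM)}, i.e.\ surjectivity of $K_{\underline{B}}(U,H)\to K_{\underline{C}}(U,H)$. This is handled by enlarging $\underline{B}$ once more, again by a downward induction and again by adjoining contractible free tensor factors: at position $(U,H)$ adjoin generators mapping onto a generating set of the cokernel of $K_{\underline{B}}(U,H)\to K_{\underline{C}}(U,H)$, with their $d$-images and their images in $\lim_{\mathcal{D}(U,H)}\underline{B}$ arranged to be zero (which is possible precisely because the chosen preimages lie in the kernel $K_{\underline{C}}$), so that the new generators land in $K_{\underline{B}}(U,H)$ and \textbf{(SC)} is preserved. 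The main obstacle I anticipate is bookkeeping rather than conceptual: one must check at each step that adjoining the contractible factor at $(U,H)$ does not disturb \textbf{(SC)} already achieved at positions above $(U,H)$ (it does not, since those limits do not see the $(U,H)$-entry) and that the whole construction assembles into an honest functor on $\mathcal{D}$ — this is where boundedness of $\mathcal{D}$ is used, to guarantee the downward induction terminates and that $\lim_{\mathcal{D}(U,H)}$ is a finite limit at each stage. The verification that each adjunction is a quasi-isomorphism is immediate from contractibility of $\Lambda(W\oplus dW)$ and the Künneth theorem.
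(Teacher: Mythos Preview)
Your approach to part $(i)$ is essentially the same as the paper's: a downward induction over the poset, adjoining a contractible free factor at each $(U,H)$ so that the map to $\lim_{\mathcal{D}(U,H)}\underline{B}$ becomes surjective, and extending morphisms to a target satisfying \textbf{(SC)} by lifting through that surjection. (The paper simply takes a contractible $\Lambda(V\oplus dV)$ surjecting onto the \emph{entire} limit rather than just the cokernel, which avoids your cocycle discussion, but this is cosmetic. Your remark that boundedness forces $\lim_{\mathcal{D}(U,H)}$ to be a finite limit is incorrect and not needed; boundedness is used only to make the layer induction terminate.)

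There is a genuine gap in your part $(ii)$. You apply $(i)$ first to obtain $\underline{B}$ with \textbf{(SC)}, and then run a second downward pass adjoining, at each $(U,H)$, a contractible factor whose generators map to zero in $\lim_{\mathcal{D}(U,H)}\underline{B}$ and hit the cokernel of $K_{\underline{B}}(U,H)\to K_{\underline{C}}(U,H)$. You check that this does not disturb \textbf{(SC)} at positions \emph{above} $(U,H)$, but you do not address positions \emph{below}. In fact it breaks there: once you enlarge $\underline{B}(U,H)$, any $(U',H')<(U,H)$ sees a larger target $\lim_{\mathcal{D}(U',H')}\underline{B}$ (containing compatible families supported on the new generators at $(U,H)$), while $\underline{B}(U',H')$ has only been enlarged by elements mapping to zero upward, so the map is no longer surjective. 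Concretely, for a chain $a<b<c$, adjoining kernel generators at $b$ mapping to $0$ in $c$ gives new elements of $\lim_{\{b,c\}}\underline{B}$ that are not hit from $a$.

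The paper avoids this by reversing the order: first enlarge $\underline{A}$ pointwise by contractible factors surjecting onto $K_{\underline{C}}(U,H)$ (with the new generators sent to zero above and into $K_{\underline{C}}(U,H)$ in $\underline{C}$), obtaining an intermediate $\underline{A}'\to\underline{C}$ that already has the required surjectivity on kernels but may fail \textbf{(SC)}; then apply $(i)$ once to $\underline{A}'$. The final $(i)$-step only adds tensor factors, so the previously adjoined kernel-hitting generators remain in $K_{\underline{B}}(U,H)$ and \textbf{(SCM)} persists. Your argument can be repaired in the same spirit by applying $(i)$ once more after your second pass, but as written the claim that ``\textbf{(SC)} is preserved'' is false.
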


\begin{proof}
In $(i)$, consider the filtration $\mathcal{D}=\bigcup_n \mathcal{D}_n$ where $\mathcal{D}_n$ consists of all those elements $(U,H)$ such that any maximal chain $(e,T)>\ldots> (U,H)$ is of length $\leq n$. We note that the filtration is exhaustive because $\mathcal{D}$ is bounded and that for $(U,H)\in \mathcal{D}_n$ we have $\mathcal{D}(U,H)\subset \mathcal{D}_{n-1}$. Assume for some $n$ that we have constructed free nilpotent extensions $\underline{A}|_{\mathcal{D}_n}\rightarrow \underline{B_n}$ of $\mathcal{D}_n$-systems, such that $\underline{B}_n$ satisfies \textbf{(SC)}. Now for every $(U,H)\in \mathcal{D}_{n+1}\backslash \mathcal{D}_{n}$ we define $\underline{B_{n+1}}(U,H)=\underline{A}(U,H)\otimes\Lambda(V\oplus dV)$ where $\Lambda(V\oplus dV)$ is a contractible free cdga which maps surjectively onto $\lim_{\mathcal{D}(U,H)} \underline{B_n}$. By construction, the map
\[\underline{B_{n+1}}(U,H)\rightarrow \lim_{\mathcal{D}(U,H)} \underline{B_{n+1}}=\lim_{\mathcal{D}(U,H)} \underline{B_n}\]
is surjective for $(U,H)\in \mathcal{D}_{n+1}$.
Iterating this procedure inductively yields the desired extension $\underline{A}\rightarrow \underline{B}$ in the limit. Now assume $f\colon\underline{A}\rightarrow \underline{C}$ is a morphisms and an extension of $f$ to $\underline{B_n}\rightarrow \underline{C}$ has been constructed. Then for $(U,H)$ and $\Lambda(V\oplus dV)$ as above we extend $f$ to $\underline{B_{n+1}}(U,H)=\underline{B_n}(U,H)\otimes\Lambda(V\oplus dV)$ by choosing any lift of the morphism
\[\Lambda(V\oplus dV)\rightarrow \lim_{\mathcal{D}(U,H)} \underline{B_n}\rightarrow \lim_{\mathcal{D}(U,H)} \underline{C}\] through the surjection $\underline{C}(U,H)\rightarrow \lim_{\mathcal{D}(U,H)} \underline{C}$. This extends $f$ to $\underline{B_{n+1}}$ as a morphism of diagrams.

For the proof of $(ii)$, at the $(U,H)$-position choose a contractible free cdga $\Lambda(W\oplus dW)$ which maps surjectively onto $K_{\underline{C}}(U,H)$. Then set $\underline{A'}(U,H)=\underline{A}(U,H)\otimes \Lambda(W\oplus dW)$ and map $\Lambda(W\oplus dW)$ to $0$ in $\underline{A'}(U',H')$ for any $(U',H')>(U,H)$. The obvious extension $\underline{A}'\rightarrow \underline{C}$ has the desired properties except $\underline{A'}$ does not necessarily satisfy \textbf{(SC)}. Now apply $(i)$.
\end{proof}

\begin{lem}\label{lem:pullbacksurj}
Let $\mathcal{D}\subset \mathcal{S}$ and let $\mathcal{F}\subset \mathcal{S}$ a subset with the property that every element of $\mathcal{F}$ is maximal in $\mathcal{D}\cup \mathcal{F}$.
Let $\underline{A}$ be a $\mathcal{D}\cup\mathcal{F}$-system satisfying \textbf{(SC)}. Then the projection $\lim_{\mathcal{D}\cup\mathcal{F}}\underline{A}\rightarrow \lim_{\mathcal{D}}\underline{A}$ is surjective.
\end{lem}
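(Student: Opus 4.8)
The plan is to reduce the statement to the elementary fact that adding a new maximal object to a diagram does not obstruct surjectivity onto an inverse limit, then iterate this one new object at a time. Since every element of $\mathcal{F}$ is maximal in $\mathcal{D}\cup\mathcal{F}$, the elements of $\mathcal{F}$ receive no non-identity morphisms from anything in $\mathcal{D}\cup\mathcal{F}$ other than morphisms out of smaller elements of $\mathcal{F}$; in particular $\mathcal{F}$ can be given a well-founded (by boundedness of $\mathcal{S}$, or just finiteness) ordering so that, writing $\mathcal{F}=\{(U_1,H_1),(U_2,H_2),\ldots\}$ with $(U_m,H_m)$ maximal in $\mathcal{D}\cup\{(U_1,H_1),\ldots,(U_m,H_m)\}$ for each $m$, it suffices to treat the case $|\mathcal{F}|=1$. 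Indeed, if the one-element case is known, then $\lim_{\mathcal{D}\cup\{(U_1,H_1)\}}\underline{A}\to\lim_{\mathcal{D}}\underline{A}$ is surjective, $\lim_{\mathcal{D}\cup\{(U_1,H_1),(U_2,H_2)\}}\underline{A}\to\lim_{\mathcal{D}\cup\{(U_1,H_1)\}}\underline{A}$ is surjective, and so on; composing the finitely many surjections gives the claim. (Here one uses that $\underline{A}$ restricted to any of these intermediate subsets still satisfies \textbf{(SC)}, which is immediate since \textbf{(SC)} is a pointwise condition and the relevant down-sets $\mathcal{D}(U,H)$ do not change when one only adds elements that are maximal.)

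So assume $\mathcal{F}=\{(U_0,H_0)\}$ with $(U_0,H_0)$ maximal in $\mathcal{D}\cup\mathcal{F}$. I claim $\lim_{\mathcal{D}\cup\mathcal{F}}\underline{A}$ can be described explicitly: a compatible family over $\mathcal{D}\cup\mathcal{F}$ is the same as a pair $(c,a)$ where $c=(c_{(U,H)})\in\lim_{\mathcal{D}}\underline{A}$ and $a\in\underline{A}(U_0,H_0)$ subject to the compatibility constraints imposed by the morphisms of $\mathcal{D}\cup\mathcal{F}$ that involve $(U_0,H_0)$. Since $(U_0,H_0)$ is maximal, every such morphism is of the form $(U',H')\to(U_0,H_0)$ with $(U',H')\in\mathcal{D}$ and $(U',H')<(U_0,H_0)$, i.e.\ with $(U',H')\in\mathcal{D}\cap \{(V,K)\mid (V,K)<(U_0,H_0)\}$. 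These are exactly the elements of $\mathcal{D}(U_0,H_0)$ in the sense of the paper (strict comparabilities), so the constraint on $a$ is precisely that its image under each structure map $\underline{A}(U',H')\to\underline{A}(U_0,H_0)$ agrees with the image of $c_{(U',H')}$, equivalently that $a$ maps to the element of $\lim_{\mathcal{D}(U_0,H_0)}\underline{A}$ determined by $c$ under the natural map $\lim_{\mathcal{D}}\underline{A}\to\lim_{\mathcal{D}(U_0,H_0)}\underline{A}$. (There are no constraints \emph{out of} $(U_0,H_0)$, by maximality.)

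Now surjectivity of $\lim_{\mathcal{D}\cup\mathcal{F}}\underline{A}\to\lim_{\mathcal{D}}\underline{A}$ follows: given $c\in\lim_{\mathcal{D}}\underline{A}$, let $\ell\in\lim_{\mathcal{D}(U_0,H_0)}\underline{A}$ be its image. By \textbf{(SC)} applied to $(U_0,H_0)$, the map $\underline{A}(U_0,H_0)\to\lim_{\mathcal{D}(U_0,H_0)}\underline{A}$ is surjective, so choose $a\in\underline{A}(U_0,H_0)$ lifting $\ell$. Then $(c,a)$ is a well-defined element of $\lim_{\mathcal{D}\cup\mathcal{F}}\underline{A}$ by the description above, and it projects to $c$. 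This proves the lemma. The only genuinely substantive point — and the step I would be most careful about — is the identification of $\lim_{\mathcal{D}\cup\mathcal{F}}\underline{A}$ as the set of compatible pairs; this needs the maximality hypothesis used twice (no incoming arrows from outside $\mathcal{D}$, and no outgoing arrows at all) and, in the multi-element case, the observation that the down-sets $\mathcal{D}(U_m,H_m)$ computed inside $\mathcal{D}\cup\{(U_1,H_1),\ldots,(U_m,H_m)\}$ coincide with those computed inside the full $\mathcal{D}\cup\mathcal{F}$, so that \textbf{(SC)} can be invoked at each stage without recomputation.
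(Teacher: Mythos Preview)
Your argument has the compatibility constraints running the wrong way. In the paper $\mathcal{D}(U,H)$ denotes the set of elements \emph{strictly greater} than $(U,H)$, and \textbf{(SC)} asserts surjectivity of $\underline{A}(U,H)\to\lim_{\mathcal{D}(U,H)}\underline{A}$, i.e.\ onto the limit over what lies \emph{above}. If $(U_0,H_0)$ is literally maximal in $\mathcal{D}\cup\mathcal{F}$ then $(\mathcal{D}\cup\mathcal{F})(U_0,H_0)=\emptyset$ and \textbf{(SC)} there is vacuous; the genuine constraints on your element $a\in\underline{A}(U_0,H_0)$ come from \emph{below}: for every $(U',H')<(U_0,H_0)$ in $\mathcal{D}$ the image of $c_{(U',H')}$ in $\underline{A}(U_0,H_0)$ must equal $a$. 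Nothing in the hypotheses forces those images to coincide, and there is no map from $\underline{A}(U_0,H_0)$ to a limit over the lower set for \textbf{(SC)} to control. Indeed the statement read with ``maximal'' is false: take two incomparable elements $p,q$ both strictly below $(U_0,H_0)$, set all three values of $\underline{A}$ equal with both structure maps the identity; then \textbf{(SC)} holds but $\lim_{\{p,q,(U_0,H_0)\}}\underline{A}\to\lim_{\{p,q\}}\underline{A}$ is the diagonal embedding, not a surjection.

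What is actually needed---and what the paper's own proof and every application (see the proofs of Propositions~\ref{prop:SC} and~\ref{prop:technicaldiagramstuff}) in fact verify---is that each element of $\mathcal{F}$ is \emph{minimal} in $\mathcal{D}\cup\mathcal{F}$. With that reading your approach becomes correct and essentially coincides with the paper's: for $(U,H)\in\mathcal{F}$ minimal there are no constraints from below, the constraints from above amount to $y_{(U,H)}$ mapping to the image of $x$ in $\lim_{(\mathcal{D}\cup\mathcal{F})(U,H)}\underline{A}=\lim_{\mathcal{D}(U,H)}\underline{A}$ (equality because no element of $\mathcal{F}$ can lie strictly above a minimal one), and \textbf{(SC)} supplies the lift. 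The paper simply treats all of $\mathcal{F}$ simultaneously; your reduction to $|\mathcal{F}|=1$ is then unnecessary but harmless.
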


\begin{proof}
Let $x\in \lim_{\mathcal{D}}\underline{A}$. For any $(U,H)\in \mathcal{F}$ let $x_{(U,H)}$ denote the image of $x$ in $\lim_{\mathcal{D}(U,H)}\underline{A}$. Note that $\mathcal{D}(U,H)=(\mathcal{D}\cup\mathcal{F})(U,H)$ and let $y_{(U,H)}\in \underline{A}(U,H)$ be a preimage of $x_{(U,H)}$. Then a preimage of $x$ is given by the element $y\in \lim_{\mathcal{D}\cup\mathcal{F}}\underline{A}$ which is $y_{(U,H)}$ in the $(U,H)$-component for $(U,H)\in \mathcal{F}$ and agrees with $x$ on the other components.
\end{proof}

\begin{prop}\label{prop:SC}
Let $\mathcal{D}\subset \mathcal{S}$ be a bounded subset and let $\underline{A}\rightarrow \underline{B}$ be a quasi isomorphism of $\mathcal{D}$-systems which satisfy \textbf{(SC)}. Then $\lim_{\mathcal{D}}\underline{A}\rightarrow \lim_{\mathcal{D}}\underline{B}$ is a quasi isomorphism. If moreover $\underline{A}\rightarrow\underline{B}$ satisfies \textbf{(SCM)} then $\lim_{\mathcal{D}}\underline{A}\rightarrow \lim_{\mathcal{D}}\underline{B}$ is surjective.
\end{prop}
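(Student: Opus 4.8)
The plan is to induct on $\ell(\mathcal{D})$, the length of a longest chain in the poset $\mathcal{D}$ (finite, since $\mathcal{D}$ is bounded), proving the statement for \emph{all} bounded subsets of $\mathcal{S}$ simultaneously; this is important so that the induction hypothesis can later be applied to the down-sets $\mathcal{D}(x)$ and to a truncated poset, which need not be one of the $\mathcal{D}_n$'s from the proof of Proposition \ref{prop:SCexistence}. For $\mathcal{D}$ an antichain (including $\mathcal{D}=\emptyset$) one has $\lim_{\mathcal{D}}\underline{A}=\prod_{(U,H)}\underline{A}(U,H)$, and a product of pointwise quasi isomorphisms is a quasi isomorphism, a product of surjections (under \textbf{(SCM)}) a surjection; so this is the base case. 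For the inductive step I would let $\mathcal{D}_0$ be the set of minimal elements of $\mathcal{D}$ (nonempty by boundedness) and set $\mathcal{E}=\mathcal{D}\setminus\mathcal{D}_0$. Prolonging a chain in $\mathcal{E}$ downward by a minimal element shows $\ell(\mathcal{E})<\ell(\mathcal{D})$, and prolonging a chain in $\mathcal{D}(x)$ by $x$ shows $\ell(\mathcal{D}(x))<\ell(\mathcal{D})$ for each $x\in\mathcal{D}_0$.

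A small but essential bookkeeping point makes the induction hypothesis applicable: for $y\in\mathcal{E}$, anything strictly above $y$ is non-minimal and hence lies in $\mathcal{E}$, so $\mathcal{E}(y)=\mathcal{D}(y)$; likewise $(\mathcal{D}(x))(y)=\mathcal{D}(y)$ for $x\in\mathcal{D}_0$, $y\in\mathcal{D}(x)$. Consequently $\underline{A}|_{\mathcal{E}},\underline{B}|_{\mathcal{E}}$ and all the $\underline{A}|_{\mathcal{D}(x)},\underline{B}|_{\mathcal{D}(x)}$ satisfy \textbf{(SC)}, with \emph{unchanged} kernels $K_{\underline{A}}(\cdot)$, so \textbf{(SCM)} is inherited too, and the restricted morphisms remain pointwise quasi isomorphisms; the inductive hypothesis therefore applies to $\underline{A}|_{\mathcal{E}}\to\underline{B}|_{\mathcal{E}}$ and to each $\underline{A}|_{\mathcal{D}(x)}\to\underline{B}|_{\mathcal{D}(x)}$.

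Since the elements of $\mathcal{D}_0$ are pairwise incomparable and each $\mathcal{D}(x)\subset\mathcal{E}$, unravelling the definition of the limit exhibits $\lim_{\mathcal{D}}\underline{A}$ as the pullback of $\lim_{\mathcal{E}}\underline{A}\to\prod_{x\in\mathcal{D}_0}\lim_{\mathcal{D}(x)}\underline{A}\leftarrow\prod_{x\in\mathcal{D}_0}\underline{A}(x)$, where the first map is restriction and the second is surjective in each degree by \textbf{(SC)}. Hence $\lim_{\mathcal{D}}\underline{A}$ is the kernel term of a short exact sequence of cochain complexes $0\to\lim_{\mathcal{D}}\underline{A}\to(\lim_{\mathcal{E}}\underline{A})\oplus\prod_x\underline{A}(x)\to\prod_x\lim_{\mathcal{D}(x)}\underline{A}\to 0$ (the last map being the difference of the two structure maps; it is not multiplicative, but this is irrelevant for the cohomology of the kernel), natural in the system. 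Comparing the long exact cohomology sequences for $\underline{A}$ and $\underline{B}$: the induced map on $H^*(\lim_{\mathcal{E}})$ is an isomorphism by induction, the map on $H^*(\prod_x\underline{A}(x))=\prod_xH^*(\underline{A}(x))$ is an isomorphism because $\underline{A}\to\underline{B}$ is a pointwise quasi isomorphism, and the map on $\prod_xH^*(\lim_{\mathcal{D}(x)})$ is an isomorphism by induction; the five lemma then gives that $\lim_{\mathcal{D}}\underline{A}\to\lim_{\mathcal{D}}\underline{B}$ is a quasi isomorphism.

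For the surjectivity statement under \textbf{(SCM)} I would argue directly on the pullback. Given $(b,(b_x))$ representing an element of $\lim_{\mathcal{D}}\underline{B}=\lim_{\mathcal{E}}\underline{B}\times_{\prod\lim_{\mathcal{D}(x)}\underline{B}}\prod\underline{B}(x)$, first lift $b$ to $a\in\lim_{\mathcal{E}}\underline{A}$ using surjectivity over $\mathcal{E}$ (induction together with the inherited \textbf{(SCM)}); then for each $x\in\mathcal{D}_0$ choose $a_x'\in\underline{A}(x)$ over $a|_{\mathcal{D}(x)}$ by \textbf{(SC)}, observe $\overline{a_x'}-b_x\in K_{\underline{B}}(x)$, lift it to $k\in K_{\underline{A}}(x)$ by \textbf{(SCM)}, and set $a_x:=a_x'-k$; this $a_x$ still maps to $a|_{\mathcal{D}(x)}$ and now maps to $b_x$ in $\underline{B}(x)$, so $(a,(a_x))$ is an element of $\lim_{\mathcal{D}}\underline{A}$ mapping to $(b,(b_x))$. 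I expect the only real friction to be organizational rather than conceptual: framing the induction over all bounded posets, checking the identities $\mathcal{E}(y)=\mathcal{D}(y)$ and $(\mathcal{D}(x))(y)=\mathcal{D}(y)$ that let \textbf{(SC)}/\textbf{(SCM)} descend, and verifying the pullback description of $\lim_{\mathcal{D}}$ carefully enough to extract the short exact sequence; after that the Mayer--Vietoris sequence and the five lemma do the work.
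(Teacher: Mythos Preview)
Your proof is correct and follows essentially the same inductive strategy as the paper: peel off minimal elements, set up a short exact sequence relating the full limit to the limits over smaller posets, and apply the five lemma. The only cosmetic difference is that the paper removes just those minimal elements sitting at the bottom of a chain of maximal length (writing the resulting kernel as $\prod K_{\underline{A}}(U,H)$ and first showing each $K_{\underline{A}}(U,H)\to K_{\underline{B}}(U,H)$ is a quasi isomorphism), whereas you remove all minimal elements at once and package the comparison into a single Mayer--Vietoris sequence; both lead to the same five-lemma conclusion.
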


\begin{proof}
As $\mathcal{D}$ is bounded, any chain in $\mathcal{D}$ is of length $\leq n$ for some $n\geq 0$. Assume inductively that we have shown the proposition for diagrams whose maximal chain length is bounded by $n-1$.

Let $\mathcal{D}_{n-1}\subset \mathcal{D}$ be the subset of those $(U,H)\in \mathcal{D}$ such that any chain $(U,H)<\ldots $ in $\mathcal{D}$ is of length $\leq n-1$. By the induction hypothesis, $\lim_{\mathcal{D}_{n-1}}\underline{A}\rightarrow \lim_{\mathcal{D}_{n-1}}\underline{B}$ is a quasi isomorphism.
Observe that the requirements of Lemma \ref{lem:pullbacksurj} are fulfilled when writing $\mathcal{D}=\mathcal{D}_{n-1}\cup\mathcal{F}$ as a disjoint union.
By the lemma we obtain a diagram of complexes
\[\xymatrix{
0\ar[r] & K_{\underline{A}} \ar[r]\ar[d]& \lim_{\mathcal{D}}\underline{A}\ar[r]\ar[d]&\lim_{\mathcal{D}_{n-1}}\underline{A}\ar[r]\ar[d]&0\\
0\ar[r]& K_{\underline{B}}\ar[r]&\lim_{\mathcal{D}}\underline{B}\ar[r]&\lim_{\mathcal{D}_{n-1}}\underline{B}\ar[r]& 0
}\]
in which the rows are exact. Note that for $(U,H)\in \mathcal{F}$ we have $\mathcal{D}(U,H)\subset \mathcal{D}_{n-1}$. Thus the left hand map between the kernels is given by \[K_{\underline{A}}\cong \prod_{(U,H)\in\mathcal{F}}K_{\underline{A}}(U,H)\rightarrow \prod_{(U,H)\in\mathcal{F}}K_{\underline{B}}(U,H)\cong K_{\underline{B}}\]
with $K_{\underline{A}}(U,H)$ and $K_{\underline{B}}(U,H)$ as defined above.
We claim that $K_{\underline{A}}(U,H)\rightarrow K_{\underline{B}}(U,H)$ induces an isomorphism in cohomology for any $(U,H)\in\mathcal{D}$. If this holds, then the same is true for $K_{\underline{A}}\rightarrow K_{\underline{B}}$ and thus also the middle vertical morphism in the diagram above, which finishes the induction. To prove the claim consider the diagram
\[\xymatrix{
0\ar[r] & K_{\underline{A}}(U,H) \ar[r]\ar[d]& \underline{A}(U,H)\ar[r]\ar[d]&\lim_{\mathcal{D}(U,H)}\underline{A}\ar[r]\ar[d]&0\\
0\ar[r]& K_{\underline{B}}(U,H)\ar[r]&\underline{B}(U,H)\ar[r]&\lim_{\mathcal{D}(U,H)}\underline{B}\ar[r]& 0
}\]
with exact rows. The central vertical map is a quasi isomorphism by hypothesis and the right hand vertical map is a quasi isomorphism by the induction hypothesis. Thus the claim and the first part of the proposition follow. If $K_{\underline{A}}(U,H)\rightarrow K_{\underline{B}}(U,H)$ is surjective for all $(U,H)\in \mathcal{D}$, then in the induction above $K_{\underline{A}}\rightarrow K_{\underline{B}}$ is surjective. Assuming inductively that $\lim_{\mathcal{D}_{n-1}}\underline{A}\rightarrow \lim_{\mathcal{D}_{n-1}}\underline{B}$ is surjective, then the first diagram above implies surjectivity of $\lim_{\mathcal{D}_{n}}\underline{A}\rightarrow \lim_{\mathcal{D}_{n}}\underline{B}$
\end{proof}

\begin{prop}\label{prop:lifting}
Let $\mathcal{D}\subset \mathcal{S}$ be a bounded subset.
Consider a commutative diagram of solid arrows of $\mathcal{D}$-systems
\[\xymatrix{
\underline{P}\ar[d]\ar[r] & \underline{A}\ar[d]\\
\underline{\mathcal{M}}\ar[r]\ar@{-->}[ur]& \underline{B}
}\]
in which the left hand vertical map is a disconnected free nilpotent extension and the right hand vertical map is a quasi isomorphism between spacelike systems satisfying \textbf{(SCM)}. Then the dashed arrow exists such that the diagram commutes.
\end{prop}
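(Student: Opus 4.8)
The plan is to construct the lift by an induction along the filtration of $\mathcal{D}$ by the height function $h(U,H)$, defined as the length of the longest strictly ascending chain of $\mathcal{D}$ that starts at $(U,H)$; this is everywhere finite because $\mathcal{D}$ is bounded. Set $\mathcal{D}^{(n)}=\{(U,H)\in\mathcal{D}~|~h(U,H)\le n\}$, so that $\mathcal{D}=\bigcup_{n\ge0}\mathcal{D}^{(n)}$, any two elements of $\mathcal{D}^{(n)}\setminus\mathcal{D}^{(n-1)}$ are incomparable, and $\mathcal{D}(U,H)\subset\mathcal{D}^{(n-1)}$ whenever $h(U,H)=n$. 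I will produce, by induction on $n$, compatible lifts $\underline{\psi}^{(n)}\colon\underline{\mathcal{M}}|_{\mathcal{D}^{(n)}}\to\underline{A}|_{\mathcal{D}^{(n)}}$ of the restricted square, beginning with the empty lift on $\mathcal{D}^{(-1)}=\emptyset$ (so that maximal positions, where $\lim_{\mathcal{D}(U,H)}$ is the terminal cochain algebra, are covered by the first step).

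For the inductive step, fix $(U,H)$ with $h(U,H)=n$ and consider the pullback cochain algebra
\[P:=\underline{B}(U,H)\times_{\lim_{\mathcal{D}(U,H)}\underline{B}}\lim_{\mathcal{D}(U,H)}\underline{A},\]
where the structure maps are the canonical cone map $\underline{B}(U,H)\to\lim_{\mathcal{D}(U,H)}\underline{B}$ and the map $\lim_{\mathcal{D}(U,H)}\underline{A}\to\lim_{\mathcal{D}(U,H)}\underline{B}$ induced by $\underline{A}\to\underline{B}$. The crucial point is that the natural map $q\colon\underline{A}(U,H)\to P$ is a \emph{surjective quasi-isomorphism of spacelike cochain algebras}. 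Surjectivity: given $(b,a)\in P$, use \textbf{(SC)} for $\underline{A}$ to lift $a$ to some $a_{0}\in\underline{A}(U,H)$; then $b$ and the image of $a_{0}$ in $\underline{B}(U,H)$ differ by an element of $K_{\underline{B}}(U,H)$, which by \textbf{(SCM)} is the image of some element of $K_{\underline{A}}(U,H)$, and adding this element to $a_{0}$ yields a preimage of $(b,a)$. That $q$ is a quasi-isomorphism follows by comparing the short exact sequences $0\to K_{\underline{A}}(U,H)\to\underline{A}(U,H)\to\lim_{\mathcal{D}(U,H)}\underline{A}\to0$ and $0\to K_{\underline{B}}(U,H)\to P\to\lim_{\mathcal{D}(U,H)}\underline{A}\to0$ (both exact by \textbf{(SC)}): under $q$ the quotient maps are the identity and the kernel map is $K_{\underline{A}}(U,H)\to K_{\underline{B}}(U,H)$, which is a quasi-isomorphism by the argument in the proof of Proposition \ref{prop:SC} applied to $\underline{A}|_{\mathcal{D}(U,H)}\to\underline{B}|_{\mathcal{D}(U,H)}$; the five lemma finishes it. Finally $P$ is spacelike because $q$ induces an isomorphism on $H^{0}$.

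With $q$ in hand, I extend $\underline{\psi}^{(n-1)}$ over $(U,H)$ by lifting through $q$ the map $\underline{\mathcal{M}}(U,H)\to P$ whose two components are the given $\underline{\mathcal{M}}(U,H)\to\underline{B}(U,H)$ and $\underline{\mathcal{M}}(U,H)\to\lim_{\mathcal{D}(U,H)}\underline{\mathcal{M}}\to\lim_{\mathcal{D}(U,H)}\underline{A}$, the second arrow being induced by $\underline{\psi}^{(n-1)}$ on $\mathcal{D}(U,H)\subset\mathcal{D}^{(n-1)}$; these components agree over $\lim_{\mathcal{D}(U,H)}\underline{B}$ by commutativity of the original square and of $\underline{\psi}^{(n-1)}$. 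Since $\underline{P}(U,H)\to\underline{\mathcal{M}}(U,H)$ is a disconnected free nilpotent extension and $q$ is a surjective quasi-isomorphism of spacelike cochain algebras, the lifting lemma recorded in Remark \ref{rem:technicalstuff} produces the desired lift relative to the prescribed initial lift $\underline{P}(U,H)\to\underline{A}(U,H)$; here one checks that the two composites $\underline{P}(U,H)\to\underline{A}(U,H)\xrightarrow{q}P$ and $\underline{P}(U,H)\to\underline{\mathcal{M}}(U,H)\to P$ coincide by chasing the original square together with the compatibility of $\underline{\psi}^{(n-1)}$ with $\underline{P}\to\underline{A}$ through $\lim_{\mathcal{D}(U,H)}$. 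By construction the new map is compatible with the cone maps into $\mathcal{D}^{(n-1)}$, and since every morphism of $\mathcal{D}$ out of a position of height $n$ lands in $\mathcal{D}^{(n-1)}$ (height-$n$ positions being pairwise incomparable), this is precisely the naturality needed; carrying this out at every height-$n$ position yields $\underline{\psi}^{(n)}$, and $\underline{\psi}:=\bigcup_{n}\underline{\psi}^{(n)}$ is the sought dashed arrow.

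The step I expect to be the main obstacle is verifying that $q$ is a surjective quasi-isomorphism of spacelike cdgas, since this is where the spacelike (disconnected) setting, both parts of the condition \textbf{(SCM)}, and Proposition \ref{prop:SC} must be combined; the filtration bookkeeping and the invocation of the lifting lemma are routine by comparison.
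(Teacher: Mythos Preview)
Your proof is correct and follows essentially the same approach as the paper: induct along the height filtration, form the pullback $P=\underline{B}(U,H)\times_{\lim_{\mathcal{D}(U,H)}\underline{B}}\lim_{\mathcal{D}(U,H)}\underline{A}$, show $\underline{A}(U,H)\to P$ is a surjective quasi-isomorphism, and apply the relative lifting lemma. The only cosmetic difference is that the paper verifies the quasi-isomorphism via the other projection $P\to\underline{B}(U,H)$ (which is a surjective quasi-isomorphism by Proposition~\ref{prop:SC}, whence $\underline{A}(U,H)\to P$ is one by 2-out-of-3), whereas you compare kernels $K_{\underline{A}}(U,H)\to K_{\underline{B}}(U,H)$ directly; both routes use the same ingredient from the proof of Proposition~\ref{prop:SC}.
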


\begin{proof}
Let $\mathcal{D}=\bigcup_n \mathcal{D}_n$ as in the proof of Proposition \ref{prop:SC}. Assume that we have constructed the lift when restricting to $\mathcal{D}_{n-1}$-systems. Let $(U,H)\in \mathcal{D}_n\backslash \mathcal{D}_{n-1}$.  Consider the pullback square
\[\xymatrix{
Q\ar[r]\ar[d]&\lim_{\mathcal{D}(U,H)} \underline{A}\ar[d]\\
\underline{B}(U,H)\ar[r]& \lim_{\mathcal{D}(U,H)}\underline{B}
}\]
with the compatible $\underline{P}(U,H)$-actions. By Proposition \ref{prop:SC} the right hand vertical map is a surjective quasi isomorphism so the same holds for $Q\rightarrow \underline{B}(U,H)$. In particular it follows that $\underline{A}(U,H)\rightarrow Q$ is a quasi isomorphism. It follows from the \textbf{(SCM)} condition that $\underline{A}(U,H)\rightarrow Q$ is also surjective. Note that $\mathcal{D}(U,H)\subset \mathcal{D}_{n-1}$. Thus by induction we have a map $\underline{\mathcal{M}}(U,H)\rightarrow Q$, compatible with the $\underline{P}(U,H)$-actions. We lift this relative to $\underline{P}(U,H)$ to a map $\underline{\mathcal{M}}(U,H)\rightarrow \underline{A}(U,H)$. This defines the desired lift on $\mathcal{D}_n$ and finishes the proof.
\end{proof}

\begin{prop}\label{prop:technicaldiagramstuff}
Let $\mathcal{D}\subset \mathcal{S}$ be a stable bounded subset and $\underline{A}$ be a $\mathcal{D}$-system satisfying \textbf{(SC)}. Let $(U,H)\in \mathcal{D}$.
\begin{enumerate}[(i)]
\item The projections
\[\lim_{\mathcal{D}(U,H)}\underline A\longrightarrow \lim_{\mathcal{D}^+(U,H)}\underline{A}\]
and
\[\lim_{\mathcal{D}(U,H)}\underline A\longrightarrow \lim_{\mathcal{D}^-(U,H)}\underline{A}\]
are surjective.
\item
If $H^0(\underline{A}(U',H))\rightarrow H^0(\underline{A}(\{1\},H))$ is an isomorphism and $H^1(\underline{A}(U',H))=0$ for any $(U,H)\leq (U',H)\in \mathcal{D}$ then $H^1(\lim_{\mathcal{D}^+(U,H)}\underline{A})=0$. 
\item
If in addition to the conditions of $(ii)$ the map $H^2(\underline{A}(U',H))\rightarrow H^2(\underline{A}(\{1\},H))$ is injective for every $U'\subsetneq U$, then $H^2(\lim_{\mathcal{D}^+(U,H)}\underline{A})\rightarrow H^2(\underline{A}(e,H))$ is injective.
\end{enumerate}
\end{prop}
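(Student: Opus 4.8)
\emph{Proof proposal.}
Write $\mathcal{F}:=\mathcal{D}(U,H)\setminus\mathcal{D}^{+}(U,H)=\{(U,H')\in\mathcal{D}:H'\supsetneq H\}$ and $\mathcal{E}:=\mathcal{D}(U,H)\setminus\mathcal{D}^{-}(U,H)=\{(U',H)\in\mathcal{D}:U'\subsetneq U\}$; note that any subtorus $U'\subsetneq U$ has $U'\subset U\subset H$, so these pairs do lie in $\mathcal{D}$, and $(\{1\},H)\in\mathcal{E}$ (when $U\neq\{1\}$; the case $U=\{1\}$ is vacuous). For (i) I would extend a given compatible family along \textbf{(SC)}: take $(a_{p})_{p\in\mathcal{D}^{+}(U,H)}$ and enumerate $\mathcal{F}$ by non-increasing height (possible since $\mathcal{D}$ is bounded). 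When $(U,H')$ is reached, its strict up-set in $\mathcal{D}(U,H)$ is $\mathcal{D}(U,H')=\mathcal{D}^{+}(U,H')\sqcup\{(U,H'')\in\mathcal{F}:H''\supsetneq H'\}$; the first piece lies in $\mathcal{D}^{+}(U,H)$ because $H'\supset H$, the second consists of already-processed $\mathcal{F}$-elements, so all values on $\mathcal{D}(U,H')$ are available and (one checks) form a compatible family there. Then \textbf{(SC)} lets us lift it to $a_{(U,H')}\in\underline{A}(U,H')$; compatibility of $a_{(U,H')}$ with everything above is automatic, and compatibility below is dealt with when the lower $\mathcal{F}$-elements are processed. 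This yields a preimage in $\lim_{\mathcal{D}(U,H)}\underline{A}$. The second projection is the same argument with $\mathcal{F},\mathcal{D}^{+}$ replaced by $\mathcal{E},\mathcal{D}^{-}$, using $\mathcal{D}(U',H)=\mathcal{D}^{-}(U',H)\sqcup\{(U'',H)\in\mathcal{E}:U''\subsetneq U'\}$ with $\mathcal{D}^{-}(U',H)\subset\mathcal{D}^{-}(U,H)$ for $(U',H)\in\mathcal{E}$.

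The crux of (ii) and (iii) is the identification
\[
\lim_{\mathcal{D}^{+}(U,H)}\underline{A}\ \xrightarrow{\ \cong\ }\ \lim_{\mathcal{E}}\underline{A}|_{\mathcal{E}},
\]
given by restriction and compatible with the projections to the $(\{1\},H)$-entry. Its inverse sends a compatible family on $\mathcal{E}$ to the family on $\mathcal{D}^{+}(U,H)$ whose value at $(U',H')$ is the image of its value at $(U',H)\in\mathcal{E}$ under $\underline{A}(U',H)\to\underline{A}(U',H')$; uniqueness of morphisms in $\mathcal{S}$ makes this well defined and compatible. (Conceptually, the functor $\mathcal{D}^{+}(U,H)\to\mathcal{E}$, $(U',H')\mapsto(U',H)$, is initial.) The gain is that this discards exactly the entries of $\mathcal{D}^{+}(U,H)$ with second coordinate strictly bigger than $H$, on which the hypotheses give no information, and leaves $\underline{A}|_{\mathcal{E}}$, \emph{every} entry of which has $H^{1}=0$ and $H^{0}(\underline{A}(U',H))\cong H^{0}(\underline{A}(\{1\},H))$. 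Two cheap auxiliary facts I would also record: $\underline{A}|_{\mathcal{E}}$ satisfies \textbf{(SC)} inside $\mathcal{E}$ (combine \textbf{(SC)} for $\underline{A}$, part (i), and the same identification applied at each $(U',H)\in\mathcal{E}$); and $\mathcal{E}$ has a least element $(\{1\},H)$, so it is connected, whence $H^{0}(\underline{A})|_{\mathcal{E}}$ --- a diagram of isomorphisms over a connected poset --- has $\lim_{\mathcal{E}}H^{0}(\underline{A})=H^{0}(\underline{A}(\{1\},H))$, and (as kernels commute with limits) the same holds for the limit of any subdiagram of $\underline{A}|_{\mathcal{E}}$ through $(\{1\},H)$.

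For (ii) I build $\mathcal{E}$ up from $\{(\{1\},H)\}$ by adjoining maximal elements of the not-yet-processed part (a whole antichain layer at once, if $\mathcal{E}$ is infinite). Adjoining $(U_{0},H)$, whose up-set $\{(U'',H):U''\subsetneq U_{0}\}$ is already processed, realizes the new limit as the pullback of $\underline{A}(U_{0},H)$ and the old limit over $\lim_{\{(U'',H):U''\subsetneq U_{0}\}}\underline{A}$ along the map out of $\underline{A}(U_{0},H)$, which is surjective by \textbf{(SC)} for $\underline{A}|_{\mathcal{E}}$; hence a Mayer--Vietoris long exact sequence. Feeding in $H^{1}(\underline{A}(U_{0},H))=0$, $H^{1}$ of the old limit $=0$ by induction, and the $H^{0}$-computation above (all relevant $H^{0}$'s equal $H^{0}(\underline{A}(\{1\},H))$, and the restriction maps between them are the given isomorphisms), the sequence forces $H^{1}$ of the new limit to vanish and $H^{0}$ of it to stay $H^{0}(\underline{A}(\{1\},H))$, completing the induction; so $H^{1}(\lim_{\mathcal{D}^{+}(U,H)}\underline{A})=0$.

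For (iii) I would argue incrementally over $\mathcal{E}$. Let $z\in Z^{2}(\lim_{\mathcal{E}}\underline{A})=\lim_{\mathcal{E}}Z^{2}(\underline{A})$ restrict to a coboundary in $\underline{A}(\{1\},H)$; I need $W\in(\lim_{\mathcal{E}}\underline{A})^{1}$ with $dW=z$. Each $z_{(U',H)}$ is already a coboundary in $\underline{A}(U',H)$, since its class dies in $H^{2}(\underline{A}(\{1\},H))$ and $H^{2}(\underline{A}(U',H))\to H^{2}(\underline{A}(\{1\},H))$ is injective. Process $\mathcal{E}$ top-down from any primitive $W_{(\{1\},H)}$ of $z_{(\{1\},H)}$. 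When $(U',H)$ is reached, the chosen $W$'s over its up-set $\mathcal{E}(U',H)=\{(U'',H):U''\subsetneq U'\}$ form a primitive of $z|_{\mathcal{E}(U',H)}$ in $\lim_{\mathcal{E}(U',H)}\underline{A}\cong\lim_{\mathcal{D}^{+}(U',H)}\underline{A}$; pick any primitive $w\in\underline{A}(U',H)^{1}$ of $z_{(U',H)}$, observe that the image of $w$ differs from that primitive by a $1$-cocycle of $\lim_{\mathcal{E}(U',H)}\underline{A}$, which is a coboundary $d\bar{\eta}$ by (ii) applied to $(U',H)$, lift $\bar{\eta}$ through the surjection $\underline{A}(U',H)\to\lim_{\mathcal{E}(U',H)}\underline{A}$ from \textbf{(SC)} for $\underline{A}|_{\mathcal{E}}$, and replace $w$ by $w-d(\text{lift})$; the resulting $W_{(U',H)}$ still has $dW_{(U',H)}=z_{(U',H)}$ and is compatible with all earlier values. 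At the end $dW=z$, so $H^{2}(\lim_{\mathcal{D}^{+}(U,H)}\underline{A})\to H^{2}(\underline{A}(\{1\},H))$ is injective. The step that must be gotten exactly right is the reduction to $\mathcal{E}$ in the second paragraph: both the Mayer--Vietoris argument and the cocycle correction break down over $\mathcal{D}^{+}(U,H)$ itself, because its entries with second coordinate strictly containing $H$ carry cohomology the hypotheses do not constrain; everything after the reduction is a bounded induction powered by \textbf{(SC)} and part (i).
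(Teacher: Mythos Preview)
Your argument is correct and follows essentially the same route as the paper: the key reduction $\lim_{\mathcal{D}^{+}(U,H)}\underline{A}\cong\lim_{\mathcal{E}}\underline{A}$ (the paper's $\overline{\mathcal{D}(U,H)}$), the verification that $\underline{A}|_{\mathcal{E}}$ satisfies \textbf{(SC)} via (i), and the bounded bottom-up induction along $\mathcal{E}$ are all exactly what the paper does. Two cosmetic remarks: in the inherited order from $\mathcal{S}$ the element $(\{1\},H)$ is the \emph{greatest} element of $\mathcal{E}$, not the least (your argument is unaffected, since you use the up-set correctly throughout); and for (iii) the paper packages your cocycle-correction step as the vanishing $H^{2}(\ker(\underline{A}(U',H)\to\lim_{\mathcal{E}(U',H)}\underline{A}))=0$ via the long exact sequence, which is the same computation in slightly different clothing.
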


\begin{proof}
We prove only the surjectivity of the first map in $(i)$. The proof for the second map is completely analogous with the roles of the two components swapped.
We have $\mathcal{D}(U,H)=\mathcal{D}^+(U,H)\cup F$, where $F=\{(U,H')\in \mathcal{D}~|~ H\subsetneq H'\}$. We filter $F=\bigcup F_n$ where $F_n$ consists of those $(U,H')$ such that the length of any maximal chain $(U,T)>\ldots>(U,H')$ in $\mathcal{D}$ is of length $\leq n$. The map \[\lim_{\mathcal{D}^+(U,H)\cup F_{n+1}}\underline{A}\longrightarrow \lim_{\mathcal{D}^+(U,H)\cup F_n}\underline{A}
\]
is surjective by Lemma \ref{lem:pullbacksurj} and the surjectivity of $\lim_{\mathcal{D}(U,H)}\underline A\longrightarrow \lim_{\mathcal{D}^+(U,H)}\underline{A}$ follows inductively.

Now suppose $H^0(\underline{A}(U,H))\rightarrow H^0(\underline{A}(U',H))$ is an isomorphism and $H^1(\underline{A}(U',H))=0$ for any $U\supset U'\in \mathcal{D}_L$. For any $(K,L)\in \mathcal{S}$, let $\overline{\mathcal{D}(K,L)}=\{(K',L)\in\mathcal{D}~|~K'\subsetneq K\}$. We claim that the projection
\[\lim_{\mathcal{D}^+(U,H)}\underline{A}\longrightarrow \lim_{\overline{\mathcal{D}(U,H)}}\underline{A}\]
is an isomorphism. Any element $(K,L)\in\mathcal{D}^+(U,H)$ has the predecessor $(K,H)\in \overline{D(U,H)}$. Thus the projection map is an injection. On the other hand every element of $\lim_{\overline{\mathcal{D}(U,H)}}\underline{A}$ extends to one of $\lim_{{\mathcal{D}^+(U,H)}}\underline{A}$ by setting the $(K,L)$ component to be the image of the $(K,H)$ component.

We show now that $H^1(\lim_{\overline{\mathcal{D}(U,H)}}\underline{A})=0$. Define a filtration $\overline{\mathcal{D}(U,H)}=\bigcup_n E_n$ where $E_n$ consists of those $(U',H)$ where the length of any maximal chain $(\{1\},H)>\ldots>(U',H)$ in $\mathcal{D}$ is $\leq n$. We claim that $\lim_{E_{n+1}}\underline{A}\rightarrow \lim_{E_n}\underline{A}$ is surjective. The composition
\[\underline{A}(K,H)\rightarrow\lim_{\mathcal{D}(K,H)}\underline{A}\rightarrow \lim_{\mathcal{D}^+(K,H)}\underline{A}\cong \lim_{\overline{\mathcal{D}(K,H)}}\underline{A}\]
is surjective by \textbf{(SC)} and the first part of the proposition. Thus $\underline{A}|_{\overline{D(U,H)}}$ satisfies \textbf{(SC)} and the claim follows from Lemma \ref{lem:pullbacksurj}.

The algebra $\lim_{E_0}\underline{A}=\underline{A}(e,H)$ is simply connected. Now assume inductively that $\lim_{E_n}\underline{A}$ is simply connected. Let $B$ be the kernel fitting in the exact sequence 
\[0\longrightarrow B\longrightarrow \lim_{E_{n+1}}\underline{A}\longrightarrow \lim_{E_n}\underline{A}\longrightarrow 0\]
of complexes. By the long exact cohomology sequence and the induction hypothesis $H^1(B)\rightarrow H^1(\lim_{E_{n+1}}\underline{A})$ is surjective. We observe that 
\[B\cong \prod_{(K,H)\in E_{n+1}\backslash E_n} B_{(K,H)}\]
as complexes, where
\[0\longrightarrow B_{(K,H)}\longrightarrow \underline{A}(K,H)\longrightarrow \lim_{\overline{\mathcal{D}(K,H)}}\underline{A}\longrightarrow 0\]
is exact. It suffices to show that all the $B_{(K,H)}$ are cohomologically simply connected. As $H^1(\underline{A}(K,H))=0$ it suffices to show that $H^0(\underline{A}(K,H))\rightarrow H^0(\lim_{\overline{D(K,H)}}\underline{A})$ is an isomorphism.
To see this, recall that by hypothesis $H^0(\underline{A}(U',H))\rightarrow H^0(\underline{A}(\{1\},H))$ is an isomorphism for all $U\supset U'\in\mathcal{D}_L$. Since all algebras are nonnegatively graded, degree $0$ cohomology is just the kernel of the differentials. But then clearly the projection $\lim_{\overline{\mathcal{D}(K,H)}}\underline{A}\rightarrow \underline{A}(\{1\},H)$ maps the degree $0$ kernels of the differentials isomorphically to each other.

For the proof of $(iii)$ we note that $\lim_{E_0}\underline{A}=\underline{A}(\{1\},H)$ and proceed by showing that the maps $H^2(\lim_{E_{n+1}}\underline{A})\rightarrow H^2(\lim_{E_n}\underline{A})$ are injective for all $n$. For this it suffices that $H^2(B)=0$, which is equivalent to $H^2(B_{(K,H)})=0$ for all $(K,H)\in E_{n+1}\backslash E_n$. For any such $(K,H)$, the map $H^2(\underline{A}(K,H))\rightarrow H^2(\lim_{\overline{\mathcal{D}(K,H)}}\underline{A})$ is injective because the composition with the projection onto $H^2(\underline{A}(\{1\},H))$ is. But then $H^2(B_{(K,H)})=0$ follows from the long exact sequence of $0\longrightarrow B_{(K,H)}\longrightarrow \underline{A}(K,H)\longrightarrow \lim_{\overline{\mathcal{D}(K,H)}}\underline{A}\longrightarrow 0$ using that $H^1(\lim_{\overline{\mathcal{D}(K,H)}}\underline{A})=0$ by $(ii)$.
\end{proof}

\begin{lem}\label{lem:subcomplexes}
Let $X$ be a $T$-CW-complex with isotropies contained in a bounded collection $C$ of subgroups of $T$. Set $\mathcal{D}=\mathcal{D}(C)$. For some $H\in C$ we define $\mathcal{D}^-(H)\subset \mathcal{D}$ to be the subset of elements of the form $(H_0,K)$ with $K\supsetneq H$. Also set $Y$ to be the union over all $X^K$ for $T\supset K\supsetneq H$. Then $\underline{A_{PL}(X)}|_{\mathcal{D}^-(H)}$ satisfies \textbf{(SC)} and $A_{PL}(Y_{T/H_0})\rightarrow \lim_{\mathcal{D}^-(H)} \underline{A_{PL}(X)}$ is a surjective quasi isomorphism.
\end{lem}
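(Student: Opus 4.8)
The plan is to translate the lemma into a statement about the polynomial forms of a CW complex covered by a family of subcomplexes closed under intersection, and prove it by induction on the length of the (bounded) poset $\mathcal{D}^-(H)$. For $H\subsetneq K\subseteq T$ the fixed set $X^K$, and hence $Y$, is a $T$-subcomplex of $X$ on which $H_0\subseteq H\subsetneq K$ acts trivially, so $Y_{T/H_0}$ is defined and, the Borel construction commuting with unions, $Y_{T/H_0}=\bigcup X^K_{T/H_0}$ with $K$ ranging over $\{K\in\mathcal{D}_R:K\supsetneq H\}\cong\mathcal{D}^-(H)$ (using $X^K=X^{m_\mathcal{D}(K)}$, Lemma \ref{lem:isotropytypes}). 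Unwinding definitions, $\underline{A_{PL}(X)}|_{\mathcal{D}^-(H)}$ is exactly the diagram of restriction maps between the subalgebras $A_{PL}(X^K_{T/H_0})$ of $A_{PL}(Y_{T/H_0})$, and the map of the lemma is the canonical one. This family of subcomplexes is closed under finite intersection, since $X^K\cap X^{K'}=X^{\langle K,K'\rangle}=X^{m_\mathcal{D}(\langle K,K'\rangle)}$ with $m_\mathcal{D}(\langle K,K'\rangle)\in\mathcal{D}_R$ and $\supseteq K\supsetneq H$. To make the induction close I would prove the following generalization: for any subtorus $U\in\mathcal{D}_L$ and any collection $\mathcal{K}$ of groups in $\mathcal{D}_R$, each containing $U$, closed under $(K,K')\mapsto m_\mathcal{D}(\langle K,K'\rangle)$ and with $\mathcal{G}:=\{(U,K):K\in\mathcal{K}\}$ bounded, the system $\underline{A_{PL}(X)}|_\mathcal{G}$ satisfies \textbf{(SC)} and $A_{PL}\bigl((\bigcup_{K\in\mathcal{K}}X^K)_{T/U}\bigr)\to\lim_\mathcal{G}\underline{A_{PL}(X)}$ is a surjective quasi isomorphism; the lemma is the case $U=H_0$, $\mathcal{K}=\{K\in\mathcal{D}_R:K\supsetneq H\}$. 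The point of this generality is that it is preserved both by passing to an over-category $\mathcal{G}(U,K)=\{(U,K'):K'\in\mathcal{K},K'\supsetneq K\}$ and by discarding from $\mathcal{G}$ the set of its minimal elements.

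\textit{Induction on the length $\ell$ of $\mathcal{G}$; condition \textbf{(SC)}.} If $\ell=0$ the diagram has at most one entry and both assertions are immediate. For $\ell>0$ and $(U,K)\in\mathcal{G}$ the over-category $\mathcal{G}(U,K)$ has length $<\ell$ (prepend $(U,K)$ to a chain in it) and its associated subcomplex is $(Y_K)_{T/U}$, where $Y_K:=\bigcup_{K'\in\mathcal{K},\,K'\supsetneq K}X^{K'}$ is a $T$-subcomplex of $X^K$ fixed by $U$. By the inductive hypothesis $A_{PL}((Y_K)_{T/U})\to\lim_{\mathcal{G}(U,K)}\underline{A_{PL}(X)}$ is a surjective quasi isomorphism, and since every structure map out of $(U,K)$ factors through $(Y_K)_{T/U}\hookrightarrow X^K_{T/U}$, the \textbf{(SC)}-map at $(U,K)$ is the composite $A_{PL}(X^K_{T/U})\to A_{PL}((Y_K)_{T/U})\to\lim_{\mathcal{G}(U,K)}\underline{A_{PL}(X)}$, whose first factor is surjective because $A_{PL}$ sends subcomplex inclusions to surjections (\cite{BibelI}). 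This yields \textbf{(SC)}.

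\textit{The quasi isomorphism.} Let $\mathcal{F}\subseteq\mathcal{G}$ be the antichain of minimal elements, and $\mathcal{G}':=\mathcal{G}\setminus\mathcal{F}$, which is again of the general form above, has length $\le\ell-1$, and contains every strict successor of an element of $\mathcal{F}$. Put $W:=(\bigcup_{(U,K)\in\mathcal{G}'}X^K)_{T/U}\subseteq Y_{T/U}$. Using Lemma \ref{lem:isotropytypes} as above one checks $W\cap X^K_{T/U}=(Y_K)_{T/U}$ for $(U,K)\in\mathcal{F}$, and $X^K_{T/U}\cap X^{K'}_{T/U}\subseteq W$ for distinct $(U,K),(U,K')\in\mathcal{F}$; hence $Y_{T/U}/W=\bigvee_{(U,K)\in\mathcal{F}}X^K_{T/U}/(Y_K)_{T/U}$. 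This gives a short exact sequence of complexes
\[0\to\prod_{(U,K)\in\mathcal{F}}\ker\bigl(A_{PL}(X^K_{T/U})\to A_{PL}((Y_K)_{T/U})\bigr)\to A_{PL}(Y_{T/U})\to A_{PL}(W)\to 0,\]
the surjectivity on the right coming from \cite{BibelI}, together with an entirely parallel sequence in which $A_{PL}(Y_{T/U})$ and $A_{PL}(W)$ are replaced by $\lim_\mathcal{G}\underline{A_{PL}(X)}$ and $\lim_{\mathcal{G}'}\underline{A_{PL}(X)}$ — with the same kernel, and exactness there (in particular surjectivity of $\lim_\mathcal{G}\to\lim_{\mathcal{G}'}$) following from the \textbf{(SC)} just proved. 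The comparison map is the identity on kernels and, by the inductive hypothesis, a surjective quasi isomorphism $A_{PL}(W)\xrightarrow{\sim}\lim_{\mathcal{G}'}\underline{A_{PL}(X)}$ on quotients; the five lemma applied to the long exact cohomology sequences, plus a short diagram chase for surjectivity, then shows that $A_{PL}(Y_{T/U})\to\lim_\mathcal{G}\underline{A_{PL}(X)}$ is a surjective quasi isomorphism, completing the induction.

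\textit{Main obstacle.} The delicate points should be, first, getting the inductive framework right: the first component must be allowed to be an arbitrary subtorus, since in the \textbf{(SC)} step it is held fixed while the base group enlarges, and the quasi isomorphism step passes to $\mathcal{G}\setminus\mathcal{F}$. Second, since $C$ is only assumed bounded, $\mathcal{F}$ may be infinite, so the short exact sequences and the five-lemma comparison have to be organized with the same product-over-the-new-strata bookkeeping as in the proof of Proposition \ref{prop:SC}, using in addition that $A_{PL}$ carries increasing unions of subcomplexes to inverse limits (\cite{BibelI}).
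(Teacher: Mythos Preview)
Your inductive framework is reasonable and the \textbf{(SC)} argument goes through. The gap is in the quasi-isomorphism step: the displayed short exact sequence for $A_{PL}(Y_{T/U})$ is not correct. The kernel of $A_{PL}(Y_{T/U})\to A_{PL}(W)$ consists of polynomial forms on $Sing(Y_{T/U})$ that vanish on $Sing(W)$; it is \emph{not} the product $\prod_{(U,K)\in\mathcal{F}}\ker\bigl(A_{PL}(X^K_{T/U})\to A_{PL}((Y_K)_{T/U})\bigr)$, because a singular simplex of $Y_{T/U}$ need not land in $W$ or in any single $X^K_{T/U}$. There is a restriction map from the true kernel to that product, but it is in general neither injective nor surjective. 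It \emph{is} a quasi-isomorphism, by excision for CW pairs applied to $H^*(Y_{T/U},W)\cong\bigoplus H^*(X^K_{T/U},(Y_K)_{T/U})$, so the five-lemma part of your argument can be salvaged. Similarly, the kernel of $\lim_\mathcal{G}\to\lim_{\mathcal{G}'}$ is $\prod_{(U,K)\in\mathcal{F}}\ker\bigl(A_{PL}(X^K_{T/U})\to\lim_{\mathcal{G}(U,K)}\bigr)$, which strictly contains your product, since by induction $A_{PL}((Y_K)_{T/U})\to\lim_{\mathcal{G}(U,K)}$ has nontrivial (though acyclic) kernel. So the two sequences do not have ``the same kernel'', only quasi-isomorphic ones.

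The more serious casualty is surjectivity. With the corrected kernels the comparison $K_1\to K_2$ is only a quasi-isomorphism, not a surjection: an element of $K_2$ is a family $(\alpha_K)$ with $\alpha_K|_{(Y_K)_{T/U}}$ merely in the acyclic kernel of the map to the limit, not zero, whereas anything in the image of $K_1$ must restrict to zero on $(Y_K)_{T/U}\subset W$. Hence the snake-lemma chase does not give surjectivity of $A_{PL}(Y_{T/U})\to\lim_\mathcal{G}$. More conceptually, an element of $\lim_\mathcal{G}$ is a compatible family of forms on the $Sing(X^K_{T/U})$, and $A_{PL}$ offers no mechanism to glue these to a form on the singular simplices of $Y_{T/U}$ that do not respect the cover. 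The paper sidesteps this entirely: it identifies $\lim_{\mathcal{D}^-(H)}\underline{A_{PL}(X)}$ with $A_{PL}$ of the sub-simplicial-set of $Sing(Y_{T/H_0})$ consisting of simplices respecting the closed cover, so that surjectivity is immediate from the fact that $A_{PL}$ takes injections of simplicial sets to surjections. For the quasi-isomorphism it then thickens each $X^K_{T/H_0}$ to an open neighbourhood that deformation retracts back and invokes the small-simplices argument (barycentric subdivision) for open covers. Your induction is thus not needed for surjectivity, and for the quasi-isomorphism it would still require the excision input you omitted.
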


\begin{proof}
We begin with the surjectivity of $A_{PL}(Y_{T/H_0})\rightarrow \lim_{\mathcal{D}^-(H)} \underline{A_{PL}(X)}$. In what follows, $Sing(-)$ denotes the simplicial set given by the singular simplices of a space.
The map corresponds to the morphism $A_{PL}(Sing(Y_{T/H_0}))\rightarrow A_{PL}(\colim Sing(X_{T/H_0}^K))$ induced by the morphism of simplicial sets $\colim Sing(X_{T/H_0}^K))\rightarrow Sing(Y_{T/H_0})$ and the colimit is formed over all $(H_0,K)\in\mathcal{D}^-(H)$.  For $(H_0,K)$, $(H_0,K')\in\mathcal{D}^-(H)$ we have $X_{T/H_0}^K\cap X_{T/H_0}^{K'}=X_{T/H_0}^{K''}$ for some $(H_0,K'')\in\mathcal{D}^-(H)$.
From this we deduce that the colimit can be identified with the subset of singular simplices which respect the cover $Y_{T/H_0}=\bigcup_{(H_0,K)\in \mathcal{D}^-(H)} X_{T/H_0}^K$. The inclusion of simplicial sets induces a surjection on $A_{PL}(-)$ (see \cite[Proposition 10.4]{BibelI}). The same argument shows that $\underline{A_{PL}(X)}|_{\mathcal{D}^-(H)}$ satisfies \textbf{(SC)}.

To show that $A_{PL}(Y_{T/H_0})\rightarrow \lim_{\mathcal{D}^-(H)} \underline{A_{PL}(X)}$ is a quasi isomorphism, we first replace the $X_{T/H_0}^K$ by open neighbourhoods. For a fixed $0<\varepsilon<1$ and $T$-subcomplex $A\subset X$ we define an equivariant open neighbourhood as follows: inductively assume we have constructed a neighbourhood $U_n$ of $A^n$ in $X^n$. Then for each $(n+1)$-cell $\phi\colon T/K\times D^{n+1}\rightarrow X^n$ which is not in $A^{n+1}$ we radially thicken up $\phi^{-1}(U_n)$ by length $\varepsilon$ into the interior of $T/K\times D^{n+1}$. This construction gives neighbourhoods $U_{(H_0,K)}\subset Y_{T/H_0}$ for every $(H_0,K)\in\mathcal{D}^-(H)$ which deformation retract onto $X_{T/H_0}^K$. For any $(H_0,K)\leq (H_0,K')$ we have $U_{(H_0,K)}\supset U_{(H_0,K')}$ and  we have $U_{(H_0,K)}\cap U_{(H_0,K')}=U_{(H_0,K'')}$ whenever $X_{T/H_0}^K\cap X_{T/H_0}^{K'}=X_{T/H_0}^{K''}$. Then $\lim_{\mathcal{D}^-(H)} \underline{A_{PL}(X)}\rightarrow \lim_{(H_0,K)\in\mathcal{D}^-(H)} A_{PL}(U_{(H_0,K)})$ is a quasi isomorphism since this holds pointwise and both systems satisfy \textbf{(SC)}. As before, the right hand algebra can be understood as $A_{PL}(-)$ applied to the subset of $Sing(Y_{T/H_0})$ which respects the open cover given by the $U_{(H_0,K)}$.

In order to show that $A_{PL}(Y_{T/H_0})\rightarrow \lim_{(H_0,K)\in\mathcal{D}^-(H)} A_{PL}(U_{(H_0,K)})$ is a quasi isomorphism, by \cite[Theorem 10.9]{BibelI}, it suffices to prove that the inclusion of the corresponding simplicial sets induces a quasi isomorphism on the standard singular cochain algebras. This is a standard result which follows via barycentric subdivision.
\end{proof}

\section{Strictification}
The goal of this section is to show that the notion of homotopy of systems reduces to pointwise considerations in the specific case of models for the Borel fibration on $T$-simply connected spaces.

By $(t,dt)$ we denote the free contractible cdga on the generators $t$ in degree $0$ and $dt$ in degree $1$, where the differential sends $t$ to $dt$. There are two evaluation morphisms $i_0$ and $i_1$ to $\mathbb{Q}$ given by $t\mapsto 0$ and $t\mapsto 1$ respectively and $dt\mapsto 0$.
Recall that a homotopy between two morphisms $f,g\colon A\rightarrow B$, is a morphism $H\colon A\rightarrow B\otimes (t,dt)$ with $i_0\circ H=f$, $i_1\circ H=g$, where we write $i_j$ for the map $\id_B\otimes i_j$ by abuse of notation. For a system $\underline{A}$ of cdgas we form the system $\underline{A\otimes (t,dt)}$ by taking the pointwise tensor product and extending maps via the identity on the right hand side. We obtain morphisms $i_0,i_1\colon \underline{A\otimes(t,dt)}\rightarrow \underline{A}$ of systems. We point out that if $\mathcal{D}\subset \mathcal{S}$ is finite, then $\lim_{\mathcal{D}}\underline{A\otimes(t,dt)}=\lim_\mathcal{D}\underline{A}\otimes (t,dt)$.

\begin{defn} A homotopy between morphisms $f,g\colon\underline{A}\rightarrow\underline{B}$ of systems of cdgas is a morphism  $H\colon \underline{A}\rightarrow \underline{B\otimes (t,dt)}$ satisfying $i_0\circ H=f$ and $i_1\circ H=g$.
\end{defn}

The following standard lemma is the key to passing from homotopy commutative diagrams to strictly commutative ones.

\begin{lem}\label{lem:homotopetocom}
Let $P$ be cohomologically connected and $A,B$ be spacelike cochain algebras. Consider a diagram
\[\xymatrix{
 P\ar[r]\ar[d]& A\ar[d]^\pi\\
 \mathcal{M}\ar[ur]^\varphi\ar[r]^\psi & B
}\]
in which $P\rightarrow \mathcal{M}$ is a disconnected free nilpotent extension, the outer square as well as the upper left triangle commute, and the lower right triangle commutes up to homotopy relative to $P$. If $\pi$ is surjective, then we find a morphism $\varphi'$ homotopic to $\varphi$ relative to $P$, such that $\psi=\pi\circ\varphi'$.
\end{lem}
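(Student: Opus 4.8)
The statement is a relative version of the standard fact that a homotopy can be corrected to a strict equality when the target map is surjective; the only new ingredients are the disconnected setting and the ``relative to $P$'' bookkeeping. The plan is to use the given homotopy $H\colon \mathcal{M}\to B\otimes(t,dt)$ between $\psi$ and $\pi\circ\varphi$ (which, by hypothesis, restricts to the constant homotopy on $P$), and lift it through the surjection $\pi\otimes\mathrm{id}\colon A\otimes(t,dt)\to B\otimes(t,dt)$ starting from the already-given lift $\varphi$ at the endpoint $t=0$. Concretely, I would first observe that $A\otimes(t,dt)\to B\otimes(t,dt)$ is a surjective quasi-isomorphism (surjective since $\pi$ is, and a quasi-isomorphism since tensoring with the contractible $(t,dt)$ does not change the cohomology, or simply because it is a surjection of complexes whose kernel is $\ker(\pi)\otimes(t,dt)$, hence acyclic iff $\ker\pi$ is — but here we do not even need $\pi$ to be a quasi-isomorphism, only surjective). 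Then I would set up the lifting problem
\[
\xymatrix{
P\ar[r]\ar[d] & A\otimes(t,dt)\ar[d]^{\pi\otimes\mathrm{id}}\\
\mathcal{M}\ar[r]^-{H}\ar@{-->}[ur] & B\otimes(t,dt)
}
\]
where the left vertical map is $P\to\mathcal{M}$ followed by the inclusion $P\hookrightarrow A\hookrightarrow A\otimes(t,dt)$ into the $t=0$ piece, which is a legitimate initial lift because $H$ restricted to $P$ is the constant homotopy and $\pi\circ(P\to A)$ equals $P\to B$.

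The existence of the dashed lift $\widetilde H\colon \mathcal{M}\to A\otimes(t,dt)$ is exactly the first bullet of Remark \ref{rem:technicalstuff}: a morphism from a disconnected free nilpotent extension $P\to\mathcal{M}$ lifts through a surjective quasi-isomorphism of spacelike cochain algebras relative to an initial lift on $P$. (Here $A\otimes(t,dt)$ and $B\otimes(t,dt)$ are spacelike since $A$, $B$ are and tensoring with the connected contractible algebra $(t,dt)$ preserves $H^0$.) Having obtained $\widetilde H$, I set $\varphi' := i_1\circ\widetilde H\colon \mathcal{M}\to A$. Then $\pi\circ\varphi' = i_1\circ(\pi\otimes\mathrm{id})\circ\widetilde H = i_1\circ H = \psi$, which is the desired strict equality. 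Moreover $\widetilde H$ itself is a homotopy from $i_0\circ\widetilde H$ to $\varphi' = i_1\circ\widetilde H$; but $i_0\circ\widetilde H$ need not literally equal $\varphi$. I chose the initial lift so that $i_0\circ\widetilde H$ and $\varphi$ agree on $P$, hence $\widetilde H$ is at least a homotopy relative to $P$ between $i_0\circ\widetilde H$ and $\varphi'$; it remains to connect $i_0\circ\widetilde H$ to $\varphi$ relative to $P$. Both are lifts of $\psi$... no — rather, both $i_0\circ\widetilde H$ and $\varphi$ are maps $\mathcal{M}\to A$ agreeing on $P$ and both projecting under $\pi$ to maps homotopic rel $P$ (namely $i_0\circ H=\psi$ and $\pi\circ\varphi$, which are homotopic rel $P$ via $H$); by the uniqueness-up-to-homotopy-rel-$P$ clause of Remark \ref{rem:technicalstuff} applied to lifts of $\psi$ through $\pi$, the cleanest route is to instead choose the initial lift of $H$ on the nose so that $i_0\circ\widetilde H=\varphi$: since $H|_P$ is constant and $\varphi$ is already a lift of $H|_{t=0}$ on all of $\mathcal{M}$, one extends $H|_P$ and $\varphi$ compatibly to an initial datum on the subalgebra $P\otimes(t,dt)+\mathcal{M}\otimes 1$ and lifts from there, again using the free nilpotent lifting lemma. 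This forces $i_0\circ\widetilde H=\varphi$ exactly, so $\widetilde H$ is a homotopy rel $P$ from $\varphi$ to $\varphi'$ and we are done.

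The main obstacle is purely bookkeeping: ensuring that the initial lift of the homotopy can be prescribed simultaneously on $P$ (to make the homotopy rel $P$) and at the endpoint $t=0$ (to make it start at $\varphi$), i.e. that $\varphi$ together with the constant homotopy on $P$ glue to an admissible partial lift from which the free nilpotent lifting lemma applies. This is the standard ``lifting a homotopy with prescribed initial face'' argument — one lifts generator-by-generator up the free nilpotent filtration, at each stage solving a lifting problem against the surjective (acyclic, or at least surjective-with-the-needed-lifting-property) map $\pi\otimes\mathrm{id}$ — and the disconnectedness causes no trouble by Lemma \ref{lem:disconnected} and the first bullet of Remark \ref{rem:technicalstuff}. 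I expect the write-up to be short once this gluing is phrased correctly, and I would present it exactly in the order above: (1) identify the surjective quasi-isomorphism $\pi\otimes\mathrm{id}$; (2) assemble the initial partial lift from $\varphi$ and the constant homotopy on $P$; (3) invoke the free nilpotent lifting lemma to get $\widetilde H$; (4) read off $\varphi'=i_1\circ\widetilde H$ and verify $\pi\circ\varphi'=\psi$ and that $\widetilde H$ witnesses $\varphi\simeq\varphi'$ rel $P$.
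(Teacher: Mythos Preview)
Your core idea is correct and is essentially the paper's proof, but your packaging contains a real misstep that the paper's formulation avoids.

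The map $\pi\otimes\id\colon A\otimes(t,dt)\to B\otimes(t,dt)$ is \emph{not} a quasi-isomorphism in general: its kernel is $\ker(\pi)\otimes(t,dt)$, whose cohomology is $H^*(\ker\pi)$, not zero. So your first lifting square (and your summary item (1)) cannot be solved by the lifting lemma from Remark~\ref{rem:technicalstuff} as stated. You notice the symptom (the lift need not hit $\varphi$ at the endpoint) and propose the right fix---prescribe the lift simultaneously on $P$ and at one endpoint---but phrasing this as ``lift from the subalgebra $P\otimes(t,dt)+\mathcal{M}\otimes 1$'' is not a direct instance of the free-nilpotent lifting lemma, since that subalgebra is not the base of a free nilpotent extension of $\mathcal{M}$.

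The paper encodes your endpoint constraint cleanly: form the pullback $Q$ of $B\otimes(t,dt)\xrightarrow{i_1}B\xleftarrow{\pi}A$ and observe that the single map
\[
(\pi\otimes\id)\times i_1\colon A\otimes(t,dt)\longrightarrow Q
\]
\emph{is} a surjective quasi-isomorphism (its kernel is $\ker\pi\otimes\ker i_1$, and $\ker i_1\subset(t,dt)$ is acyclic). Then the data $(H,\varphi)$ give a map $\mathcal{M}\to Q$, and one lifts this relative to $P$ (constant homotopy) to obtain $\tilde{H}$ with $i_1\circ\tilde{H}=\varphi$ and $(\pi\otimes\id)\circ\tilde{H}=H$ simultaneously; then $\varphi':=i_0\circ\tilde{H}$ works. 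This is exactly your ``prescribed initial face'' lift, but now it is a single application of the standard lifting lemma against a genuine surjective quasi-isomorphism. I would rewrite your steps (1)--(3) accordingly.
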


\begin{proof}
Let $H\colon \mathcal{M}\rightarrow B\otimes (t,dt)$ be a homotopy with $i_0\circ H=\psi$, $i_1\circ H=\pi\circ\varphi$. Denote by $Q$ the pullback of $B\otimes (t,dt)\xrightarrow{i_1} B\xleftarrow{\pi} A$. Then we have a map $(\pi\otimes \id_{(t,dt)})\times i_1\colon A\otimes (t,dt)\rightarrow Q$. One checks that this is a surjective quasi isomorphism.
Now consider the map $H\times \varphi\colon \mathcal{M}\rightarrow Q$. We lift this through $(\pi\otimes \id_{(t,dt)})\times i_1$ relative to $P$, where we use the constant homotopy $P\rightarrow A\otimes (t,dt)$ as an initial lift.
The result is a homotopy $\tilde{H}\colon \mathcal{M}\rightarrow A\otimes (t,dt)$ relative to $P$ which satisfies $i_1\circ \tilde{H}=\varphi$. On the other hand $\pi\circ i_0\circ \tilde{H}=i_0\circ (\pi\otimes \id_{(t,dt)})\circ \tilde{H}=i_0\circ H=\psi$
\end{proof}

\begin{lem}\label{lem:cohomohomotopy}
Let $T$ be a torus, $P\rightarrow A_{PL}(BT)$ a free nilpotent model and $A$ any cochain algebra. Then two maps $P\rightarrow A$ are homotopic if and only if they induce the same map on cohomology.
\end{lem}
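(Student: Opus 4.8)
The plan is to exploit the very simple structure of a free nilpotent model $P$ for $A_{PL}(BT)$. Since $T$ is a torus of rank $n$, say, $H^*(BT) \cong \mathbb{Q}[x_1,\ldots,x_n]$ with $|x_i|=2$, and the minimal Sullivan model of $BT$ is simply $(\Lambda(x_1,\ldots,x_n),0)$, a polynomial algebra on degree-$2$ generators with zero differential. A general free nilpotent model $P$ need not be minimal, but by the almost Sullivan decomposition recalled in the Remark following Proposition \ref{prop:Sullivandiagram} (applied to $\mathbb{Q}\to A_{PL}(BT)$), we may write $P \cong (\Lambda W, 0) \otimes (\Lambda(V^0\oplus dV^0), d)$ where $(\Lambda W,\overline{d'})$ is the minimal model, hence $(\Lambda W, 0)$ with $W = W^2$ concentrated in degree $2$, and the second factor is contractible, generated in degree $0$. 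So the first step is to reduce to understanding maps out of this concrete algebra.

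\textbf{Key steps.} First I would record the easy direction: homotopic maps always induce the same map on cohomology, which is immediate from the fact that $i_0, i_1\colon B\otimes(t,dt)\to B$ are quasi-isomorphisms inducing the same map on $H^*$. For the converse, suppose $f,g\colon P\to A$ agree on cohomology. Using the almost Sullivan decomposition $P = (\Lambda W,0)\otimes(\Lambda(V^0\oplus dV^0),d)$, I would build the homotopy $H\colon P\to A\otimes(t,dt)$ factor by factor. On the contractible factor $(\Lambda(V^0\oplus dV^0),d)$ any two maps into any cochain algebra are homotopic relative to nothing (a contractible cdga is the cofibrant object through which lifts are unobstructed; concretely one builds the homotopy generator by generator since the differential is an isomorphism $V^0\xrightarrow{\sim} dV^0$, and there is no cohomology to match). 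On the Sullivan factor $(\Lambda W,0)$ with $W=W^2$: a map $(\Lambda W,0)\to A$ is the same as a linear map $W\to Z^2(A)$ (cocycles in degree $2$), and since $dW=0$ the induced map on cohomology is $W\to H^2(A)$, $w\mapsto [f(w)]$. If $f$ and $g$ agree on cohomology then $f(w)-g(w)$ is a coboundary for each $w\in W$; choosing for each basis element $w_i$ an element $b_i\in A^1$ with $db_i = f(w_i)-g(w_i)$, one defines $H(w_i) = f(w_i)\otimes 1 - db_i\otimes t - b_i\otimes dt$, which is a cocycle in $(A\otimes(t,dt))^2$, extends multiplicatively to $\Lambda W$, and satisfies $i_0\circ H = f$, $i_1\circ H = g$ on that factor. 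Tensoring the two homotopies gives the desired $H$ on all of $P$.

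\textbf{Main obstacle.} The only genuine subtlety is handling the degree-$0$ generators, i.e.\ ensuring the two factors really do decouple and that a map out of the contractible part is freely homotopic to any other with matching behaviour where it overlaps. This is where the almost Sullivan structure from the Remark after Proposition \ref{prop:Sullivandiagram} does the work: it guarantees $P$ splits as a tensor product of a Sullivan algebra generated in degree $2$ (with zero differential) and a contractible one, so there are no higher-degree generators, no nilpotent tower to climb, and the cohomology of $P$ is concentrated exactly on the Sullivan factor. Once that structural input is in hand, the rest is the elementary computation above. An alternative, slicker phrasing avoids choosing the decomposition explicitly: since $P$ is free nilpotent and the homotopy relation on maps out of a free nilpotent cdga is well behaved (Remark \ref{rem:technicalstuff}), and since $P\to A_{PL}(BT)$ is a quasi-isomorphism from a free nilpotent cdga whose minimal model $(\Lambda W,0)$ has all relations trivial, one reduces to the minimal model and invokes that homotopy classes of maps out of $(\Lambda(x_1,\ldots,x_n),0)$ into $A$ are classified by $\Hom(W, H^2(A))$ — but I would still spell out the homotopy construction since it is short and makes the statement self-contained.
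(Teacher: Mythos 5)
Your argument is essentially correct, and your ``alternative, slicker phrasing'' at the end is in fact exactly the paper's proof: the paper chooses a minimal model $\varphi\colon(\Lambda(x_1,\ldots,x_r),0)\rightarrow P$, uses that precomposition with this quasi-isomorphism between free nilpotent cdgas induces a bijection on homotopy classes of maps into $A$, and then writes down the same linear homotopy on the degree-$2$ generators that you do. Your primary route --- decomposing $P$ itself as $(\Lambda W,0)\otimes(\Lambda(V^0\oplus dV^0),d)$ and tensoring a homotopy on each factor --- is a workable variant, but two points need attention. First, the Remark following Proposition \ref{prop:Sullivandiagram} establishes the almost Sullivan decomposition only for the particular free nilpotent approximation constructed there, not for an arbitrary free nilpotent model $P\rightarrow A_{PL}(BT)$ as in the lemma's hypothesis; for a general such $P$ the contractible complement need not be generated in degree $0$ (it could, for instance, contain a factor $\Lambda(v,dv)$ with $|v|=2$), so you would either need a general decomposition theorem for free nilpotent cdgas or, more cheaply, the precomposition argument, which sidesteps the issue entirely. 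Second, your explicit formula has a sign slip: with $db_i=f(w_i)-g(w_i)$ the element $f(w_i)\otimes 1-db_i\otimes t-b_i\otimes dt$ is not closed (its differential is $-2\,db_i\otimes dt$); the cocycle you want is $f(w_i)\otimes 1-db_i\otimes t+b_i\otimes dt$, which agrees with the paper's $f(x_i)+(g(x_i)-f(x_i))t+y_i\,dt$. Both issues are local and easily repaired; the underlying idea --- that maps out of a model of $BT$ are controlled, up to coboundary, entirely by where the degree-$2$ polynomial generators go --- is the same as in the paper.
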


\begin{proof}
Let $f,g\colon P\rightarrow A$ be two maps which induce the same map on cohomology.
Choose a minimal model $\varphi\colon(\Lambda (x_1,\ldots,x_r),0)\rightarrow P$ with $x_i$ generators of degree $2$. It suffices to show that $f\circ\varphi$ and $g\circ\varphi$ are homotopic since composition with $\varphi$ induces a bijection on homotopy classes. By assumption there are $y_1,\ldots,y_r\in A^1$ such that $f(\varphi(x_i))-g(\varphi(x_i))=dy_i$. Then the desired homotopy is defined by $H\colon R\rightarrow A\otimes (t,dt)$, $x_i\mapsto f(\varphi(x_i))+(g(\varphi(x_i))-f(\varphi(x_i)))t+y_idt$.
\end{proof}

\begin{prop}\label{prop:coveringhomotopy}
Let $P\rightarrow A_{PL}(BT)$ be a free nilpotent model. Let

\[\xymatrix{
 & A\ar[d]^\pi\\ P\ar[ur]^{\tilde{f}}\ar[r]^f & B
}
\quad and \quad
\xymatrix{
 & A\ar[d]^\pi\\ P\ar[ur]^{\tilde{g}}\ar[r]^g & B
}\]
be two commutative diagrams of spacelike cochain algebras in which $\pi$ is surjective and $H^1(B)=0$. Suppose further that $H\colon P\rightarrow B\otimes (t,dt)$ is a homotopy from $f$ to $g$ and that $\tilde{f}$, $\tilde{g}$ are homotopic as well. Then there is a homotopy $\tilde{H}\colon P\rightarrow A\otimes (t,dt)$ from $\tilde{f}$ to $\tilde{g}$ such that ${H}=\pi\otimes \id_{(t,dt)}\circ \tilde{H}$.
\end{prop}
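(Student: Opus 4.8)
The plan is to encode $\tilde H$ as the solution of a single lifting problem and to produce it in the customary two steps: a lift up to homotopy, then a strictification. Put $T:=(B\otimes(t,dt))\times_{B\times B}(A\times A)$, the pullback of $(i_0,i_1)\colon B\otimes(t,dt)\to B\times B$ along $\pi\times\pi\colon A\times A\to B\times B$, and let $\Psi\colon A\otimes(t,dt)\to T$ be $u\mapsto\big((\pi\otimes\mathrm{id})u,\,i_0u,\,i_1u\big)$. A morphism $P\to T$ is exactly a homotopy $P\to B\otimes(t,dt)$ together with two maps $P\to A$ whose images under $\pi$ are its endpoints, so the data $H,\tilde f,\tilde g$ assemble into a morphism $(H,\tilde f,\tilde g)\colon P\to T$, and a lift of it through $\Psi$ is precisely a homotopy $\tilde H$ from $\tilde f$ to $\tilde g$ with $(\pi\otimes\mathrm{id})\tilde H=H$. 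One checks that $\Psi$ is surjective by splitting off the kernels of the two defining surjections and using that $(t,dt)$ is free; also $A\otimes(t,dt)$ and $T$ are spacelike (for $T$ one computes $H^0(T)=H^0(A)\times_{H^0(B)}H^0(A)$, a finite product of copies of $\mathbb{Q}$). Since $P$ is free nilpotent, all the liftings appearing below through surjective quasi-isomorphisms of spacelike cdgas are furnished by the lifting lemma recalled in Remark \ref{rem:technicalstuff}.

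For the lift up to homotopy I would use the hypothesis $\tilde f\simeq\tilde g$: fix a homotopy $\tilde L\colon P\to A\otimes(t,dt)$ from $\tilde f$ to $\tilde g$. Then $\Psi\tilde L=\big((\pi\otimes\mathrm{id})\tilde L,\tilde f,\tilde g\big)$ agrees with $(H,\tilde f,\tilde g)$ in the last two coordinates and records in the first coordinate the homotopy $(\pi\otimes\mathrm{id})\tilde L\colon f\simeq g$ rather than $H\colon f\simeq g$. Hence $\Psi\tilde L$ and $(H,\tilde f,\tilde g)$ are homotopic as maps $P\to T$ as soon as the two homotopies $H$ and $(\pi\otimes\mathrm{id})\tilde L$ from $f$ to $g$ are homotopic relative to their endpoints (the two constant coordinates cause no trouble). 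This relative statement is the heart of the matter, and it is where $H^1(B)=0$ is used: I claim that any two homotopies $P\to B\otimes(t,dt)$ with the same prescribed endpoints $f,g\colon P\to B$ are homotopic rel endpoints. Precomposition with a minimal model $\varphi\colon(\Lambda V,0)\to P$ (with $V$ in degree $2$, since $P\simeq A_{PL}(BT)$) is a bijection on homotopy classes of homotopies rel endpoints, so it suffices to prove this for $(\Lambda V,0)$; there it follows by the same explicit recipe as in Lemma \ref{lem:cohomohomotopy}: the difference of two such homotopies on each degree-$2$ generator is a cocycle in $B^2\otimes(t,dt)$ vanishing at both ends, and $H^1(B)=0$ is exactly what is needed to write the correcting degree-$1$ elements of $B$ and assemble a homotopy of homotopies (formally: the obstruction lies in $\mathrm{Hom}(V,H^1(B))=0$).

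Having a homotopy $\Theta\colon P\to T\otimes(t',dt')$ from $\Psi\tilde L$ to $(H,\tilde f,\tilde g)$, the strictification is routine. The morphism $q\colon(A\otimes(t,dt))\otimes(t',dt')\to(A\otimes(t,dt))\times_T(T\otimes(t',dt'))$, $w\mapsto\big(i_0^{t'}w,(\Psi\otimes\mathrm{id})w\big)$, is a surjective quasi-isomorphism of spacelike cdgas: surjectivity is checked exactly as for $\Psi$, and it is a quasi-isomorphism because it lies over the quasi-isomorphisms $i_0^{t'}$ on both sides (the projection $(A\otimes(t,dt))\times_T(T\otimes(t',dt'))\to A\otimes(t,dt)$ is the pullback of the surjective quasi-isomorphism $i_0^{t'}\colon T\otimes(t',dt')\to T$). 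As $P$ is free nilpotent we lift $(\tilde L,\Theta)$ along $q$ to $\widehat\Theta\colon P\to(A\otimes(t,dt))\otimes(t',dt')$; then $\tilde H:=i_1^{t'}\widehat\Theta$ satisfies $\Psi\tilde H=i_1^{t'}(\Psi\otimes\mathrm{id})\widehat\Theta=i_1^{t'}\Theta=(H,\tilde f,\tilde g)$, which is the desired covering homotopy. The one genuine obstacle is the relative-homotopy statement of the second paragraph — equivalently, that self-homotopies of a map into $B$ are trivial up to homotopy rel endpoints — which is precisely the content of the hypothesis $H^1(B)=0$; everything else is formal manipulation with free nilpotent extensions and surjective quasi-isomorphisms.
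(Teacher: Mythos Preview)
Your proof is correct and follows the same overall strategy as the paper: form the pullback $T$ (the paper calls it $Q$), reduce the problem to lifting $(H,\tilde f,\tilde g)\colon P\to T$ through the surjection $\Psi\colon A\otimes(t,dt)\to T$, show that the triangle with the given homotopy $\tilde L$ commutes up to homotopy, and then strictify. Your strictification step is precisely the proof of Lemma~\ref{lem:homotopetocom} unpacked, which is what the paper invokes.

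The one substantive difference lies in how you establish homotopy commutativity. The paper uses Lemma~\ref{lem:cohomohomotopy}: it suffices that $\Psi\tilde L$ and $(H,\tilde f,\tilde g)$ agree on $H^2$, and this is verified by a short direct computation in $Q$, with $H^1(B)=0$ entering via injectivity of $H^2(\ker(\pi\times\pi))\to H^2(A\times A)$. You instead argue that $(\pi\otimes\mathrm{id})\tilde L$ and $H$ are homotopic \emph{rel endpoints}, identifying the obstruction with $H^2$ of $\ker\big((i_0,i_1)\colon B\otimes(t,dt)\to B\times B\big)$, which the long exact sequence shows is exactly $H^1(B)$. This is a valid and conceptually pleasant reformulation. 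The only soft spot is your reduction to the minimal model $(\Lambda V,0)$: the assertion that precomposition with $\varphi$ is a bijection on ``homotopy classes of homotopies rel endpoints'' is true (it is a relative lifting statement along the trivial cofibration $\varphi$, hence follows from the LLP), but it is not among the lemmas developed in the paper and deserves a word of justification. The paper's route via Lemma~\ref{lem:cohomohomotopy} avoids this by working directly with $P$; alternatively, once you have $\Psi\tilde L\varphi\simeq(H,\tilde f,\tilde g)\varphi$ as maps $\Lambda V\to T$, the standard fact that $\varphi^*$ is a bijection on \emph{ordinary} homotopy classes into $T$ already suffices, and no relative version is needed.
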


\begin{proof}
Consider the commutative diagram

\[\xymatrix{P\ar@/_1.0pc/[ddr]_{\tilde{f}\times\tilde{g}}\ar@/^1.0pc/[rrd]^H & &\\
 & A\otimes (t,dt)\ar[d]\ar[r] & B\otimes (t,dt)\ar[d]\\
 & A\times A\ar[r] & B\times B
}\]
and let $G$ be some homotopy from $\tilde{f}$ to $\tilde{g}$. Let $Q$ denote the pullback of $A\times A\rightarrow B\times B\leftarrow B\otimes (t,dt)$. Then the above diagram induces the diagram

\[\xymatrix{
 & A\otimes (t,dt)\ar[d]^\alpha\\
 P\ar[ru]^G\ar[r]^\beta & Q
}\]
which we claim to be homotopy commutative. If this is true then by Lemma \ref{lem:homotopetocom}, we can homotope $G$ to a map $\tilde{H}$ which makes the diagram strictly commutative and thus gives the desired homotopy.

Let $x\in P^2$ be closed. Writing $Q\subset (A\times A)\times (B\otimes (t,dt))$, we find that the difference $\beta-\alpha\circ G$ maps $x$ to an element $((0,0),y)$ where $y\in \ker(B\otimes (t,dt)\rightarrow B\times B)$. It follows that $y=dz$ is exact. Let $z'\in A^1\times A^1$ be an element whose image in $B\times B$ agrees with that of $z$. It follows that $dz'\in K=\ker(A\times A\rightarrow B\times B)$. The long exact cohomology sequence of $K\rightarrow A\times A\rightarrow B\times B$ together with the fact that $H^1(B)=0$ implies that $H^2(K)\rightarrow H^2(A\times A)$ is injective. Thus there is some $a\in K^1$ with $da=dz'$. The element $(z'-a,z)$ lies in $Q$ and thus $\beta(x)-\alpha\circ G(x)$ is exact in $Q$. The claim now follows from Lemma \ref{lem:cohomohomotopy}.
\end{proof}

\begin{defn}
A weak morphism $f\colon \underline{A}\rightarrow \underline{B}$ of $\mathcal{D}$-systems is a morphism $\underline{A}(U,H)\rightarrow \underline{B}(U,H)$ for every $(U,H)\in \mathcal{D}$, such that the diagrams
\[\xymatrix{
\underline{A}(U,H)\ar[r]\ar[d] & \underline{B}(U,H)\ar[d]\\ \underline{A}(K,L)\ar[r]& \underline{B}(K,L)}\]
commute up to homotopy. We also refer to such a morphism as a weak $\underline{A}$-structure on $\underline{B}$. If the diagrams commute strictly (i.e.\ $f$ is a morphism of $\mathcal{D}$-systems) we also refer to $f$ as a strict morphism and say $\underline{B}$ has a (strict) $\underline{A}$-structure.
\end{defn}

\begin{rem}\label{rem:weakvscohomology}
There is a weak morphism $\underline{R}\rightarrow \underline{A_{PL}(*)}$ which is a pointwise minimal Sullivan model. Thus if $\underline{P}\rightarrow\underline{A_{PL}(*)}$ is a weak quasi isomorphism which is pointwise a free nilpotent model, then there are weak quasi isomorphisms $\underline{P}\simeq\underline{R}$ in both directions. This establishes a correspondence between weak $\underline{P}$-structures and cohomology $\underline{R}$-structures on $\underline{A}$ in the following fashion.

Clearly every weak morphism $\underline{P}\rightarrow\underline{A}$ induces a cohomology $\underline{R}$-structure via $H^*(\underline{P})\cong\underline{R}$. Conversely, by Lemma \ref{lem:cohomohomotopy}, choosing cocycle representatives produces a weak morphism $\underline{R}\rightarrow \underline{A}$ from a cohomology $\underline{R}$-structure on $\underline{A}$. Composing with the weak morphism $\underline{P}\rightarrow \underline{R}$ gives a weak morphism $\underline{P}\rightarrow\underline{A}$. These two constructions are inverse to one another up to pointwise homotopy on the side of weak $\underline{P}$-structures.
We extend the conditions \textbf{(TC)} and \textbf{(LC)} to (weak) $\underline{P}$-structures in the obvious fashion.
\end{rem}

\begin{thm}\label{thm:strictification}
Let $\mathcal{D}$ be stable and finite.
Let $\underline{P}$ be a free nilpotent $\mathcal{D}$-system satisfying $\underline{P}(U,H)=\underline{P}(U,T)$ for all $(U,H)\in \mathcal{D}$ and which admits a weak quasi isomorphism $\underline{P}\rightarrow \underline{A_{PL}(*)}$. Let $\underline{A}$ be a spacelike $\mathcal{D}$-system satisfying $H^1(\underline{A})=0$, \textbf{(SC)}, and \textbf{(TC)}.
\begin{enumerate}[(i)]
\item For every weak morphism $\underline{P}\rightarrow \underline{A}$, there is a pointwise homotopic strict morphism $\underline{P}\rightarrow\underline{A}$.
\item For any two strict morphisms $f,g\colon\underline{P}\rightarrow \underline{A}$ which are pointwise homotopic to one another, there is a strict morphism $H\colon \underline{P}\rightarrow \underline{A\otimes (t,dt)}$ which is a homotopy from $f$ to $g$.
\item Let $\underline{P}\rightarrow\underline{\mathcal{M}}$ be a disconnected free nilpotent extension and $f,g\colon \underline{\mathcal{M}}\rightarrow \underline{A}$ be two morphisms whose restrictions to $\underline{P}$ agree. Then if $f$ and $g$ are homotopic, they are connected by a chain of homotopies relative to $\underline{P}$.
\end{enumerate}
\end{thm}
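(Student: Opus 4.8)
\emph{Overall strategy.} The idea is to reduce all three assertions to statements over the poset $\mathcal{D}_L$ of subtori. Since $\mathcal{D}_R$ is closed under intersection, $m_{\mathcal{D}}(U):=\bigcap\{K\in\mathcal{D}_R\mid U\subseteq K\}$ lies in $\mathcal{D}_R$, so $(U,m_{\mathcal{D}}(U))$ is the initial object of the full subposet $\mathcal{D}_U:=\{(U,H)\in\mathcal{D}\}$. As $\underline{P}$ is constant along each $\mathcal{D}_U$ (that is the content of $\underline{P}(U,H)=\underline{P}(U,T)$), a strict morphism $\underline{P}\to\underline{A}$ is the same datum as a family $g_U\colon P(U):=\underline{P}(U,T)\to\underline{A}(U,m_{\mathcal{D}}(U))$, one for each $U\in\mathcal{D}_L$, constrained only by the $\underline{P}$-structure maps $P(U)\to P(U')$ with $U'\subsetneq U$; the remaining values $f_{(U,H)}$ are then $g_U$ post-composed with a structure map of $\underline{A}$, and functoriality is automatic. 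The same reduction applies to (chains of) homotopies, since $\lim_{\mathcal D}$ commutes with $\otimes(t,dt)$ for finite $\mathcal D$. One then constructs the relevant families by induction on $\dim U$, the base case $U=\{1\}$ being unconstrained, so any choice in the homotopy class prescribed by the cohomology $\underline{R}$-structure works.

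\emph{Proof of (i).} At stage $U$, the $g_{U'}$ with $\dim U'<\dim U$ assemble, through the structure maps of $\underline{A}$, to a map $h\colon P(U)\to\lim_{\mathcal{D}^+(U,m_{\mathcal{D}}(U))}\underline{A}$, and the task is to lift $h$ through $\pi\colon\underline{A}(U,m_{\mathcal{D}}(U))\to\lim_{\mathcal{D}^+(U,m_{\mathcal{D}}(U))}\underline{A}$ — surjective by \textbf{(SC)} together with Proposition~\ref{prop:technicaldiagramstuff}(i) — into the homotopy class prescribed by the cohomology $\underline{R}$-structure. The crucial observation is that the hypotheses of Proposition~\ref{prop:technicaldiagramstuff}(ii) hold: \textbf{(TC)} in degree $0$ gives $H^0(\underline{A}(U',H))\xrightarrow{\sim}H^0(\underline{A}(\{1\},H))$, and $H^1(\underline{A})=0$; hence $\lim_{\mathcal{D}^+(U,m_{\mathcal{D}}(U))}\underline{A}$ has vanishing $H^1$ and, by the filtration argument in that proof, its $H^2$ embeds into the product of the $H^2(\underline{A}(U',H'))$. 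Since homotopy classes of maps out of the free nilpotent model $P(U)$ of $A_{PL}(B(T/U))$ are detected on cohomology (Lemma~\ref{lem:cohomohomotopy}) and the $\underline{R}$-structure is compatible with all structure maps, $h$ and $\pi g'$ induce the same map on $H^2$, hence are homotopic, for any representative $g'$ of the prescribed class. I would then lift this homotopy: the map $\alpha\mapsto(i_0\alpha,(\pi\otimes\id)\alpha)$ exhibits $\underline{A}(U,m_{\mathcal{D}}(U))\otimes(t,dt)$ as a surjective quasi-isomorphic cover of the pullback of $\pi$ along $i_0\colon(\lim_{\mathcal{D}^+(U,m_{\mathcal{D}}(U))}\underline{A})\otimes(t,dt)\to\lim_{\mathcal{D}^+(U,m_{\mathcal{D}}(U))}\underline{A}$, so the map out of $P(U)$ determined by $g'$ and the homotopy lifts through it (Remark~\ref{rem:technicalstuff}), and evaluation at $i_1$ yields $g_U$ with $\pi g_U=h$, still in the right class. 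The assembled strict morphism induces the cohomology $\underline{R}$-structure, hence is pointwise homotopic to the given weak morphism by Lemma~\ref{lem:cohomohomotopy}.

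\emph{Proof of (ii).} Since $(t,dt)$ is contractible, $\underline{A\otimes(t,dt)}$ is again spacelike with $H^1=0$, satisfies \textbf{(SC)} and \textbf{(TC)}, and inherits a cohomology $\underline{R}$-structure; in particular it again satisfies the hypotheses of the theorem. Running the same induction for the pair $f,g$, at stage $U$ the homotopies built so far assemble to $\hat H\colon P(U)\to(\lim_{\mathcal{D}^+(U,m_{\mathcal{D}}(U))}\underline{A})\otimes(t,dt)$ with $i_0\hat H=\pi f_U$ and $i_1\hat H=\pi g_U$, and $f_U\simeq g_U$ because $f$ and $g$, being pointwise homotopic, induce the same cohomology morphism. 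Proposition~\ref{prop:coveringhomotopy}, applied with base $\lim_{\mathcal{D}^+(U,m_{\mathcal{D}}(U))}\underline{A}$ (whose $H^1$ vanishes as above) and the surjection $\pi$, then produces a homotopy $H_U\colon P(U)\to\underline{A}(U,m_{\mathcal{D}}(U))\otimes(t,dt)$ from $f_U$ to $g_U$ covering $\hat H$; these are compatible with the structure maps, and reassembling the $H_U$ gives the desired strict homotopy $H$ with $i_0H=f$, $i_1H=g$.

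\emph{Proof of (iii); the main obstacle.} Given a strict homotopy $K\colon\underline{\mathcal{M}}\to\underline{A\otimes(t,dt)}$ from $f$ to $g$, its restriction $\Lambda:=K|_{\underline{P}}$ is a strict self-homotopy of $\phi:=f|_{\underline{P}}=g|_{\underline{P}}$. Because $H^1(\underline{A})=0$, every pointwise self-homotopy of $\phi_{(U,H)}$ is homotopic rel endpoints to the constant one (the ``one level up'' analogue of Lemma~\ref{lem:cohomohomotopy}: a self-homotopy of a map out of $P(U)$ is recorded by a tuple of $1$-cocycles of $\underline{A}(U,H)$, all exact when $H^1=0$); running the machinery of (i)--(ii) one categorical level higher then connects $\Lambda$, through a chain of strict homotopies-of-homotopies rel endpoints, to the constant self-homotopy of $\phi$. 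The remaining --- and hardest --- step is to propagate such a deformation of the restriction across the disconnected free nilpotent extension $\underline{P}\to\underline{\mathcal{M}}$, so as to deform $K$ rel its endpoints $f,g$ into a homotopy whose restriction to $\underline{P}$ is constant, i.e.\ a genuine rel-$\underline{P}$ homotopy. I would carry this out by a nested induction --- over $\mathcal{D}_L$ as above, and over the skeleta of the free nilpotent filtration of $\underline{P}\to\underline{\mathcal{M}}$ --- extending the homotopy-of-homotopies cell by cell by the same pullback-and-lift arguments; a single homotopy does not suffice because each extension step preserves the endpoints only at the price of a new auxiliary self-homotopy on $\underline{P}$, which is then straightened in a subsequent step, the process terminating because $\mathcal{D}$ and all generating data are finite. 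This bookkeeping is where essentially all the remaining work lies.
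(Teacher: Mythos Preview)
Your arguments for (i) and (ii) are essentially the paper's: induction on $\dim U$, reduction to the initial object $(U,m_{\mathcal D}(U))$ of each $\mathcal D_U$, surjectivity of $\pi$ via \textbf{(SC)} and Proposition~\ref{prop:technicaldiagramstuff}(i), the $H^2$-injectivity coming from \textbf{(TC)} and Proposition~\ref{prop:technicaldiagramstuff}(iii), then Lemmas~\ref{lem:cohomohomotopy}/\ref{lem:homotopetocom} for (i) and Proposition~\ref{prop:coveringhomotopy} for (ii). Good.

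For (iii) you have identified the right first move --- straighten $\Lambda=K|_{\underline P}$ to the constant self-homotopy of $\phi$ rel endpoints --- but your proposed execution (a chain of homotopies-of-homotopies followed by a nested induction over skeleta of $\underline P\to\underline{\mathcal M}$, with termination argued from finiteness of the data) is where you go astray. The free nilpotent filtration need not be finite, so the termination claim is unjustified, and the ``new auxiliary self-homotopy at each step'' mechanism you describe has no clear stopping point.

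The paper avoids all of this by packaging the rel-endpoints deformation as a \emph{single} filling problem for the square: one writes down a model $S$ for $\partial I^2$ inside $(s,ds)\otimes(t,dt)$, defines $F\colon\underline P\to\underline{A\otimes S}$ with sides $K|_{\underline P}$, constant, constant, constant, and lifts $F$ to $\underline{A\otimes(s,ds)\otimes(t,dt)}$ by the same $\dim U$ induction used in (i)--(ii). The obstruction at stage $U$ is that the map from $\underline A(U,m_{\mathcal D}(U))\otimes(s,ds)\otimes(t,dt)$ to the pullback $Q$ of $\underline A(U,m_{\mathcal D}(U))\otimes S$ and $\lim_{\mathcal D^+}\underline A\otimes(s,ds)\otimes(t,dt)$ over $\lim_{\mathcal D^+}\underline A\otimes S$ be surjective on $H^2$; this follows from $H^1(\lim_{\mathcal D^+}\underline A)=0$ and the $H^0$-isomorphism (both consequences of \textbf{(TC)} via Proposition~\ref{prop:technicaldiagramstuff}) together with $H^*(S)=H^*(S^1)$. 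Once the lift $\widetilde F$ exists on $\underline P$, it extends to $\underline{\mathcal M}$ in \emph{one} step by Proposition~\ref{prop:lifting}, since $p_{s=0}\colon\underline{A\otimes(s,ds)\otimes(t,dt)}\to\underline{A\otimes(t,dt)}$ satisfies \textbf{(SCM)}. Evaluating $\widetilde F$ on the remaining three sides of the square gives a chain of exactly three homotopies rel $\underline P$ from $f$ to $g$. No skeleta, no open-ended iteration.
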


\begin{proof}
Let $f\colon \underline{P}\rightarrow \underline{A}$ a weak morphism. We construct a strict morphism $g\colon \underline{P}\rightarrow\underline{A}$ which is pointwise homotopic to $f$.
We filter $\mathcal{D}=\bigcup_n \mathcal{D}_n$ where $\mathcal{D}_n$ consists of those $(U,H)\in \mathcal{D}$ where $U$ is of dimension $\leq n$.

Assume we have constructed $g\colon\underline{P}|_{\mathcal{D}_n}\rightarrow \underline{A}|_{\mathcal{D}_n}$ which is pointwise homotopic to $f$. Let $U\in \mathcal{D}_L$ of dimension $n+1$ and recall that $m_\mathcal{D}(U)$ is the intersection of all groups in $\mathcal{D}_R$ which contain $U$. Thus $(U,m_\mathcal{D}(U))\in \mathcal{D}$ and $(U,m_\mathcal{D}(U))\leq (U,H')$ for any $(U,H')\in \mathcal{D}$. We have $\mathcal{D}^+(U,m_\mathcal{D}(U))\subset \mathcal{D}_n$
and a diagram

\[\xymatrix{
 & \underline{A}(U,m_\mathcal{D}(U))\ar[d]^\pi\\
 \underline{P}(U,m_\mathcal{D}(U))\ar[r]^{\tilde{g}}\ar[ur]^{f_{(U,m_\mathcal{D}(U))}}&\lim_{\mathcal{D}^+(U,m_\mathcal{D}(U))}\underline{A}
}\]
 which we claim commutes up to homotopy. By assumption the two maps are homotopic after composing with the map to $\underline{A}(\{1\},m_\mathcal{D}(U))$. Note that \textbf{(TC)} assures that the requirements of Proposition \ref{prop:technicaldiagramstuff} $(i)$, $(ii)$, and $(iii)$ are all satisfied. Thus $H^2(\lim_{\mathcal{D}^+(U,m_\mathcal{D}(U))}\underline{A})\rightarrow H^2(\underline{A}(\{1\},m_\mathcal{D}(U)))$ is injective and it follows that $\pi\circ f$ and $\tilde{g}$ induce the same map on cohomology. By Lemma \ref{lem:cohomohomotopy} this implies homotopy commutativity. Then by Lemma \ref{lem:homotopetocom} we can homotope $f_{(U,m_\mathcal{D}(U))}$ to a map $g_{(U,m_\mathcal{D}(U))}$ which makes the diagram strictly commutative. For any $(U,H)\in\mathcal{D}_{n+1}$ we define $g_{(U,H)}$ as the composition $\underline{P}(U,H)=\underline{P}(U,m_\mathcal{D}(U))\rightarrow \underline{A}(U,m_\mathcal{D}(U))\rightarrow \underline{A}(U,H)$. Doing this for all $U$ completes the induction and the proof of $(i)$.
 
The proof of $(ii)$ proceeds via induction over the $\mathcal{D}_n$ as well. Suppose we have a strict morphism $H\colon\underline{P}|_{\mathcal{D}_n}\rightarrow \underline{A\otimes (t,dt)}|_{\mathcal{D}_n}$ which is pointwise a homotopy between $f$ and $g$.
Let $U\in\mathcal{D}_L$ be a fixed $(n+1)$-dimensional subtorus. Then $H$ induces the second map in the composition
\[\phi_U\colon\underline{P}(U,m_\mathcal{D}(U))\rightarrow \lim_{\mathcal{D}^+(U,m_\mathcal{D}(U))} \underline{P}\rightarrow\lim_{\mathcal{D}^+(U,m_\mathcal{D}(U))} \underline{A}\otimes(t,dt)\]
and defines a homotopy between the two maps $\underline{P}(U,m_\mathcal{D}(U))\rightarrow \lim_{\mathcal{D}^+(U,m_\mathcal{D}(U))} \underline{A}$ induced by $f$ and $g$.
By Proposition \ref{prop:coveringhomotopy} we find a homotopy $H_{U}\colon \underline{P}(U,m_\mathcal{D}(U))\rightarrow \underline{A}(U,m_\mathcal{D}(U))$ between the $(U,m_\mathcal{D}(U))$-positions of $f$ and $g$, such that $(\pi\otimes \id_{(t,dt)})\circ H_U=\phi_U$, where $\pi$ is the map $\underline{A}(U,m_\mathcal{D}(U))\rightarrow \lim_{\mathcal{D}^+(U,m_\mathcal{D}(U))} \underline{A}$. This induces homotopies $\underline{P}(U,H)=\underline{P}(U,m_\mathcal{D}(U))\rightarrow \underline{A}(U,m_\mathcal{D}(U))\otimes (t,dt)\rightarrow \underline{A}(U,H)\otimes (t,dt)$ for all $(U,H)\in\mathcal{D}$. Doing this for all $U\in\mathcal{D}_L$ of dimension $n+1$ we can extend the homotopy $H$ to a map $\underline{P}|_{\mathcal{D}_{n+1}}\rightarrow \underline{A\otimes (t,dt)}|_{\mathcal{D}_{n+1}}$ which finishes the induction.

Now for the proof of $(iii)$, let $G\colon \mathcal{M}\rightarrow \underline{A\otimes (t,dt)}$ be a homotopy from $f$ to $g$. The first step is to homotope $G|_{\underline{P}}$ relative to the endpoints to a constant homotopy. Consider two models for the interval $(s,ds)$ and $(t,dt)$ together with the usual evaluation maps $e_{s=0},e_{s=1}\colon (s,ds)\rightarrow \mathbb{Q}$ and $e_{t=0},e_{t=1}\colon (t,dt)\rightarrow \mathbb{Q}$. Consider the subalgebra \[S\subset (t,dt)\times (s,ds)\times (t,dt)\times (s,ds)\]
consisting of those $(a,b,c,d)$ with $e_{t=1}(a)=e_{s=0}(b)$, $e_{s=1}(b)=e_{t=1}(c)$, $e_{t=0}(c)=e_{s=1}(d)$, and $e_{s=0}(d)=e_{t=0}(a)$. This a model for the boundary of the square, i.e. for $S^1$, with the four factors corresponding to the line segments. For $i=0,1$, let $q_{s=i}$ (resp.\ $q_{t=i}$) denote the map $e_{s=i}\otimes \id\colon (s,ds)\otimes (t,dt)\rightarrow (t,dt)$ (resp.\ $\id\otimes e_{t=i}\colon (s,ds)\otimes (t,dt)\rightarrow (s,ds)$). The map $q_{s=0}\times q_{t=1}\times q_{s=1}\times q_{t=0}$ defines a surjection $(s,ds)\otimes (t,dt)\rightarrow S$ which corresponds to the inclusion of the boundary into the square. The $q_{s=i}$, $q_{t=i}$ factor through $S$. The induced maps $\underline{A\otimes S}\rightarrow\underline{A\otimes (t,dt)}$ and $\underline{A\otimes S}\rightarrow\underline{A\otimes (s,ds)}$ are denoted ${p}_{s=i}$ and ${p}_{t=i}$.

There is a unique map $F\colon\underline{P}\rightarrow \underline{A\otimes S}$ such that $p_{s=0}\circ F= G|_{\underline{P}}$, $p_{s=1}\circ F$ is the constant homotopy in $t$, and $p_{t=0}\circ F$ as well as $p_{t=1}\circ F$ are constant homotopies in $s$. We want to lift $F$ to a map $\underline{P}\rightarrow \underline{A\otimes (s,ds)\otimes (t,dt)}$. We do this inductively over the $\mathcal{D}_n$ as above. Assume we have constructed the lift on $\underline{P}|_{\mathcal{D}_n}$. Again, let $U\in\mathcal{D}_L$ be a fixed subtorus of dimension $n+1$.
We consider the pullback square
\[\xymatrix{
 Q\ar[d]\ar[r]^{p_1} & \underline{A}(U,m_\mathcal{D}(U))\otimes S\ar[d]^{p_3}\\
\lim_{\mathcal{D}^+(U,m_\mathcal{D}(U))}\underline{A}\otimes (s,ds)\otimes(t,dt)\ar[r]^(.6){p_2} & \lim_{\mathcal{D}^+(U,m_\mathcal{D}(U))}\underline{A}\otimes S
}\]
and we want to find the dashed lift in the diagram
\[\xymatrix{
& \underline{A}(U,m_\mathcal{D}(U))\otimes (s,ds)\otimes(t,dt)\ar[d]^{p_4}\\
\ar@{-->}[ru]P(U,m_\mathcal{D}(U))\ar[r] & Q
}\]
where all solid arrows are induced by the given maps.
Doing this for any choice of $U$ then yields the desired extension to $\mathcal{D}_{n+1}$ by choosing the maps $\underline{P}(U,H)=\underline{P}(U,m_\mathcal{D}(U))\rightarrow \underline{A}(U,m_\mathcal{D}(U))\otimes (s,ds)\otimes (t,dt)\rightarrow \underline{A}(U,H)\otimes (s,ds)\otimes(t,dt)$ for any $(U,H)\in\mathcal{D}_{n+1}$. This will finish the induction. As for the existence of the lift, it suffices to show that $p_4$ is surjective on degree 2 cohomology. Then the existence of a lift up to homotopy follows from Lemma \ref{lem:cohomohomotopy} and this can be made strictly commutative by Lemma \ref{lem:homotopetocom} since $p_4$ is surjective. Note that $\underline{A}(U,m_\mathcal{D}(U))$ and $\lim_{\mathcal{D}^+(U,m_\mathcal{D}(U))}\underline{A}$ are cohomologically simply connected by Proposition \ref{prop:technicaldiagramstuff}. As $H^*(S)=H^*(S^1)$, we deduce that $p_2$ is an isomorphism on $H^2$. Furthermore, since $\underline{A}(U,m_\mathcal{D}(U))\rightarrow\lim_{\mathcal{D}^+(U,m_\mathcal{D}(U))}\underline{A}$ is an isomorphism on $H^0$, it follows that $p_3$ is an isomorphism on $H^1$. Using that $p_3$ is surjective, one deduces that $p_1$ is injective on $H^2$. But this implies that $p_4$ gives an isomorphism on $H^2$ since this holds for the composition $p_1\circ p_4$. This finishes the construction of the lift $\widetilde{F}\colon \underline{P}\rightarrow \underline{A\otimes (s,ds)\otimes (t,dt)}$.

Now consider the commutative diagram of solid arrows
\[\xymatrix{
\underline{P}\ar[d]\ar[r]^(.3){\widetilde{F}} & \underline{A\otimes (s,ds)\otimes(t,dt)}\ar[d]^{p_{s=0}}\\
\mathcal{M}\ar[r]^{H}\ar@{-->}[ur] & \underline{A\otimes (t,dt)}
}\]
and note that $p_{s=0}$ satisfies \textbf{(SCM)}. It follows from Propostion \ref{prop:lifting} that the dashed extension of $\widetilde{F}$ exists. By construction $p_{t=0}\circ \widetilde{F}$, $p_{s=1}\circ \widetilde{F}$, and $p_{t=1}\circ \widetilde{F}$ are homotopies relative to $\underline{P}$ which connect $f$ to $g$.
\end{proof}

Note that if $\underline{A}$ carries a cohomology $\underline{R}$-structure (or a $\underline{P}$-structure) and $\underline{A}\rightarrow\underline{B}$ is a morphism of systems, then $B$ inherits such a structure from $A$. Theorem \ref{thm:strictification}, Proposition \ref{prop:SCexistence}, and Remark \ref{rem:weakvscohomology} combine to give the following

\begin{cor}\label{cor:strictreplacement}
Let $\mathcal{D}\subset \mathcal{S}$ be a finite stable subset and $\underline{P}$ a free nilpotent $\mathcal{D}$-system satisfying $\underline{P}(U,H)=\underline{P}(U,T)$ for all $(U,H)\in \mathcal{D}$ together with a weak quasi isomorphism
$\underline{P}\rightarrow \underline{A_{PL}(*)}$. Let  $\underline{A}$ be a spacelike be $\mathcal{D}$-system with $H^1(\underline{A})=0$ and a cohomology $\underline{R}$-structure satisfying \textbf{(TC)}. Then there is a quasi isomorphism $\underline{A}\simeq \underline{A'}$ such that $\underline{A'}$ satisfies \textbf{(SC)} and a strict $\underline{P}$-structure on $\underline{A'}$ which induces the homology $\underline{R}$-structure coming from $\underline{A}$. Furthermore this $\underline{P}$-structure is unique up to homotopy of $\mathcal{D}$-systems.
\end{cor}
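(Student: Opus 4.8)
The plan is to deduce the corollary by assembling Proposition~\ref{prop:SCexistence}(i), Remark~\ref{rem:weakvscohomology}, and the three parts of Theorem~\ref{thm:strictification}, the only real task being to check at each stage that the relevant hypotheses are preserved (recall that a finite $\mathcal{D}$ is in particular bounded, so Proposition~\ref{prop:SCexistence} applies). \emph{Construction of $\underline{A'}$.} Apply Proposition~\ref{prop:SCexistence}(i) to $\underline{A}$ to obtain a free extension $\underline{A}\rightarrow\underline{A'}$, pointwise of the form $\underline{A}(U,H)\rightarrow\underline{A}(U,H)\otimes\Lambda(V\oplus dV)$ with $\Lambda(V\oplus dV)$ contractible, such that $\underline{A'}$ satisfies \textbf{(SC)}. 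Tensoring with a contractible cdga does not change cohomology, so this map is a quasi isomorphism of systems; hence $\underline{A}\simeq\underline{A'}$ and $H^*(\underline{A'})\cong H^*(\underline{A})$. In particular $\underline{A'}$ is again spacelike, has $H^1(\underline{A'})=0$, inherits a cohomology $\underline{R}$-structure $\underline{R}|_\mathcal{D}\rightarrow H^*(\underline{A})\xrightarrow{\cong}H^*(\underline{A'})$ (cf.\ the remark preceding the corollary), and this structure still satisfies \textbf{(TC)} because \textbf{(TC)} is a condition on cohomology only. Thus $\underline{A'}$ meets all hypotheses imposed on the target in Theorem~\ref{thm:strictification}.

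\emph{Existence of the strict $\underline{P}$-structure.} By Remark~\ref{rem:weakvscohomology}, the cohomology $\underline{R}$-structure on $\underline{A'}$ gives rise to a weak morphism $\underline{P}\rightarrow\underline{A'}$: choose cocycle representatives to obtain a weak morphism $\underline{R}|_\mathcal{D}\rightarrow\underline{A'}$ (Lemma~\ref{lem:cohomohomotopy} guarantees independence of choices up to pointwise homotopy) and precompose with a weak quasi isomorphism $\underline{P}\rightarrow\underline{R}|_\mathcal{D}$; the resulting map on cohomology is, under $H^*(\underline{P})\cong\underline{R}|_\mathcal{D}$, exactly the prescribed structure. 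Now apply Theorem~\ref{thm:strictification}(i): since $\underline{A'}$ is spacelike with $H^1=0$ and satisfies \textbf{(SC)} and \textbf{(TC)}, while $\underline{P}$ is free nilpotent with $\underline{P}(U,H)=\underline{P}(U,T)$ and admits a weak quasi isomorphism to $\underline{A_{PL}(*)}$, the weak morphism is pointwise homotopic to a \emph{strict} morphism $\underline{P}\rightarrow\underline{A'}$. Pointwise homotopic maps agree on cohomology, so this strict $\underline{P}$-structure induces the same $\underline{R}$-structure, namely the one coming from $\underline{A}$.

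\emph{Uniqueness up to homotopy of $\mathcal{D}$-systems, and the main obstacle.} Let $f,g\colon\underline{P}\rightarrow\underline{A'}$ be two strict $\underline{P}$-structures inducing the given $\underline{R}$-structure; then $f$ and $g$ agree on cohomology at every $(U,H)\in\mathcal{D}$. Since $\underline{P}(U,H)=\underline{P}(U,T)$ is a free nilpotent model of $A_{PL}(B(T/U))$ with $T/U$ a torus, Lemma~\ref{lem:cohomohomotopy} shows $f$ and $g$ are pointwise homotopic, and Theorem~\ref{thm:strictification}(ii) then promotes this to an actual homotopy $H\colon\underline{P}\rightarrow\underline{A'\otimes(t,dt)}$ of $\mathcal{D}$-systems from $f$ to $g$. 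I do not expect a genuine difficulty here, since all the substance has been placed in Theorem~\ref{thm:strictification}; what remains is bookkeeping, the most delicate point being to verify via Remark~\ref{rem:weakvscohomology} that the strictified morphism induces the \emph{original} $\underline{R}$-structure rather than merely some $\underline{R}$-structure — which in the end reduces to the fact that homotopic maps induce equal maps on cohomology, combined with the explicit two-sided correspondence between weak $\underline{P}$-structures and cohomology $\underline{R}$-structures recorded there.
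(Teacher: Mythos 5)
Your proposal is correct and is exactly the assembly the paper intends: the text introduces the corollary with ``Theorem \ref{thm:strictification}, Proposition \ref{prop:SCexistence}, and Remark \ref{rem:weakvscohomology} combine to give the following,'' and your three steps (pass to a quasi isomorphic system satisfying \textbf{(SC)} via Proposition \ref{prop:SCexistence}(i), convert the cohomology $\underline{R}$-structure to a weak $\underline{P}$-structure via Remark \ref{rem:weakvscohomology} and strictify with Theorem \ref{thm:strictification}(i), then get uniqueness from Lemma \ref{lem:cohomohomotopy} and Theorem \ref{thm:strictification}(ii)) are precisely that combination, with the hypothesis checks carried out correctly.
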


\section{Realization}
Before we get to the technical details of the realization process, let us give a brief overview of the main idea, which is in fact rather simple. Lemma \ref{lem:retract} below shows that gluing a free equivariant $k$-cell to a $T$-space $X$, changes $X_T$ (up to homotopy) by gluing in a nonequivariant $k$-cell. Thus as long as we glue free $T$-cells we have close control over what is happening on the Borel construction. Using this, Proposition \ref{prop:realization} shows that given a suitable cochain algebra $\mathcal{M}$ and a $T$-space $X$, we can modify $X$ by gluing free $T$-cells until $\mathcal{M}$ becomes a model for $X_T$. Of course in the end, we are not interested in only gluing free cells but rather want to apply the previous approximation process inductively to tori $T/H$, with $H$ running through all isotropy groups. This is done in Theorem \ref{thm:P-realization}, which achieves our goal of realizing whole systems of algebras.

\begin{lem}\label{lem:retract}
Let $X$ be a $T$-space and $i\colon X\rightarrow X_T$ be the inclusion $x\mapsto [e_0,x]$ for some $e_0\in ET$. For a map $S^n\rightarrow X$ denote by $\tilde{\varphi}\colon S^n\times T\rightarrow X$ the equivariant extension. Furthermore consider $D^{n+1}\cong D^{n+1}\times\{1\}\subset D^{n+1}\times T\subset (D^{n+1}\times T)_T$ as a subspace. Then the inclusion
\[ X_T\cup_{i\circ \varphi} D^{n+1}\longrightarrow  X_T\cup_{\tilde{\varphi}_T} (D^{n+1}\times T)_T \cong (X\cup_{\tilde{\varphi}} D^{n+1}\times T)_T\]
admits a deformation retraction.
\end{lem}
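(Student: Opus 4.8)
The plan is to construct the deformation retraction by hand, exploiting the fact that $T$ is a (path-connected) topological group and that the extra cell $D^{n+1}\times T$ contributes a contractible factor once we divide by the $T$-action. First I would unwind the homeomorphism on the right: since the $T$-action on $D^{n+1}\times T$ is free and $(D^{n+1}\times T)_T=(ET\times D^{n+1}\times T)/T\cong ET\times D^{n+1}$ via $[e,y,t]\mapsto (e\cdot t^{-1},y)$, and this identification is compatible with the attaching map $\tilde\varphi_T$ along $S^n\times T\subset D^{n+1}\times T$, so that $X_T\cup_{\tilde\varphi_T}(D^{n+1}\times T)_T$ is obtained from $X_T$ by attaching $ET\times D^{n+1}$ along the map $ET\times S^n\to X_T$, $(e,s)\mapsto [e,\varphi(s)]$ (here I use that $\tilde\varphi(s,t)=t\cdot\varphi(s)$ together with freeness). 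On the other hand $X_T\cup_{i\circ\varphi}D^{n+1}$ attaches a single copy $\{e_0\}\times D^{n+1}$ along $\{e_0\}\times S^n\to X_T$. So the statement reduces to: the inclusion of the pushout attaching $\{e_0\}\times D^{n+1}$ into the pushout attaching $ET\times D^{n+1}$ (along the evident compatible maps) is a deformation retract.

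Next I would produce the retraction. Since $ET$ is contractible, choose a contraction $h\colon ET\times[0,1]\to ET$ with $h_0=\id_{ET}$ and $h_1\equiv e_0$ (using that $ET$ built à la Milnor is contractible; one may first pick any basepoint-free contraction and then slide along a path to $e_0$, as $ET$ is path connected). The retraction $r\colon X_T\cup (ET\times D^{n+1})\to X_T\cup(\{e_0\}\times D^{n+1})$ is the identity on $X_T$ and sends $(e,y)\in ET\times D^{n+1}$ to $(e_0,y)$; for this to be well defined on the pushout we need $r$ to be compatible with the attaching maps, i.e.\ $[e,\varphi(s)]$ and $[e_0,\varphi(s)]$ to be matched — but these are generally distinct points of $X_T$, so the naive formula does not descend. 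The fix is to build the homotopy $H\colon (X_T\cup ET\times D^{n+1})\times[0,1]\to X_T\cup ET\times D^{n+1}$ simultaneously on both pieces: on $ET\times D^{n+1}$ set $H((e,y),\tau)=(h(e,\tau),y)$, and on $X_T$ use that a point in the image of the attaching map is $[e,\varphi(s)]$ and push it along the same contraction, $H([e,z],\tau)=[h(e,\tau),z]$ for $z$ in the relevant subspace, extended by the identity elsewhere. Concretely, one uses that $X_T\cup ET\times D^{n+1}$ deformation retracts in the $e$-coordinate because $ET$ is contractible, keeping $X_T$ pointwise fixed outside a neighborhood of the attached cell and carefully matching the homotopy on the overlap $ET\times S^n$. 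A clean way to organize this: observe $X_T\cup_{ET\times S^n}(ET\times D^{n+1})$ is the double mapping cylinder / homotopy pushout of $X_T\leftarrow ET\times S^n\to ET\times D^{n+1}$, and there is a map of spans from this to $X_T\leftarrow\{e_0\}\times S^n\to\{e_0\}\times D^{n+1}$ given by the contraction which is a (fiberwise over $S^n$, resp.\ $D^{n+1}$) homotopy equivalence with explicit homotopy inverse the inclusion; then invoke that a map of homotopy pushouts which is a levelwise deformation retraction induces a deformation retraction on pushouts (all the relevant inclusions are cofibrations since everything is built from CW data).

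The main obstacle I anticipate is the bookkeeping on the overlap: the retraction formula must be defined consistently on $X_T$ and on $ET\times D^{n+1}$ so that it descends to the quotient/pushout, which forces one to move points of $X_T$ (those of the form $[e,\varphi(s)]$ with $e\neq e_0$) and hence one cannot take the retraction to be the identity on all of $X_T$. Handling this requires either (a) a collar of the attached cell in which one interpolates, or (b) the homotopy-pushout formalism above which makes the compatibility automatic at the cost of a little machinery. I would go with (b): set up the canonical homotopy equivalence of spans induced by a contraction of $ET$ fixing $e_0$, note all legs are closed cofibrations (the inclusion $S^n\hookrightarrow D^{n+1}$ is one, and smashing/crossing with $ET$ preserves this), and conclude by the standard fact that such a map of spans induces a deformation retraction of iterated pushouts. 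Finally I would record that under the homeomorphism identifying $X_T\cup_{\tilde\varphi_T}(D^{n+1}\times T)_T\cong (X\cup_{\tilde\varphi}D^{n+1}\times T)_T$ this retraction is exactly the one asserted in the lemma, completing the proof.
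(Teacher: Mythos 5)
Your setup (the homeomorphism $(D^{n+1}\times T)_T\cong ET\times D^{n+1}$ and the reduction to attaching $ET\times D^{n+1}$ along $ET\times S^n\to X_T$, $(e,s)\mapsto[e,\varphi(s)]$) agrees with the paper, and you correctly spot the central difficulty: the naive retraction $(e,y)\mapsto(e_0,y)$ does not descend because $[e,\varphi(s)]\neq[e_0,\varphi(s)]$ in $X_T$. But neither of your two proposed fixes resolves it. For (a), the formula $H([e,z],\tau)=[h(e,\tau),z]$ is not well defined on $X_T=(ET\times X)/T$: well-definedness would force the contraction $h$ of $ET$ to be $T$-equivariant, which is impossible for a free action of a nontrivial $T$ on a contractible space. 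For (b), the map of spans you describe goes from the big span $X_T\leftarrow ET\times S^n\to ET\times D^{n+1}$ to the small one and is ``given by the contraction''; the required square $ET\times S^n\to X_T$ over $\{e_0\}\times S^n\to X_T$ fails to commute for exactly the reason you flagged in the previous paragraph, so this map of spans does not exist and there is nothing to feed into the gluing lemma. As written, the proof has a genuine gap.

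The missing idea is to retract $ET\times D^{n+1}$ not onto $\{e_0\}\times D^{n+1}$ but onto the subcomplex $A=(ET\times S^n)\cup(\{e_0\}\times D^{n+1})$, which \emph{contains the entire attaching region}. This is what the paper does: $(ET\times D^{n+1},A)$ is a CW pair of contractible spaces, so $A$ is a strong deformation retract; since the retraction fixes $ET\times S^n$ pointwise it glues with the identity of $X_T$, and $X_T\cup_{\tilde\varphi_T}A=X_T\cup_{i\circ\varphi}D^{n+1}$. Equivalently, your span argument can be repaired by reversing the direction: the inclusion of the small span into the big one \emph{does} commute strictly (identity on $X_T$, and the two inclusions $\{e_0\}\times S^n\hookrightarrow ET\times S^n$, $\{e_0\}\times D^{n+1}\hookrightarrow ET\times D^{n+1}$ are cofibrations and homotopy equivalences), so the gluing lemma shows the induced map on pushouts is a homotopy equivalence, and being a cofibration (cobase change of the CW pair $(ET\times D^{n+1},A)$) it is the inclusion of a deformation retract. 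Either repair yields the lemma; without one of them the argument does not close.
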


\begin{proof}
The map $(D^{n+1}\times T)_T=(ET\times D^{n+1}\times T)/T\rightarrow ET\times D^{n+1}$ given by $[x,v,t]\mapsto(xt,v)$ is a homeomorphism. Note that our choice of $ET$ (see \cite{Milnor}) is a CW-complex. The inclusion of the subcomplex $A:= (ET\times S^n)\cup (\{e_0\}\times D^{n+1})\rightarrow ET\times D^{n+1}$ is a homotopy equivalence since both spaces are contractible. Thus $ET\times D^{n+1}$ deformation retracts onto $A$ (e.g.\ by \cite[Lemma 4.6]{Hatcher}). This gives the desired deformation retraction $X_T\cup_{i\circ \varphi} D^{n+1}=X_T\cup_{\tilde{\varphi}_T}A\longleftarrow  X_T\cup_{\tilde{\varphi}_T} (D^{n+1}\times T)_T $, where $\tilde{\varphi}_T\colon (S^n\times T)_T\rightarrow X_T$ is the induced map on Borel constructions.
\end{proof}

The next step is to approximate an algebraic model by gluing free $T$-cells to a $T$-space. The key to this is the following

\begin{prop}\label{prop:realization}
Let $X$ be a finite type $T$-space and assume that either $X=\emptyset$ or that $\pi_2(X_T)\rightarrow \pi_2(BT)$ is surjective and $\pi_1(X)$ is finitely generated on every path component. Consider a diagram
\[\xymatrix{
\mathcal{M}\ar[r]^\phi & A_{PL}(X_T)\\
P\ar[u]\ar[r]& A_{PL}(BT)\ar[u]
}\]
in which $P\rightarrow\mathcal{M}$ is a disconnected Sullivan extension of cohomologically simply connected, finite type, disconnected almost Sullivan algebras. Assume further that $P\rightarrow A_{PL}(BT)$ is a quasi isomorphism and $H^2(P)\otimes H^0(\mathcal{M})\rightarrow H^2(\mathcal{M})$ is injective.
Then there is a componentwise simply connected $T$-space $Y$, which arises from $X$ by gluing free $T$-cells, such that $\phi$ factorizes as
\[\mathcal{M}\rightarrow A_{PL}(Y_T)\rightarrow A_{PL}(X_T)\]
where the first map is a quasi isomorphism and commutes with the $P$-actions. If $X$ is a $T$-CW-complex, then $Y$ can be chosen as a $T$-CW-complex. If the kernel and cokernel of $\phi^*\colon H^*(\mathcal{M})\rightarrow H^*(X_T)$ are finite dimensional, then one can choose $Y$ such that it arises from $X$ by gluing finitely many $T$-cells.
\end{prop}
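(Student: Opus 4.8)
The plan is to build $Y$ from $X$ by an iterative cell-attachment procedure driven by the generators of $\mathcal{M}$ over $P$, using Lemma \ref{lem:retract} at each step to control the effect on the Borel construction. First I would reduce to the connected case: by Lemma \ref{lem:disconnected} the almost Sullivan extension $P\to\mathcal{M}$ splits into a finite product of connected almost Sullivan extensions of $P$, and the components of $X_T$ (equivalently, of $X$, after a preliminary attachment of free $0$-cells $T\times D^0$ to match path components and fix the map on $H^0$) can be matched to these. The hypotheses $\pi_2(X_T)\to\pi_2(BT)$ surjective, $\pi_1(X)$ finitely generated, and $H^2(P)\otimes H^0(\mathcal{M})\to H^2(\mathcal{M})$ injective are exactly what is needed so that each component is handled by the classical Sullivan realization machinery relative to the base $BT$.

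Next, working one component at a time, I would filter the Sullivan extension $P\to\mathcal{M}=(P\otimes\Lambda Z,d)$ by a well-ordered basis of $Z$ (respecting the nilpotence filtration $Z(0)\subset Z(1)\subset\cdots$, and noting the contractible degree-$0$ part $\Lambda(U\oplus dU)$ contributes nothing up to homotopy). Inductively, suppose $\phi$ factors through $A_{PL}((X^{(k)})_T)$ for a $T$-space $X^{(k)}$ obtained from $X$ by attaching finitely or countably many free $T$-cells, with the factorization compatible with the $P$-action. For the next generator $z$ of degree $n$ with $dz\in P\otimes\Lambda Z(<k)$ already realized, the element $\phi(dz)$ is a coboundary in $A_{PL}((X^{(k)})_T)$, hence represents the zero class; choosing a cocycle representative of the corresponding class in $H^{n}$, minimality/surjectivity of $A_{PL}$ lets me realize the attaching data by a map $S^{n-1}\to (X^{(k)})_T$ which, because $(X^{(k)})_T$ is up to homotopy a free Borel construction, can be arranged (after homotopy) to factor through the inclusion $X^{(k)}\hookrightarrow (X^{(k)})_T$, $x\mapsto[e_0,x]$. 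I then attach the free equivariant cell $S^{n-1}\times T$ along the equivariant extension $\widetilde\varphi$, and Lemma \ref{lem:retract} identifies $(X^{(k)}\cup_{\widetilde\varphi}D^n\times T)_T$ up to homotopy with $(X^{(k)})_T\cup_{i\circ\varphi}D^n$, which is exactly the nonequivariant cell attachment that realizes the new generator $z$. Extending $\phi$ over this cell and invoking the lifting statements in Remark \ref{rem:technicalstuff} (lifting a map out of a free nilpotent extension through a surjective quasi isomorphism) produces the updated factorization. Passing to the (possibly infinite) union over all generators gives $Y$ with $\mathcal{M}\xrightarrow{\sim}A_{PL}(Y_T)$ compatible with $P$; since $P\to A_{PL}(BT)$ is a quasi isomorphism and $\mathcal{M}$ is cohomologically simply connected, the finite-type and simple-connectivity statements for $Y_T$ (hence for $Y$, componentwise) follow, and if $X$ is a $T$-CW-complex we only ever attached $T$-cells, so $Y$ is one too.

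For the finiteness refinement, suppose $\ker\phi^*$ and $\operatorname{coker}\phi^*$ on cohomology are finite dimensional over $\mathbb{Q}$. Then only finitely many of the attachments above are genuinely needed: generators $z$ whose addition does not change cohomology in the relevant range (i.e.\ beyond the finite-dimensional discrepancy, already accounted for) can be attached in trivial pairs $(z,dz)$ whose realization is a homotopy-trivial modification, or simply omitted after truncating $\mathcal{M}$ by an acyclic almost Sullivan subextension. More precisely, I would first replace $\mathcal{M}$ up to quasi isomorphism by a finite type almost Sullivan extension, then observe that the map $\phi^*$ being a cohomology isomorphism outside finitely many dimensions means that, in each degree, all but finitely many generators can be grouped with their differentials into a contractible tensor factor; splitting this factor off (using the almost Sullivan structure) leaves a \emph{finite} Sullivan extension $P\to\mathcal{M}'$ with $\mathcal{M}'\simeq\mathcal{M}$, and running the above procedure for $\mathcal{M}'$ attaches only finitely many $T$-cells.

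The main obstacle I anticipate is the step forcing the attaching map $S^{n-1}\to(X^{(k)})_T$ to factor, up to homotopy, through $X^{(k)}\hookrightarrow(X^{(k)})_T$: this requires knowing that $(X^{(k)})_T$ is homotopy equivalent to the Borel construction of a free action in a way compatible with all previous choices, i.e.\ that at every stage we have only attached free cells and the homotopy equivalence $E T\times_T(D^n\times T)\simeq ET\times D^n$ of Lemma \ref{lem:retract} can be threaded consistently through the induction. Keeping the $P$-compatibility (the map to $A_{PL}(BT)$) strict rather than merely up to homotopy throughout the countable process is the delicate bookkeeping; here the uniqueness-up-to-homotopy of lifts of free nilpotent extensions in Remark \ref{rem:technicalstuff} is what makes the inductive choices coherent, and $H^2(P)\otimes H^0(\mathcal{M})\to H^2(\mathcal{M})$ injective is precisely what prevents an obstruction to extending $\phi$ over the degree-$2$ generators compatibly with the base.
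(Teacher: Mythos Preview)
Your proposal conflates two different problems. The map $\phi\colon\mathcal{M}\to A_{PL}(X_T)$ is already defined on all of $\mathcal{M}$; the task is not to extend $\phi$ generator by generator but to modify $X$ so that $\phi^*$ becomes an isomorphism. Attaching one free $T$-cell per Sullivan generator of $\mathcal{M}$ over $P$ is the mechanism for realizing $\mathcal{M}$ from scratch, not for approximating a given map. In particular, your sentence ``the element $\phi(dz)$ is a coboundary \ldots\ hence represents the zero class'' is backward: $\phi(dz)=d\phi(z)$ is automatically a coboundary since $\phi$ is already defined on $z$; nothing is being gained by the attachment you describe.

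The paper instead runs a CW-approximation argument on the adjoint side. One passes to the spatial realization $|\mathcal{M}|$ and the induced map $f\colon|Sing(Y_T)|\to|\mathcal{M}|$, and kills the relative homology $H_k(|\mathcal{M}|,\widetilde{Y_T})$ class by class. Rational Hurewicz identifies such a class with an element of $\pi_k(|\mathcal{M}|,\widetilde{Y_T})\otimes\mathbb{Q}$, and the crucial step---precisely the obstacle you flag but do not resolve---is a diagram chase showing that the map $\pi_k(F,\tilde{Y})\otimes\mathbb{Q}\to\pi_k(|\mathcal{M}|,\widetilde{Y_T})\otimes\mathbb{Q}$ is surjective, where $F$ is the fibre of $|\mathcal{M}|\to|P|$. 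This surjectivity (which uses that $\pi_2(X_T)\to\pi_2(BT)$ is onto and that $\widetilde{BT}\to|P|$ is a rational equivalence) is what forces the attaching map $S^{k-1}$ to land in $Y$ rather than merely in $Y_T$, and the extension $D^k\to F$ to land in a single fibre. Lemma \ref{lem:retract} and the bookkeeping Lemma \ref{lem:diagramirrsinn} then update the factorization.

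Your finiteness argument also fails: even after replacing $\mathcal{M}$ by a quasi-isomorphic finite type model, the number of Sullivan generators bears no relation to $\dim(\ker\phi^*\oplus\operatorname{coker}\phi^*)$, and there is no reason one can ``split off'' all but finitely many generators as a contractible factor. The paper's accounting is direct: each cell attachment kills exactly one dimension of the relative homology $H_*(|\mathcal{M}|,\widetilde{Y_T})$, so a finite-dimensional kernel and cokernel force the procedure to terminate after finitely many cells.
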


Before we get to the proof, let us give a brief overview on the approximation step and sort out some technicalities. For an in depth treatment of the nonequivariant realization process of cochain algebras we refer e.g.\ to \cite[Chapter 17]{BibelI}, \cite[Section 1.6]{BibelII}. A collection of the needed results with more specific references was given in Remark \ref{rem:technicalstuff}. There are two pairs of functors
\[\{\text{cochain algebras}\}\leftrightarrows \{\text{simplicial sets}\}\leftrightarrows \{\text{topological spaces}\}.\]
The simplicial set of a cochain algebra $A$ is denoted $\langle A \rangle$. The space associated to a simplicial set $L$ is the Milnor realization and denoted $\vert L\vert$. We also write $\vert A\vert=\vert \langle A\rangle\vert$ for the spatial realization of a cochain algebra. The simplicial set associated to a space is the set $Sing(X)$ of its singular simplices. The cochain algebra of a simplicial set is denoted $A_{PL}(L)$. Recall that $A_{PL}(X)=A_{PL}(Sing(X))$ by definition. 

The canonical map $\vert Sing(X)\vert\rightarrow X$ is always a weak equivalence. On the side of algebras, there is a similar phenomenon: the functors $\langle -\rangle$ and $A_{PL}(-)$ are adjoint to one another and there is the canonical morphism $A\rightarrow A_{PL}(\langle A\rangle)$ which is adjoint to the identity on $\langle A\rangle$. If $A$ is a disconnected almost Sullivan algebra of finite type and $H^1(A)=0$, then this is a quasi isomorphism (cf.\ Remark \ref{rem:technicalstuff}). In this case, also the map $A_{PL}(\vert A\vert)\rightarrow A_{PL}(\langle A\rangle)$ induced by the inclusion $\langle A\rangle\rightarrow Sing(\vert A\vert)$ is a surjective quasi isomorphism, such that $A$ is in fact a model for its realization $\vert A\vert$.

Now in the situation of Proposition \ref{prop:realization}, the cdga morphisms $\mathcal{M}\rightarrow A_{PL}(X_T)$, $P\rightarrow A_{PL}(BT)$ are adjoint to morphisms $\langle \mathcal{M}\rangle\leftarrow Sing(X_T)$, $\langle P\rangle\leftarrow Sing(BT)$ and we obtain an induced commutative diagram

\[\xymatrix{\vert \mathcal{M}\vert \ar[d]&\ar[d]\ar[l]_f \vert Sing(X_T)\vert\ar[r]^g &\ar[d] X_T\\
\vert P \vert & \vert Sing (BT)\vert\ar[l]\ar[r] & BT
}\]
of spaces. The right hand horizontal maps are the canonical weak equivalences and the left hand vertical map is a fibration (cf.\ Remark \ref{rem:technicalstuff}). We want $f$ to become a weak equivalence. This can be achieved as in standard CW-approximation by gluing (nonequivariant) cells to $\vert Sing(X_T)\vert$ and $X_T$ accordingly. To make this precise, we consider a section $s$ of $\vert Sing(S^{k-1})\vert\rightarrow S^{k-1}$ induced by a triangulation of $S^{k-1}$.
Now given some $\varphi\colon S^{k-1}\rightarrow X_T$ let $\varphi':=s\circ\varphi\colon S^{k-1}\rightarrow \vert Sing(X_T)\vert$ the corresponding lift. We have $\varphi=g\circ\varphi'$. Then for some extension $\psi\colon D^k\rightarrow\vert \mathcal{M}\vert$ of $f\circ \varphi'$ we obtain continuous maps
\[\vert\mathcal{M}\vert\leftarrow \vert Sing(X_T)\vert\cup_{\varphi'} D^k \rightarrow X_T\cup_\varphi D^k\]
where the right hand map is still a weak equivalence and the left hand map might be closer to a weak equivalence than before, depending on the choice of $\varphi,\psi$. This is essentially the same as taking a homotopy inverse of $g$ (in case $X_T$ is a simply connected CW-complex) and applying usual CW-approximation to the resulting composition $\vert \mathcal{M}\vert \leftarrow X_T$. However doing things as above makes it easier to keep track of strict commutativity of diagrams.

Now the idea is to go through this nonequivariant approximation procedure for the Borel constructions while gluing free $T$-cells to $X$.
This is done as follows: fix a base point in $ET$ and identify $X\subset X_T$ with the fiber at that point. Now with $\varphi,\psi$ as above, assume additionally that $\varphi$ takes image in $X$ and $\psi$ maps $D^k$ to a single fibre of $\vert \mathcal{M}\vert\rightarrow \vert P\vert$. The assumption on $\psi$ makes sense since in this case the image of $\varphi'$ maps to a single point in $\vert Sing(BT)\vert$. Denote by $Y$ the $T$-space obtained from $X$ by attaching a free $D^k\times T$-cell along the equivariant extension of $\varphi$. Recall from Lemma \ref{lem:retract} that $X_T\cup_\varphi D^k\subset Y_T$ is a deformation retract. Then the following technical Lemma tells us that the above approximation step may be carried out while maintaining the setup of Proposition \ref{prop:realization}.

\begin{lem}\label{lem:diagramirrsinn}
With the setup above, there is a map $\mathcal{M}\rightarrow A_{PL}(Y_T)$ fitting into the commutative diagram
\[\xymatrix{ \mathcal{M}\ar[r] & A_{PL}(Y_T)\ar[d]\ar[r]& A_{PL}(X_T)\ar[dl] \\ P\ar[u]\ar[r] & A_{PL}(BT)& }\]
as well as into the homotopy commutative diagram
\[\xymatrix{
\mathcal{M}\ar[d] \ar[r] &  A_{PL}(Y_T)\ar[d]\\
A_{PL}(\vert\mathcal{M}\vert)\ar[r] & A_{PL}(\vert Sing(X_T)\vert\cup_{\varphi'} D^k)
}\]
in which the vertical maps are quasi isomorphisms.

\end{lem}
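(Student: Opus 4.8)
The plan is to build the map $\mathcal{M}\to A_{PL}(Y_T)$ by combining the nonequivariant realization data with the deformation retraction from Lemma~\ref{lem:retract}, and then to verify both diagrams by appealing to the relative-Sullivan lifting machinery collected in Remark~\ref{rem:technicalstuff}. First I would record the key homeomorphism/retraction: by Lemma~\ref{lem:retract}, $X_T\cup_{i\circ\varphi}D^k\hookrightarrow Y_T$ is a deformation retract, so $A_{PL}(Y_T)\to A_{PL}(X_T\cup_{i\circ\varphi}D^k)$ is a quasi isomorphism, and there is a section-up-to-homotopy in the other direction. Thus it suffices to produce a compatible map into $A_{PL}(X_T\cup_\varphi D^k)$, which is the nonequivariant target appearing in the second diagram (up to the weak equivalence $g$ between $\vert Sing(X_T)\vert$ and $X_T$). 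The pushout $\vert Sing(X_T)\vert\cup_{\varphi'}D^k$ receives a map from $\mathcal{M}$ essentially by construction: $f\colon\vert Sing(X_T)\vert\to\vert\mathcal{M}\vert$ together with the chosen extension $\psi\colon D^k\to\vert\mathcal{M}\vert$ of $f\circ\varphi'$ glue to a map $\vert Sing(X_T)\vert\cup_{\varphi'}D^k\to\vert\mathcal{M}\vert$, and then $A_{PL}(-)$ applied to this, precomposed with the canonical quasi isomorphism $\mathcal{M}\to A_{PL}(\vert\mathcal{M}\vert)$ (valid since $\mathcal{M}$ is finite type, disconnected almost Sullivan, $H^1=0$), gives $\mathcal{M}\to A_{PL}(\vert Sing(X_T)\vert\cup_{\varphi'}D^k)$. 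This already makes the second (homotopy commutative) square commute up to homotopy, and the left vertical map is a quasi isomorphism by the same realization facts; the right vertical map is a quasi isomorphism because $g$ is a weak equivalence and the glued cell is attached along corresponding maps.

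Next I would upgrade this homotopy-commutative data to the strictly commutative first diagram. The essential point is that $P\to\mathcal{M}$ is a disconnected Sullivan extension, so I can apply the lifting lemma (Remark~\ref{rem:technicalstuff}, second bullet): the map $\mathcal{M}\to A_{PL}(Y_T)$ should be constructed as a lift of the map $\mathcal{M}\to A_{PL}(X_T\cup_\varphi D^k)\simeq A_{PL}(Y_T)$ relative to an initial lift of $P$. Concretely, the map $P\to A_{PL}(BT)$ is fixed, and $A_{PL}(Y_T)\to A_{PL}(BT)$ is the Borel fibration map; since the composite $P\to\mathcal{M}\to A_{PL}(Y_T)\to A_{PL}(BT)$ must equal the given $P\to A_{PL}(BT)$, I use the surjective quasi isomorphism $A_{PL}(\vert Y_T\vert)\to A_{PL}(\langle Y_T\rangle)$-type statements, or more directly Lemma~\ref{lem:homotopetocom} / the lifting lemma, to rigidify. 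Because $P$ is cohomologically connected and $A_{PL}(Y_T)$ is spacelike, and the relevant map $A_{PL}(Y_T)\to$ (the target in the homotopy square) can be arranged surjective by passing through $A_{PL}(\vert Y_T\vert)$, the homotopy class of lifts relative to $P$ is unique, which is exactly what is needed for both the strict commutativity with the $P$-action and the homotopy commutativity of the realization square.

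I would then verify the two asserted triangles in the first diagram: the upper triangle $\mathcal{M}\to A_{PL}(Y_T)\to A_{PL}(X_T)$ composing to $\phi$ holds because on realizations the composite $\vert Sing(X_T)\vert\cup_{\varphi'}D^k\to\vert\mathcal{M}\vert$ restricts to $f$ on $\vert Sing(X_T)\vert$, whose $A_{PL}$ recovers $\phi$ up to the canonical identifications; the lower triangle is the Borel-fibration compatibility, handled as above. The fact that the vertical maps in the second diagram are quasi isomorphisms follows from: $\mathcal{M}\to A_{PL}(\vert\mathcal{M}\vert)$ is a quasi isomorphism (finite type almost Sullivan, $H^1=0$), and $A_{PL}(Y_T)\to A_{PL}(\vert Sing(X_T)\vert\cup_{\varphi'}D^k)$ is a quasi isomorphism since $Y_T$ deformation retracts to $X_T\cup_\varphi D^k$, which is weakly equivalent via $g$ to $\vert Sing(X_T)\vert\cup_{\varphi'}D^k$ (gluing a cell along homotopic attaching maps in a homotopy-equivalent way preserves weak equivalence, using that $g$ restricted to the sphere factors the two attaching maps).

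The main obstacle I anticipate is the bookkeeping needed to make the single-cell attachment compatible \emph{simultaneously} with the $P$-action (strict commutativity of the left square in diagram one) and with the realization maps (homotopy commutativity of diagram two), while also ensuring $Y_T$ genuinely deformation retracts onto $X_T\cup_\varphi D^k$ with the attaching map matching $i\circ\varphi$ rather than just being homotopic to it. This is precisely why the hypotheses ``$\psi$ maps $D^k$ into a single fibre of $\vert\mathcal{M}\vert\to\vert P\vert$'' and ``$\varphi$ takes image in $X$'' were imposed: the former guarantees the cell attachment does not disturb the $P$-side (the composite $D^k\to\vert\mathcal{M}\vert\to\vert P\vert$ is constant, matching the constant path in $\vert Sing(BT)\vert$), and the latter guarantees the geometric picture of Lemma~\ref{lem:retract} applies verbatim. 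So the proof is mostly a careful assembly: choose the lift of $\mathcal{M}\to A_{PL}(Y_T)$ via the Sullivan lifting lemma relative to $P$, then check commutativity of each face using uniqueness of lifts up to homotopy relative to $P$ together with the already-established homotopy commutativity on the level of realizations.
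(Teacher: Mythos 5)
Your proposal is correct and follows essentially the same route as the paper: both arguments transport $\mathcal{M}\to A_{PL}(\vert\mathcal{M}\vert)$ across the glued cell on the realization side, lift up to homotopy relative to $P$ through the chain of quasi isomorphisms ending at $A_{PL}(Y_T)$ (using the deformation retraction of Lemma~\ref{lem:retract}), and finally rigidify via Lemma~\ref{lem:homotopetocom} using the surjectivity of $A_{PL}(Y_T)\to A_{PL}(X_T)$. The only difference is organizational: the paper assembles one large strictly commutative diagram of cochain algebras (checking commutativity on the level of continuous maps, using that the discs map to points in the base), whereas you verify the faces piecemeal, but the ingredients and their roles are identical.
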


\begin{proof}
For a space $Z$, a morphism of cochain algebras $\varphi\colon A\rightarrow A_{PL}(Z)$ induces a commutative diagram
\[\xymatrix{
A \ar[r]\ar[dr] & A_{PL}(\langle A\rangle)\ar[d] &\ar[l] A_{PL}(\vert A\vert)\ar[d]\\ & A_{PL}( Z)& \ar[l] A_{PL}(\vert Sing(Z)\vert)
}\]
of cochain algebras. Here the horizontal maps in the square are induced by the canonical inclusions of simplicial sets which, for some simplicial set $L$, is the morphism $L\rightarrow Sing(\vert L\vert)$ adjoint to $\id_{\vert L\vert}$. The vertical maps in the square are induced by the simplicial morphism $Sing(Z)\rightarrow \langle A\rangle$ adjoint to $\varphi$. In the case of $A=\mathcal{M},P$ we may lift $A\rightarrow A_{PL}(\langle A\rangle)$ through the surjective quasi isomorphism $A_{PL}(\langle A\rangle)\leftarrow A_{PL}(\vert A\vert)$. Doing this for $P$ first and then lifting to $A_{PL}(\vert\mathcal{M}\vert)$ relative to $P$, we obtain the commutative diagram
\[\xymatrix{
& A_{PL}(\vert Sing(X_T)\vert \cup_{\varphi'} D^k)\ar[dr] & A_{PL}(\vert Sing(X_T\cup_\varphi D^k)\vert)\ar[l]\ar[d]\ar[r] & A_{PL}(X_T\cup_\varphi D^k)\ar[d]\\
\mathcal{M}\ar[r]& A_{PL}(\vert\mathcal{M}\vert)\ar[u]\ar[r]& A_{PL}(\vert Sing(X_T)\vert)\ar[r]& A_{PL}(X_T)\\
P\ar[u]\ar[r]& A_{PL}(\vert P\vert)\ar[r]\ar[u]& A_{PL}(\vert Sing(BT)\vert)\ar[r]\ar[u]\ar@/_{2.0cm}/[uu]& A_{PL}(BT)\ar[u]\ar@/_{2.0cm}/[uu]
}\]
in which the compositions of the bottom rows are the given maps $\mathcal{M}\rightarrow A_{PL}(X_T)$ and $P\rightarrow A_{PL}(BT)$. All morphisms within and between the two central columns as well as within the right hand column are induced by respective continuous maps. Of those, only $X_T\cup D^k\rightarrow BT$ and $\vert Sing(X_T)\vert \cup \varphi' D^k\rightarrow \vert Sing(X\cup_\varphi D^k)\vert$ need further specification. The first map just extends $X_T\rightarrow BT$ by mapping $D^k$ to the basepoint. The second map is the obvious inclusion on $\vert Sing(X)\vert$ and on $D^k$ it is induced by a fixed choice of triangulation of $D^k$ extending the previous choice of triangulation of $S^{k-1}$. Now commutativity of the diagram can be checked on the level of continuous maps, using that the discs in the top row map to single points in the spaces corresponding to the bottom row.

Every algebra in the above diagram comes with a compatible morphism from $P$ and the morphisms in the top row are quasi isomorphisms. Thus we can lift up to homotopy relative to $P$ to obtain a morphism $\mathcal{M}\rightarrow A_{PL}(X_T\cup_\varphi D^k)$ and we can further lift this  through $A_{PL}(X_T\cup_\varphi D^k)\leftarrow A_{PL}(Y_T)$ up to homotopy relative to $P$. The map $A_{PL}(Y_T)\rightarrow A_{PL}(X_T)$ is surjective so the lemma follows from Lemma \ref{lem:homotopetocom}.
\end{proof}

\begin{proof}[Proof of Proposition \ref{prop:realization}]

We first treat the case when $X\neq \emptyset$. Furthermore, we begin under the assumption that $X_T$ is path connected, simply connected and $\mathcal{M}$ is connected. These assumptions will be justified in the end in a fashion which is analogous to the following induction step.

Assume inductively that we have a $T$-space $Y\supset X$ and a factorization $\mathcal{M}\rightarrow A_{PL}(Y_T)\rightarrow A_{PL}(X_T)$ of $P$-cdgas such that for some $k\geq 2$ the map $H^*(\mathcal{M})\rightarrow H^*(Y_T)$ is an isomorphism in degrees $<k-1$ and injective in degree $k-1$.

For a space $Z$ set $\tilde{Z}:=\vert Sing(Z)\vert$. Furthermore, whenever a choice of map $\phi\colon Z\rightarrow Z'$ is clear from the context, we denote by $H_*(Z',Z)$ (resp.\ $\pi_*(Z',Z)$) the relative homology (resp.\ homotopy) groups of $(M_\phi, Z)$, where $M_\varphi$ denotes the mapping cylinder of $\varphi$. This construction is clearly functorial with respect to commutative squares of continuous maps. Consider the diagram

\[\xymatrix{
F\ar[d] & \tilde{Y}\ar[d]\ar[l]\ar[r] & Y\ar[d]\\
\vert \mathcal{M}\vert\ar[d] & \tilde{Y}_T\ar[d]\ar[l]_f\ar[r] & Y_T\ar[d]\\
\vert P\vert & \tilde{BT}\ar[l]\ar[r]& BT
}\]
where $F$ denotes the fiber of $\vert \mathcal{M}\vert\rightarrow\vert P\vert$. Dualizing the assumptions we obtain that on homology, $f_*\colon H_*(\tilde{Y}_T)\rightarrow  H_*(\vert\mathcal{M}\vert)$ is an isomorphism in degrees $<k-1$ and surjective in degree $k$. In other words, $H_i(\vert \mathcal{M}\vert,\tilde{Y}_T)=0$ for $i<k$.  Choose some nontrivial $x\in H_k(\vert \mathcal{M}\vert,\tilde{Y}_T)$. We want to glue a free $T$-cell to $Y$ and extend $f$ such that $x$ becomes trivial.

The rational Hurewicz theorem provides an isomorphism $H_k(\vert \mathcal{M}\vert,\tilde{Y}_T)\cong\pi_k(\vert\mathcal{M}\vert,\tilde{Y}_T)\otimes \mathbb{Q}$. The first step is to show that $x$ comes from an element in $\pi_k(F,\tilde{Y})\otimes \mathbb{Q}$. We do this by showing surjectivity of the arrow marked as $\alpha$ in the diagram

\[\xymatrix{
\pi_k(\tilde{Y})\ar[d]\ar[r] &\pi_k(F)\ar[d]\ar[r]& \pi_k(F,\tilde{Y})\ar[d]^{\alpha}\ar[r] &\pi_{k-1}(\tilde{Y})\ar[d]\ar[r]&\pi_{k-1}(F)\ar[d]^{i}\\
\pi_k(\tilde{Y}_T)\ar[d]^{p}\ar[r]& \pi_k(\vert \mathcal{M}\vert)\ar[d]\ar[r]& \pi_k(\vert \mathcal{M}\vert,\tilde{Y}_T)\ar[d]\ar[r] &\pi_{k-1}(\tilde{Y}_T)\ar[d]\ar[r]&\pi_{k-1}(\vert \mathcal{M}\vert)\ar[d]\\
\pi_k(\tilde{BT})\ar[r]& \pi_k(\vert {P}\vert)\ar[r]& \pi_k(\vert {P}\vert,\tilde{BT})\ar[r] &\pi_{k-1}(\tilde{BT})\ar[r]&\pi_{k-1}(\vert {P}\vert)\\
}\]
after tensoring everything with $\mathbb{Q}$. Note that for $k=2$ the second to right column becomes zero and we only need to use the left hand part of the diagram, where consequently all groups are Abelian. The proof is a straight forward diagram chase using the following facts: rows are exact, all columns except the central one are known to be exact, $\pi_k(\vert \mathcal{P}\vert,\tilde{BT})\otimes \mathbb{Q}=0$, $i$ is injective, and $p$ is surjective. Surjectivity of $p$ holds by assumption and $\pi_k(\vert \mathcal{P}\vert,\tilde{BT})\otimes \mathbb{Q}=0$ holds since $\tilde{BT}\rightarrow \vert P\vert$ is a rational equivalence. Finally, injectivity of $i$ is equivalent to surjectivity of $\pi_{k}(\vert M\vert)\otimes \mathbb{Q}\rightarrow \pi_{k}(\vert P\vert)\otimes \mathbb{Q}$. This is only an issue for $k=2$ where the injectivity of $i$  is in fact not needed for the argument, as explained above.
 
We have shown that $x\in H_k(\vert \mathcal{M}\vert,\tilde{Y}_T)$ comes from an element $y\in \pi_k(F,\tilde{Y})\otimes \mathbb{Q}$. Possibly modifying $x$ and $y$ up to nonzero scalars, we choose a representative $\phi\colon (D^k,S^{k-1})\rightarrow (M_{\overline{f}}, \tilde{Y})$ for $y$, where $\overline{f}\colon \tilde{Y}\rightarrow F$ is the restriction of $f$. In doing this, we may assume that $\phi|_{S^{k-1}}$ is of the form $\varphi'$ for some $\varphi\colon S^{k-1}\rightarrow Y$ with notation as above Lemma \ref{lem:diagramirrsinn}. Now we glue an equivariant cell $D^k\times T$ to $Y$ along the equivariant extension of $\varphi$, resulting in the $T$-space $Y'$. Furthermore set $\psi$ to be the composition $D^k\rightarrow M_{\overline{f}}\rightarrow F$. It extends the map $f\circ\varphi'$ and defines an extension of $f$ to $\tilde{Y}_T\cup_{\varphi'} D^k$.
By Lemma \ref{lem:diagramirrsinn} we have a factorization $\mathcal{M}\rightarrow A_{PL}(Y'_T)\rightarrow A_{PL}(Y_T)$ of $P$-cdgas such that on cohomology the first map is given by
\[H^*(\mathcal{M})\cong H^*(\vert \mathcal{M}\vert)\xrightarrow{f^*} H^*(\tilde{Y}_T\cup_{\varphi'} D^k)\cong H^*(Y_T').\]
We claim that the total dimension of the kernel and cokernel of this extended $f^*$ has been reduced by $1$. Indeed, one checks that by construction, the map $H_*(\vert\mathcal{M}\vert, \tilde{Y}_T)\rightarrow H_*(\vert \mathcal{M}\vert, \tilde{Y}_T\cup_{\varphi'} D^k)$ is an isomorphism in all degrees except in degree $k$, where \[H_k(\vert \mathcal{M}\vert, \tilde{Y}_T\cup_{\varphi'} D^k)= H_k(\vert \mathcal{M}\vert, \tilde{Y}_T)/\langle x\rangle_\mathbb{Q}.\] By iterating this procedure we may assume $H_k(\vert \mathcal{M}\vert, \tilde{Y}'_T)=0$, which finishes the induction.

To justify the initial assumption of path connectedness and simply connectedness, we proceed as in the induction with few modifications. Decompose $\mathcal{M}=\prod_i \mathcal{M}_i$ into path components. For a component $\mathcal{M}_i$ which maps trivially to $A_{PL}(X)$, we choose a simply connected realization $Y_i$ and a quasi isomorphism $\mathcal{M}_i\rightarrow A_{PL}((Y_i)_T)$ as in the $X=\emptyset$ case described below. Then we replace $X$ with $X\sqcup Y_i$ and extend $\mathcal{M}\rightarrow A_{PL}(X\sqcup Y_i)=A_{PL}(X)\times A_{PL}(Y_i)$ accordingly. Thus we may assume $H^0(\mathcal{M})\rightarrow H^0(X_T)$ to be injective.
Then we glue free $1$-cells to $X$ in order to obtain a $T$-space $Y\supset X$ and $\mathcal{M}\rightarrow A_{PL}(Y_T)$ which induces an isomorphism on $H^0$. This is done as in the induction above by killing $H_1(\vert \mathcal{M}\vert, \tilde{Y}_T)$ (note that for $k=1$, the surjectivity of $\alpha$ can be shown directly and tensoring with $\mathbb{Q}$ is neither needed nor sensical).

Having an isomorphism on $H^0$, we may consider path components separately and thus assume $\mathcal{M}$ and $X$ to be path connected.
We continue to make spaces simply connected. This is done as before, the difference being that we do not start with an element in homology which needs to be killed but directly with a nontrivial element $x\in\pi_1(Y_T)$.
By assumption, the composition $\pi_2(X_T)\rightarrow \pi_2(Y_T)\rightarrow \pi_2(BT)$ is surjective. Hence the second map is surjective as well proving that $\pi_1(Y)\rightarrow \pi_1(Y_T)$ is an isomorphism. Thus we may choose a representative $\varphi\colon S^1\rightarrow Y$.
We claim that $\pi_1(F)=0$ and hence the composition $f\circ\varphi'$ is nullhomotopic. To see this, we use the fact that that $\vert\mathcal{M}\vert\rightarrow\vert P\vert$ induces an injection on $H^2$, or dually a surjection in $H_2$ with rational coefficients. But the spaces are simply connected and rational so $\pi_2(\vert\mathcal{M}\vert)\rightarrow\pi_2(\vert P\vert)$ is surjective and thus indeed $\pi_1(F)=0$. It follows that $f\circ\varphi'$ extends to a map $\psi\colon D^2\rightarrow F$ and we proceed as in the induction by gluing a free $2$-cell along $\varphi$. Repeating this we can achieve that $\pi_1(Y_T)=\pi_1(Y)=0$.

We now consider the case $X=\emptyset$ and assume $\mathcal{M}$ to be cohomologically connected by considering path components separately. We essentially use the procedure for realizing algebraic models by free actions as described e.g.\ in \cite[Proposition 4.2]{HalperinTori}. The need for a slightly different argument is due to the fact that the approximation in the $X\neq \emptyset$ case relies on the surjectivity of $\pi_2(X_T)\rightarrow \pi_2(BT)$ in order for the resulting space to be simply connected.

Start by taking a nonequivariant rational CW approximation $Z\rightarrow\vert \mathcal{M}\vert$, i.e.\ a rational equivalence where $Z$ is a simply connected CW-complex.
Note that -- as in the more complicated, equivariant argument above -- we may take $Z$ to be a finite CW-complex if $H^*(\mathcal{M})$ is finite dimensional. Let $X_1,\ldots,X_r$ be cohomology classes in $H^*(P)\cong H^*(BT;\mathbb{Q})$ which correspond to a basis of the integral cohomology $H^2(BT;\mathbb{Z})$. Using that $H^2(P)\rightarrow H^2(\vert M\vert)$ is injective, we see that $Z\rightarrow\vert \mathcal{M}\vert$ can be chosen in a way such that the images of the $X_i$ in $H^2(Z,\mathbb{Q})$ come from integral cohomology classes which are part of a basis of $H^2(Z;\mathbb{Z})$. Now use these classes to define a map $f\colon Z\rightarrow K(\mathbb{Z}^r,2)=BT$ and pull back the universal $T$-bundle $T\rightarrow ET\rightarrow BT$ along $f$. The result is a principal $T$-bundle $T\rightarrow Y\rightarrow Z$.
Using the Hurewicz theorem and dualizing we obtain that the map $\pi_2(Z)\rightarrow \pi_2(BT)$ is surjective. An argument using the naturality of the long exact homotopy sequence then shows that $Y$ is simply connected. Note that the (finite) CW-structure on $Z$ translates cellwise to a (finite) $T$-CW-structure on $Y$. It remains to show that $P\rightarrow \mathcal{M}$ is a model for $Y_T\rightarrow BT$. For this we refer to \cite[Proposition, 7.17]{AlgebraicModels}, \cite[Proposition 2.7]{AmannZoller}.

Finally note that if, by abuse of notation, $Y$ now denotes result of the procedure above, then $A_{PL}(Y_T)$ is the limit over the $A_{PL}(-)$ in the intermediate steps. Thus we obtain the desired factorization $\mathcal{M}\rightarrow A_{PL}(Y_T)\rightarrow A_{PL}(X_T)$.
Clearly, if the kernel and cokernel of the initial map $H^*(\mathcal{M})\rightarrow H^*(X_T)$ are finite dimensional, then the process is finished after gluing finitely many cells.
\end{proof}

\begin{thm}\label{thm:P-realization}
Let $C$ be a bounded collection of subgroups of $T$ and $\mathcal{D}\subset \mathcal{S}$ be the stable subset generated by $C$. Let $\underline{P}\rightarrow \underline{A_{PL}(*)}$ be an almost Sullivan model and let $\underline{P}\rightarrow\underline{A}$ be a $\mathcal{D}$-system under $\underline{P}$. Then there is a $T$-simply connected, $T$-finite $\mathbb{Q}$-type $T$-CW-complex $X$ with isotropies contained in $C$ such that $\underline{A_{PL}(X)}$ is connected to $\underline{A}$ via quasi isomorphisms of systems under $\underline{P}$ if and only if $\underline{A}$ satisfies \textbf{(TC)}, is spacelike, of finite type, $H^1(\underline{A})=0$, and $H^2(\underline{P})\otimes H^0(\underline{A})\rightarrow H^2(\underline{A})$ is injective. If $C$ is finite, then $X$ can be chosen as a finite $T$-CW-complex if and only if $\underline{A}$ satisfies \textbf{(LC)} and $H^*(\underline{A}(U,H))$ is finitely generated as an $H^*(\underline{P}(U,H))$-module for any $(U,H)\in\mathcal{D}$.
\end{thm}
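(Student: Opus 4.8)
The proof proceeds by characterizing which $\underline{A_{PL}(X)}$ arise (the necessity) and by realizing a given $\underline{A}$ through an induction over isotropy groups built on Proposition~\ref{prop:realization} (the sufficiency). \emph{Necessity.} If $\underline{A}$ is joined to $\underline{A_{PL}(X)}$ by quasi isomorphisms of systems under $\underline{P}$, then \textbf{(TC)} is Remark~\ref{rem:TC}; spacelikeness, finite type and $H^1(\underline{A})=0$ come from $X$ being $T$-simply connected of $T$-finite $\mathbb{Q}$-type together with the $1$-connectedness of $B(T/U)$; and the homotopy exact sequence of the Borel fibration $X^H\to X^H_{T/U}\to B(T/U)$ shows $\pi_2(X^H_{T/U})\to\pi_2(B(T/U))$ is onto on each component, which dualizes to the injectivity of $H^2(\underline{P})\otimes H^0(\underline{A})\to H^2(\underline{A})$. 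If $X$ is in addition a finite $T$-CW-complex, \textbf{(LC)} follows from Remark~\ref{rem:localization}(ii) and Lemma~\ref{lem:isotropytypes}, and $H^*(\underline{A}(U,H))\cong H^*_{T/U}(X^H)$ is finitely generated over $R_{T/U}\cong H^*(\underline{P}(U,H))$ because $H^*_T(X^H)$ is finitely generated over $R_T$ and $R_T$ is free over $R_{T/U}$.

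\emph{Sufficiency, reduction.} First I would replace $\underline{A}$ by a convenient model: by Proposition~\ref{prop:SCexistence}(i) we may assume $\underline{A}$ satisfies \textbf{(SC)}, and by the disconnected almost Sullivan machinery of Section~\ref{sec:disconnected} (cf.\ Remark~\ref{rem:technicalstuff}) we may further assume that $\underline{P}\to\underline{A}$ is pointwise a disconnected Sullivan extension of cohomologically simply connected, finite type disconnected almost Sullivan algebras. None of this alters the relevant cohomology, and \textbf{(TC)}, $H^1(\underline{A})=0$ and the injectivity hypothesis put us in the situation of Proposition~\ref{prop:technicaldiagramstuff}, so the limits of $\underline{A}$ occurring below are cohomologically simply connected.

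\emph{Sufficiency, the induction.} I would then construct $X$ by processing the groups $H\in\mathcal{D}_R$ in an order refining reverse inclusion, starting from the maximal ones. On reaching $H$ we will have a componentwise $T$-simply connected sub-$T$-CW-complex $X_{<H}$ with isotropies among the already processed groups, and quasi isomorphisms of systems under $\underline{P}$ between $\underline{A}$ and $\underline{A_{PL}(X_{<H})}$ at all positions $(U,K)$ with $K\supsetneq H$. Since cells of $X_{<H}$ whose isotropy is incomparable to $H$ avoid the $H$-fixed set, $X_{<H}^{H}$ is the union $Y$ of the $X_{<H}^{K}$ over $T\supseteq K\supsetneq H$, so Lemma~\ref{lem:subcomplexes} and Proposition~\ref{prop:SC} produce a surjective quasi isomorphism over $\underline{P}$ from $A_{PL}(Y_{T/H_0})$ onto $\lim_{\mathcal{D}^-(H)}\underline{A}$, where $\mathcal{D}^-(H)=\{(H_0,K)\in\mathcal{D}: K\supsetneq H\}$ as in Lemma~\ref{lem:subcomplexes}; lifting the map $\underline{A}(H_0,H)\to\lim_{\mathcal{D}^-(H)}\underline{A}$ through it relative to $\underline{P}(H_0,H)$ (possible since $\underline{A}(H_0,H)$ is almost Sullivan over $\underline{P}(H_0,H)$) yields the input of Proposition~\ref{prop:realization} for the torus $T/H_0$. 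Its hypotheses are met: $\pi_2(Y_{T/H_0})\to\pi_2(B(T/H_0))$ is surjective because it is for each already built $X_{<H}^{K}$ (the injectivity hypothesis at $(H_0,K)$), $\pi_1(Y)$ vanishes componentwise by induction, the algebras are of finite type and cohomologically simply connected, and $H^2(\underline{P}(H_0,H))\otimes H^0(\underline{A}(H_0,H))\to H^2(\underline{A}(H_0,H))$ is injective by hypothesis. Hence Proposition~\ref{prop:realization} — where for disconnected $H$ one glues cells of isotropy exactly $H$, which by the rational form of Lemma~\ref{lem:retract}, via the rational equivalence $B(T/H_0)\to B(T/H)$, modifies $Y_{T/H}\simeq_{\mathbb{Q}}Y_{T/H_0}$ by ordinary cells — gives $X_{\le H}\supseteq X_{<H}$, obtained by attaching free $T/H_0$-cells, that is $T$-cells of isotropy $H$, with $\underline{A}(H_0,H)$ a model for $(X_{\le H}^{H})_{T/H_0}$; the positions $(U,H)$ with $U\subsetneq H_0$ then follow by \textbf{(TC)} and Lemma~\ref{lem:tensorTC}, and the positions with $K\in\mathcal{D}_R\setminus C$ are correct because there $X^{K}$ is a union of strata already realized and \textbf{(SC)} identifies $\underline{A}(U,K)$ with the corresponding limit. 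Attaching isotropy-$H$ cells leaves $X^{K}$ unchanged for $K\supsetneq H$, so the induction proceeds, and $X:=X_{\le\{1\}}$ is the desired $T$-simply connected, $T$-finite $\mathbb{Q}$-type $T$-CW-complex with isotropies in $C$ modelling $\underline{A}$ over $\underline{P}$.

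\emph{Sufficiency, the finite case; the main obstacle.} If $C$ is finite there are finitely many stages, and at stage $H$ the map $H^*(\underline{A}(H_0,H))\to H^*(\lim_{\mathcal{D}^-(H)}\underline{A})$ of finitely generated $R_{T/H_0}$-modules becomes an isomorphism after localizing at $S_{T/H_0}(K)$ for every torus $K\supsetneq H_0$ — precisely what \textbf{(LC)}, carried through the limit by Lemma~\ref{lem:subcomplexes}, together with the finite generation hypothesis provides — so by Lemma~\ref{lem:localizationfinite} its kernel and cokernel are finite dimensional, Proposition~\ref{prop:realization} attaches only finitely many cells, and $X$ comes out a finite $T$-CW-complex. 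I expect the main difficulty to lie in the bookkeeping of the inductive step: arranging at each stage a strictly commutative diagram under $\underline{P}$ feeding Proposition~\ref{prop:realization} whose hypotheses — above all the surjectivity onto $\pi_2(B(T/H_0))$ — are preserved along the induction, which is where \textbf{(TC)}, $H^1(\underline{A})=0$, the injectivity hypothesis, Lemma~\ref{lem:subcomplexes} and Proposition~\ref{prop:technicaldiagramstuff} all come into play.
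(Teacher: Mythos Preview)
Your strategy is essentially the paper's: induct over isotropy groups, at each stage feed Proposition~\ref{prop:realization} via Lemma~\ref{lem:subcomplexes} and Proposition~\ref{prop:SC}, propagate to the remaining $(U,H)$-positions by \textbf{(TC)} and Lemma~\ref{lem:tensorTC}, and in the finite case invoke Lemma~\ref{lem:localizationfinite}. The necessity direction and the finite-case argument are as in the paper.

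There is, however, a real gap in your reduction step. You propose to assume simultaneously that $\underline{A}$ satisfies \textbf{(SC)} and that $\underline{P}\to\underline{A}$ is pointwise a disconnected Sullivan extension, but these two replacements are in tension: the functorial Sullivan replacement of Proposition~\ref{prop:Sullivandiagram} does not preserve \textbf{(SC)}, while Proposition~\ref{prop:SCexistence} tensors with contractible free cdgas $\Lambda(V\oplus dV)$ that in general carry degree~$0$ generators (needed to surject onto the degree~$0$ part of the limit), so the result is merely a free nilpotent extension of $\underline{P}$, not a Sullivan extension as Proposition~\ref{prop:realization} requires (the fibration property of $\vert\mathcal{M}\vert\to\vert P\vert$ is used there). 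The paper resolves this by keeping $\underline{A}$ and the intermediate systems $\underline{B_i}$ in the quasi-isomorphism chain with \textbf{(SC)}, and at each stage $H$ constructing a \emph{local} Sullivan model $\mathcal{M}(H_0,H)\to\underline{A}(H_0,H)$; this model is then lifted through the chain (using Proposition~\ref{prop:SC} on limits and a homotopy through $(t,dt)$) to produce the input $\mathcal{M}(H_0,H)\to A_{PL}(X^H_{T/H_0})$ for Proposition~\ref{prop:realization}. This separation of concerns is exactly the ``bookkeeping'' you flagged as the main obstacle. A smaller point: $Y=X_{<H}^{H}$ is a union of simply connected subcomplexes but need not itself be simply connected; the paper only asserts (and Proposition~\ref{prop:realization} only needs) that $\pi_1(Y)$ is finitely generated on each component.
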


\begin{proof} For a $T$-simply connected, $T$-finite $\mathbb{Q}$-type, $T$-space $X$, the system $\underline{A_{PL}(X)}$ has the described algebraic properties (cf.\ Remark \ref{rem:TC}). Conversely, let $\underline{A}$ be such a system under $\underline{P}$.
Let $\mathcal{D}_n$ consist of all $(U,H)\in \mathcal{D}$ such that the length of any maximal chain $(H_0,H)<\ldots<(H_0,T)$ is $\leq n$. Suppose inductively that we have a $T$-space $X$ such that each occurring isotropy group $H$ satisfies $(H_0,H)\in\mathcal{D}_n$, $X^H$ is componentwise {simply connected} for any $(H_0,H)\in \mathcal{D}_n$, and there is a chain of quasi isomorphisms \[\underline{A}|_{\mathcal{D}_n}\leftarrow \underline{B_1} \rightarrow\ldots\rightarrow \underline{B_k}\leftarrow \underline{A_{PL}(X)}|_{\mathcal{D}_n}\] of systems under $\underline{P}|_{\mathcal{D}_n}$. By  Proposition \ref{prop:SCexistence} we may assume that $\underline{A}$ and the $\underline{B}_i$ satisfy \textbf{(SC)}.

Now fix $(H_0,H)\in \mathcal{D}_{n+1}\backslash \mathcal{D}_n$. Define subsets $\mathcal{C}(H):=\mathcal{D}^-(H_0,H)=\mathcal{D}_n\cap \mathcal{D}(H_0,H)$, $\mathcal{C}(H)^+:= \mathcal{C}(H)\cup \{ (H_0,H)\}$, and $\tilde{\mathcal{C}}(H):=\mathcal{D}(H_0,H)\cup\{(H_0,H)\}$. We construct a space $Y(H)$ by gluing $T/H$-cells to $X^H$, as well as a chain of quasi isomorphisms linking $\underline{A}|_{\tilde{\mathcal{C}}(H)}$ to $\underline{A_{PL}}(Y(H))|_{\tilde{\mathcal{C}}(H)}$. Doing this for all choices of $H$ and piecing together the chains of quasi isomorphisms will then complete the induction step.

We start by extending the $\underline{B_i}|_{\mathcal{C}(H)}$-systems to $\mathcal{C}^+(H)$-systems. Consider the decomposition $\underline{A}(H_0,H)=\prod_i \underline{A}(H_0,H)_i$ into cohomologically connected cochain algebras. Set $P(H_0)=\underline{P}(H_0,H)$ and let $(P(H_0)\otimes \Lambda V_i,d)\rightarrow \underline{A}(H_0,H)$ be a quasi isomorphism from a Sullivan extension, compatible with the $P(H_0)$-actions. We may write $P(H_0)$ as a tensor product of $R_{T/H}$ and a contractible cdga and construct the extensions such that $d(V_i)\subset R_{T/H}\otimes \Lambda V_i$. Then $(P(H_0)\otimes \Lambda V_i,d)$ is clearly an almost Sullivan algebra. Set $\mathcal{M}(H_0,H)=\prod_i (P(H_0)\otimes\Lambda V_i,d)$. The map $\lim_{\mathcal{C}(H)}\underline{B_1}\rightarrow \lim_{\mathcal{C}(H)}\underline{A}$ is a quasi isomorphism by Proposition \ref{prop:SC}. Thus we may lift $\mathcal{M}(H_0,H)$ through this quasi isomorphism up to homotopy relative to $P(H_0)$. This yields a strictly commutative diagram

\[\xymatrix{
\underline{A}(H_0,H)\ar[d] & & \mathcal{M}(H_0,H)\ar[ll]\ar[d]\ar[dr]\ar[dl] & \\
\lim_{\mathcal{C}(H)}\underline{A}& \lim_{\mathcal{C}(H)}\underline{A}\otimes (t,dt)\ar[l] \ar[r]& \lim_{\mathcal{C}(H)}\underline{A} &\lim_{\mathcal{C}(H)}\underline{B_1}\ar[l]
}\]
which is compatible with the canonical $P(H_0)$-actions. Setting the $(H_0,H)$-positions to be $\mathcal{M}(H_0,H)$ we obtain extensions of $\underline{A}$, $\underline{A\otimes (t,dt)}$, and $\underline{B_1}$ to $\mathcal{C}^+(H)$-diagrams, as well as quasi isomorphisms between these extensions. Using the fact that $\lim_{\mathcal{C}(H)}\underline{B_{i-1}}\leftarrow\lim_{\mathcal{C}(H)}\underline{B_{i}}$ are quasi-isomorphisms as well, we may continue this procedure to form a chain of quasi isomorphisms of extended $\mathcal{C}(H)^+$-diagrams which are the identity in the $(H_0,H)$-position.

We claim that $\lim_{\mathcal{C}(H)}\underline{B_k}\leftarrow \lim_{\mathcal{C}(H)}\underline{A_{PL}(X)}$
is a quasi isomorphism as well. Let $\mathcal{D}^-(H)\subset \mathcal{C}(H)$ denote the category given by all $(H_0,K)\in\mathcal{D}$ with $H\subsetneq K$. Then the projections $\lim_{\mathcal{C}(H)}\underline{B_k}\rightarrow \lim_{\mathcal{D}^-(H)}\underline{B_k}$ and $\lim_{\mathcal{C}(H)}\underline{A_{PL}(X)}\rightarrow \lim_{\mathcal{D}^-(H)}\underline{A_{PL}(X)}$ are isomorphisms. By Lemma \ref{lem:subcomplexes}, $\underline{A_{PL}(X)}|_{\mathcal{D}^-(H)}$ satisfies \textbf{(SC)}. The system $\underline{B_k}|_{\mathcal{D}^-(H)}$ satisfies \textbf{(SC)} as well since $\underline{B_k}(H_0,K)\rightarrow \lim_{\mathcal{D}^-(H_0,K)} \underline{B_k}\cong \lim_{(\mathcal{D}^-(H))(H_0,K)} \underline{B_k}$ is surjective for any $(H_0,K)\in \mathcal{C}(H)$ by Proposition \ref{prop:technicaldiagramstuff}.
Thus $\lim_{\mathcal{D}^-(H)}\underline{B_k}\leftarrow \lim_{\mathcal{D}^-(H)}\underline{A_{PL}(X)}$ is a quasi isomorphism by Proposition \ref{prop:SC}.

With the claim proved, it follows that we obtain a chain of quasi isomorphisms of $\mathcal{C}^+(H)$-systems linking $\underline{A}|_{\mathcal{C}^+(H)}$ to $\underline{A_{PL}(X)}^+$ where the latter is defined to be $\underline{A_{PL}(X)}$ on $\mathcal{C}(H)$ and $\mathcal{M}(H_0,H)$ at $(H_0,H)$. Also by Lemma \ref{lem:subcomplexes}, the map $A_{PL}(X_{T/H_0}^H)\rightarrow \lim_{\mathcal{D}^-(H)}\underline{A_{PL}(X)}$ is a surjective quasi isomorphism. Thus by relative lifting we obtain a map $\mathcal{M}(H_0,H)\rightarrow A_{PL}(X_{T/H_0}^H)$ which is compatible with the $P(H_0)$-action.

Note that either $X^H=\emptyset$ or there is some $H'\supsetneq H$ for which $X^{H'}$ is nonempty and componentwise simply connected. In the latter case, $\pi_2(X^{H'})\rightarrow \pi_2(X^H)\rightarrow \pi_2(BT)$ is surjective on each component, which also implies surjectivity of the second map in the composition. In this case furthermore $\pi_1(X^H)$ is finitely generated. We can now apply Proposition \ref{prop:realization} to obtain a simply connected $T$-CW-complex $Y(H)$ which arises from $X$ by gluing $T/H$-cells and a factorization
\[\xymatrix{
\mathcal{M}(H_0,H)\ar[d]\ar[r]\ar[rd] & A_{PL}(Y(H)_{T/H_0})\ar[d]\\
\lim_{\mathcal{D}^-(H)}\underline{A_{PL}(X)} & \ar[l] A_{PL} (X^H_{T/H_0})
}\]
in which the top horizontal map is a quasi isomorphism of $P(H_0)$-cdgas.
Thus $\underline{A}|_{\mathcal{C}^+(H)}$ and $\underline{A_{PL}(Y(H))}|_{\mathcal{C}^+(H)}$ are linked by a chain of quasi isomorphisms compatible with the $(\underline{P}|_{\mathcal{C}(H)^+}$)-actions.

We extend this to a chain of quasi isomorphisms of $\tilde{\mathcal{C}}(H)$-systems. The extensions of the starting and ending systems $\underline{A}$ and $\underline{A_{PL}(Y)}$ are predetermined. For the systems in between recall that the $(H_0,H)$ position is always $\mathcal{M}(H_0,H)$. For those systems we set the $(U,H)$-positions, $U\subset H_0$, to be $\mathcal{M}(U,H):=P(U,H)\otimes_{P(H_0)} \mathcal{M}(H_0,H)=\prod_i(P(U,H)\otimes \Lambda V_i,d)$ with the obvious differentials and maps between them. Note that the quasi isomorphisms of systems extend naturally as well, providing us with the desired chain of $\tilde{\mathcal{C}}(H)$-system morphisms linking $\underline{A}|_{\mathcal{C}^+(H)}$ to $\underline{A_{PL}(Y(H)}|_{\mathcal{C}^+(H)}$. It remains to see that these are quasi isomorphisms. Since we have quasi isomorphisms at the $(H_0,H)$ positions, this follows from the fact that all systems satisfy \textbf{(TC)}. This holds by assumption at the start and the end of the chain and follows from Lemma \ref{lem:tensorTC} for the intermediate systems.

Having done this for every choice of $H$ we define $Y=\bigcup_{H\in \mathcal{D}_{n+1}\backslash \mathcal{D}_n} Y(H)$. For $H_1,H_2\in \mathcal{D}_{n+1}\backslash \mathcal{D}_n$, the quasi isomorphisms chains between the $\underline{A}|_{\tilde{\mathcal{C}}(H_i)}$ and $\underline{A_{PL}(Y)}|_{\tilde{\mathcal{C}}(H_i)}$ agree when restricted to
${\tilde{\mathcal{C}}(H_1)}\cap {\tilde{\mathcal{C}}(H_2)}$. Thus they all piece together to form a quasi isomorphism chain from $\underline{A}|_{\mathcal{D}_{n+1}}$ to $\underline{A_{PL}(Y)}|_{\mathcal{D}_{n+1}}$. This finishes the induction.

Finally, assume that $C$ is finite, $\underline{A}$ satisfies \textbf{(LC)}, and the modules $\underline{R}\rightarrow H^*(\underline{A})$ are pointwise finitely generated. We claim that in the construction above, the kernel and cokernel of $H^*(\mathcal{M}(H_0,H))\rightarrow H^*(X^H_{T/H_0})$ are finite dimensional and one can therefore choose $Y(H)$ as in Proposition \ref{prop:realization} such that it arises from $X^H$ by gluing a finite number of cells. To verify the claim let $U\supsetneq H$ be a torus. When localizing the commutative diagram

\[\xymatrix{H^*(\mathcal{M}(H_0,H))\ar[r]\ar[dr]& H^*(X^H_{T/H_0})\ar[d]\\
& H^*(X^U_{T/H_0})
}\]
at $S_{T/H}(U)$,
the diagonal map is an isomorphism by \textbf{(LC)} as $H^*(X^U_{T/H_0})\cong H^*(\underline{A}(H_0,m_\mathcal{D}(U)))$ (see Lemma \ref{lem:isotropytypes}). The vertical map is an isomorphism by the Borel Localization theorem (cf. Remark \ref{rem:localization}). In particular the horizontal map becomes an isomorphism after localization. Furthermore \cite[Proposition 3.10.1]{AP} implies that $H^*(X^H_{T/H_0})$ is finitely generated as an $R_{T/H_0}$ module and the claim follows from Lemma \ref{lem:localizationfinite}. Conversely, if $X$ is a finite $T$-CW-complex, then \cite[Proposition 3.10.1]{AP} as well as Borel localization ensure that $H^*(\underline{A_{PL}(X)})$ is finitely generated over $\underline{R}$ and satisfies \textbf{(LC)}.
\end{proof}

Combining Theorem \ref{thm:P-realization} with Corollary \ref{cor:strictreplacement} now yields

\begin{thm}\label{thm:R-realization}
Let $C$ be a finite collection of subgroups of $T$ and $\mathcal{D}\subset \mathcal{S}$ be the stable subset generated by $C$. Let $\underline{A}$ be a $\mathcal{D}$-system with a cohomology $\underline{R}$-structure. Then there is a $T$-simply connected, $T$-finite $\mathbb{Q}$-type $T$-CW-complex $X$ with isotropies contained in $C$ such that $\underline{A_{PL}(X)}$ is connected to $\underline{A}$ via quasi isomorphisms of systems respecting the cohomology $\underline{R}$-structures if and only if $\underline{A}$ satisfies \textbf{(TC)}, is spacelike, of finite type, $H^1(\underline{A})=0$, and $\underline{R}^2\otimes H^0(\underline{A})\rightarrow H^2(\underline{A})$ is injective. The space $X$ can be chosen as a finite $T$-CW-complex if and only if $\underline{A}$ satisfies \textbf{(LC)} and $H^*(\underline{A}(U,H))$ is finitely generated as an $R_{T/U}$-module for any $(U,H)\in\mathcal{D}$.
\end{thm}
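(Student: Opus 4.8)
The plan is to deduce this from Theorem \ref{thm:P-realization} by first manufacturing a convenient model $\underline{P}$ of the system $\underline{A_{PL}(*)}$ and then using Corollary \ref{cor:strictreplacement} to pass back and forth between cohomology $\underline{R}$-structures and strict $\underline{P}$-structures.

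First I would fix $\underline{P}$. Since $C$ is finite, $\mathcal{D}=\mathcal{D}(C)$ is a finite stable subset, so I can apply the functorial construction of Proposition \ref{prop:Sullivandiagram} to $\underline{\mathbb{Q}}\to\underline{A_{PL}(*)}|_{\mathcal{D}}$ and then perform the pointwise upgrade described in the remark following that proposition, obtaining an almost Sullivan $\mathcal{D}$-system $\underline{P}$ with a quasi isomorphism $\underline{P}\to\underline{A_{PL}(*)}$. Because $\underline{A_{PL}(*)}(U,H)=\underline{A_{PL}(*)}(U,T)=A_{PL}(B(T/U))$ with identity transition maps whenever the first coordinate is fixed, and all the constructions involved are functorial and applied pointwise, the resulting $\underline{P}$ will satisfy $\underline{P}(U,H)=\underline{P}(U,T)$; thus it is a free nilpotent $\mathcal{D}$-system meeting the hypotheses of both Corollary \ref{cor:strictreplacement} and Theorem \ref{thm:P-realization}, with $H^*(\underline{P})\cong\underline{R}$ and $H^*(\underline{P}(U,H))\cong R_{T/U}$.

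For the forward direction I would simply note, as at the start of the proof of Theorem \ref{thm:P-realization}, that $\underline{A_{PL}(X)}$ is spacelike, of finite type, has $H^1=0$ and $\underline{R}^2\otimes H^0\to H^2$ injective, and satisfies \textbf{(TC)} (Remark \ref{rem:TC}); and if $X$ is finite then \textbf{(LC)} and module-finiteness of each $H^*(\underline{A_{PL}(X)}(U,H))$ over $R_{T/U}$ hold by \cite[Proposition 3.10.1]{AP} together with Borel localization (Remark \ref{rem:localization}). All of these are conditions on the cohomology and the $\underline{R}$-structure, hence are transported along any chain of quasi isomorphisms respecting the cohomology $\underline{R}$-structures, which gives the ``only if'' parts. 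For the converse, given $\underline{A}$ with the listed properties, Corollary \ref{cor:strictreplacement} yields $\underline{A}\simeq\underline{A'}$ (a chain of quasi isomorphisms respecting the cohomology $\underline{R}$-structures) where $\underline{A'}$ satisfies \textbf{(SC)} and carries a strict $\underline{P}$-structure inducing, on cohomology, the $\underline{R}$-structure pulled back from $\underline{A}$. Under the identification $H^*(\underline{P})\cong\underline{R}$, the system $\underline{A'}$ under $\underline{P}$ inherits all the hypotheses of Theorem \ref{thm:P-realization} (in particular $H^2(\underline{P})\otimes H^0(\underline{A'})\to H^2(\underline{A'})$ is the injective map from the hypothesis on $\underline{A}$). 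Theorem \ref{thm:P-realization} then produces the desired $X$ together with a chain of quasi isomorphisms of systems under $\underline{P}$ from $\underline{A_{PL}(X)}$ to $\underline{A'}$; such morphisms automatically respect the induced cohomology $\underline{R}$-structures, and the induced structure on $\underline{A_{PL}(X)}$ is the canonical one, so concatenating with the chain $\underline{A'}\simeq\underline{A}$ finishes the ``if'' direction.

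The remaining point is the finite-$T$-CW refinement, and this is where I expect the only genuine subtlety to lie: one must match the conditions in Theorem \ref{thm:P-realization} (phrased via the $\underline{P}$-structure on $\underline{A'}$, namely \textbf{(LC)} for that structure and finite generation of $H^*(\underline{A'}(U,H))$ over $H^*(\underline{P}(U,H))$) with the conditions stated here (phrased via the $\underline{R}$-structure on $\underline{A}$). This is handled by Remark \ref{rem:weakvscohomology}: \textbf{(LC)} is a cohomological condition that corresponds under the passage between weak $\underline{P}$-structures and cohomology $\underline{R}$-structures, and $H^*(\underline{P}(U,H))\cong R_{T/U}$ with matching module structure on cohomology, so the two formulations of the finiteness hypotheses coincide, and both are preserved under the quasi isomorphisms connecting $\underline{A}$, $\underline{A'}$ and $\underline{A_{PL}(X)}$. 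The other thing to be careful about --- mostly bookkeeping rather than difficulty --- is verifying that the $\underline{P}$ constructed above really does satisfy the structural constraint $\underline{P}(U,H)=\underline{P}(U,T)$ required by Corollary \ref{cor:strictreplacement}.
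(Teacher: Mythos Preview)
Your proposal is correct and follows precisely the approach the paper takes: the paper's own proof is the single sentence ``Combining Theorem \ref{thm:P-realization} with Corollary \ref{cor:strictreplacement} now yields'', and you have simply spelled out the details of that combination.
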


\section{Applications}
\subsection{Realization of equivariant cohomology algebras}
We want to apply Theorem \ref{thm:R-realization} to derive an algebraic criterion of when a given algebra is the equivariant cohomology algebra of a finite $T$-CW-complex. Recall that $R_T:=H^*(BT)\cong \mathbb{Q}[V]$ for some finite dimensional vector space $V$ in degree $2$. We aim to give a formulation which is devoid of the internal terminology of this article and which uses a simplified version of the localization condition.

\begin{thm}\label{thm:cohorealization}
Let $A$ be a graded $\mathbb{Q}[V]$-algebra. Then $A$ is isomorphic to the equivariant cohomology algebra of a $T$-simply connected finite $T$-CW-complex if and only if there is a finite collection $V_0,\ldots, V_k\subset V$ of vector spaces, $\mathbb{Q}[V_i]$-algebras $A_i$ for $i=0,\ldots,k$, and for each inclusion $V_i\supset V_j$ a map $f_{ij}\colon A_i\rightarrow \mathbb{Q}[V_i] \otimes_{\mathbb{Q}[V_j]} A_j$ of $\mathbb{Q}[V_i]$-algebras satisfying the following properties:
\begin{enumerate}[(i)]
\item  $V_0=V$, $V_k=0$, and for any $i,j$ we have $V_i+ V_j=V_l$ for some $0\leq l\leq k$.
\item $A_0=A$, each $A_i$ is spacelike, finitely generated as $\mathbb{Q}[V_i]$-module, $A_i^1=0$, and $V_i\otimes A^0\rightarrow A^2$ is injective for $0\leq i \leq k$.
\item We have $(\id_{\mathbb{Q}[V_i]}\otimes_{\mathbb{Q}[V_j]} f_{jl})\circ f_{ij}=f_{il}$. Furthermore for any subspace $W\subset V$ and the maximal $V_j\in\{V_0,\ldots,V_k\}$ with the property that $V_j\subset W$, the map $f_{0j}$ becomes an isomorphism when localized at the multiplicative subset $S(W)\subset \mathbb{Q}[V]$ generated by $V\backslash W$.
\end{enumerate}
\end{thm}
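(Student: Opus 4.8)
The plan is to derive this directly from Theorem~\ref{thm:R-realization} by turning its hypotheses into the concrete algebra of the $A_i$ and $f_{ij}$. First I would fix the dictionary between subtori of $T$ and subspaces of $V=H^2(BT)$: to a subtorus $U\subseteq T$ associate $V_U:=\operatorname{im}(H^2(B(T/U))\to H^2(BT))\subseteq V$, so that $R_{T/U}=\mathbb{Q}[V_U]$, an intersection of subtori corresponds to a sum $V_{U_1}+V_{U_2}$, and $\{1\},T$ correspond to $V,0$. Under this dictionary the data $(V_i,A_i,f_{ij})$ should encode a $\mathcal{D}(C)$-system with a cohomology $\underline{R}$-structure for $C=\{U_0,\dots,U_k\}$: the entry at $(U_i,U_j)$ with $U_i\subseteq U_j$, i.e.\ $V_i\supseteq V_j$, is $\mathbb{Q}[V_i]\otimes_{\mathbb{Q}[V_j]}A_j$ (with zero differential), the vertical structure maps are base change $\mathbb{Q}[V_i]\to\mathbb{Q}[V_{i'}]$, the horizontal ones are induced by the $f_{jl}$, and the $\underline{R}$-structure at $(U_i,U_j)$ is the $\mathbb{Q}[V_i]$-algebra structure. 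The cocycle identity in (iii) is exactly what makes this assignment functorial, and \textbf{(TC)} holds by construction since the entries are the prescribed tensor products.

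For the ``only if'' direction I would take a $T$-simply connected finite $T$-CW-complex $X$ with $H^*_T(X)\cong A$, let $U_0=\{1\},\dots,U_k=T$ run through the identity components of the isotropy groups of $X$ together with their intersections, and set $V_i=V_{U_i}$, $A_i=H^*_{T/U_i}(X^{U_i})$, with $f_{ij}$ the restriction $H^*_{T/U_i}(X^{U_i})\to H^*_{T/U_i}(X^{U_j})$ composed with the canonical isomorphism $H^*_{T/U_i}(X^{U_j})\cong \mathbb{Q}[V_i]\otimes_{\mathbb{Q}[V_j]}A_j$ coming from the fact that $U_j/U_i$ acts trivially on $X^{U_j}$ (Remark~\ref{rem:TC}). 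Property (i) is the dictionary; the finiteness and $A_i^1=0$ clauses of (ii) follow from finiteness of $X$ (each $X^{U_i}$ is a $T$-subcomplex) via \cite[Proposition 3.10.1]{AP} and from $T$-simple connectivity via the Serre spectral sequences of $X^{U_i}\to X^{U_i}_{T/U_i}\to B(T/U_i)$; the cocycle identity in (iii) is functoriality of restriction. For the localization clause of (iii), given $W\subseteq V$ I would let $U_W$ be the subtorus with $V_{U_W}=W$ and $U_j$ the minimum of the $U_i$ containing $U_W$ (it is a minimum, not just minimal, because the $U_i$ are intersection-closed); one checks $X^{U_W}=X^{U_j}$, so after inverting $V\setminus W$ the map $f_{0j}$ becomes the Borel localization isomorphism $H^*_T(X)\to H^*_T(X^{U_W})$.

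For the ``if'' direction I would build the zero-differential $\mathcal{D}(C)$-system $\underline{A}$ above and check every hypothesis of Theorem~\ref{thm:R-realization}: spacelikeness, finite type, $H^1(\underline{A})=0$, pointwise finite generation over $\underline{R}$, and pointwise injectivity of $\underline{R}^2\otimes H^0\to H^2$ all translate into the corresponding conditions on the $A_i$ in (ii) (using that $\mathbb{Q}[V_i]\otimes_{\mathbb{Q}[V_j]}(-)$ preserves them, and reading (ii)'s injectivity clause pointwise, or deducing it for each $A_j$ from the localization clause of (iii)), while \textbf{(TC)} is built in. The substantive point is \textbf{(LC)}. Here I would argue prime by prime: since $A_j$ and $\mathbb{Q}[V_j]\otimes_{\mathbb{Q}[V_l]}A_l$ are finitely generated over $\mathbb{Q}[V_j]$, the kernel and cokernel of $f_{jl}$ are finitely generated, so it is enough to see that they vanish after localizing at every homogeneous prime of $\mathbb{Q}[V_i]$ disjoint from $S_{T/U_i}(K)$; contracting such a prime to $\mathbb{Q}[V_j]$ and lifting it to $\mathbb{Q}[V]$ (elementary linear algebra over the polynomial rings) reduces \textbf{(LC)} at an arbitrary $(U_i,U_j)$ and overtorus $K$ to the assertion that $f_{0j}$ is an isomorphism after inverting $V\setminus V_K$, which follows from the localization clause of (iii) with $W=V_j$, the factorization $f_{0l}=(\operatorname{id}\otimes f_{jl})\circ f_{0j}$, and the fact that a localization of an isomorphism is an isomorphism. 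Then Theorem~\ref{thm:R-realization} produces a $T$-simply connected finite $T$-CW-complex $X$ with isotropies in $C$ and $\underline{A_{PL}(X)}$ linked to $\underline{A}$ by cohomology-$\underline{R}$-structure-preserving quasi-isomorphisms, and evaluating at $(\{1\},\{1\})$ gives $H^*_T(X)\cong H^*(\underline{A}(\{1\},\{1\}))=A_0=A$ as $\mathbb{Q}[V]$-algebras. I expect the main obstacle to be exactly this \textbf{(LC)}-translation: matching the localization bookkeeping of the abstract theorem (over all pairs $(U,H)$ and all overtori $K$) with the single family of localizations of the $f_{0j}$ in (iii), and, in the converse direction, establishing the geometric equality $X^{U_W}=X^{U_j}$.
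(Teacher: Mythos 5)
Your proposal is correct and follows essentially the same route as the paper: the same dictionary between the subspaces $V_i$ and subtori $H_i$, the same zero-differential $\mathcal{D}(C)$-system $\underline{A}(H_m,H_i)=\mathbb{Q}[V_m]\otimes_{\mathbb{Q}[V_i]}A_i$ equipped with its cohomology $\underline{R}$-structure, and the same reduction to checking \textbf{(TC)} and \textbf{(LC)} before invoking Theorem~\ref{thm:R-realization} (the converse direction, which you spell out, the paper treats as immediate from Remarks~\ref{rem:TC} and~\ref{rem:localization}). The only divergence is the descent step inside the \textbf{(LC)} verification: you localize the finitely generated kernel and cokernel at homogeneous primes disjoint from $S_{T/H_m}(K)$ and lift these to primes of $\mathbb{Q}[V]$ disjoint from $S_T(K)$, whereas the paper extracts an annihilating monomial over $\mathbb{Q}[V]$ and uses the bigrading of $\mathbb{Q}[V_m^\perp]\otimes M$ to produce one lying in $S_{T/H_m}(K)$; both arguments are valid.
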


\begin{proof}
Let $C$ be the collection of subgroups $H_0,\ldots,H_k$ where $H_i$ is the subtorus such that $\ker (H^*(BT)\rightarrow H^*(BH_i))=V_i$. Then $R_T\cong \mathbb{Q}[V]$ induces natural identifications $R_{T/H_i}\cong \mathbb{Q}[V_i]$. For later use we also note that $S_{T}(H_i)$ as in the definition of \textbf{(LC)} coincides with $S(V_i)$ as defined in $(iii)$ above.
Furthermore if $V_i+V_j=V_l$, then $H_i\cap H_j=H_l$. Thus $\mathcal{D}(C)$ consists of all tuples $(H_k,H_i)$ with $H_k\subset H_i$. Set $\underline{A}(H_k,H_i)=\mathbb{Q}[V_k]\otimes_{\mathbb{Q}[V_i]} A_i$ together with the trivial differential. Thus the $\underline{A}(H_k,H_i)$ and the maps induced by the $f_{ij}$ define a $\mathcal{D}(C)$-system $\underline{A}$ with an $\underline{R}$-structure. Clearly \textbf{(TC)} is fulfilled. If we can show that $\underline{A}$ satisfies \textbf{LC}, then the statement follows from Theorem \ref{thm:R-realization}. 

Let $(H_k,H_i)\in \mathcal{D}$ and $K\supset H_i$ a torus. Then $m_{\mathcal{D}(C)}(K)=H_j$ where $H_j$ is the minimal group in $C$ containing $K$. It corresponds to the maximal vector space $V_j$ contained in $W=\ker((H^*(BT)\rightarrow H^*(BK))$. We need to show that
\[\underline{A}(H_k,H_i)=\mathbb{Q}[V_k]\otimes_{\mathbb{Q}[V_i]} A_i\rightarrow \mathbb{Q}[V_k]\otimes_{\mathbb{Q}[V_j]} A_j=\underline{A}(H_k,H_j)\]
becomes an isomorphism after localizing at $S_{T/H_k}(K)$. We split $\mathbb{Q}[V]=\mathbb{Q}[V_k]\otimes \mathbb{Q}[V_k^\perp]$ for some decomposition $V=V_k\oplus V_k^\perp$. By assumption, in the commutative diagram
\[\xymatrix{
A_0\ar[r]^{f_{0i}}\ar[dr]_{f_{0j}} & \underline{A}(T,H_i)\ar[d]&\ar[l]_(.6)\cong \mathbb{Q}[V_k^\perp]\otimes \underline{A}(H_k,H_i)\ar[d]
\\ & \underline{A}(T,H_j) &\ar[l]_(.6)\cong \mathbb{Q}[V_k^\perp]\otimes \underline{A}(H_k,H_j)
}\]
$f_{0j}$ becomes an isomorphism when localised at $S_T(K)$ which is the set of all monomials generated in $V\backslash W$. Also $f_{0i}$ is an isomorphism when localized at $S_T(H_i)\subset S_T(K)$. Consequently the vertical maps become isomorphisms when localized at $S_T(K)$. Let $M$ denote the kernel of $\underline{A}(H_k,H_i)\rightarrow \underline{A}(H_k,H_j)$. Then we have shown that there is a monomial $f\in S_T(K)$ which annihilates $\mathbb{Q}[V_k^\perp]\otimes M$. We can write $f=\prod_i(x_i+y_i)$ with $x_i\in V_k$, $y_i\in V_k^\perp$. Let $f'$ be the product over all $x_i$ for which $y_i=0$. It follows from the bigrading on $\mathbb{Q}[V_k^{\perp}]\otimes M$ that it is also annihilated by $f'$. By assumption $x_i+y_i \in V\backslash W$ for all $i$ and in particular $x_i\in V_k\backslash W$ whenever $y_i=0$. Thus $f'\in S_{T/H_k}(K)$ and $S_{T/H_k}(K)^{-1}M=0$. For the cokernel one argues analogously, thus proving that $S_{T/H_k}(K)^{-1} \underline{A}(H_k,H_i)\rightarrow S_{T/H_k}(K)^{-1} \underline{A}(H_k,H_j)$ is an isomorphism.
\end{proof}

\begin{cor}\label{cor:discretealgebras}
Let $A$ be a $\mathbb{Q}[x]$-algebra which is finitely generated as a $\mathbb{Q}[x]$-module, spacelike, $A^1=0$, and $\langle x\rangle_\mathbb{Q}\otimes A^0\rightarrow A^2$ is injective. Assume further that for $S=\{x^k~|~k\geq 0\}$, the localized graded algebra $S^{-1}A$ has no nilpotent elements. Then $A$ is isomorphic as a $\mathbb{Q}[x]$-algebra to the equivariant cohomology of a finite $S^1$-CW-complex if and only if $(S^{-1}A)^0\cong \mathbb{Q}\times\ldots\times \mathbb{Q}$ as $\mathbb{Q}$-algebras. In this case the realization can be chosen with discrete fixed point set and $S^1$-simply connected. Conversely any equivariant cohomology algebra of such a space is of the above type.
\end{cor}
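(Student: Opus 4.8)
The plan is to deduce the corollary from Theorem \ref{thm:cohorealization} in the case $T=S^1$, where $V=\langle x\rangle_\mathbb{Q}$ is one dimensional. Then the only subspaces of $V$ are $0$ and $V$, so the data provided by Theorem \ref{thm:cohorealization} necessarily reduces to $V_0=V$, $V_1=0$, algebras $A_0=A$ over $\mathbb{Q}[x]$ and $A_1$ over $\mathbb{Q}$, and a single $\mathbb{Q}[x]$-algebra map $f_{01}\colon A\to\mathbb{Q}[x]\otimes_\mathbb{Q} A_1$. In this situation the composition conditions in (iii) are vacuous, and the only surviving localization requirement (the case $W=0$) is that $f_{01}$ become an isomorphism after inverting $x$, since $S(0)$-localization agrees with $S$-localization on $\mathbb{Q}[x]$-modules. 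Thus the corollary amounts to showing that, under its standing hypotheses on $A$, such $A_1$ and $f_{01}$ exist if and only if $(S^{-1}A)^0\cong\mathbb{Q}\times\dots\times\mathbb{Q}$.

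The ``only if'' direction does not require Theorem \ref{thm:cohorealization}. If $A\cong H^*_{S^1}(X)$ for a finite $S^1$-CW-complex $X$, the Borel localization theorem (cf.\ Remark \ref{rem:localization} and \cite[Theorem 3.2.6]{AP}) together with the Künneth formula for the trivial action on $X^{S^1}$ gives $S^{-1}A\cong\mathbb{Q}[x,x^{-1}]\otimes H^*(X^{S^1})$. Since $S^{-1}A$ is reduced by hypothesis, so is $H^*(X^{S^1})$; as $X^{S^1}$ is a finite CW-complex, every positive-degree cohomology class is nilpotent, so $H^*(X^{S^1})=H^0(X^{S^1})=\mathbb{Q}^r$ with $r$ the number of connected components of $X^{S^1}$. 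Hence $(S^{-1}A)^0\cong(\mathbb{Q}[x,x^{-1}]^r)^0\cong\mathbb{Q}^r$.

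For the ``if'' direction the crucial algebraic step is to show $S^{-1}A\cong\mathbb{Q}[x,x^{-1}]^r$ as a graded $\mathbb{Q}[x,x^{-1}]$-algebra. Any odd-degree homogeneous element of a graded-commutative $\mathbb{Q}$-algebra squares to zero, so reducedness forces $S^{-1}A$ to be concentrated in even degrees; being a finitely generated graded module over the graded field $\mathbb{Q}[x,x^{-1}]$ it is free, and since $x$ is a unit of degree $2$ a homogeneous basis can be taken in degree $0$, which identifies $S^{-1}A$ with $\mathbb{Q}[x,x^{-1}]\otimes_\mathbb{Q}(S^{-1}A)^0\cong\mathbb{Q}[x,x^{-1}]^r$. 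Now set $A_1=\mathbb{Q}^r$ and let $f_{01}$ be the localization map $A\to S^{-1}A\cong\mathbb{Q}[x,x^{-1}]^r$; because $A$ is nonnegatively graded, its odd part dies in $S^{-1}A$, and the degree-$2k$ part of $S^{-1}A$ equals $x^k(S^{-1}A)^0$ for $k\ge 0$, the image of $f_{01}$ lies in $\mathbb{Q}[x]^r=\mathbb{Q}[x]\otimes_\mathbb{Q} A_1$, and $f_{01}$ becomes the chosen isomorphism upon inverting $x$. Conditions (i)--(iii) of Theorem \ref{thm:cohorealization} are then immediate (finiteness and spacelikeness of $A_1=\mathbb{Q}^r$ are clear, $A_1^1=0$, and $V_1\otimes A_1^0\to A_1^2$ is the zero map into $0$), so $A$ is the equivariant cohomology of an $S^1$-simply connected finite $S^1$-CW-complex $X$; moreover, inspecting the realization procedure in the proof of Theorem \ref{thm:P-realization} (on which Theorem \ref{thm:cohorealization} rests), the fixed set $X^{S^1}$ is produced as a geometric realization of a model of the innermost entry $A_1\cong\mathbb{Q}^r$, and since $\vert\mathbb{Q}^r\vert$ is $r$ points one may take $X^{S^1}$ discrete.

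It remains to verify the closing assertion that the equivariant cohomology $A$ of a finite $S^1$-CW-complex $X$ which is $S^1$-simply connected with discrete fixed set is of the stated type: finite generation over $\mathbb{Q}[x]$ is \cite[Proposition 3.10.1]{AP}; $\pi_0(X_{S^1})=\pi_0(X)$ since $S^1$ is connected, and each component $(X_\beta)_{S^1}$ fibres over the simply connected $BS^1$ with simply connected fibre $X_\beta$, so $X_{S^1}$ has simply connected components, giving spacelikeness and $A^1=0$; in the Serre spectral sequence of $(X_\beta)_{S^1}\to BS^1$ no differential can hit $x\in E_2^{2,0}$ since $H^1(X_\beta)=0$, so $\langle x\rangle_\mathbb{Q}\otimes A^0\to A^2$ is injective; and $S^{-1}A\cong\mathbb{Q}[x,x^{-1}]^r$ is reduced with $(S^{-1}A)^0\cong\mathbb{Q}^r$ by the ``only if'' computation, now with $H^*(X^{S^1})=\mathbb{Q}^r$ holding literally. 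The main obstacle I anticipate is the ``if'' direction: isolating the structure statement $S^{-1}A\cong\mathbb{Q}[x,x^{-1}]^r$ and then packaging $A_1=\mathbb{Q}^r$ and the localization map $f_{01}$ so that $f_{01}$ genuinely takes values in $\mathbb{Q}[x]\otimes_\mathbb{Q} A_1$ (not merely in $\mathbb{Q}[x,x^{-1}]\otimes_\mathbb{Q} A_1$) while still becoming an isomorphism after localization; the remaining checks, including extracting discreteness of the fixed set from the construction, are routine.
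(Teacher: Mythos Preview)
Your proof is correct and follows essentially the same approach as the paper: reduce Theorem \ref{thm:cohorealization} to the rank-one case with $V_1=0$, $A_1=(S^{-1}A)^0$, and $f_{01}$ the localization map into $(S^{-1}A)^{\geq 0}=\mathbb{Q}[x]\otimes A_1$, and use Borel localization for the converse. You supply considerably more detail than the paper does (the explicit argument that $S^{-1}A\cong\mathbb{Q}[x,x^{-1}]^r$, why $f_{01}$ lands in $\mathbb{Q}[x]\otimes A_1$, and the verification of the closing sentence), but the strategy is identical.
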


\begin{proof}
For such an $A$ we set $A_1=(S^{-1}A)^0\cong\mathbb{Q}\times\ldots\times \mathbb{Q}$ and $V_1=0$. Note that the map $f_{01}\colon A\rightarrow (S^{-1} A)^{\geq 0}=\mathbb{Q}[x]\otimes A_1$ induces an isomorphism when localized at $S$. Then by Theorem \ref{thm:cohorealization} there is a finite $S^1$-simply connected $S^1$-CW-complex which realizes $A$. Clearly the realization can be chosen with a finite number of fixed points.

Conversely assume $X$ is a finite $S^1$-CW-complex such that $S^{-1}H_{S^1}^*(X)$ contains no nilpotent elements. By Borel localization we have $S^{-1}H_{S^1}^*(X)\cong S^{-1}H_{S^1}^*(X^{S^1})\cong \mathbb{Q}[x,x^{-1}]\otimes H^*(X^{S^1})$. Thus the nonequivariant cohomology $H^*(X^{S^1})$ must not contain any nilpotent elements. This means that it is isomorphic to $\mathbb{Q}\times\ldots\times\mathbb{Q}$. Furthermore the equivariant cohomology of any finite $S^1$-simply connected $S^1$-CW-complex with discrete fixed points fulfils these algebraic requirements.
\end{proof}

\begin{ex}\label{ex:nonrealizable}
Consider the graded $\mathbb{Q}[x]$-algebra $A_c:=\mathbb{Q}[x,a]/(a^2-cx^2)$ for some scalar $c\in \mathbb{Q}$ and $x,a$ of degree $2$. Then $(S^{-1}A_c)^0=\langle 1, \frac{a}{x}\rangle_\mathbb{Q}$ with the $\mathbb{Q}$-algebra structure defined by the relation $\left(\frac{a}{x}\right)^2=c\cdot 1$. If $c$ is not a square, then this is isomorphic to $\mathbb{Q}[\sqrt{c}]$ and it follows that $A_c$ is not realizable by a finite $S^1$-CW-complex. Note that it is still realizable by an infinite $S^1$-CW-complex. In case $c$ is a square, sending the $\mathbb{Q}$-basis $\frac{1}{2}+\frac{1}{2\sqrt{c}}\frac{a}{x}$, $\frac{1}{2}-\frac{1}{2\sqrt{c}}\frac{a}{x}$ to $(1,0),(0,1)\in\mathbb{Q}\times\mathbb{Q}$ gives a $\mathbb{Q}$-algebra isomorphism. Thus $A_c$ is realizable by a finite $S^1$-CW-complex. In fact $A_c$ can be checked to be isomorphic to the equivariant cohomology algebra of the standard $S^1$-action on $S^2$. We observe that while e.g.\ $A_1$, $A_2$ behave differently with respect to finite realization, we have $A_1\otimes_\mathbb{Q}\mathbb{R}\cong A_2\otimes_\mathbb{Q}\mathbb{R}$ as $(R\otimes_\mathbb{Q}\mathbb{R})$-algebras. Thus finite realization relies on the choice of coefficient field. 
\end{ex}

\subsection{Realization of graph cohomology} \label{sec:GKM}
A GKM manifold is a smooth, compact, orientable manifold $M$ with $H^{odd}(M)=0$ together with a smooth action of a torus $T$ with discrete fixed points, such that the one skeleton $M_1=\{x\in M~|~ \dim (T\cdot x)\leq 1\}$ is a finite union of copies of $S^2$ (some sources also require the manifold to be almost complex). These manifolds are named after the authors of \cite{GKM} and their main appeal comes from the fact that $M_1$ can be encoded in a labelled graph, from which $H_T^*(M)$ can be computed via the purely combinatorial notion of graph cohomology. On the combinatorial side, a notion of an abstract GKM graph has been defined in \cite{GuilleminZara}. The definition, which we will not review in detail, encompasses several necessary conditions for a labelled graph to be realizable by a GKM manifold. However, currently there is no general answer to the question whether a given abstract GKM graph is in fact realizable by a GKM manifold. The aim of this section is to solve the realization problem in absence of the manifold condition while appropriately relaxing the conditions on the combinatorial side.

For a graph $\Gamma$ we denote by $E(\Gamma)$ the set of edges and by $V(\Gamma)$ the set of vertices. We consider edges to be nonoriented. Motivated by the notion of GKM graphs, we work with the following more general objects.

\begin{defn}
A $T$-graph is a finite graph $\Gamma$ together with a labelling function $\alpha\colon E(\Gamma)\rightarrow H^2(BT;\mathbb{Z})\backslash\{0\}$. Considering $H^*(BT;\mathbb{Z})\subset R_T$, we define the (rational) graph cohomology to be the $R_T$-algebra defined as
\[H^*(\Gamma)=\{x\in R_T^{V(\Gamma)}~|~ x(p)-x(q)\in (\alpha(e)) \text{ for any }e\in E(\Gamma)\text{ connecting }p,q\in V(\Gamma)\}.\]
\end{defn}

Any $T$-graph $\Gamma$ defines a $T$-space as follows: take a copy of $S^2$ for every edge $e\in E(\Gamma)$ and endow it with the following $T$-action: up to sign, there is a unique homomorphism $\varphi_e\colon T\rightarrow S^1$ such that $\alpha(e)$ generates the image of $H^2(BS^1;\mathbb{Z})\rightarrow H^2(BT;\mathbb{Z})$ and we pull back the standard rotation $S^1\curvearrowright S^2$ along $\varphi_e$.
Note that the equivariant homeomorphism type of the result does not depend on the sign choice. This action on $S^2$ has two fixed points (the two poles) which we identify with the two vertices associated to the edge $e$. Now glue the copies of $S^2$ together at the fixed points in the way prescribed by $\Gamma$. The resulting space is denoted by $R(\Gamma)$. We note that for GKM manifold $M$, we have $M_1= R(\Gamma)$ for a $T$-graph $\Gamma$. It is unique up to the signs of the labels.

\begin{lem}\label{lem:tree}
Let $\Gamma$ be a $T$-graph which is a tree. Then $H^*_T(R(\Gamma))\rightarrow H^*_T(R(\Gamma)^T)$ is injective.
\end{lem}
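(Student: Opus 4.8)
The strategy is an induction on the number of edges of the tree $\Gamma$, built around the standard Mayer--Vietoris decomposition of $R(\Gamma)$ into a two-sphere attached to a subtree. First I would fix a leaf edge $e$ of $\Gamma$, connecting a vertex $p$ of valence one to the rest of the graph, and write $\Gamma = \Gamma' \cup e$, where $\Gamma'$ is the full subtree on the remaining vertices. Topologically $R(\Gamma) = R(\Gamma') \cup S^2_e$, where $S^2_e$ is the sphere corresponding to $e$ with its $\varphi_e$-pulled-back rotation action, and the intersection $R(\Gamma')\cap S^2_e$ is the single fixed point identified with the non-leaf endpoint $q$ of $e$. Since all spaces in sight are built from $T$-fixed points joined to $T$-equivariant two-spheres (in particular they are finite $T$-CW-complexes with vanishing odd rational cohomology and $H^*_T$ free over $R_T$), the equivariant Mayer--Vietoris sequence degenerates into short exact sequences, and one gets a pullback square
\[
\xymatrix{
H^*_T(R(\Gamma)) \ar[r]\ar[d] & H^*_T(S^2_e) \ar[d]\\
H^*_T(R(\Gamma')) \ar[r] & H^*_T(\{q\})=R_T.
}
\]

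Next I would record the computation for a single equivariant two-sphere: for the rotation action pulled back along $\varphi_e\colon T\to S^1$, one has $H^*_T(S^2_e)\cong R_T\times_{R_T/(\alpha(e))} R_T$, i.e.\ pairs of elements of $R_T$ (the values at the two poles $p$, $q$) that are congruent modulo the ideal $(\alpha(e))$; and $H^*_T((S^2_e)^T)=R_T\times R_T$, with the restriction map being the obvious inclusion, which is manifestly injective. Combining this with the induction hypothesis that $H^*_T(R(\Gamma'))\to H^*_T(R(\Gamma')^T)$ is injective, and using the pullback square above, I would argue as follows: an element of $H^*_T(R(\Gamma))$ is a pair $(u, v)$ with $u\in H^*_T(R(\Gamma'))$, $v\in H^*_T(S^2_e)$ agreeing at $q$; its restriction to $R(\Gamma)^T = R(\Gamma')^T \sqcup \{p\}$ is $(u|_{R(\Gamma')^T}, v(p))$. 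If this vanishes, then $u|_{R(\Gamma')^T}=0$ forces $u=0$ by induction, hence $v(q)=u(q)=0$, and together with $v(p)=0$ and injectivity of $H^*_T(S^2_e)\hookrightarrow R_T\times R_T$ we get $v=0$, so $(u,v)=0$. The base case is a single vertex (or a single edge), where the statement is the trivial injectivity $R_T \hookrightarrow R_T$ (resp.\ $H^*_T(S^2)\hookrightarrow R_T\times R_T$).

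\textbf{Main obstacle.} The only genuinely delicate point is justifying that the relevant Mayer--Vietoris sequences split into short exact sequences so that the pullback-square description of $H^*_T(R(\Gamma))$ is valid; this needs the vanishing of equivariant odd cohomology and the fact that the relevant restriction maps are surjective. Surjectivity of $H^*_T(R(\Gamma'))\to R_T$ (restriction to the single fixed point $q$) is clear since $q$ is a $T$-fixed point of a space with a $T$-fixed point section, and surjectivity of $H^*_T(S^2_e)\to R_T$ at $q$ is visible from the explicit model; alternatively one can simply invoke that all spaces involved are equivariantly formal finite $T$-CW-complexes, so that $H^*_T(-)$ is a free $R_T$-module concentrated in even degrees and Mayer--Vietoris degenerates. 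A clean alternative avoiding Mayer--Vietoris altogether is to note that $R(\Gamma)$ is itself the realization $R$ of the $T$-graph, and to identify $H^*_T(R(\Gamma))$ directly with the graph cohomology $H^*(\Gamma)$ (this identification for trees is elementary: an equivariant cohomology class is exactly a choice of polynomial at each vertex, constrained modulo $(\alpha(e))$ across each edge), after which the restriction to fixed points is literally the inclusion $H^*(\Gamma)\hookrightarrow R_T^{V(\Gamma)}$, whose injectivity is immediate from the definition of $H^*(\Gamma)$. I would present the inductive Mayer--Vietoris argument as the main line and remark on this direct identification as the conceptual reason.
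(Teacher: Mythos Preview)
Your proposal is correct but takes a different route from the paper. The paper's proof is a two-line global argument: since $\Gamma$ is a tree, $R(\Gamma)$ is non-equivariantly homotopy equivalent to a wedge of two-spheres, so its rational cohomology is concentrated in even degrees; hence the Serre spectral sequence of the Borel fibration degenerates and $H^*_T(R(\Gamma))$ is a \emph{free} $R_T$-module. Borel localization then says the kernel of $H^*_T(R(\Gamma))\to H^*_T(R(\Gamma)^T)$ is $R_T$-torsion, and a torsion submodule of a free module is zero.

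Your inductive Mayer--Vietoris argument is more elementary in that it avoids invoking Borel localization and makes the structure of $H^*_T(R(\Gamma))$ completely explicit; in fact your ``clean alternative'' at the end (identifying $H^*_T(R(\Gamma))$ with the graph cohomology $H^*(\Gamma)\subset R_T^{V(\Gamma)}$) is essentially what your induction proves along the way. The paper's approach, by contrast, is shorter and illustrates the standard ``equivariantly formal $\Rightarrow$ injectivity of restriction to fixed points'' mechanism, which is a useful template. Both are perfectly valid; the paper's version is the expected one in an equivariant cohomology context, while yours would be more at home in a first-principles exposition.
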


\begin{proof}
Since $\Gamma$ is a tree it follows that $R(\Gamma)$ is (non-equivariantly) homotopy equivalent to a wedge of two-spheres. In particular cohomology is concentrated in even degrees. Thus the Serre spectral sequence of the Borel fibration collapses at the $E_2$ page and consequently $H^*_T(R(\Gamma))$ is a free $R_T$-module. Since the kernel of $H^*_T(R(\Gamma))\rightarrow H^*_T(R(\Gamma)^T)$ is $R_T$-torsion by Borel localization, it needs to vanish.
\end{proof}

For a $T$-graph $(\Gamma,\alpha)$ and a subgroup $H\subset T$, we denote by $\Gamma^H$ the subgraph with the same vertex set and whose edge set consists of those edges $e\in E(\Gamma)$ such that $\alpha(e)$ lies in the kernel of $H^2(BT;\mathbb{Z})\rightarrow H^2(BH;\mathbb{Z})$.
\begin{thm}\label{thm:GKM-realization}
Let $(\Gamma,\alpha)$ be a $T$-graph. Then there is a finite $T$-CW-complex $X$ with $H^*_T(X)=H^*(\Gamma)$ and whose one-skeleton $X_1=\{x\in X~|~\mathrm{codim} T_x\leq 1\}$ is equivariantly homeomorphic to $R(\Gamma)$ if and only if for any codimension $1$ subtorus $H\subset T$, the graph $\Gamma^H$ is a disjoint union of trees. In this case $X$ can be chosen to be $T$-simply connected.
\end{thm}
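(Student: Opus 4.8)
The plan is to deduce this from Theorem \ref{thm:R-realization} by producing, from the $T$-graph $(\Gamma,\alpha)$, an explicit $\mathcal{D}$-system with cohomology $\underline{R}$-structure whose realizability conditions translate exactly into the tree condition on the graphs $\Gamma^H$. First I would fix the collection $C$ of subgroups: take all isotropy groups occurring in $R(\Gamma)$, i.e.\ $T$ itself (the fixed points, identified with $V(\Gamma)$), all codimension-$1$ subtori $H$ with $\Gamma^H$ having an edge, and $\{1\}$. This is a finite collection, generating a stable subset $\mathcal{D}=\mathcal{D}(C)$; note $R(\Gamma)^H$ is, up to equivariant homeomorphism, the realization $R(\Gamma^H)$ of the subgraph $\Gamma^H$ (viewed with labels pushed to $T/H$), and $R(\Gamma)^T=V(\Gamma)$ is discrete. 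For each $(U,H)\in\mathcal{D}$ I set $\underline{A}(U,H):=H^*_{T/U}(R(\Gamma)^H)=H^*(\Gamma^H \text{ over } T/U)$ with trivial differential, which is visibly a $\mathcal{D}$-system with the obvious $\underline{R}$-structure coming from the projection maps $R_{T/U}\to H^*(\Gamma^H\text{ over }T/U)$, and which satisfies \textbf{(TC)} (a trivial $U$-action on $R(\Gamma)^H$ makes the Borel construction a product, exactly as in Remark \ref{rem:TC}).

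The substantive step is to check the hypotheses of Theorem \ref{thm:R-realization} for this $\underline{A}$. Spacelikeness of $\underline{A}(U,H)$ in degree $0$: since each graph is a finite graph, $H^0$ of its graph cohomology is $\mathbb{Q}^{(\#\text{components})}$, so $\underline{A}$ is spacelike; finite type and finite generation over $R_{T/U}$ hold because graph cohomology is a finitely generated submodule of a finite free module over a Noetherian ring; $H^1(\underline{A})=0$ and injectivity of $\underline{R}^2\otimes H^0(\underline{A})\to H^2(\underline{A})$ are immediate from the concentration of graph cohomology in even degrees together with the fact that the constant-diagonal copy of $R_{T/U}^2$ injects. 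The crucial point is the localization condition \textbf{(LC)}: I claim it is equivalent to the hypothesis that $\Gamma^H$ is a disjoint union of trees for every codimension-$1$ subtorus $H$. One direction uses Lemma \ref{lem:tree}: if all these graphs are forests then each $R(\Gamma)^H$ is nonequivariantly a wedge of spheres, cohomology is free over $R_{T/U}$, the Borel localization map is injective, and combined with the surjectivity statement coming from Borel localization on the genuine $T$-space $R(\Gamma)$ (and the fact that $\underline{A}(U,H)$ genuinely is $H^*_{T/U}(R(\Gamma)^H)$, so \textbf{(LC)} is inherited from $\underline{A_{PL}(R(\Gamma))}$ as in Remark \ref{rem:localization}(ii)) one gets \textbf{(LC)} for free. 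For the converse, if some $\Gamma^H$ with $H$ of codimension $1$ contains a cycle, then $H^*(\Gamma^H)$ has $R_{T/H}$-torsion (a cycle forces a torsion class after localizing away from the generator of the relevant edge ideal), and one checks this torsion obstructs the isomorphism required by \textbf{(LC)} at $(U,H)$ localized at $S_{T/U}(H)$; hence \textbf{(LC)} fails. This equivalence is the main obstacle: the bookkeeping of which localizations to test, and showing a cycle genuinely produces a class that survives the relevant localization yet dies under the map to $m_\mathcal{D}(K)$, requires a careful linear-algebra argument on the graph cohomology module.

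Granting all this, Theorem \ref{thm:R-realization} yields a $T$-simply connected, $T$-finite $\mathbb{Q}$-type finite $T$-CW-complex $X$ with $\underline{A_{PL}(X)}$ quasi-isomorphic to $\underline{A}$ compatibly with $\underline{R}$-structures; in particular $H^*_T(X)=H^*(\underline{A}(\{1\},T))=H^*(\Gamma)$ and $H^*_T(X^H)=H^*_{T/H}(R(\Gamma^H))$ for each $H\in C$. It remains to arrange that the one-skeleton $X_1$ is actually $R(\Gamma)$ on the nose and not merely rationally equivalent to it; here I would either (a) redo the bottom few steps of the induction in the proof of Theorem \ref{thm:P-realization}/Proposition \ref{prop:realization} starting with $X := R(\Gamma)$ already built in (so that no cells of codimension $\le 1$ are glued and the one-skeleton is preserved), or (b) invoke that the realization procedure glues only free $T/H_0$-cells of dimension $\ge 2$ over each isotropy stratum and hence never alters the codimension-$\le 1$ part once it is put in by hand at the start. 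Finally, since any abstract GKM graph by definition has the property that $\Gamma^H$ is a disjoint union of (in fact, a single) points/edges configuration that is a forest for codimension-$1$ $H$ — this being part of the GKM axioms (the "$j$-independence"/connectedness conditions force $\Gamma^H$ to be a disjoint union of trees for $H$ of codimension one) — the last sentence follows as a special case.
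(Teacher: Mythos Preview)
Your proposal has a genuine gap in the definition of the system $\underline{A}$. You set $\underline{A}(U,H):=H^*_{T/U}(R(\Gamma)^H)$ and identify this with the graph cohomology of $\Gamma^H$. But these two objects are \emph{not} equal in general: they agree precisely when $\Gamma^H$ is a forest (this is essentially Lemma~\ref{lem:tree}). The tree hypothesis only controls $\Gamma^H$ for codimension-$1$ subtori $H$, whereas $\Gamma^{\{1\}}=\Gamma$ itself may well have cycles (e.g.\ the GKM graph of $\mathbb{CP}^2$ is a triangle). In that case $R(\Gamma)$ has nontrivial $H^1$, so $H^*_T(R(\Gamma))$ has odd-degree classes and is strictly larger than $H^*(\Gamma)$. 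Consequently your $\underline{A}$ violates $H^1(\underline{A})=0$ at the position $(\{1\},\{1\})$, Theorem~\ref{thm:R-realization} does not apply, and even if it did the realization would satisfy $H^*_T(X)=H^*_T(R(\Gamma))\neq H^*(\Gamma)$. The paper fixes this by taking $\underline{A}:=H^{even}(\underline{A_{PL}(R(\Gamma))})$: the even part still inherits \textbf{(TC)}, \textbf{(LC)}, and finite generation from the full system (these conditions respect the even/odd decomposition), it has $H^1=0$ for free, and a Mayer--Vietoris argument identifies $H^{even}_T(R(\Gamma))$ with $H^*(\Gamma)$.

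Your necessity argument also goes astray. You claim \textbf{(LC)} for $\underline{A}$ is equivalent to the tree condition, but if $\underline{A}$ is taken to be graph cohomology (equivalently $H^{even}$), then \textbf{(LC)} in fact holds regardless of whether the $\Gamma^H$ are forests: localizing at $S_T(K)$ makes the edge constraints for edges with $\alpha(e)\notin\ker(R_T\to R_K)$ automatic, so $S^{-1}H^*(\Gamma)\to S^{-1}H^*(\Gamma^{m_\mathcal{D}(K)})$ is always an isomorphism. (Your assertion that a cycle produces $R_{T/H}$-torsion in $H^*(\Gamma^H)$ is false, since graph cohomology embeds in a free module.) The paper instead argues necessity directly on the space: if $X_1=R(\Gamma)$ then $X^H=R(\Gamma^H)$ for codimension-$1$ $H$, and a cycle in $\Gamma^H$ forces $H^1_T(X^H)\neq 0$; Borel localization for $X$ then pushes this odd class back to $H^{odd}_T(X)\neq 0$, contradicting $H^*_T(X)=H^*(\Gamma)$. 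Your sketch for arranging $X_1=R(\Gamma)$ by starting the induction at $R(\Gamma)$ is along the right lines and matches the paper's treatment.
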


\begin{proof}
For a given $T$-graph $\Gamma$, let $X$ be a finite $T$-CW-complex with $X_1=R(\Gamma)$. Assume that for some subtorus $H\subset T$ the graph $\Gamma^H$ is not a tree. Since $X^H=R(\Gamma^H)$, it follows that $H_T^1(X^H)\neq 0$. By \textbf{(TC)} we deduce that $S_T(H)^{-1}H_T^*(X^H)$ does not vanish in odd degrees. Borel localization \textbf{(LC)} implies that $H^{odd}_T(X)\neq 0$. Thus we can not have $H_T^*(X)\cong H^*(\Gamma)$.

For the converse direction, let $C$ consist of all isotropy groups of $R(\Gamma)$ and set $\mathcal{D}:=\mathcal{D}(C)\subset \mathcal{S}$. The $\mathcal{D}$-system $H^*(\underline{A_{PL}(R(\Gamma))})$ satisfies \textbf{(TC)}, \textbf{(LC)}, and $H^*(R(\Gamma)^H_{T/U})$ is finitely generated as an $R_{T/U}$-module for all $(U,H)\in \mathcal{D}$. But then clearly the same holds for the $\mathcal{D}$-system $\underline{A}:=H^{even}(\underline{A_{PL}(R(\Gamma))})$ where we have set the odd-dimensional degrees equal to $0$. A Mayer-Vietoris argument shows that indeed $H^*(\Gamma)=H_T^{even}(R(\Gamma))$. Thus at this point realizability of $H^*(\Gamma)$ follows from Theorem \ref{thm:R-realization}. Following the realization procedure, one can check that the realization can be chosen with $X_1=R(\Gamma)$. A slightly less direct but maybe faster way to see this is the following. Let $\underline{P}\rightarrow \underline{A_{PL}(*)}$ be as in Theorem \ref{thm:P-realization} and consider the subset $\mathcal{C}\subset \mathcal{D}$ of those $(U,H)\in \mathcal{D}$ with $H$ of codimension $1$ or less. Furthermore let $\underline{A}\rightarrow \underline{A'}$ be a quasi isomorphism such that $\underline{A'}$ satisfies \textbf{(SC)} and fix a strictification $\underline{P}\rightarrow \underline{A'}$ of the cohomology $\underline{R}$-structure on $\underline{A}$ as in Corollary \ref{cor:strictreplacement}.

We claim that $\underline{A'}|_\mathcal{C}$ is connected to $\underline{A_{PL}(R(\Gamma))}|_\mathcal{C}$ by a chain of quasi isomorphisms of systems with $\underline{P}$-structures. If this holds, then the inductive realization procedure in the proof of Theorem \ref{thm:P-realization} can be continued by gluing cells of isotropy codimension $\geq 2$ to $R(\Gamma)$, extending the quasi isomorphism of $\mathcal{C}$-systems to one of $\mathcal{D}$-systems. This yields the desired realization.

Let $Y$ be any $T$-space such that $\underline{A_{PL}(Y)}$ is quasi isomorphic to $\underline{A}$ (as provided by Theorem \ref{thm:P-realization}). Via the quasi isomorphism, the vertices of $R(\Gamma)$ correspond to the path components of $Y^T$ and we choose an image of each vertex in the respective component. Any $S^2$ in $R(\Gamma)$ with principal isotropy $H$ can be seen as a $1$-cell $[0,1]\times T/H$ glued to the respective vertices. The existence of the quasi isomorphism implies that two vertices lie in the same component of $R(\Gamma)^H$ if and only if their images lie in the same path component of $Y^H$. Thus we can extend the map over the $1$-cells to obtain an equivariant map $R(\Gamma)\rightarrow Y$. This induces for each $(U,H)\in \mathcal{C}$ a commutative diagram
\[\xymatrix{
H^*_{T/U}(R(\Gamma)^H)\ar[d] & H^*_{T/U}(Y^H)\ar[d]\ar[l]\\
H^*_{T/U}(R(\Gamma)^T) & H^*_{T/U}(Y^T)\ar[l]
}\]
in which the bottom horizontal map is the identity when identifying $H^*_{T/U}(Y^T)\cong \underline{A}(U,T)= H^{even}_{T/U}(R(\Gamma)^T)$ via the given quasi isomorphisms. The images of the vertical maps agree since $H^*_{T/U}(R(\Gamma)^T)$ is concentrated in even degrees. By Lemma \ref{lem:tree}, the vertical maps are injective and thus the top horizontal map is an isomorphism. This proves the claim.
\end{proof}

\begin{rem}\label{ex:Egraph} For a $T$-graph to be an abstract GKM graph in the sense of \cite{GuilleminZara}, it is in particular required that two adjacent edges have linearly independent labels. In this case, the subgraphs $\Gamma^H$ in the requirements of Theorem \ref{thm:GKM-realization} are disjoint unions of single vertices and single edge graphs. Thus the theorem is applicable and all abstract GKM graphs are realizable by finite $T$-CW-complexes. 

Recall that in the definition of a GKM manifold $M$ we require $H^{odd}(M)=0$. With respect to this condition our realizations behave as follows:
under the assumption that the fixed point set of a $T$-space $X$ is discrete, the condition $H^{odd}(X)=0$ is equivalent to the fact that $H^*_T(X)$ is free as an $R_T$-module (observe that the Serre spectral Sequence of the Borel fibration degenerates at $E_2$ if and only if $H^{odd}(X)=0$; then use \cite[Corollary 4.2.3]{AP}). Consequently, any of our realizations from Theorem \ref{thm:GKM-realization} will fulfil the condition of vanishing odd cohomology if and only if the graph allows such a realization, i.e.\ if the graph cohomology is free over $R_T$.

\end{rem}

\section{The equivariant rational homotopy category}\label{sec:homcats}

An equivariant rational equivalence is an equivariant map $X\rightarrow Y$ between $T$-spaces such that the induced maps $X^H\rightarrow Y^H$ are rational equivalences for all $H\subset T$. The equivariant rational homotopy category is the localization of the category of $T$-spaces at the equivariant rational equivalences. The goal here is to find an algebraization of this category.

For some $\mathcal{D}\subset \mathcal{S}$, denote by $\mathcal{A}^{\mathcal{D}}$ the category of of $\mathcal{D}$-systems $\mathcal{D}\rightarrow \mathrm{cdga}^{\geq 0}$. For some $\underline{P}$ in $\mathcal{A}^{\mathcal{D}}$ let $\mathcal{A}^{\mathcal{D}}_{\underline{P}}$ denote the category of $\mathcal{D}$-systems under $\underline{P}$, i.e.\ objects are morphisms $\underline{P}\rightarrow \underline{A}$ and morphisms are morphisms of $\mathcal{D}$-systems which are compatible with the $\underline{P}$-structures. The following algebraization was achieved in \cite[Theorem 4.4]{ScullMendes}.

\begin{thm}\label{thm:scullmendes}
Let $\underline{P}\rightarrow A_{PL}(*)$ be a quasi isomorphism where $\underline{P}$ is free nilpotent. Then the functor $X\mapsto \underline{A_{PL}(X)}$ induces an embedding of rational homotopy category of $T$-simply connected $T$-finite $\mathbb{Q}$-type spaces as a full subcategory of the homotopy category of $\mathcal{A}_{\underline{P}}^{\mathcal{S}}$.
\end{thm}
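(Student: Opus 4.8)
The plan is to follow the template of the classical Sullivan theorem, with Elmendorf's theorem \cite{Elmendorf} supplying the equivariant input. I would factor $X\mapsto\underline{A_{PL}(X)}$ as the geometric functor $X\mapsto\underline{X}$, where $\underline{X}$ is the $\mathcal{S}$-diagram of Borel constructions $(U,H)\mapsto X^H_{T/U}$, followed by the pointwise de Rham functor $\underline{Y}\mapsto\underline{A_{PL}(Y)}$, and analyze the two stages separately. Throughout one equips the categories of $\mathcal{S}$-diagrams of simplicial sets and of $\mathcal{S}$-systems of cdgas with their projective model structures, so that pointwise $A_{PL}$ and the pointwise spatial realization $\underline{A}\mapsto\vert\underline{A}\vert$ form a (contravariant) Quillen adjunction between them.

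\emph{Geometric stage.} Elmendorf's theorem identifies $T$-spaces, up to equivariant weak equivalence, with diagrams over the orbit category of $T$ via $T/K\mapsto X^K$. Composing with the Borel construction and reindexing over $\mathcal{S}$ (only connected first components are needed, since rationally $BU\simeq BU_0$ for $U$ with finite component group) identifies the equivariant rational homotopy category of $T$-simply connected, $T$-finite $\mathbb{Q}$-type spaces with a full subcategory of the homotopy category of $\mathcal{S}$-diagrams of spaces: those that are pointwise simply connected of finite $\mathbb{Q}$-type and exhibit each $X^H_{T/U}\to B(T/U)$ as a fibration, compatibly with the inclusions of fixed sets. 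I would use here that $B(T/U)$ is simply connected, so each Borel construction is again simply connected, and that fibrewise rationalization of the simply connected finite type fibres $X^H$ is functorial and a homotopy invariant.

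\emph{Algebraic stage.} Apply pointwise $A_{PL}$. Invoking Sullivan's theorem at every $(U,H)\in\mathcal{S}$, the derived unit $\underline{Y}\to\vert\underline{A_{PL}(Y)}\vert$ is a pointwise weak equivalence for diagrams of pointwise simply connected finite $\mathbb{Q}$-type spaces, and the derived counit $\underline{A}\to\underline{A_{PL}(\vert\underline{A}\vert)}$ is a pointwise quasi isomorphism for systems pointwise quasi isomorphic to simply connected finite type Sullivan algebras (cf.\ Remark \ref{rem:technicalstuff}); pointwise simple connectivity removes the $\pi_1$-obstruction that normally intervenes. Hence pointwise $A_{PL}$ gives a fully faithful contravariant functor between the two relevant homotopy categories. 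It remains to pass to systems \emph{under} $\underline{P}$: each $T$-space carries the canonical morphism $\underline{A_{PL}(*)}\to\underline{A_{PL}(X)}$ coming from $X\to *$, and, $\underline{P}\to\underline{A_{PL}(*)}$ being free nilpotent, this lifts to a $\underline{P}$-structure, naturally up to homotopy. The required bookkeeping -- replacing $\underline{A_{PL}(X)}$ by a model satisfying \textbf{(SC)}, lifting the $\underline{P}$-structure and showing it unique up to homotopy, and then comparing homotopy classes of maps of systems under $\underline{P}$ -- is exactly what Proposition \ref{prop:SCexistence}, Proposition \ref{prop:lifting}, Lemma \ref{lem:cohomohomotopy} and Corollary \ref{cor:strictreplacement} are designed to supply. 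Combining the two stages yields the embedding; the full details are in \cite[Theorem 4.4]{ScullMendes}.

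\emph{Main obstacle.} The genuinely delicate point is fully faithfulness in the \emph{relative} category $\mathcal{A}^{\mathcal{S}}_{\underline{P}}$: its morphisms commute strictly with the maps from $\underline{P}$, whereas the Borel-fibration data only rectifies to a strict $\underline{P}$-structure after passing to a suitable model, and one must verify that this rectification is coherent over all of $\mathcal{S}$ at once -- not merely pointwise -- so that equivariant rational homotopy classes $[X,Y]$ match homotopy classes of maps $\underline{A_{PL}(Y)}\to\underline{A_{PL}(X)}$ under $\underline{P}$. Achieving this global coherence is precisely what forces the free-nilpotent resolution $\underline{P}$ and the lifting/strictification machinery of Sections 3 and 4, and is where the real work concentrates.
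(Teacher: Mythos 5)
The paper does not prove this statement itself: it imports it verbatim from \cite[Theorem 4.4]{ScullMendes} (with the formulation restricted to tori and $\mathbb{Q}$-coefficients), so there is no internal proof to compare against. Your sketch — Elmendorf's theorem on the geometric side, pointwise Sullivan theory on the algebraic side, and strictification of the $\underline{P}$-structure as the delicate point — is consistent with how the paper's introduction itself describes the Scull--Mendes argument, and since you likewise defer the details to the same citation, your proposal matches the paper's treatment.
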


We have adjusted the formulation slightly and restricted the result to tori and $\mathbb{Q}$-coefficients to fit the presentation of this paper. We note that a description of the image of this embedding (similar to the corresponding parts of Theorem \ref{thm:P-realization}) is given in \cite[Theorem 3.7]{ScullMendes}. This results in an algebraic description of the $T$-equivariant rational homotopy category. As the authors of \cite{ScullMendes} state in their introduction, a drawback of the algebraization above lies in the fact that the object $\underline{P}$ is in general hard to describe explicitly. We opt to simplify the algebraization by showing that instead of a specific object $\underline{P}$, we only need to keep track of cohomological data in the form of cohomology $\underline{R}$-structures. Let $\widetilde{\mathcal{A}}^{\mathcal{D}}_{\underline{R}}$ denote the category whose objects are $\mathcal{D}$-systems $\mathcal{D}\rightarrow\mathrm{cdga}^{\geq 0}$ with cohomology $\underline{R}$-structures and whose morphisms are morphisms of systems which respect the cohomology $\underline{R}$-structures.

\begin{rem}\label{rem:homcats}
We wish to form the homotopy categories of $\mathrm{Ho}(\mathcal{A}_{\underline{P}}^\mathcal{D})$ and $\mathrm{Ho}(\widetilde{\mathcal{A}}^\mathcal{D}_{\underline{R}})$ by localizing with respect to quasi isomorphisms. To do this, we fix the following model structures. On the category of non-negatively graded, unital cdgas we choose the model structure introduced in \cite{BG}, whose weak equivalences are quasi isomorphisms and whose fibrations are surjective maps. Then by \cite[Theorem 11.6.1]{Hirschhorn} we have an induced model structure on $\mathcal{A}^\mathcal{D}$ whose fibrations and weak equivalences are just defined as pointwise surjections and quasi isomorphisms. The undercategory $\mathcal{A}^\mathcal{D}_{\underline{P}}$ inherits this model structure by defining morphisms to be fibrations and weak equivalences if the forgetful morphisms in $\mathcal{A}^\mathcal{D}$ are (\cite[Theorem 7.6.5]{Hirschhorn}).

This guarantees the existence of $\mathrm{Ho}(\mathcal{A}_{\underline{P}}^\mathcal{D})$. The category $\widetilde{\mathcal{A}}^\mathcal{D}_{\underline{R}}$ is not an undercategory of $\mathcal{A}^\mathcal{D}$. In fact, it is not technically a model category, e.g.\ due to the lack of an initial object. Still, the localization $\mathrm{Ho}(\widetilde{\mathcal{A}}^\mathcal{D}_{\underline{R}})$ exists and can be explicitly constructed as follows: let $C,F$ denote cofibrant and fibrant replacement functors on $\mathcal{A}^\mathcal{D}$ and let $Q\colon \widetilde{\mathcal{A}}^\mathcal{D}_{\underline{R}}\rightarrow \mathcal{A}^\mathcal{D}$ denote the forgetful functor. Then the objects of $\mathrm{Ho}(\widetilde{\mathcal{A}}^\mathcal{D}_{\underline{R}})$ are defined to be the same as those of $\widetilde{\mathcal{A}}^\mathcal{D}_{\underline{R}}$. Morphisms $A\rightarrow B$ are given by the homotopy classes in $\mathcal{A}^\mathcal{D}$ of those morphisms in $\hom(FCQ(A),FCQ(B))$ which respect the cohomology $\underline{R}$-structures inherited from $A$ and $B$.

To see that this is indeed a localization at the quasi isomorphisms, we make the following observation: as $CQ(A)\simeq Q(A)$ inherits a cohomology $\underline{R}$-structure from $A$, we obtain an endofunctor $\tilde{C}$ on $\widetilde{\mathcal{A}}^\mathcal{D}_{\underline{R}}$ with $Q\tilde{C}=CQ$ and a natural quasi isomorphism $\tilde{C}A\rightarrow A$ in $\widetilde{\mathcal{A}}^\mathcal{D}_{\underline{R}}$. Analogously, $F$ lifts to a functor $\tilde{F}$.
Now the claim can be proved analogously to \cite[Theorem 8.3.5]{Hirschhorn} while replacing the roles of $C,F$ with $\tilde{C},\tilde{F}$ in the appropriate places.

\end{rem}

The following theorem is in essence a translation of Theorem \ref{thm:strictification}. In the theorem below, parts $(i)$, $(ii)$, and $(iii)$ of \ref{thm:strictification} correspond to essential surjectivity, fullness, and faithfulness respectively.

\begin{thm}\label{thm:categorystricitification}
Let $\mathcal{D}\subset \mathcal{S}$ be a finite stable subset. Let $\underline{P}\rightarrow \underline{A_{PL}(*)}$ be a weak quasi isomorphism where $P(U,H)$ is a free nilpotent cdga and $P(U,H)=P(U,H')$ for all $U\subset H\subset H'$. Then the functor $\mathrm{Ho}(\mathcal{A}^{\mathcal{D}}_{\underline{P}})\rightarrow \mathrm{Ho}(\widetilde{\mathcal{A}}^\mathcal{D}_{\underline{R}})$ induces an equivalence between the full subcategories on the spacelike and cohomologically simply connected objects satisfying \textbf{(TC)}.
\end{thm}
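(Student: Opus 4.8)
The plan is to produce the functor and then verify essential surjectivity, fullness, and faithfulness by directly citing the three parts of Theorem \ref{thm:strictification}, using the correspondence between weak $\underline{P}$-structures and cohomology $\underline{R}$-structures from Remark \ref{rem:weakvscohomology}. First I would set up the functor $\mathrm{Ho}(\mathcal{A}^{\mathcal{D}}_{\underline{P}})\rightarrow\mathrm{Ho}(\widetilde{\mathcal{A}}^{\mathcal{D}}_{\underline{R}})$: a strict $\underline{P}$-structure $\underline{P}\rightarrow\underline{A}$ induces a cohomology $\underline{R}$-structure $\underline{R}|_{\mathcal{D}}\cong H^*(\underline{P})\rightarrow H^*(\underline{A})$ via the weak quasi isomorphism $\underline{P}\rightarrow\underline{A_{PL}(*)}$, and morphisms and homotopies pass to the quotient; one checks compatibility with the cofibrant-fibrant replacements used to build $\mathrm{Ho}(\widetilde{\mathcal{A}}^{\mathcal{D}}_{\underline{R}})$ in Remark \ref{rem:homcats}. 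Since \textbf{(SC)} is not part of the hypotheses of the theorem but is needed to invoke Theorem \ref{thm:strictification}, the key preliminary step is to replace any object under consideration by a quasi-isomorphic one satisfying \textbf{(SC)} (Proposition \ref{prop:SCexistence}); this is harmless in the homotopy category. Throughout, ``spacelike and cohomologically simply connected with \textbf{(TC)}'' is exactly the standing hypothesis needed for Theorem \ref{thm:strictification}.

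Essential surjectivity: given a system $\underline{A}$ with a cohomology $\underline{R}$-structure that is spacelike, cohomologically simply connected, and satisfies \textbf{(TC)}, pass to a quasi-isomorphic $\underline{A}'$ satisfying \textbf{(SC)} (Proposition \ref{prop:SCexistence}), note $H^1(\underline{A}')=0$ since cohomological simple connectivity is a pointwise condition invariant under quasi isomorphism, then apply Corollary \ref{cor:strictreplacement} (equivalently Theorem \ref{thm:strictification}(i) together with Remark \ref{rem:weakvscohomology}) to obtain a strict $\underline{P}$-structure on $\underline{A}'$ inducing the given cohomology $\underline{R}$-structure. This exhibits $\underline{A}$ in the image up to isomorphism in the homotopy category. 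Fullness: a morphism in $\mathrm{Ho}(\widetilde{\mathcal{A}}^{\mathcal{D}}_{\underline{R}})$ between objects with strict $\underline{P}$-structures is represented by a strict morphism of systems compatible with the cohomology $\underline{R}$-structures; the induced weak $\underline{P}$-structure on the target pulls back along this morphism to a weak $\underline{P}$-structure on the source that is pointwise homotopic to the given strict one, and Theorem \ref{thm:strictification}(i) (applied to the mapping-cylinder-type system, or more simply to the source) lets us rectify the morphism to a strict morphism of systems under $\underline{P}$, giving a preimage in $\mathrm{Ho}(\mathcal{A}^{\mathcal{D}}_{\underline{P}})$. Faithfulness: two strict morphisms $f,g\colon\underline{P}\rightarrow\underline{A}$ that become equal in $\mathrm{Ho}(\widetilde{\mathcal{A}}^{\mathcal{D}}_{\underline{R}})$ are, after fibrant-cofibrant replacement, connected by a chain of homotopies of systems; since these homotopies need not respect $\underline{P}$ strictly, one first uses that they are pointwise homotopies (Lemma \ref{lem:cohomohomotopy} shows the relevant data is cohomological) and then invokes Theorem \ref{thm:strictification}(ii) and (iii) to replace them by homotopies \emph{relative to} $\underline{P}$, so that $f=g$ already in $\mathrm{Ho}(\mathcal{A}^{\mathcal{D}}_{\underline{P}})$.

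The main obstacle I expect is the bookkeeping in fullness and faithfulness caused by the fibrant-cofibrant replacement functors built into the definition of $\mathrm{Ho}(\widetilde{\mathcal{A}}^{\mathcal{D}}_{\underline{R}})$ in Remark \ref{rem:homcats}: one must check that $\tilde C$ and $\tilde F$ preserve the properties ``spacelike, cohomologically simply connected, \textbf{(TC)}'' (pointwise invariance under quasi isomorphism handles the first two; \textbf{(TC)} is phrased on cohomology and is thus also a quasi-isomorphism invariant), and that a morphism between the replacements respecting cohomology $\underline{R}$-structures can be transported back to a morphism respecting strict $\underline{P}$-structures without leaving the subcategory. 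This is essentially a diagram-chase combining Theorem \ref{thm:strictification} with Proposition \ref{prop:lifting} and the \textbf{(SCM)}-machinery of Section \ref{sec:surjcond}; the subtlety is ensuring \textbf{(SC)} can always be arranged compatibly along the chain, which follows from Proposition \ref{prop:SCexistence}(ii). Once these replacements are seen to interact well with the hypotheses, the three parts of Theorem \ref{thm:strictification} give the three properties of an equivalence of categories essentially formally.
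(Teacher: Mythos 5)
Your plan follows the paper's proof essentially verbatim: parts (i), (ii), (iii) of Theorem \ref{thm:strictification} deliver essential surjectivity, fullness, and faithfulness respectively, after \textbf{(SC)}-replacement via Proposition \ref{prop:SCexistence} and Corollary \ref{cor:strictreplacement}, with the fibrant--cofibrant bookkeeping of Remark \ref{rem:homcats} handled exactly as you anticipate. One small correction: for fullness the relevant tool is part (ii), not part (i) --- one pushes the source's strict $\underline{P}$-structure forward along the morphism to get a second strict $\underline{P}$-structure on the target that is pointwise homotopic to the given one, and part (ii) supplies a strict homotopy of systems connecting the two, which yields the required zig-zag in $\mathcal{A}^{\mathcal{D}}_{\underline{P}}$; part (i) rectifies weak $\underline{P}$-structures, not morphisms between systems that already carry strict ones.
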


\begin{proof}
By Corollary \ref{cor:strictreplacement}, for any spacelike object $\underline{A}$ of $\widetilde{\mathcal{A}}^\mathcal{D}_{\underline{R}}$ with $H^1(\underline{A})=0$ and \textbf{(TC)}, we have a quasi isomorphism $\underline{A}\rightarrow \widehat{\underline{A}}$ such that $\widehat{\underline{A}}$ satisfies \textbf{(SC)} and a morphism $\underline{P}\rightarrow \widehat{\underline{A}}$ which induces the cohomology $\underline{R}$-structure induced by $A$. In particular $\widehat{\underline{A}}$ is an object of ${\mathcal{A}}^\mathcal{D}_{\underline{P}}$ whose image in $\mathrm{Ho}(\widetilde{\mathcal{A}}^\mathcal{D}_{\underline{R}})$ is isomorphic to that of $\underline{A}$. Thus we have shown essential surjectivity.

We fix such a preimage for every object. If $\underline{B}$ is another such object of $\widetilde{\mathcal{A}}^\mathcal{D}_{\underline{R}}$ and $\underline{A}\rightarrow \underline{B}$ is a morphism in $\hom_{\widetilde{\mathcal{A}}^\mathcal{D}_{\underline{R}}}(\underline{A},\underline{B})$, then as in \ref{prop:SCexistence} we can extend to a map of systems $\widehat{\underline{A}}\rightarrow\widehat{\underline{B}}$. This map is not necessarily compatible with the fixed chosen $\underline{P}$-actions but it is so up to pointwise homotopy. Let $\widehat{\underline{B}}'$ be the same as $\widehat{\underline{B}}$ but with the $\underline{P}$-action induced by $\underline{\widehat{A}}\rightarrow \underline{\widehat{B}}$. Then
by part $(ii)$ of Theorem \ref{thm:strictification} we obtain a homotopy $\underline{P}\rightarrow \underline{\widehat{B}\otimes (t,dt)}$ such that in the commutative diagram
\[\xymatrix{
\widehat{\underline{A}}\ar[r] & \widehat{\underline{B}}' & \underline{\widehat{B}\otimes (t,dt)}\ar[l]\ar[r] & \widehat{\underline{B}}\\
{\underline{A}}\ar[r]\ar[u] & {\underline{B}}\ar[u] & \underline{{B}\otimes (t,dt)}\ar[u]\ar[l]\ar[r] & {\underline{B}}\ar[u]
}\]
all morphisms in the top row are compatible with the fixed $\underline{P}$-actions. Also note that if we already have a (strict) $\underline{P}$-action on $\underline{A}$ or $\underline{B}$ then the $\underline{P}$-actions on $\widehat{\underline{A}}$ and $\widehat{\underline{B}}$ can be chosen such that the respective outer vertical map is compatible with the $\underline{P}$-actions as well. Furthermore the string of maps  from $\underline{A}$ to $\underline{B}$ which goes through the top row and the outer vertical maps defines a morphism in $\mathrm{ho}(\widetilde{\mathcal{A}}^\mathcal{D}_{\underline{R}})$ which is equivalent to the original morphism $\underline{A}\rightarrow \underline{B}$.

Now for $\underline{A}$ and $\underline{B}$ as above with a fixed $\underline{P}$-action, any morphism in $\mathrm{Ho}(\widetilde{\mathcal{A}}^\mathcal{D}_{\underline{R}})$ between them is represented by a string of morphisms in $\widetilde{\mathcal{A}}^\mathcal{D}_{\underline{R}}$ through objects which are spacelike, cohomologically simply connected and satisfy \textbf{(TC)} (cf. Remark \ref{rem:homcats}). By what we have seen above, this string is equivalent in $\mathrm{Ho}(\widetilde{\mathcal{A}}^\mathcal{D}_{\underline{R}})$ to a string of morphisms which comes from ${\mathcal{A}}^\mathcal{D}_{\underline{P}}$. This shows fullness.

It remains to show that the functor between the homotopy categories is faithful. It suffices to show that this holds for the forgetful functor $\mathrm{Ho}(\mathcal{A}^\mathcal{D}_{\underline{P}})\rightarrow \mathrm{Ho}( \mathcal{A}^\mathcal{D})$.

Consider two morphisms $f,g\colon \underline{A}\rightarrow \underline{B}$ between fibrant and cofibrant objects $\underline{A},\underline{B}$ of $\mathcal{A}^\mathcal{D}_{\underline{P}}$ which map to the same morphism in $\mathrm{Ho}(\mathcal{A}^\mathcal{D})$. We need to show that they also get identified in $\mathrm{Ho}(\mathcal{A}^\mathcal{D}_{\underline{P}})$. Let $\underline{\mathcal{M}}\rightarrow \underline{A}$ be a quasi isomorphism in $\mathcal{A}^\mathcal{D}_{\underline{P}}$ where $\underline{\mathcal{M}}$ is a free disconnected extension of $\underline{P}\rightarrow \underline{A}$ (cf.\ Proposition \ref{prop:Sullivandiagram}). With the notation from above, Proposition \ref{prop:SCexistence} provides a commutative diagram
\[\xymatrix{
\underline{\mathcal{M}}\ar[r]\ar[d] & \underline{A}\ar[r]^f & \underline{B}\ar[d]\ar[d]\\
\widehat{\underline{\mathcal{M}}}\ar[rr]^{\hat{f}} & &\widehat{\underline{B}}
}\]
where the vertical maps are quasi isomorphisms. Analogously we obtain a map $\hat{g}$ and it suffices to prove that $\hat{f}$ and $\hat{g}$ are the same in $\mathrm{Ho}(\mathcal{A}^\mathcal{D}_{\underline{P}})$. We show that there is a homotopy $\underline{\widehat{\mathcal{M}}}\rightarrow \underline{\widehat{B}\otimes (t,dt)}$ between $\hat{f}$ and $\hat{g}$. In this case by part $(iii)$ of Theorem \ref{thm:strictification} there is also a homotopy relative to $\underline{P}$ which finishes the proof.

To prove this, consider cofibrant replacements $\underline{C\mathcal{M}}\rightarrow \underline{\widehat{\mathcal{M}}}$ and $\underline{C\widehat{B}}\rightarrow \widehat{\underline{B}}$ in $\mathcal{A}^\mathcal{D}$. We note that these are also fibrant since all objects are fibrant. Then $\hat{f}$, $\hat{g}$ lift to morphisms $\tilde{f},\tilde{g}\colon  \underline{C\mathcal{M}}\rightarrow \underline{C\widehat{B}}$ which are homotopic in the sense of the model structure on $\mathcal{A}^\mathcal{D}$. We may use $\underline{C\widehat{B}\otimes (t,dt)}$ as a path object. Again using Proposition \ref{prop:SCexistence} we obtain the commutative diagram of solid arrows\vspace{0.3cm}

\[\xymatrix{
\widehat{\underline{C\mathcal{M}}}\ar[r]_p\ar@/^2.0pc/@{-->}[rrr]^\phi& \widehat{\underline{\mathcal{M}}} \ar[r]^{\hat{f}\times\hat{g}}& \widehat{\underline{B}}\times\widehat{\underline{B}}& \ar[l] \underline{\widehat{B}\otimes (t,dt)}\\
 & \underline{C\mathcal{M}}\ar[lu]\ar[u] \ar[r]^{\tilde{f}\times\tilde{g}} \ar@/_2.0pc/[rr] &\underline{C\widehat{B}}\times \underline{C\widehat{B}}\ar[u]& \ar[l] \underline{C\widehat{B}\otimes (t,dt)}\ar[u]
}\]\vspace{0.4cm}

\noindent in which $p$ satisfies \textbf{(SCM)}. Applying \ref{prop:SCexistence} to $\underline{C\mathcal{M}}\rightarrow \underline{\widehat{B}\otimes (t,dt)}$, we extend the homotopy to yield the dashed arrow $\phi$. The extension may be constructed  such that it is compatible with the already exiting map $\widehat{\underline{C\mathcal{M}}}\rightarrow \underline{\widehat{B}}\times\underline{\widehat{B}}$, by using the fact that for any $(U,H)\in \mathcal{D}$ the morphism from $\underline{\widehat{B}}(U,H)\otimes (t,dt)$ into the pullback of \[\underline{\widehat{B}}(U,H)\times\underline{\widehat{B}}(U,H)\rightarrow \lim_{\mathcal{D}(U,H)} \widehat{\underline{B}}\times\widehat{\underline{B}}\leftarrow \lim_{\mathcal{D}(U,H)}\widehat{\underline{B}}\otimes (t,dt)\] is a surjection. Thus the whole diagram commutes. It follows from Proposition \ref{prop:lifting} that $p$ admits a section $s$. The desired homotopy is given by $\phi\circ s$.
\end{proof}

\begin{lem}\label{lem:restrictiongalore}
Let $\mathcal{D}\subset \mathcal{S}$ be a stable subset and define $\mathcal{D'}$ as the stable subset $\{(U,H)\in \mathcal{S}~|~ H\in \mathcal{D}_R\}$. With $\underline{P}$ as above, let $\mathcal{B}_0$ be the full subcategory of $\mathrm{Ho}(\mathcal{A}^\mathcal{S}_{\underline{P}})$ on objects $\underline{A}$ which satisfy \textbf{(TC)} and for which $i_{\mathcal{D}'}\circ r_{\mathcal{D}'}(\underline{A})\rightarrow \underline{A}$ is a quasi isomorphism.
Let $\mathcal{B}_1$ be the full subcategory of $\mathrm{Ho}(\mathcal{A}^\mathcal{D}_{\underline{P}})$ on objects $\underline{A}$ which satisfy \textbf{(TC)}. Then restriction defines an equivalence of categories $\mathcal{B}_0\rightarrow\mathcal{B}_1$.
\end{lem}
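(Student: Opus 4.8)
The plan is to factor the restriction functor as $\mathcal{A}^{\mathcal{S}}\xrightarrow{r_{\mathcal{D}'}}\mathcal{A}^{\mathcal{D}'}\xrightarrow{\rho}\mathcal{A}^{\mathcal{D}}$, where $r_{\mathcal{D}'}$ is restriction along $\mathcal{D}'\subset\mathcal{S}$ and $\rho$ is restriction along the inclusion $\mathcal{D}\subset\mathcal{D}'$, and to show that each half induces an equivalence on the relevant subcategories. Here $\mathcal{D}'$ is genuinely stable, with $\mathcal{D}'_R=\mathcal{D}_R$ (so $m_{\mathcal{D}'}=m_{\mathcal{D}}$) and $\mathcal{D}'_L$ the collection of all subtori, so the functors $r_{\mathcal{D}'}$, $i_{\mathcal{D}'}$ and the natural transformation $\varphi_{\mathcal{D}'}\colon\id\to i_{\mathcal{D}'}r_{\mathcal{D}'}$ from Section~2 are available, and $\mathcal{D}\subseteq\mathcal{D}'$. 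Let $\mathcal{B}'$ denote the full subcategory of $\mathrm{Ho}(\mathcal{A}^{\mathcal{D}'}_{\underline{P}})$ on the objects satisfying \textbf{(TC)}. I claim that $r_{\mathcal{D}'}$ restricts to an equivalence $\mathcal{B}_0\to\mathcal{B}'$ and that $\rho$ restricts to an equivalence $\mathcal{B}'\to\mathcal{B}_1$; since the composite of these two is the restriction functor in the statement, this proves the lemma.

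For the first equivalence, both $r_{\mathcal{D}'}$ and $i_{\mathcal{D}'}$ preserve pointwise quasi isomorphisms, hence descend to the homotopy categories, and both preserve \textbf{(TC)}: for $i_{\mathcal{D}'}$ this is because $i_{\mathcal{D}'}(\underline{A})(U,H)=\underline{A}(U,m_{\mathcal{D}}(H))$, so for a fixed $H$-slot the maps appearing in the condition \textbf{(TC)} are precisely those of $\underline{A}$. Since $r_{\mathcal{D}'}i_{\mathcal{D}'}=\id$, for any $\underline{B}\in\mathcal{B}'$ the transformation $\varphi_{\mathcal{D}'}$ at $i_{\mathcal{D}'}\underline{B}$ is the identity (here one uses idempotence of $m_{\mathcal{D}'}$), so $i_{\mathcal{D}'}\underline{B}\in\mathcal{B}_0$; thus $i_{\mathcal{D}'}$ maps $\mathcal{B}'$ into $\mathcal{B}_0$ and is a section of $r_{\mathcal{D}'}$ there. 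On $\mathcal{B}_0$ the transformation $\varphi_{\mathcal{D}'}$ is by definition a pointwise quasi isomorphism, hence a natural isomorphism $\id_{\mathcal{B}_0}\Rightarrow i_{\mathcal{D}'}r_{\mathcal{D}'}|_{\mathcal{B}_0}$ in the homotopy category. Together with $r_{\mathcal{D}'}i_{\mathcal{D}'}=\id$ this exhibits $r_{\mathcal{D}'}|_{\mathcal{B}_0}$ and $i_{\mathcal{D}'}|_{\mathcal{B}'}$ as mutually inverse equivalences.

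For the second equivalence I would construct an inverse to $\rho$ by pushing out along identity components. For $(U,H)\in\mathcal{D}'$ let $\hat{U}$ be the smallest element of $\mathcal{D}_L$ containing $U$; it exists because $\mathcal{D}_L$ is closed under intersection and $U\subset H_0\in\mathcal{D}_L$, it satisfies $(\hat{U},H)\in\mathcal{D}$, it equals $U$ when $(U,H)\in\mathcal{D}$, and $(U,H)\mapsto\hat{U}$ is order preserving. Given a cofibrant object $\underline{\mathcal{M}}$ of $\mathcal{A}^{\mathcal{D}}_{\underline{P}}$ — so each $P(U,H):=\underline{P}(U,H)\to\underline{\mathcal{M}}(U,H)$ is a cofibration of cdgas — set $\kappa(\underline{\mathcal{M}})(U,H)=P(U,H)\otimes_{P(\hat{U},H)}\underline{\mathcal{M}}(\hat{U},H)$, the pushout of $\underline{\mathcal{M}}(\hat{U},H)\leftarrow P(\hat{U},H)\to P(U,H)$ formed along the structure maps of $\underline{P}$. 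Using that $P(U,H)$ depends only on $U$ and that $\hat{(\cdot)}$ is order preserving, this assembles into a $\mathcal{D}'$-system under $\underline{P}|_{\mathcal{D}'}$ which restricts to $\underline{\mathcal{M}}$ on $\mathcal{D}$. Because base change along the cofibration $P(\hat{U},H)\to\underline{\mathcal{M}}(\hat{U},H)$ is left Quillen, $\kappa$ preserves pointwise quasi isomorphisms between such objects, so precomposing with a functorial cofibrant replacement in $\mathcal{A}^{\mathcal{D}}_{\underline{P}}$ yields a functor $\tilde{\kappa}$ that descends to $\mathrm{Ho}$. By Lemma~\ref{lem:tensorTC} applied to $P(\hat{U},H)\to\underline{\mathcal{M}}(\hat{U},H)$ and the base change $P(\hat{U},H)\to P(U,H)$ — which models $B(T/\hat{U})\to B(T/U)$, since $\underline{P}\to\underline{A_{PL}(*)}$ is a weak quasi isomorphism and $P(\hat{U},H)=P(\hat{U},T)$ — one obtains $H^*(\kappa(\underline{\mathcal{M}})(U,H))\cong R_{T/U}\otimes_{R_{T/\hat{U}}}H^*(\underline{\mathcal{M}}(\hat{U},H))$, and combined with \textbf{(TC)} for $\underline{\mathcal{M}}$ this shows $\kappa(\underline{\mathcal{M}})$ satisfies \textbf{(TC)}, so $\tilde{\kappa}$ lands in $\mathcal{B}'$. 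Finally $\rho\tilde{\kappa}$ is cofibrant replacement, hence naturally quasi isomorphic to $\id_{\mathcal{B}_1}$; and for $\underline{A}\in\mathcal{B}'$ the replacement quasi isomorphism on $\mathcal{D}$ extends, via the universal property of the pushouts and the $\underline{P}$-structure of $\underline{A}$, to a natural map $\tilde{\kappa}\rho(\underline{A})\to\underline{A}$ which is a pointwise quasi isomorphism: on $\mathcal{D}$ it is the replacement map, and on $\mathcal{D}'\setminus\mathcal{D}$ both sides are identified with $R_{T/U}\otimes_{R_{T/\hat{U}}}H^*(\underline{A}(\hat{U},H))$ by the preceding computation and \textbf{(TC)} for $\underline{A}$. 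Hence $\rho$ and $\tilde{\kappa}$ are mutually inverse equivalences $\mathcal{B}'\simeq\mathcal{B}_1$.

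The main obstacle is the homotopy-theoretic bookkeeping around $\kappa$: verifying it preserves weak equivalences, which is handled by the left-Quillen property of base change along the cofibrations $P(\hat{U},H)\to\underline{\mathcal{M}}(\hat{U},H)$ together with the fact that cofibrant objects of $\mathcal{A}^{\mathcal{D}}_{\underline{P}}$ are pointwise cofibrant over $\underline{P}$; and, more delicately, producing the natural quasi isomorphism $\tilde{\kappa}\rho\Rightarrow\id$ and checking it at the new positions $(U,H)\in\mathcal{D}'\setminus\mathcal{D}$, which is exactly where Lemma~\ref{lem:tensorTC} and the hypothesis \textbf{(TC)} carry the argument. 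A secondary point that still needs care is the functoriality of $\kappa$ on morphisms of $\mathcal{D}'$, which rests on $\hat{(\cdot)}$ being order preserving and on $\underline{P}(U,H)$ depending only on $U$.
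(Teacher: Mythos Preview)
Your argument is correct and follows the same overall strategy as the paper: factor through the intermediate category $\mathcal{B}'\subset\mathrm{Ho}(\mathcal{A}^{\mathcal{D}'}_{\underline{P}})$, handle the passage $\mathcal{B}_0\leftrightarrow\mathcal{B}'$ via the adjunction $(r_{\mathcal{D}'},i_{\mathcal{D}'})$ and the natural transformation $\varphi_{\mathcal{D}'}$, and build an inverse to restriction $\mathcal{B}'\to\mathcal{B}_1$ by base-changing along $\underline{P}(m^L_{\mathcal{D}}(U),H)\to\underline{P}(U,H)$ (your $\hat{U}$ is exactly the paper's $m^L_{\mathcal{D}}(U)$), with Lemma~\ref{lem:tensorTC} supplying \textbf{(TC)} and the quasi-isomorphism check at the new positions.

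The one genuine difference is in the replacement step: the paper uses the explicit functor $\Gamma$ from Proposition~\ref{prop:Sullivandiagram}, which pointwise produces disconnected free nilpotent extensions of $\underline{P}$, whereas you use an abstract functorial cofibrant replacement in the projective model structure and invoke Ken Brown to see that $\kappa$ preserves weak equivalences between cofibrant objects. Both choices work. Your route is cleaner conceptually and avoids the spacelike and $H^1$-injectivity hypotheses baked into Proposition~\ref{prop:Sullivandiagram}; the paper's route has the advantage that $\Gamma$ lands literally in free nilpotent extensions, so Lemma~\ref{lem:tensorTC} applies verbatim. In your setup you should either note that the cofibrant replacement can be taken to be a cellular one (hence pointwise a free nilpotent extension of $\underline{P}(\hat{U},H)$, so Lemma~\ref{lem:tensorTC} applies directly), or remark that the conclusion of Lemma~\ref{lem:tensorTC} is stable under retracts, which covers arbitrary cofibrations. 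Also, your sentence ``base change along the cofibration $P(\hat{U},H)\to\underline{\mathcal{M}}(\hat{U},H)$ is left Quillen'' is phrased a bit off: what you actually need is that $P(U,H)\otimes_{P(\hat{U},H)}(-)$ is left Quillen on the undercategory of $P(\hat{U},H)$, together with the fact that projective cofibrations in $\mathcal{A}^{\mathcal{D}}_{\underline{P}}$ are pointwise cofibrations over $\underline{P}$; the argument you have in mind is correct, but the justification should be stated this way.
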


\begin{proof}
The functors $i_{\mathcal{D}'}$ and $r_{\mathcal{D}'}$ induce an equivalence of categories between $\mathcal{B}_0$ and the full subcategory $\mathcal{B}'$ of $\mathrm{Ho}(\mathcal{A}^{\mathcal{D}'}_{\underline{P}})$ on systems satisfying \textbf{(TC)}. We construct a functor $j\colon\mathcal{B}_1\rightarrow \mathcal{B}'$ which is inverse to the restriction $r_{\mathcal{D}}\colon \mathcal{B}'\rightarrow\mathcal{B}_1$.

The procedure from Proposition \ref{prop:Sullivandiagram}, which replaces a system under $\underline{P}$ with a quasi isomorphic disconnected free nilpotent extension of $\underline{P}$, is functorial. We denote the corresponding functor by $\Gamma_{\mathcal{D}'}\colon \mathcal{A}^{\mathcal{D}'}_{\underline{P}}\rightarrow \mathcal{A}^{\mathcal{D}'}_{\underline{P}}$. As the functor operates pointwise, we obtain a compatible functor $\Gamma_\mathcal{D}$ on $\mathcal{A}^{\mathcal{D}}_{\underline{P}}$ satisfying $r_{\mathcal{D}}\circ \Gamma_{\mathcal{D}'}=\Gamma_{\mathcal{D}}\circ r_\mathcal{D}$. For simplicity we just denote both funtors by $\Gamma$. Let $\underline{A}$ be a $\mathcal{D}$-system under $\underline{P}$.
For a subtorus $U\subset T$ we denote by $m^L_\mathcal{D}(U)$ the intersection of all tori in $\mathcal{D}_L$ which contain $U$. Now fix $(U,H)\in\mathcal{D}'$.
The algebra $\Gamma\underline{A}(m_\mathcal{D}^L(U),H)$ is of the form $\prod_i (\underline{P}(m_\mathcal{D}^L(U),H)\otimes \Lambda V_i,d)$.
We set \[\underline{j(A)}(U,H)= \underline{P}(U,H)\otimes_{\underline{P}(m^L_\mathcal{D}(U),H)} \Gamma \underline{A}(m^L_\mathcal{D}(U),H)= \prod_i (\underline{P}(U,H)\otimes \Lambda V_i,d).\]
For $(U,H)\leq (U',H')\in \mathcal{D}'$ we have $(m^L_\mathcal{D}(U),H)\leq (m^L_\mathcal{D}(U'),H')$ in $\mathcal{D}$ so $\underline{j(A)}$ naturally inherits the structure of a $\mathcal{D}'$-system under $\underline{P}$. It satisfies \textbf{(TC)} by Lemma \ref{lem:tensorTC}. We have $r_{\mathcal{D}}\circ j=\Gamma$ and a natural transformation $\Gamma\rightarrow \id_{\mathcal{A}^\mathcal{D}_{\underline{P}}}$ consisting of quasi isomorphisms.

If we start with a $\mathcal{D}'$-system $\underline{B}$, then
$(m^L_\mathcal{D}(U),H)\leq (U,H)$ induces a map \[\underline{P}(U,H)\otimes_{\underline{P}(m^L_\mathcal{D}(U),H)}\Gamma\underline{B}(m^L_\mathcal{D}(U),H)\rightarrow \Gamma\underline{B}(U,H).\]
We obtain natural transformations $j\circ r_{\mathcal{D}}\rightarrow \Gamma\rightarrow \id_{\mathcal{A}^{\mathcal{D}'}_{\underline{P}}}$. The second transformation is a quasi isomorphism and the first transformation is a quasi isomorphism due to \textbf{(TC)}. This shows that $j$ and $r_\mathcal{D}$ descend to equivalences of categories between $\mathcal{B}'$ and $\mathcal{B}_1$.
\end{proof}

We sum up the results of this section in the following Corollary. We point out that in $(ii)$,  we only require $\underline{P}\rightarrow \underline{A_{PL}(*)}$ to be a weak morphism. Thus we may choose e.g.\ $\underline{P}=\underline{R}$ to obtain a description via systems with strict $\underline{R}$-structures. 

\begin{cor}\label{cor:categorialdescription}
Let $C$ be a finite collection of subgroups and $\mathcal{D}=\mathcal{D}(C)$. The full subcategory on objects with isotropies in $C$ of the equivariant rational homotopy category of $T$-simply connected, $T$-finite $\mathbb{Q}$-type $T$-spaces is equivalent to
\begin{enumerate}[(i)]
\item the full subcategory of $\mathrm{Ho}(\widetilde{\mathcal{A}}^\mathcal{D}_{\underline{R}})$ on objects $\underline{A}$ which satisfy \textbf{(TC)} and for which $\underline{R}^2\otimes H^0(\underline{A})\rightarrow H^2(\underline{A})$ is injective, $H^1(\underline{A})=0$, and $H^*(\underline{A})$ is finite type and spacelike.
\item the full subcategory of $\mathrm{Ho}(\mathcal{A}^\mathcal{D}_{\underline{P}})$ (with $\underline{P}$ as in Theorem \ref{thm:categorystricitification}) on objects $\underline{A}$ which satisfy \textbf{(TC)} and for which $H^2(\underline{P})\otimes H^0(\underline{A})\rightarrow H^2(\underline{A})$ is injective, $H^1(\underline{A})=0$, and $H^*(\underline{A})$ is finite type and spacelike.
\end{enumerate}
\end{cor}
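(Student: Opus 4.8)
The plan is to assemble the corollary from four structural results already in place: the Scull--Mendes algebraization (Theorem \ref{thm:scullmendes}), the strictification equivalence (Theorem \ref{thm:categorystricitification}), the restriction equivalence (Lemma \ref{lem:restrictiongalore}), and the realization theorem (Theorem \ref{thm:P-realization}). Throughout put $\mathcal{D}=\mathcal{D}(C)$ and let $\mathcal{D}'=\{(U,H)\in\mathcal{S}\mid H\in\mathcal{D}_R\}$ be the associated stable subset having all subtori as left parts. First I would fix once and for all a $\underline{P}$ which is simultaneously free nilpotent with $\underline{P}(U,H)=\underline{P}(U,T)$, a quasi-isomorphism onto $\underline{A_{PL}(*)}$, and whose restriction to $\mathcal{D}$ is an almost Sullivan model; this is possible by the construction following Proposition \ref{prop:Sullivandiagram}, applied compatibly in the $H$-coordinate since $\underline{A_{PL}(*)}(U,H)$ depends on $U$ only, and it simultaneously meets the hypotheses of Theorems \ref{thm:scullmendes}, \ref{thm:categorystricitification} and \ref{thm:P-realization}. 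With this choice the equivalence $(i)\simeq(ii)$ is immediate from Theorem \ref{thm:categorystricitification}: that theorem gives an equivalence of the full subcategories of $\mathrm{Ho}(\mathcal{A}^{\mathcal{D}}_{\underline{P}})$ and $\mathrm{Ho}(\widetilde{\mathcal{A}}^{\mathcal{D}}_{\underline{R}})$ on spacelike, cohomologically simply connected objects satisfying \textbf{(TC)}, and the additional conditions in $(i)$ and $(ii)$ — $H^1(\underline{A})=0$, finiteness of $H^*(\underline{A})$, and injectivity of $\underline{R}^2\otimes H^0(\underline{A})\to H^2(\underline{A})$ versus $H^2(\underline{P})\otimes H^0(\underline{A})\to H^2(\underline{A})$ — are cohomological and correspond under the natural isomorphism $H^*(\underline{P})\cong\underline{R}$, hence are preserved. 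So it remains to identify the geometric category with $(ii)$.

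Next I would compose the Scull--Mendes embedding $X\mapsto\underline{A_{PL}(X)}$ of the equivariant rational homotopy category of all $T$-simply connected, $T$-finite $\mathbb{Q}$-type $T$-spaces into $\mathrm{Ho}(\mathcal{A}^{\mathcal{S}}_{\underline{P}})$ (Theorem \ref{thm:scullmendes}) with the restriction functor, and restrict the source to spaces with isotropies in $C$. For such a space $X$ the system $\underline{A_{PL}(X)}$ satisfies \textbf{(TC)} by Remark \ref{rem:TC} and, by Lemma \ref{lem:isotropytypes}, $i_{\mathcal{D}'}r_{\mathcal{D}'}(\underline{A_{PL}(X)})=\underline{A_{PL}(X)}$; thus it lies in the subcategory $\mathcal{B}_0$ of Lemma \ref{lem:restrictiongalore}. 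Moreover $\underline{A_{PL}(X)}$ is spacelike (its $H^0$ counts path components), cohomologically simply connected, of finite type, with $\underline{R}^2\otimes H^0\to H^2$ injective: each path component of $X^H_{T/U}$ is the Borel construction of a simply connected space over the simply connected base $B(T/U)$, so the Serre spectral sequence of the Borel fibration gives $H^1(X^H_{T/U})=0$, finiteness of each $H^n(X^H_{T/U})$, and injectivity of the edge map $H^2(B(T/U))\to H^2(X^H_{T/U})$. Passing through the restriction equivalence $\mathcal{B}_0\simeq\mathcal{B}_1$ of Lemma \ref{lem:restrictiongalore}, the restricted functor therefore lands in the full subcategory of $\mathrm{Ho}(\mathcal{A}^{\mathcal{D}}_{\underline{P}})$ cut out by exactly the conditions of $(ii)$, and it is fully faithful, being the restriction of the fully faithful Scull--Mendes functor to full subcategories of source and target. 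Essential surjectivity onto $(ii)$ is precisely Theorem \ref{thm:P-realization}: any object of $(ii)$ is a $\mathcal{D}$-system under $\underline{P}$ satisfying \textbf{(TC)}, spacelike, of finite type, $H^1=0$ and with injective $H^2(\underline{P})\otimes H^0\to H^2$, hence is linked by a zigzag of quasi-isomorphisms of systems under $\underline{P}$ to $\underline{A_{PL}(X)}|_{\mathcal{D}}$ for some $T$-simply connected, $T$-finite $\mathbb{Q}$-type $T$-CW-complex $X$ with isotropies in $C$, and via Lemma \ref{lem:restrictiongalore} the unrestricted systems agree in $\mathcal{B}_0$. This yields the equivalence of the geometric category with $(ii)$, and with $(i)$ by the first paragraph.

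The main difficulty is organizational rather than conceptual. One has to verify that a single $\underline{P}$ can be arranged to satisfy the slightly different hypotheses imposed on it by the three imported theorems, and that replacing $\underline{P}$ by a quasi-isomorphic variant leaves all the homotopy categories in play unchanged up to equivalence. More substantively, one must match the purely \emph{algebraic} condition defining $\mathcal{B}_0$, namely that $i_{\mathcal{D}'}r_{\mathcal{D}'}(\underline{A})\to\underline{A}$ be a quasi-isomorphism, with the \emph{geometric} condition of having isotropies contained in $C$: the forward implication is Lemma \ref{lem:isotropytypes}, and the converse is supplied by Theorem \ref{thm:P-realization}, whose realization procedure produces a space with isotropies in $C$ by construction. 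The remaining checks — that the finiteness and injectivity conditions, all phrased through $H^*(\underline{P})\cong\underline{R}\cong H^*(\underline{A_{PL}(*)})$, translate faithfully across the equivalences, and that each of $(i)$, $(ii)$ and the geometric category is therefore cut out by mutually corresponding conditions — are routine but must be stated carefully.
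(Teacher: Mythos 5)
Your proposal is correct and follows essentially the same route as the paper: full faithfulness of the Scull--Mendes embedding combined with the restriction equivalence of Lemma \ref{lem:restrictiongalore}, essential surjectivity from Theorem \ref{thm:P-realization}, and the identification of $(i)$ with $(ii)$ via Theorem \ref{thm:categorystricitification}. The only cosmetic difference is that you insist on a single $\underline{P}$ meeting all hypotheses at once, whereas the paper sidesteps this by letting the $\underline{P'}$ of $(ii)$ vary and comparing both choices to the category in $(i)$ through Theorem \ref{thm:categorystricitification}; both resolutions work.
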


\begin{proof}
Let $\underline{P}$ be a free nilpotent system with a quasi isomorphism $\underline{P}\rightarrow \underline{A_{PL}(*)}$ such that $P(U,H)=P(U,H')$ for all $U\subset H\subset H'$. Also let the $\mathcal{A}_0$ be the full subcategory of $\mathrm{Ho}(\mathcal{A}^{\mathcal{S}}_{\underline{P}})$ on those objects $\underline{A}$ which satisfy \textbf{(TC)} and for which $H^2(\underline{P})\otimes H^0(\underline{A})\rightarrow H^2(\underline{A})$ is injective, $H^1(\underline{A})=0$, and $H^*(\underline{A})$ is finite type and spacelike.
By Theorem \ref{thm:scullmendes}, $\mathcal{T}\rightarrow \mathcal{A}_0$ is fully faithful, where $\mathcal{T}$ is the equivariant rational homotopy category $\mathcal{T}$ of $T$-simply connected finite $\mathbb{Q}$-type $T$-spaces.

By Lemma \ref{lem:isotropytypes}, restricting to the full subcategory $\mathcal{T}^C$ of $\mathcal{T}$ on spaces with isotropy groups in $C$, the image of the functor $X\mapsto \underline{A_{PL}(X)}$ lies in the full subcategory $\mathcal{A}_0^C\subset \mathcal{A}_0$ on those objects which also lie in $\mathcal{B}_0$ as defined in Lemma \ref{lem:restrictiongalore}.

Let $\mathcal{A}_1$ be the full subcategory of $\mathrm{Ho}(\mathcal{A}^\mathcal{D}_{\underline{P}})$ on those objects $\underline{A}$ which satisfy \textbf{(TC)} and for which $H^2(\underline{P})\otimes H^0(\underline{A})\rightarrow H^2(\underline{A})$ is injective, $H^1(\underline{A})=0$, and $H^*(\underline{A})$ is of finite type and spacelike. Then $r_\mathcal{D}\colon \mathcal{A}_0^C\rightarrow \mathcal{A}_1$ is fully faithful by Lemma \ref{lem:restrictiongalore}. It follows from Theorem \ref{thm:P-realization} that the composition $\mathcal{T}^C\rightarrow \mathcal{A}_0^C\rightarrow\mathcal{A}_1$ is essentially surjective.
But as a composition of two fully faithful functors this means it is an equivalence of categories. Then clearly the equivalence between the two full subcategories of $\mathrm{Ho}(\mathcal{A}^\mathcal{D}_{\underline{P}})$ and $\mathrm{Ho}(\widetilde{\mathcal{A}}^\mathcal{D}_{\underline{R}})$ from Theorem \ref{thm:categorystricitification} restricts to an equivalence between $\mathcal{A}_1$ and the desired category $\mathcal{A}_2$ described in $(i)$. For some free nilpotent system $\underline{P'}$ with a weak quasi isomorphism $\underline{P'}\rightarrow \underline{A_{PL}(*)}$, the equivalence of the category described in $(ii)$ and $\mathcal{A}_2$ follows from Theorem \ref{thm:categorystricitification} as well.
\end{proof}

\bibliography{Realizationbib}
\bibliographystyle{acm}

\end{document}